\newcommand{\res}{{\operatorname{res}}}
\newcommand{\spec}{{\operatorname{spec}}}
\newtheorem{theorem}{Theorem}[section]
\newtheorem{lem}[theorem]{Lemma}
\newtheorem{prop}[theorem]{Proposition}
\newtheorem{cor}[theorem]{Corollary}
\theoremstyle{definition}
\newtheorem{definition}[theorem]{Definition}
\theoremstyle{remark}
\newtheorem{remark}[theorem]{Remark}
\newtheorem{conj}[theorem]{Conjecture}
\numberwithin{equation}{section}
\def \a {{\alpha}}
\def \sa {{\mathfrak{a}}}
\def \sb {{\mathfrak{b}}}
\def \sc {{\mathfrak{c}}}
\def \sd {{\mathfrak{d}}}
\def \e {{\varepsilon}}
\def \g {{\gamma}}
\def \G {{\Gamma}}
\def \R {{\mathbb R}}
\def \H {{\mathbb H}}
\def \C {{\mathbb C}}
\def \Z {{\mathbb Z}}
\def \Q {{\mathbb Q}}
\def \N {{\mathbb N}}
\def \GmodH {{\Gamma\backslash\H}}
\def \pslz  {{\hbox{PSL}_2( {\mathbb Z})} }
\newcommand{\norm}[1]{\left\lVert #1 \right\rVert}
\newcommand{\abs}[1]{\left\lvert #1 \right\rvert}
\newcommand{\inprod}[2]{\left \langle #1,#2 \right\rangle}
\newcommand{\vol}[1]{{\operatorname{vol}}( #1 )}
\newcommand{\modsym}[2]{\left \langle #1,#2 \right\rangle}
\newcommand{\sym}{{\operatorname{sym}}}
\newcommand{\Exp}[2]{{\operatorname{E}( #1,#2)}}
\newcommand{\Var}[2]{{\operatorname{Var}( #1,#2)}}
\title{Arithmetic  statistics of modular symbols}    
\author{Yiannis N. Petridis}
\address{Department of Mathematics, University College London, Gower Street, London WC1E 6BT, United Kingdom}
\email{i.petridis@ucl.ac.uk}
\author{Morten S. Risager}
\address{Department of Mathematical
  Sciences, University of Copenhagen, Universitetsparken 5, 2100
  Copenhagen \O, Denmark}
\email{risager@math.ku.dk}
\thanks{The second author was supported by a Sapere Aude grant from
  The Danish Council for Independent Research
  (Grant-id:0602-02161B). The first author would like to acknowledge
  the hospitality of the University of Copenhagen, while this paper was written. }
\keywords{}
\subjclass[2010]{Primary 
11F67; 
Secondary 
11E45, 
11M36. 
}
\date{\today}
\begin{document}
\begin{abstract} 
Mazur, Rubin, and Stein have recently formulated a series of conjectures
about statistical properties of modular symbols in order to understand central values of twists of
elliptic curve $L$-functions. Two of these  conjectures relate to the asymptotic growth of the first and second moments of the modular symbols. We prove these on average by 
 using
analytic properties of Eisenstein series twisted by modular symbols.
Another of their conjectures predicts the Gaussian  distribution of normalized modular symbols ordered according to the size of the denominator of the cusps. 
  We prove this conjecture in a refined
version that also allows restrictions on the location of the cusps.
\end{abstract}
\maketitle

\section{Introduction}
Modular symbols are  fundamental tools 
 in number theory. By the work of 
Birch, Manin, Cremona and others they can be used to
compute modular forms, the homology of modular curves, and to gain information about  elliptic curves and special values of $L$-functions. In this paper we study the arithmetical properties of the modular
symbol map \begin{equation}\label{study-mapping}
  \{\infty, \sa \}\mapsto 2\pi i \int_{i\infty}^\sa f(z)dz.
\end{equation}
Here  $f\in S_2(\Gamma)$ is a holomorphic cusp
form of weight $2$ for the group $\G=\G_0(q)$, and $\{ \infty,\sa \}$ denotes the homology class
 of curves between the cusps $\infty$ and $\sa$.
 
For our purposes it is convenient to work with  the real-valued, cuspidal   one-form $\a=\Re (f(z)dz)$.
The finite cusps $\sa$ are parametrized by $\Q$, so for $r\in \Q$ we
 write
 \begin{equation}\label{our-raw}
   \langle r\rangle = 2\pi i\int_{i\infty}^ r\alpha .
 \end{equation}
Note that the path can be taken to be the vertical line connecting $r\in \mathbb Q$ to $\infty$.

 If $\sa$ is equivalent to $\infty$ under the
$\G$-action such that $\{\infty, \sa\}=\{\infty, \g(\infty)\}$
for some $\g\in \G$ we write the map \eqref{study-mapping} as    
\begin{equation*}
\modsym{\g}{\a}:=\langle \g(\infty)\rangle=2\pi i \int_{z_0}^{\gamma z_0}\a,
\end{equation*}
where in the last expression we  have  replaced $\infty$ by any $z_0\in
\H^*$. 

Mazur, Rubin, and Stein \cite{MazurRubin:2016a, Stein:2015a} have recently formulated a series of conjectures about
the value distribution of $\langle r\rangle$.  We now
describe these conjectures. Let $E$ be an elliptic curve over $\Q$ of conductor $q$ with
associated holomorphic weight 2 cusp form $f(z)$. We write the Fourier expansion of $f$ at $\infty$ as
\begin{equation*}\label{f-fourier-expansion}
f(z)=\sum_{n\ge 1}a(n)e(nz).
\end{equation*}
It is a fundamental question in number theory to understand how often the central value of $L(E, \chi, s)$, vanishes,
when  $\chi $ runs over the characters of $\hbox{Gal}(\bar \Q/\Q)$. 

 Mazur,  Rubin, and Stein  used \emph{raw modular symbols}. For $r\in\Q$ these are  defined by
\begin{equation*}
  \langle r\rangle^{\pm}=\pi i \int_{i\infty}^r f(z)dz
\pm \pi i \int_{i\infty}^{-r}f(z)dz.
\end{equation*} 
This corresponds
 roughly to taking $\alpha$ in \eqref{our-raw} to be  the real or
 imaginary part of the 1-form $f(z)dz$. See Remark
 \ref{relation-to-their-raw} for the precise statement.
Modular symbols and the central value of twists of the corresponding
$L$-function are related  by  the Birch--Stevens formula
e.g. \cite[Eq. 2.2]{Pollack:2014a}, \cite[Eq 8.6]{MazurTateTeitelbaum:1986a} that is
\begin{equation*}
\tau (\chi)L(E, \bar \chi, 1)=\sum_{a\in (\Z/m\Z)^*}\chi (a)\langle a/m\rangle^{\pm}
\end{equation*}
for a primitive character of conductor $m$ (here the choice of $\pm$
corresponds to the sign of $\chi$). To understand the vanishing of
$L(E, \bar \chi, s)$  at $s=1$ Mazur, Rubin, and Stein were  led to  investigate the distribution of modular symbols and theta constants. In this paper we investigate modular symbols but not theta constants.

Mazur, Rubin, and Stein studied computationally the statistics of  (raw) modular symbols.
Since $f$ has period $1$, the same is true for the modular symbols: $\langle r+1\rangle=\langle r\rangle.$
They observed the behavior of contiguous sums of modular symbols, defined for each $x\in[0, 1]$ by
\begin{equation*}\label{contiguous-sum}
G_c(x) =\frac{1}{c}\sum_{0\leq a\leq cx}\langle {a}/{c}\rangle .
\end{equation*}
Based on their computations they defined
\begin{align*}
g(x)&=\frac{1}{2\pi i }\sum_{n\ge 1}\frac{\Re(a(n)(e(nx)-1))}{n^2},\end{align*}
and arrived at the following conjecture. 
\begin{conj}[Mazur--Rubin--Stein]\label{partial-first-conjecture} As $c\to \infty$ we have 
\begin{equation*}G_c(x)\to g(x).
\end{equation*}
\end{conj}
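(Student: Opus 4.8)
The plan is to recognise $G_c(x)$ as a left Riemann sum for the integral $\int_0^x\langle t\rangle\,dt$ and to check by termwise integration that this integral is exactly $g(x)$; the substance of the conjecture is the identity $g'(t)=\langle t\rangle$ together with enough uniformity to pass from Riemann sums to the integral. First I would record the Fourier expansion of the modular symbol: parametrising the vertical segment from $i\infty$ to $r$ by $z=r+it$ and integrating $f(z)=\sum_{n\ge 1}a(n)e(nz)$ term by term over $[\varepsilon,\infty)$ (where Fubini applies), then sending $\varepsilon\to0$ --- the left side converging to $\langle r\rangle$ because $f$ decays exponentially at the cusp $r$, and the right side handled by Abel's theorem together with the classical bound $\abs{\sum_{n\le N}a(n)e(n\theta)}\ll_f N^{1/2}$ uniform in $\theta\in\R$ (Wilton) --- one obtains
\begin{equation*}
  \langle r\rangle = i\sum_{n\ge1}\frac{\Im(a(n)e(nr))}{n},
\end{equation*}
with the series converging uniformly in $r$. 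Hence $r\mapsto\langle r\rangle$ extends to a continuous $1$-periodic function, and termwise integration (using $\int_0^x e(nt)\,dt=(e(nx)-1)/(2\pi in)$) gives $\int_0^x\langle t\rangle\,dt = g(x)$.

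Next I would split off the tail. Fix $M$ and write $\langle r\rangle=S_M(r)+R_M(r)$ with $S_M(r)=i\sum_{n\le M}\Im(a(n)e(nr))/n$; partial summation from Wilton's bound gives $\abs{R_M(r)}\ll_f M^{-1/2}$ uniformly in $r$. Then
\begin{equation*}
  G_c(x)=\frac1c\sum_{0\le a\le cx}S_M\!\Big(\tfrac ac\Big)+\frac1c\sum_{0\le a\le cx}R_M\!\Big(\tfrac ac\Big),
\end{equation*}
and, because the sum has at most $cx+1$ terms, the second piece is $\ll_f M^{-1/2}$ uniformly in $c$ and $x$. For the first piece, $S_M$ is a trigonometric polynomial, hence $C^1$, so $\frac1c\sum_{0\le a\le cx}S_M(a/c)=\int_0^x S_M(t)\,dt+O_M(1/c)$ as $c\to\infty$ (equivalently, $\frac1c\sum_{0\le a\le cx}e(na/c)\to(e(nx)-1)/(2\pi in)$ for each fixed $n$, by summing the geometric series). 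Finally $\int_0^x S_M(t)\,dt\to g(x)$ with error $\ll_f M^{-1/2}$ since $S_M\to\langle\cdot\rangle$ uniformly. Putting these together, $\abs{G_c(x)-g(x)}\ll_f M^{-1/2}+O_M(1/c)$; letting $c\to\infty$ and then $M\to\infty$ proves the conjecture (and optimising $M\asymp c^{2/3}$ even yields the rate $O_f(c^{-1/3})$).

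The one genuinely delicate point is the uniformity over the cusps $a/c$ used to control the tail: the Fourier series of $\langle\cdot\rangle$ is only conditionally convergent, and the elementary pointwise bound $\abs{\sum_{0\le a\le cx}e(na/c)}\ll\min(c,\norm{n/c}^{-1})$, with $\norm{\cdot}$ the distance to the nearest integer, is not summable against $\abs{a(n)}/n$; one therefore genuinely needs the cancellation in the $n$-aspect supplied by Wilton's theorem. Everything else is a routine Riemann-sum and geometric-series computation.
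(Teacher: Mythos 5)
There is a genuine gap, and it sits exactly where the paper says the difficulty lies. First, note that the statement you were given is a \emph{conjecture}: the paper does not prove it for individual $c$, only the averaged version $\frac1M\sum_{c\le M}G_c(x)\to g(x)$ (Theorem \ref{partial-first-theorem}, via Corollary \ref{power-first-moment} and Section \ref{arithmetic}); the pointwise statement is open except for $c$ prime \cite{KimSun:2017a}. Your argument hinges on the claimed uniform estimate $\abs{\sum_{n\le N}a(n)e(n\theta)}\ll_f N^{1/2}$. That is not Wilton's theorem. For a weight-$2$ cusp form with unnormalized coefficients $a(n)\ll n^{1/2+\e}$, the Wilton/Iwaniec bound \cite[Thm 5.3]{Iwaniec:1997a} is $\sum_{n\le N}a(n)e(n\theta)\ll N\log 2N$ uniformly in $\theta$ (equivalently $N^{1/2}\log 2N$ for the \emph{normalized} coefficients $a(n)n^{-1/2}$). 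Feeding the correct bound into your partial summation, the tail of $\sum_n a(n)e(n\theta)/n$ is controlled by $\int_M^\infty (\log t/t)\,dt$, which diverges: the series is not known to converge at all, uniformly or otherwise. This is precisely Remark \ref{heuristics} of the paper: ``the series is not convergent, even if Wilton's classical estimate shows that it just barely fails to converge conditionally.'' Consequently your identity $\langle r\rangle=i\sum_{n\ge1}\Im(a(n)e(nr))/n$ with uniform convergence, the continuous extension of $\langle\cdot\rangle$ to $\R/\Z$, and the tail bound $\abs{R_M(r)}\ll M^{-1/2}$ all fail, and with them the interchange of the limits $c\to\infty$ and $M\to\infty$ — which is exactly the step the Mazur--Rubin--Stein heuristic cannot justify.

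One could hope to salvage convergence of the Fourier series at each fixed rational $r=a/c$ (periodicity of $e(na/c)$ in $n$ plus cancellation of $a(n)$ in arithmetic progressions does give conditional convergence there), but the resulting tail bounds degrade polynomially in $c$, so you still cannot let $c\to\infty$ before $M\to\infty$. The paper's way around this obstruction is to average over $c$: the sum $\sum_{r\in T_{\sa\sb}(M)}\langle r\rangle e(nr)$ is captured by the generating series $L^{(1)}_{\sa\sb}(s,0,n)$, whose meromorphic continuation past $\Re(s)=1$ and polynomial bounds on vertical lines (Theorem \ref{first-derivatives}, proved via Eisenstein series twisted by modular symbols) supply, after a contour shift, exactly the cancellation in the $n$-aspect that the pointwise exponential-sum bound cannot. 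If you want an unconditional pointwise statement, you would need a genuine power saving $\sum_{n\le N}a(n)e(na/c)\ll N^{1-\delta}$ uniformly in $c\le N^{A}$ for suitable $A$, which is not available.
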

They added credence to  this conjecture with the following heuristics. If we 
  cut off  the paths in \eqref{our-raw} for modular symbols at height $\delta>0$, then\begin{equation*}
c^{-1}\sum_{0\le a\le cx}\int_{a/c+i\delta}^{i\infty}\alpha \to  \int_{[0, x]\times [\delta,\infty]}  \alpha,
\end{equation*}
because the left-hand side is a Riemann sum for the integral. 
The heuristics involves interchanging this limit  with   the limit as $\delta\to 0$.

In another direction Mazur  and Rubin investigated the distribution of $\langle a/c\rangle$ for $(a, c)=1$ as $c\to\infty$.
Define the usual mean and variance by
\begin{equation}\label{mean-variance}
  \Exp{f}{c}=\frac{1}{\phi(c)}\sum_{\substack{a\bmod
      c\\(a,c)=1}}\langle {a}/{c}\rangle,\quad  \Var{f}{c}=\frac{1}{\phi(c)}\sum_{\substack{a\bmod
      c\\(a,c)=1}}\left(\langle{a}/{c}\rangle-\Exp{f}{c}\right)^2, 
\end{equation}
where $\phi$ is Euler's totient function.  They conjectured the following asymptotic behavior of  the variance.
\begin{conj}[Mazur--Rubin]\label{variance-slope-conjecture}  
 There exist a constant $C_f$ and constants $D_{f,d}$ for each
 divisor $d$ of $q$, 
  such that
\begin{equation*}
\lim_{\stackrel{c\to \infty}{(c, q)=d}}( \Var{f}{ c}-C_f\log c)=D_{f, d}.
\end{equation*}
Moreover,
\begin{equation}\label{CF}
C_f=-\frac{6}{\pi^2}\prod_{p\vert q}(1+p^{-1})^{-1}L(\sym^2f,1).
\end{equation} 
\end{conj}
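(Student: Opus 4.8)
The plan is to avoid the twisted Eisenstein series altogether for this statement. Its Fourier coefficients package the data as a Dirichlet series $\sum_c c^{-2s}\sum_a\langle a/c\rangle^2$, from which a Tauberian/contour argument can extract only the \emph{average} of $\Var{f}{c}$ over $c\le X$; the pointwise limit as $c\to\infty$ with $(c,q)=d$ requires analysing the second moment for each fixed $c$ by hand. I would begin by recording the Fourier expansion of the modular symbol: integrating \eqref{our-raw} along the vertical geodesic, with $f(z)=\sum_{n\ge 1}a(n)e(nz)$ and $a(n)=\lambda(n)\sqrt n\in\R$, gives
\begin{equation*}
\langle a/c\rangle=i\sum_{n\ge 1}\frac{\lambda(n)}{\sqrt n}\sin\!\left(\frac{2\pi na}{c}\right)=:i\,\beta(a/c).
\end{equation*}
Because $a\mapsto\beta(a/c)$ is odd under $a\mapsto c-a$, pairing $a\leftrightarrow c-a$ (distinct and both coprime to $c$ when $c>2$) shows $\Exp{f}{c}=0$, so $\Var{f}{c}=-\phi(c)^{-1}\sum_{(a,c)=1}\beta(a/c)^2$, a nonpositive quantity consistent with the sign of $C_f$ in \eqref{CF}.

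Next I would open the square and perform the sum over $a$. With $s_c(k)=\sum_{(a,c)=1}e(ka/c)$ the Ramanujan sum and $2\sin\theta\sin\psi=\cos(\theta-\psi)-\cos(\theta+\psi)$,
\begin{equation*}
\Var{f}{c}=-\frac{1}{2\phi(c)}\sum_{m,n\ge 1}\frac{\lambda(m)\lambda(n)}{\sqrt{mn}}\bigl(s_c(m-n)-s_c(m+n)\bigr),
\end{equation*}
which I would make rigorous by inserting a regulator $(mn)^{-s}$ in place of $(mn)^{-1/2}$ (equivalently weighting the geodesic integral by $y^{s-1}$), giving absolute convergence for $\Re s>1$ and then continuing to $s=1/2$. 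The decisive contribution is the diagonal $m=n$ inside $s_c(m-n)$: since $s_c(0)=\phi(c)$ it produces $-\tfrac12\,R(2s)$ with $R(w)=\sum_{n}\lambda(n)^2n^{-w}=\zeta(w)L(\sym^2 f,w)/\zeta(2w)$ (modified by Euler factors at $p\mid q$), which has a simple pole at $w=1$, i.e.\ at $s=1/2$, with $\res_{w=1}R(w)=\tfrac{6}{\pi^2}\prod_{p\mid q}(1+p^{-1})^{-1}L(\sym^2 f,1)$. This is exactly where the symmetric-square value of \eqref{CF} enters.

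The logarithm and the dependence on $d$ would then come out as follows. The pole of $-\tfrac12 R(2s)$ is cancelled by the off-diagonal terms, whose $c$-dependence is carried by the Ramanujan sums and appears as a relative factor $c^{\,w-1}$; the surviving finite part is then governed by the elementary limit $\frac{c^{\,w-1}-1}{w-1}\to\log c$ as $w\to 1$, which converts the symmetric-square pole into the main term $C_f\log c$, with the normalisation constants combining to give precisely \eqref{CF}. The constant $D_{f,d}$ collects the remaining regular pieces: the generalised Euler constant in $\sum_{n\le X}\lambda(n)^2/n-\res_{w=1}R(w)\,\log X$, the values at $s=1/2$ of the genuinely off-diagonal sums (including the entire $s_c(m+n)$ part), and, crucially, the local Euler factors of $L(\sym^2 f,w)$ and of $\zeta_q$ at primes $p\mid q$, which enter differently according to whether $p\mid c$. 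Since that alternative is recorded exactly by $d=(c,q)$, while the generic places contribute only through $\log c$ and $c$-independent constants, the limit depends on $c$ only through $d$, as the conjecture predicts.

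The hard part is continuing the regularised double series to $s=1/2$ \emph{with a power-saving error uniform in $c$}. After removing the diagonal one is left with shifted sums $\sum_{m\equiv n\,(e),\,m\ne n}\lambda(m)\lambda(n)(mn)^{-s}$ over divisors $e\mid c$, that is, $\sym^2$-type shifted convolution sums, and the pointwise (as opposed to averaged) statement demands bounding these at the edge of convergence uniformly in the modulus $e$ and in the shift. This is where the spectral theory of automorphic forms becomes unavoidable: one would realise the off-diagonal through inner products of $|f|^2 y^2$ against Poincaré series and invoke spectral bounds (or the meromorphic continuation of the attached shifted Dirichlet series) to secure the $o(1)$ remainder. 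Establishing this uniformity in $c$ is precisely the step that upgrades the Eisenstein-series average to the asserted limit $\Var{f}{c}-C_f\log c\to D_{f,d}$.
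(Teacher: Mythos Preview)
This statement is \emph{Conjecture}~\ref{variance-slope-conjecture} in the paper, not a theorem: the paper does not prove it. What the paper establishes is the averaged version, Theorem~\ref{theorem-variance}, which replaces the pointwise limit $\lim_{c\to\infty,\,(c,q)=d}(\Var{f}{c}-C_f\log c)$ by the average
\[
\frac{1}{\sum_{c\le M,(c,q)=d}\phi(c)}\sum_{\substack{c\le M\\(c,q)=d}}\phi(c)\bigl(\Var{f}{c}-C_f\log c\bigr)\to D_{f,d}.
\]
That is obtained exactly by the mechanism you dismiss in your first sentence: the Fourier coefficients of the twisted Eisenstein series $E_\sa^{(2)}(z,s)$ are the Dirichlet series $L_{\sa\sb}^{(2)}(s,0,0)$, whose second-order pole at $s=1$ is computed explicitly in Theorem~\ref{two-derivatives}, and a contour integral extracts the averaged asymptotic (Theorem~\ref{variance} and Corollary~\ref{variance-arithmetic}).

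Your proposal is not a proof but a strategy, and you are candid about this. The decisive gap is the one you name in your final paragraph: you need the shifted convolution sums $\sum_{m-n=h}\lambda(m)\lambda(n)(mn)^{-s}$ (and their $m+n$ counterparts) continued past $\Re s=1/2$ with an error term that is \emph{uniform in the modulus $c$ and saves a power of $c$}. Without that uniformity you have only reproduced the average result by a different route. Getting that uniformity is not a technicality: it is the entire content separating the conjecture from the theorem, and at the time of the paper it was open.

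There is also a soft spot earlier in the argument. Your explanation of how $\log c$ emerges --- that the off-diagonal carries a relative factor $c^{w-1}$ and one then reads off $\frac{c^{w-1}-1}{w-1}\to\log c$ --- is a heuristic, not a computation. The Ramanujan sum $s_c(k)=\sum_{e\mid(c,k)}e\,\mu(c/e)$ splits the off-diagonal into congruence classes $m\equiv \pm n\pmod e$ over $e\mid c$, and the $c$-dependence enters through the divisor structure of $c$, not as a clean global factor $c^{w-1}$. To make this honest you would have to track each $e\mid c$ separately, show that the combined singular parts at $w=1$ assemble to $C_f\log c$ with the exact constant \eqref{CF}, and then control the regular parts uniformly in $c$ --- which brings you back to the same shifted-convolution uniformity problem.

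In short: the paper does not attempt a proof of Conjecture~\ref{variance-slope-conjecture}, so there is nothing to compare; your outline is a plausible line of attack on the full conjecture, but it stops precisely at the step that is genuinely hard.
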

The constant $C_f$ is called the {\em variance slope} and the constant $D_{f, d}$ the {\em variance shift}. 
In Conjecture \ref{variance-slope-conjecture} the  symmetric square $L$-function $L(\textrm{sym}^2f,s)$ is
normalized such that 1 is at the edge of the critical strip.

Moreover, the numerics suggest  that the normalized raw modular symbols obey a Gaussian distribution law. 
\begin{conj}[Mazur--Rubin]\label{distribution-conjecture}
Let $d\vert q$. The data
\begin{equation*}
\frac{\langle a/c\rangle}{(C_f\log c+D_{f, d})^{1/2}}, \quad
c\in \N\textrm{ with } (c,q)=d,\quad a\in (\Z/c\Z)^*
\end{equation*}
has limit the standard  normal distribution.
\end{conj}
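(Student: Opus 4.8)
The plan is to prove the statement by the method of moments, the moments being supplied by the analytic theory of Eisenstein series twisted by modular symbols; so it suffices to establish convergence of moments to those of $N(0,1)$. Fix $d\mid q$ and, for the refined version, an interval $I\subseteq[0,1]$, and put $N(X)=\#\{(a,c)\colon c\le X,\ (c,q)=d,\ (a,c)=1,\ a/c\in I\}$, so $N(X)\asymp\abs{I}X^{2}$. One must show that for every integer $m\ge 0$
\begin{equation*}
\frac{1}{N(X)}\sum_{\substack{c\le X,\ (c,q)=d\\ (a,c)=1,\ a/c\in I}}\Bigl(\frac{\langle a/c\rangle}{\sqrt{C_f\log c}}\Bigr)^{m}\ \longrightarrow\ \begin{cases}(m-1)!!,& m\ \text{even},\\ 0,& m\ \text{odd},\end{cases}
\end{equation*}
as $X\to\infty$ (the case $I=[0,1]$ being the conjecture). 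For $I=[0,1]$ the odd moments vanish identically: the substitution $a\mapsto c-a$ together with $\langle -r\rangle=-\langle r\rangle$ --- valid because $f$ has real Fourier coefficients --- gives $\sum_{(a,c)=1}\langle a/c\rangle^{m}=0$ for odd $m$. For general $I$ the odd moments will instead be shown to be of strictly lower order in $\log X$. So the crux is the even moments, and in particular the second, which must reproduce the variance slope $C_f$ of \eqref{CF}.

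Since $f$ is a cusp form, the homomorphism $\gamma\mapsto\modsym{\gamma}{\a}$ annihilates every parabolic element of $\G=\G_0(q)$, so for $\e\in\R$ the map $\chi_\e\colon\gamma\mapsto\exp(i\e\modsym{\gamma}{\a})$ is a unitary character of $\G$ trivial on all parabolic subgroups. For each cusp $\sb$ of $\G$ I would form the twisted Eisenstein series
\begin{equation*}
E_{\sb}(z,s,\e)=\sum_{\gamma\in\G_{\sb}\backslash\G}\chi_\e(\gamma)\,\operatorname{Im}(\sigma_{\sb}^{-1}\gamma z)^{s},
\end{equation*}
whose Taylor coefficients in $\e$ are the Eisenstein series twisted by powers of modular symbols. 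A denominator $c$ with $(c,q)=d$ represents cusps in finitely many $\G$-classes $\sb$; writing $a/c=\tau(a_0/c_0)$ with $a_0/c_0$ a fixed representative of such a class and $\tau\in\G$, base-point independence of modular symbols gives $\langle a/c\rangle=\langle a_0/c_0\rangle+\modsym{\tau}{\a}$ with $\langle a_0/c_0\rangle=O(1)$. Hence, up to the bounded cusp-shift constants, the $m$-th moment sums are carried by the scattering entries $\varphi_{\sb\bullet}(s,\e)$ in the constant terms of $E_{\sb}(z,s,\e)$, which are Dirichlet series of the shape $(\text{gamma factors})\sum_c c^{-2s}\sum_a\exp(i\e\modsym{\tau_{a,c}}{\a})$, with $c$ running over exactly the denominators of the given gcd-class. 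To impose $a/c\in I$ I would replace the plain zeroth Fourier coefficient by a smoothed combination of all Fourier coefficients against a test function $\psi$ approximating $\mathbf 1_{I}$ (equivalently, pair $E_{\sb}$ with an incomplete Eisenstein series): the meromorphic continuation is unaffected and a weight $\psi(a/c)$ is inserted into the inner sums.

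The analytic input is the behaviour of $E_{\sb}(z,s,\e)$ near $s=1$, uniformly for small real $\e$. At $\e=0$ this is the ordinary Eisenstein series, with a simple pole at $s=1$ of residue $1/\vol{\GmodH}$. Conjugating away the character turns the $\chi_\e$-twisted Laplacian into $\Delta+\e A_1+\e^{2}A_2$, where $A_1=2i\nabla_{\a^{\sharp}}$ is a \emph{first-order} operator because the harmonic one-form $\a$ is coclosed; deformation theory (as in the work of Phillips--Sarnak and of the authors) then gives: the smallest eigenvalue is $\lambda_0(\e)=C\e^{2}+O(\e^{4})$ with $C>0$, the pole persists as a simple pole at $s(\e)=1-C\e^{2}+O(\e^{4})$ with residual eigenfunction $1+O(\e^{2})$ --- no term linear in $\e$, precisely because $A_1(\mathrm{const})=0$ --- and away from $s(\e)$ the series stays holomorphic with polynomial growth in a fixed neighbourhood of $s=1$, uniformly in $\e$. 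Second-order perturbation theory expresses $C$ through $\norm{\a}^{2}$, and a Rankin--Selberg unfolding matches the resulting constant with the variance slope $C_f$ of \eqref{CF}, bringing in $L(\sym^{2}f,1)$ and the local factors at $p\mid q$ (together with $\vol{\GmodH}$ and a factor $\zeta(2)^{-1}$). The absence of an $\e$-linear term in the residue is exactly why the odd moments are negligible: the lowest odd power of $\e$ in $R_{\sb}(z,\e)/(s-s(\e))$ is $\e^{3}$, so the twist by $\langle\cdot\rangle^{2k+1}$ has a pole at $s=1$ of order at most $k$, against order $k+1$ for the twist by $\langle\cdot\rangle^{2k}$.

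It remains to combine the two expansions. From $E_{\sb}(z,s,\e)=R_{\sb}(z,\e)/(s-s(\e))+(\text{holomorphic near }s=1)$, with $R_{\sb}(z,0)=1/\vol{\GmodH}$, and $1/(s-s(\e))=\sum_{j\ge0}(-C\e^{2})^{j}(s-1)^{-j-1}+\cdots$, one reads off that the twist by $\langle\cdot\rangle^{2k}$ has a pole at $s=1$ of order exactly $k+1$ with leading coefficient a fixed multiple of $C^{k}\vol{\GmodH}^{-1}$. Feeding this into the scattering Dirichlet series and shifting the contour past $s=1$ (using the uniform holomorphy and growth above, and a standard zero-free region for the remaining continuous and residual spectrum) should give
\begin{equation*}
\sum_{\substack{c\le X,\ (c,q)=d\\ (a,c)=1,\ a/c\in I}}\langle a/c\rangle^{2k}\ \sim\ (2k-1)!!\,(C_f\log X)^{k}\,N(X),
\end{equation*}
the factor $(2k-1)!!=(2k)!/(2^{k}k!)$ emerging from the passage of $(s-1)^{-k-1}$ to $(\log X)^{k}/k!$ and the $c^{-2s}$ in the Dirichlet series, the $\psi$-weighting affecting only the residue (by a factor $\approx\int\psi=\abs{I}$, which cancels against $N(X)$) and not the pole location $s(\e)$; the odd moments contribute $O(X^{2}(\log X)^{k-1})$. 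Since $\log c=\log X+o(\log X)$ for all but a negligible proportion of the pairs counted by $N(X)$, dividing by $N(X)(C_f\log X)^{k}$ yields the desired moment asymptotics, and the method of moments concludes. I expect the main obstacle to be the uniform deformation theory --- that $E_{\sb}(z,s,\e)$ be jointly controlled in $(s,\e)$ near $(1,0)$, a single simple moving pole with polynomial growth elsewhere, with error terms strong enough to survive the Tauberian step --- together with the bookkeeping needed to run the argument simultaneously over all cusps of $\G_0(q)$ in a fixed gcd-class and to absorb the cusp-shift constants and the interval-localisation error into genuinely lower-order terms; the moment combinatorics itself is routine once the pole structure is in hand.
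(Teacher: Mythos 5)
Your strategy --- method of moments, with the moments generated by Eisenstein series twisted by the character attached to $\a$, the interval restriction handled by Fourier-expanding the indicator and showing the nonzero modes give lower-order poles, and the conclusion via Fr\'echet--Shohat --- is exactly the paper's, and your bookkeeping (pole of order $k+1$ for the $2k$-th moment with leading coefficient a multiple of $C^{k}$, odd moments of strictly lower order because the first variation of the bottom eigenfunction vanishes, the factor $(2k)!/(2^{k}k!)$, the cusp-shift constants from \eqref{three-more}) is all consistent with Theorems \ref{many-derivatives}, \ref{smooth-moments} and Corollaries \ref{unsmooth-moments}, \ref{anot-dist}. Where you genuinely diverge is in how the pole structure at $s=1$ is obtained. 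You propose the perturbative route: control $E_{\sb}(z,s,\e)$ jointly in $(s,\e)$ near $(1,0)$, track a single simple pole moving to $s(\e)=1-C\e^{2}+O(\e^{4})$ by Kato theory for the isolated bottom eigenvalue, and expand $1/(s-s(\e))$ in $\e$. This is essentially the mechanism of the authors' earlier work \cite{PetridisRisager:2004a}, and the obstacle you yourself flag --- uniform meromorphic continuation and growth of $E_{\sb}(z,s,\e)$ in a \emph{complex} neighbourhood of $\e=0$, so that Cauchy's formula can legitimately extract the $\e$-derivatives and the estimates survive the contour shift --- is precisely the technical burden that route carries. The paper deliberately avoids it (see the remark preceding Lemma \ref{recurrence}): it works directly with the derivatives $D^{(k)}_{\sa}(z,s)$, proves their continuation, $L^{2}$-bounds and pole orders by induction on $k$ through the resolvent identity \eqref{resolvent-relation}, with Proposition \ref{integral-of-Dk} playing the role of your ``no $\e$-linear term'' observation, and recovers the generating Dirichlet series as Fourier coefficients by pairing with Poincar\'e series (Lemma \ref{integral-rep-L-s-0-n-e}); no uniformity in $\e$ is ever needed and the leading singular coefficients come out explicitly. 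Three smaller points you should repair if you pursue your route: the character must be $\gamma\mapsto\exp\left(\e\modsym{\gamma}{\a}\right)$, since $\modsym{\gamma}{\a}$ is already purely imaginary and your extra $i$ destroys unitarity; for the interval localisation you need the singular coefficients of the $e(nr)$-twisted series to be bounded uniformly in $n$ before summing over the Fourier modes of $\psi$ (this is part \ref{claim3} of Theorem \ref{many-derivatives-additive-twists}, and it is not automatic); and the passage from the normalisation $\sqrt{C_f\log c}$ to the conjectured $\sqrt{C_f\log c+D_{f,d}}$, as well as from $c$ to $c(r)=c\sqrt{q/d}$, is the content of Remark \ref{extra-constant}, which your observation that $\log c=\log X+o(\log X)$ for almost all pairs also covers.
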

In this paper we  prove  average versions of Conjectures
\ref{partial-first-conjecture} and  
\ref{variance-slope-conjecture} when we average over $c$. We only work  with $q$ squarefree. 
The restriction to $q$ squarefree may not be necessary. On the other hand
 we can only work with averages over $c$ and not individual $c$. 

Moreover, we prove a \emph{refined} version of Conjecture
\ref{distribution-conjecture}. We can restrict the rational number $a/c$ to lie  in \emph{any} prescribed interval
and we can restrict to rational numbers $a/c$ with  $(c,q)$  a fixed number.  

To prove these results we specialize to $\Gamma_0(q)$ more general
results on modular symbols for cofinite Fuchsian groups with cusps.
Here is a statement of our results for $\Gamma_0(q)$, when $q$ is squarefree.
\begin{theorem}\label{partial-first-theorem} Conjecture \ref{partial-first-conjecture} holds on
  average. More precisely we have
\begin{equation*}
  \frac{1}{M}\sum_{1\leq c\leq M} G_c(x)\to g(x),\quad \textrm{ as }M\to\infty. 
\end{equation*}
\end{theorem}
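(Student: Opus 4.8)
The plan is to work with the generating Dirichlet series attached to the modular symbols
$\sum_{c\ge 1} c^{-s}\sum_{0\le a\le cx}\langle a/c\rangle$
and to extract its analytic behaviour from the spectral theory of Eisenstein series twisted by modular symbols. Recall that $\langle a/c\rangle = 2\pi i\int_{i\infty}^{a/c}\alpha$; summing over $a$ with $0\le a\le cx$ and then over $c$ naturally produces (a completion of) an Eisenstein-type series for $\Gamma_0(q)$ in which the cusp $a/c$ is weighted by the modular-symbol integral. Concretely, one recognizes $\sum_{c\le M}\sum_{0\le a\le cx}\langle a/c\rangle$ as a sum over $\Gamma_\infty\backslash\Gamma$ of $\langle\gamma(\infty)\rangle$ restricted to those $\gamma$ whose lower-left data $(a,c)$ satisfies $c\le M$ and $0\le a/c\le x$, and the Dirichlet series in $s$ is then the modular-symbol–twisted Eisenstein series $E^*(z,s)$ evaluated in the limit $z\to i\infty$ along the relevant cusp, with the extra restriction on the location of $a/c$ handled by a smooth partition of the horocycle (equivalently by inserting a test function in $x$ and using incomplete Eisenstein series).

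First I would set up the twisted Eisenstein series $E_\alpha(z,s)=\sum_{\gamma\in\Gamma_\infty\backslash\Gamma}\langle\gamma,\alpha\rangle\,\mathrm{Im}(\gamma z)^s$ (the first-order automorphic object whose meromorphic continuation and functional equation are the engine of the paper), and compute the contribution of a fixed interval $[0,x]$ of cusps to its constant term. Second, I would express $\frac{1}{M}\sum_{c\le M}G_c(x)$ via a Perron-type contour integral / Tauberian argument against this Dirichlet series: the average over $c\le M$ corresponds to integrating $c^{-s}$ against $\frac{1}{M}\sum_{c\le M}1$, which in the $s$-variable picks out the pole structure near $s=1$. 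The key analytic input is that $E_\alpha(z,s)$ is holomorphic at $s=1$ up to an explicitly computable principal part coming from the residue of the ordinary Eisenstein series; the residue at $s=1$ of the twisted series, after the $a$-summation over the arc $0\le a/c\le x$, should reproduce exactly the Fourier series $g(x)=\frac{1}{2\pi i}\sum_{n\ge1}\frac{\Re(a(n)(e(nx)-1))}{n^2}$, because unfolding the Eisenstein series against $\alpha=\Re(f(z)dz)$ and picking the $n$-th Fourier coefficient of $f$ against the horocycle integral over $[0,x]$ produces the $a(n)(e(nx)-1)/n^2$ shape. Third, the main term $g(x)$ emerges from the leading singularity, while the error terms (lower-order poles, and the contribution of the continuous and discrete spectrum away from $s=1$) are absorbed into the $o(1)$ as $M\to\infty$; crucially the $\frac1M$-averaging over $c$ is what lets us beat these secondary terms, which for individual $c$ would not decay — this is precisely why we only obtain the average form of Conjecture~\ref{partial-first-conjecture}.

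The main obstacle I expect is controlling the analytic continuation and growth of $E_\alpha(z,s)$ near and to the left of $s=1$ uniformly enough to run the Tauberian/Perron step: the twisted Eisenstein series is only a \emph{first-order} automorphic form, so its spectral expansion involves derivatives in the spectral parameter of ordinary Eisenstein series and double poles, and one must show that after the smooth cutoff in the $x$-variable (restricting the cusps to $[0,x]$) the resulting object still has enough decay in vertical strips to justify shifting contours. A secondary technical point is matching the constant term of the twisted Eisenstein series — which naturally produces an integral $\int_{[0,x]\times[\delta,\infty)}\alpha$ in the spirit of the Mazur–Rubin–Stein heuristic — with the closed-form Fourier series $g(x)$; this is the rigorous version of their $\delta\to0$ interchange, and it is legitimate here precisely because we have the meromorphic continuation in $s$ in hand rather than a bare limit. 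Once these two points are settled, assembling $\frac1M\sum_{c\le M}G_c(x)\to g(x)$ is a routine contour shift plus estimation.
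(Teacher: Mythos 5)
Your overall strategy---encode the modular symbols in a Dirichlet series, realize that series inside an Eisenstein series twisted by modular symbols, continue it past $\Re(s)=1$, and extract the main term by a Perron/contour-shift argument whose residue at $s=1$ produces $g(x)$---is exactly the engine of the paper (Sections \ref{gener}--\ref{distribution-results}), and your diagnosis of why only the averaged statement is accessible is correct. But there is a concrete gap in your very first reduction. You ``recognize'' $\sum_{c\le M}\sum_{0\le a\le cx}\langle a/c\rangle$ as a sum over $\Gamma_\infty\backslash\Gamma$ of $\langle\gamma(\infty)\rangle$. For $\Gamma=\Gamma_0(q)$ with $q>1$ this is false: the $\Gamma_0(q)$-orbit of $\infty$ consists only of the fractions $a/c$ with $q\mid c$, so a single twisted Eisenstein series attached to the cusp $\infty$ sees only the denominators divisible by $q$ and misses almost all terms of $G_c(x)$. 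The paper must instead, for each divisor $d\mid q$, parametrize the reduced fractions with $(c,q)=d$ by the double coset space $\Gamma_\infty\backslash\sigma_{\infty}^{-1}\Gamma\sigma_{1/d}/\Gamma_\infty$ (the sets $T_{\sa\sb}$ of \eqref{tab}), prove the two-cusp first-moment asymptotic of Theorem \ref{general-partial-first} for each pair $(\infty,1/d)$, and only then sum over $d\mid q$ (using $\sum_{d\mid q}(q/d)/\vol{\Gamma_0(q)\backslash\H}=3/\pi$), strip off the period $2\pi i\int_{1/d}^{i\infty}\alpha$ relating $\langle r\rangle_{\infty\frac1d}$ to $\langle r\rangle$, and finally remove the coprimality condition $(a,c)=1$ by a $\zeta(2)$ computation before the partial summation that produces the $\frac1M\sum_c\frac1c$ normalization. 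None of this bookkeeping is optional: it is where the constants match up, and the need to combine statistics from inequivalent cusps is precisely the new feature the paper emphasizes. Without it your main term cannot come out right for general squarefree $q$.

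Two smaller points. First, your proposed ``smooth partition of the horocycle / incomplete Eisenstein series'' for the restriction to $[0,x]$ is vaguer than what is actually needed: the paper Fourier-expands the cutoff and must control \emph{every} coefficient $L^{(1)}_{\sa\sb}(s,0,n)$ with residues and vertical-line bounds uniform in $n$ (Theorem \ref{first-derivatives}); this is achieved by representing these coefficients as inner products of the automorphic series $D^{(k)}_{\sa}(z,s)$ against Poincar\'e series, not by looking at a constant term. You would need to supply an equally quantitative substitute, since the $H^{1/2}$-summability of the error over $n$ is what lets one pass from smooth test functions to the indicator $1_{[0,x]}$. Second, your worry about double poles is misplaced at this stage: the first-order object is regular at $s=1$ (Proposition \ref{D1-regular}); the residue producing $a(n)/n^2$ comes from the simple pole of the \emph{untwisted} Eisenstein series hitting the twisted Poincar\'e series, and double poles only enter for the second and higher moments.
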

\begin{remark}
  In fact we can restrict the summation to $(c,q)=d$ for $d$ a fixed
  divisor of $q$. See Corollary \ref{first-arithmetic} and the
  discussion following it. 
\end{remark}
\begin{theorem} \label{theorem-variance}Conjecture \ref{variance-slope-conjecture} holds on
  average. More precisely let $d|q$, and let $C_f$ be given by \eqref{CF}.
Then  \begin{equation*}
 \frac{1}{\displaystyle\sum_{\stackrel{c\le M}{(c, q)=d}}\phi
   (c)}\sum_{\substack{c\leq M\\(c,q)=d}}
   \phi(c)(\Var{f}{c}-C_f\log c) \to D_{f,d}, \textrm{ as } M\to \infty,
\end{equation*} 
where 
\begin{equation*}
D_{f,d}=A_{d,q}L(\sym^2f, 1)+ B_{q}L'(\sym^2f, 1). 
\end{equation*}
Here $A_{d,q}$, $B_{q}$ are explicitly computable constants given by \eqref{variance-shift-coefficients}.
\end{theorem}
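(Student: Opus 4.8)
The plan is to exhibit the second moment $\sum_{(a,c)=1}\langle a/c\rangle^{2}$ as (essentially) the $c$-th Dirichlet coefficient of the second $\epsilon$-derivative at $\epsilon=0$ of an Eisenstein series twisted by the modular symbol, and then to convert analytic information about that series into the averaged asymptotics. First one opens up the variance:
\[
  \sum_{\substack{c\le M\\(c,q)=d}}\phi(c)\,\Var{f}{c}
  =\sum_{\substack{c\le M\\(c,q)=d}}\ \sum_{\substack{a\bmod c\\(a,c)=1}}\langle a/c\rangle^{2}
   -\sum_{\substack{c\le M\\(c,q)=d}}\frac{1}{\phi(c)}\Bigl(\sum_{\substack{a\bmod c\\(a,c)=1}}\langle a/c\rangle\Bigr)^{2}.
\]
The first-moment analysis underlying Theorem~\ref{partial-first-theorem} (which rests on a \emph{first}-order Eisenstein series, with milder pole structure at $s=1$) shows that $\Exp{f}{c}$ is small on average, so the last sum is $o(M^{2})$ and contributes nothing to $D_{f,d}$; everything reduces to the averaged second moment $\sum_{c\le M,\,(c,q)=d}\sum_{(a,c)=1}\langle a/c\rangle^{2}$.

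For $\epsilon\in\R$ put $\chi_{\epsilon}(\g)=\exp(i\epsilon\modsym{\g}{\a})$, a unitary character of $\G$: this is well defined because $\g\mapsto\modsym{\g}{\a}$ is a homomorphism and it vanishes on every parabolic subgroup. At each cusp $\sa$ of $\G=\G_0(q)$ form $E_{\sa}(z,s,\epsilon)=\sum_{\g\in\GinfG}\chi_{\epsilon}(\g)\Im(\g z)^{s}$, whose constant term at a cusp $\sb$ is $\delta_{\sa\sb}y^{s}+\varphi_{\sa\sb}(s,\epsilon)y^{1-s}$ with $\varphi_{\sa\sb}(s,\epsilon)$ a Dirichlet series whose $c$-th coefficient is $\sum_{a}\chi_{\epsilon}(\g_{c,a})$ times the usual $\Gamma$-factor; hence $\partial_{\epsilon}^{2}\varphi_{\sa\sb}(s,\epsilon)|_{\epsilon=0}=-(\text{$\Gamma$-factor})\sum_{c}c^{-2s}\sum_{a}\langle a/c\rangle^{2}$. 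Because $q$ is squarefree the cusps of $\G_0(q)$ are indexed by the divisors $d\mid q$ with explicit scaling matrices, and a short computation identifies the coefficient obtained from the appropriate cusp with $\sum_{(a,c)=1}\langle a/c\rangle^{2}$ summed over $c$ with $(c,q)=d$, up to an elementary Euler product over $p\mid q$; this is the only place squarefreeness of $q$ is used.

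The analytic core is the meromorphic continuation in $s$ of $E^{(2)}_{\sa}:=\partial_{\epsilon}^{2}E_{\sa}(z,s,\epsilon)|_{\epsilon=0}$, the second-order Eisenstein series, supplied by the general Fuchsian-group machinery: it satisfies $(\Delta+s(1-s))E^{(2)}_{\sa}=(\text{explicit combination of }E^{(1)}_{\sa},\ E^{(0)}_{\sa}\text{ and }\a)$, and since the projection of the right-hand side onto the constant eigenfunction does not vanish at $s=1$, iterating the resolvent shows that $E^{(2)}_{\sa}$ has a pole of order two there. The leading Laurent coefficient of $\varphi_{\sa\sb}''(s,0)$ at $s=1$ is proportional to $\langle\a,\a\rangle$ and a power of $\vol{\GmodH}^{-1}$, and by Rankin--Selberg unfolding $\langle\a,\a\rangle$ is a multiple of $\|f\|^{2}$, hence of $L(\sym^{2}f,1)$ once the local factors at $p\mid q$ and the $\zeta^{(q)}(2)$ in the denominator are inserted; this reproduces \eqref{CF}. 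The subleading coefficient is, after the same unfolding, a residue of a $\tfrac{d}{ds}$-differentiated Rankin--Selberg integral, and therefore produces $L'(\sym^{2}f,1)$ with a coefficient $B_{q}$ depending only on $q$, alongside further $L(\sym^{2}f,1)$-contributions whose coefficient $A_{d,q}$ depends on $d$ through the Euler product above. Pinning down this subleading coefficient precisely, and in particular the clean appearance of $L'(\sym^{2}f,1)$, is the step I expect to be the main obstacle.

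Finally one runs a Perron/Tauberian argument. From the order-two pole at $s=1$ of the Dirichlet series $\sum_{c}(\cdots)c^{-2s}$ (holomorphic elsewhere in $\Re s>1/2$), together with standard polynomial bounds on the continued Eisenstein series along vertical lines used to shift the contour past $\Re s=1$ and gain a power-saving error, one obtains
\[
  \sum_{\substack{c\le M\\(c,q)=d}}\ \sum_{\substack{a\bmod c\\(a,c)=1}}\langle a/c\rangle^{2}
  = C_{f}\!\!\sum_{\substack{c\le M\\(c,q)=d}}\!\!\phi(c)\log c
   \;+\;D_{f,d}\!\!\sum_{\substack{c\le M\\(c,q)=d}}\!\!\phi(c)\;+\;o(M^{2}),
\]
using the known asymptotics of $\sum\phi(c)$ and $\sum\phi(c)\log c$ over the progression $(c,q)=d$ (both read off from $\zeta^{(q)}(w-1)/\zeta^{(q)}(w)$ near $w=2$). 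Dividing by $\sum_{c\le M,(c,q)=d}\phi(c)$ and recombining with the $C_{f}\log c$ term yields the asserted limit $D_{f,d}$. Besides the subleading-coefficient computation, the remaining work is the arithmetic bookkeeping of the second paragraph and the verification that the first-moment cross term is genuinely negligible.
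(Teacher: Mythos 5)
Your overall strategy coincides with the paper's: realize $\sum_{(a,c)=1}\langle a/c\rangle^{2}c^{-2s}$ (up to a $\Gamma$-factor) as the second $\e$-derivative at $\e=0$ of the constant term of a character-twisted Eisenstein series, produce a double pole at $s=1$ by iterating the resolvent against the recursion $(\Delta+s(1-s))E^{(2)}=\cdots$, identify the Laurent coefficients with Rankin--Selberg data to extract $L(\sym^2 f,1)$ and $L'(\sym^2 f,1)$, and finish with a Perron argument. But one step fails as written. To discard the cross term you need $\sum_{c\le M,\,(c,q)=d}\phi(c)\Exp{f}{c}^{2}=o(M^{2})$, and you propose to get this from the fact that the first moment is small \emph{on average over $c$}. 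The averaged statement $\sum_{c\le M}\sum_{a}\langle a/c\rangle=O(M^{2-\delta})$, which is all the first-order Eisenstein series and contour shift give you, controls $\sum_{c}\phi(c)\Exp{f}{c}$ with possible sign cancellation across different $c$; it says nothing about the sum of \emph{squares} $\sum_{c}\phi(c)\Exp{f}{c}^{2}$, and Cauchy--Schwarz runs in the wrong direction. The trivial pointwise bound $\langle a/c\rangle\ll c^{\e}$ of Proposition \ref{small-symbols} only yields $\sum_{c}\phi(c)\Exp{f}{c}^{2}\ll M^{2+\e}$, which just misses. The paper closes this by a mechanism absent from your sketch: the pointwise bound $\Exp{f}{c}\ll c^{-1/2+\e}$ of Lemma \ref{exp-decay}, proved from the Hecke relation $a(n)\int_{i\infty}^{r}f(z)\,dz=\sum_{ad=n}\sum_{0\le b<d}\int_{i\infty}^{(ar+b)/d}f(z)\,dz$ at an integer $r$, M\"obius inversion, and Eichler's bound $a(n)\ll n^{1/2+\e}$. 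This uses that $f$ is a Hecke eigenform and is genuinely arithmetic input, not a by-product of the spectral first-moment analysis.

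Beyond that, the step you defer as ``the main obstacle'' is where most of the theorem lives, and a few pieces of bookkeeping you omit actually change the constant. The subleading Laurent coefficient (Theorem \ref{two-derivatives}) requires showing that the $\Psi^{(1)}$-term and the $L^{(1)}D^{(1)}$-contribution to $\Psi^{(2)}$ are regular at $s=1$ (via $\inprod{1}{L^{(1)}D^{(k)}}=0$), leaving only $\int_{\GmodH}E_{\sb}(z,s)L^{(2)}E_{\sa}(z,s)\,d\mu$, whose constant Laurent coefficient brings in the Kronecker-limit functions $B_{\sa},B_{\sb}$ of \eqref{Kronecker-limit}; the identity $\int B_{1/d}\,y^{2}|f|^{2}d\mu=\int B_{\infty}\,y^{2}|f|^{2}d\mu$, proved with Atkin--Lehner involutions (Lemma \ref{permuting-Eisenstein}), is what makes $B_{q}$ in \eqref{variance-shift-coefficients} independent of $d$. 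Finally, the general theorem is stated for $\langle r\rangle_{\infty\frac{1}{d}}$, which differs from $\langle a/c\rangle$ by the period $2\pi i\int_{1/d}^{i\infty}\alpha$, and the ordering parameter is $c(r)=c\sqrt{q/d}$ rather than $c$; both corrections (a first-moment cross term and a shift by $\tfrac12 C_{f}\log(q/d)$) feed into $A_{d,q}$ and must be tracked to land on the stated $D_{f,d}$.
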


\begin{theorem}\label{distribution-theorem-special}Let $I\subseteq \R\slash \Z$ be an interval of positive length, and consider for $d\vert
q$ the set $Q_d=\{{a}/{c}\in\Q, (a,c)=1, (c,q)=d\}$. 
Then the values of the map 
\begin{equation*}
\begin{array}{ccc}
  Q_d\cap I&\to&\displaystyle \R \\
\displaystyle\frac{a}{c} & \mapsto & \frac{\displaystyle\langle{a}/{c}\rangle}{\displaystyle(C_f\log c)^{1/2}},
\end{array}
\end{equation*}
ordered according to $c$, have asymptotically  a standard normal
distribution.

\end{theorem}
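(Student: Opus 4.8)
The plan is to combine the method of moments with the analytic theory of Eisenstein series twisted by modular symbols, exploiting that $Q_d$ is a single $\G_0(q)$-orbit of cusps. First I would fix a cusp $\sb=\sa_d$ of $\G_0(q)$ and a scaling matrix $\sigma_\sb$ so that the rationals $\G_0(q)$-equivalent to $\sb$ are exactly the elements of $Q_d$; then every $a/c\in Q_d$ is uniquely $\g\sb$ for $\g\in\G_0(q)/\G_\sb$. Since $\a$ is cuspidal the $1$-form $f(z)\,dz$ has vanishing residue at every cusp, so the period of $\a$ around any cusp vanishes; combined with the simple connectivity of $\H$ this gives
\[
\langle a/c\rangle=\langle\sb\rangle+\modsym{\g}{\a},\qquad \modsym{\g}{\a}=2\pi i\int_{z_0}^{\g z_0}\a,
\]
where $\langle\sb\rangle=2\pi i\int_{i\infty}^{\sb}\a$ is a bounded constant, negligible after division by $(C_f\log c)^{1/2}$. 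Moreover $c^{-2}$ is comparable to $\operatorname{Im}(\sigma_\sb^{-1}\g\sigma_\infty(iY))$ as $Y\to\infty$, so ordering the data by the denominator $c$ is exactly the ordering built into the Eisenstein series at $\sb$ developed at $\infty$. As the standard normal law is determined by its moments (Carleman's criterion holds for $(2m-1)!!$), it suffices to prove, for every $k\ge 0$, that $N(M)^{-1}\sum_{a/c\in Q_d\cap I,\ c\le M}\bigl(\langle a/c\rangle/(C_f\log c)^{1/2}\bigr)^{k}$ tends to $(k-1)!!$ for $k$ even and to $0$ for $k$ odd, where $N(M)=\#\{a/c\in Q_d\cap I:c\le M\}\sim\kappa_{d,q}\abs I\,M^2$ by an elementary count. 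Using the a priori bound $\langle a/c\rangle\ll_q\log c$ and cutting at $c=M^{1-\delta}$ one may replace $\log c$ by $\log M$ with an error vanishing as $M\to\infty$ then $\delta\to0$, reducing the theorem to the asymptotics
\[
P_k(M):=\sum_{\substack{a/c\in Q_d\cap I\\ c\le M}}\langle a/c\rangle^k=
\begin{cases}(2m-1)!!\,C_f^{\,m}\,\kappa_{d,q}\abs I\,M^2(\log M)^{m}+O\bigl(M^2(\log M)^{m-1}\bigr),& k=2m,\\[1mm] O\bigl(M^2(\log M)^{m}\bigr),& k=2m+1.\end{cases}
\]

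Since $\a$ is cuspidal, $\g\mapsto\chi_\e(\g):=\exp(\e\modsym{\g}{\a})$ is a character of $\G_0(q)$ trivial on every parabolic subgroup, so
\[
E_\sb(z,s,\e):=\sum_{\g\in\G_\sb\backslash\G_0(q)}\chi_\e(\g)\operatorname{Im}(\sigma_\sb^{-1}\g z)^s=\sum_{k\ge0}\frac{\e^k}{k!}E_\sb^{(k)}(z,s)
\]
is well defined for $\operatorname{Re}(s)>1$ and $\e$ in a neighbourhood of $0$, with $E_\sb^{(k)}(z,s)=\sum_\g\modsym{\g}{\a}^k\operatorname{Im}(\sigma_\sb^{-1}\g z)^s$ the $k$-th modular-symbol–twisted Eisenstein series. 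Standard perturbation theory for the hyperbolic Laplacian applied to the family $\chi_\e$ yields: $E_\sb(z,s,\e)$ continues meromorphically to $\operatorname{Re}(s)>1/2$ (the squarefreeness of $q$ entering through the spectral gap and the explicit cusp/scattering data), its only singularity there is a simple pole at $s=s_\sb(\e)$ with $s_\sb(0)=1$ of residue $r_\sb(z,\e)$ analytic in $\e$ with $r_\sb(z,0)=1/\vol{\G_0(q)\backslash\H}$, and — the decisive points — $s_\sb'(0)=0$ while $s_\sb''(0)=\tfrac12 C_f$. The vanishing of $s_\sb'(0)$ is first-order perturbation theory: the first variation of the bottom eigenvalue pairs the perturbation with the constant function and vanishes because $\a$ is cuspidal. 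The identification of $s_\sb''(0)$ is a second-order perturbation computation which, by a Rankin–Selberg unfolding, reduces to an elementary multiple of $L(\sym^2 f,1)$; this is precisely the computation underlying Theorem~\ref{theorem-variance} (the case $k=2$) and produces the constant $C_f$ of \eqref{CF}.

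Expanding the pole $E_\sb(z,s,\e)\sim r_\sb(z,\e)\bigl((s-1)-(\tfrac12 s_\sb''(0)\e^2+O(\e^4))\bigr)^{-1}$ in a geometric series in $\e$, the coefficient of $\e^{2m}$ shows that $E_\sb^{(2m)}(z,s)$ has at $s=1$ a pole of order $m+1$ with leading Laurent coefficient $(2m)!\,r_\sb(z,0)\,(C_f/4)^m$, while $E_\sb^{(2m+1)}(z,s)$ has a pole of order $\le m+1$. I would then identify the combinatorial Dirichlet series $\mathcal D_k(s):=\sum_{a/c\in Q_d\cap I}\langle a/c\rangle^k c^{-2s}$ with the part coming from $x\in I$ of the zeroth Fourier coefficient of $E_\sb^{(k)}(\sigma_\infty(x+iY),s)$: integrating over $x\in I$ in place of a full period replaces the Kloosterman–Selberg zeta of the relevant scattering entry by the sum over $a/c\in Q_d\cap I$. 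Because the residue of $E_\sb^{(k)}$ at $s=1$ is a polynomial in the $\e$-derivatives of $r_\sb$, hence a \emph{constant} function, only the constant Fourier mode is singular there, and $\int_I$ of it contributes $\abs I$ times that mode, the nonconstant modes giving a correction holomorphic at $s=1$; thus $\mathcal D_k(s)$ inherits exactly the $s=1$ pole structure of $E_\sb^{(k)}$, scaled by $\abs I$ and by the same density constant $\kappa_{d,q}$ that governs $\mathcal D_0$, independently of the location of $I$ — this is the refined statement. Standard polynomial bounds for $E_\sb^{(k)}$ on vertical lines now feed a Perron/Tauberian argument: for $k=2m$ the residue at $s=1$ of $\mathcal D_k(s)M^{2s}/s$ gives $P_{2m}(M)\sim\tfrac{(2m)!}{m!}(C_f/2)^m\kappa_{d,q}\abs I\,M^2(\log M)^m=(2m-1)!!\,C_f^m\,\kappa_{d,q}\abs I\,M^2(\log M)^m$ since $(2m)!/m!=(2m-1)!!\,2^m$, and for $k=2m+1$ one gets $O(M^2(\log M)^m)$; dividing by $N(M)(C_f\log M)^{k/2}$ produces $(2m-1)!!$ in the even case and $O((\log M)^{-1/2})\to0$ in the odd case, as required.

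The crux is the analytic input of the second step: the meromorphic continuation of $E_\sb(z,s,\e)$ past $\operatorname{Re}(s)=1$ with the precise simple-pole structure, the perturbation facts $s_\sb'(0)=0$ and $s_\sb''(0)=\tfrac12 C_f$, and enough uniform growth control to run the Tauberian step. A close second is the bookkeeping of the third step — relating the honest sum over $a/c\in Q_d\cap I$ ordered by $c$ to the Eisenstein series developed at a cusp, tracking the $O(1)$ shift $\langle\sb\rangle$ and the comparison between "denominator $\le M$" and "imaginary part above a threshold", and showing cleanly that the interval $I$ enters the main term only through the factor $\abs I$. Much of this is provided by the general theory for cofinite Fuchsian groups developed in the body of the paper and is here specialized to $\G_0(q)$ with $q$ squarefree.
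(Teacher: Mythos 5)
Your proposal is correct in its overall architecture and that architecture coincides with the paper's: reduce to the method of moments (Fr\'echet--Shohat), encode the $k$th moments restricted to $I$ in the Dirichlet series $\sum \langle a/c\rangle^k e(ma/c)c^{-2s}$, show that only the $m=0$ series carries the top-order pole at $s=1$ (whence the factor $\abs{I}$), and finish by Perron with polynomial bounds on vertical lines; your constants $(C_f/4)^m(2m)!$ and $(2m-1)!!$ all check out against Theorems \ref{order-of-poles} and \ref{many-derivatives}. Where you genuinely diverge is in the derivation of the pole structure of $E^{(k)}_\sb(z,s)$ at $s=1$: you obtain it from Kato-type perturbation theory of the bottom eigenvalue, i.e.\ the moving pole $s(\e)$ of $E_\sb(z,s,\e)$ with $s'(0)=0$ and $s''(0)=\tfrac12 C_f$, expanded as a geometric series in $\e$. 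That is the route of the authors' earlier work \cite{PetridisRisager:2004a}, which the present paper deliberately avoids (see the remark before Lemma \ref{recurrence}); instead the paper runs an induction on the resolvent identity of Corollary \ref{fundamental-recurrence}, using $\inprod{1}{L^{(1)}D^{(k)}_\sa}=0$ (Proposition \ref{integral-of-Dk}) to kill the top singularity at each odd step. Your route is shorter to state and makes the appearance of $C_f$ conceptually transparent, but it requires importing the analytic perturbation machinery (holomorphy of the family, persistence of the small eigenvalue, nonvanishing of the residue); the paper's recursion buys explicit, cusp-by-cusp control of all singular Laurent coefficients (e.g.\ the full singular part in Theorem \ref{two-derivatives}, needed elsewhere for the variance shift) and uniform $L^2$-bounds feeding the vertical-line estimates.

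Two repairable inaccuracies. First, your claim that the nonconstant Fourier modes of $E^{(k)}_\sb(\sigma_\sa z,s)$ are \emph{holomorphic} at $s=1$ is false: only the \emph{leading} Laurent coefficient is constant in $z$, the subleading ones involve the Kronecker-limit functions $B_\sa(z)$, so the $m\neq 0$ modes do have poles, merely of order strictly less than $[k/2]+1$ (Theorem \ref{many-derivatives-additive-twists}); this weaker statement is what you actually need and it still yields the $O(M^2\log^{[(k-1)/2]}M)$ error. Second, summing over the Fourier modes of $1_I$ requires uniformity in $m$ of both the singular coefficients and the vertical-line bounds (the paper's Theorem \ref{many-derivatives-additive-twists}\ref{claim2}--\ref{claim3}), together with a smooth majorant/minorant approximation of $1_I$, since $\widehat{1_I}(m)\asymp 1/m$ makes the relevant $H^{A}$-norms of $1_I$ itself divergent; you gesture at ``standard polynomial bounds'' but this is where the real work sits. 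Minor points: the squarefreeness of $q$ enters only through the explicit cusp parametrization \eqref{one-more}--\eqref{three-more}, not through the spectral gap, and the perturbed pole location does not depend on the cusp $\sb$.
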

\begin{remark}
  Putting $I=\R\slash \Z$ in Theorem
  \ref{distribution-theorem-special} we prove Conjecture
  \ref{distribution-conjecture}. The difference in normalization,
  i.e. the appearance of $D_{f,d}$ in the denominator, is irrelevant
  as explained in Remark \ref{extra-constant}.
  
\end{remark}

Theorems \ref{partial-first-theorem}, \ref{theorem-variance}, and \ref{distribution-theorem-special} will follow rather easily from the following general theorems
for finite covolume Fuchsian groups $\Gamma$  with cusps. Let $\sa$, $\sb$
be cusps of $\Gamma$,  not necessarily distinct. Let $\alpha=\Re
(f(z)dz)$, where $f\in S_2(\Gamma)$ is a holomorphic cusp form of
weight $2$. We do not assume that $f$ has real coefficients. Fix
scaling matrices $\sigma_{\sa}$ and $\sigma_\sb$  for the two cusps.  Define 
\begin{equation}\label{tab}
T_{\sa\sb}=
\left\{r=\frac{a}{c} \bmod 1, \begin{pmatrix}a&b\\c&d\end{pmatrix}
\in \Gamma_\infty\backslash \sigma_{\sa}^{-1}\Gamma
\sigma_\sb\slash\Gamma_\infty \textrm{ and } c>0\right\}\subseteq \R/\Z,
\end{equation}
see Proposition \ref{trivial}.
Note that
\begin{equation*}r=\begin{pmatrix}a&b\\c&d\end{pmatrix}\infty\bmod 1.\end{equation*}
 We denote the denominator of $r$ by  $c(r)$. 
We order the elements of $T_{\sa\sb}$ according to the size of $c(r)$ and define
\begin{equation*}
T_{\sa\sb}(M)=\{r\in T_{\sa\sb}, c(r)\le M\}.
\end{equation*}
We define  general 
 modular symbols as
  \begin{equation*}
  \langle r\rangle_{\sa\sb}=2\pi i \int_{\sb}^{\sigma_{\sa}r}\alpha.
  \end{equation*}
  \begin{theorem}\label{general-partial-first}
  Let $x\in [0,1]$. Let $a_{\sa}(n)$ be the Fourier coefficients  of $f$ at the cusp $\sa$. There exists a $\delta>0$ such that, as $M\to\infty$, 
  \begin{align*}
\sum_{r\in T_{\sa\sb}(M)}\langle r\rangle_{\sa\sb} 1_{[0,x]}(r)   =&\left(2\pi i \int_{\sb}^{\sa}\a\cdot x+\frac{1}{2\pi i} \sum_{n=1}^\infty\frac{\Re
  \left(a_{\sa}(n)(e(nx)-1)\right)}{n^2}\right)\frac{M^2}{\pi\vol{\GmodH}}\\
&
+O_f(M^{2-\delta}).
\end{align*}
\end{theorem}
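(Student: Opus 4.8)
The plan is to realize the left‑hand side as the partial sum, ordered by $c(r)$, of the Dirichlet coefficients of a Dirichlet series $D_x(s)$ which, after a Fourier decomposition in the variable $r$, is assembled from the Fourier coefficients at $\sb$ of the Eisenstein series at $\sa$ twisted by modular symbols, and then to read off the asymptotics from the meromorphic continuation and polynomial growth of that twisted series. First set up the group dictionary: for $r=a/c\in T_{\sa\sb}$ choose a representative $\sigma_\sa^{-1}\gamma\sigma_\sb$, $\gamma\in\Gamma$, with lower‑left entry $c>0$ (Proposition~\ref{trivial}); then $\sigma_\sa r=\gamma\sb$, so $\langle r\rangle_{\sa\sb}=2\pi i\int_{\sb}^{\gamma\sb}\alpha=2\pi i\int_{z_0}^{\gamma z_0}\alpha$ for any $z_0$. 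Since $\alpha$ is cuspidal and $\Gamma$‑invariant this is independent of $z_0$ and of the representative, the map $\gamma\mapsto\langle\gamma\rangle:=2\pi i\int_{z_0}^{\gamma z_0}\alpha$ is a homomorphism $\Gamma\to\C$ vanishing on parabolic elements, and $|\langle\gamma\rangle|$ grows at most logarithmically in the size of $\gamma$. One then forms the twisted Eisenstein series $E_\sa(z,s;\varepsilon)=\sum_{\gamma\in\Gamma_\sa\backslash\Gamma}\exp(\varepsilon\langle\gamma\rangle)\,\Im(\sigma_\sa^{-1}\gamma z)^s$ and differentiates at $\varepsilon=0$ to get $E_\sa^{(1)}(z,s)=\sum_{\gamma}\langle\gamma\rangle\,\Im(\sigma_\sa^{-1}\gamma z)^s$, which converges absolutely for $\Re s>1$ by the logarithmic bound.

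Unfolding $E_\sa^{(1)}(\sigma_\sb z,s)$ over the double cosets of \eqref{tab}, using that $\langle\gamma\rangle$ is constant along the relevant parabolic orbits (so it pulls out of the horocycle average) and performing Poisson summation in the coset variable, identifies the $m$‑th Fourier coefficient of $E_\sa^{(1)}(\sigma_\sb z,s)$ at $\sb$ with an explicit elementary factor (a ratio of $\Gamma$‑functions times a power of $|m|$) times
\begin{equation*}
\Phi_m(s)=\sum_{r\in T_{\sa\sb}}\frac{\langle r\rangle_{\sa\sb}\,e(mr)}{c(r)^{2s}},
\end{equation*}
after the routine bookkeeping $\sigma_\sa^{-1}\gamma\sigma_\sb\mapsto(\sigma_\sa^{-1}\gamma\sigma_\sb)^{-1}$ that interchanges the diagonal entries and the roles of the two cusps. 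For $m=0$ this is the twisted analogue of the Dirichlet series producing the scattering coefficient $\varphi_{\sa\sb}(s)$.

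The analytic input is that $E_\sa^{(1)}$ is a second‑order automorphic form satisfying an inhomogeneous equation of the shape $(\Delta+s(1-s))E_\sa^{(1)}=(\text{a combination of }E_\sa\text{ and a Poincar\'e-type series built from }\alpha)$; this yields the meromorphic continuation of $E_\sa^{(1)}$, hence of every $\Phi_m(s)$, to a half‑plane $\Re s\ge 1-\delta$ for some $\delta>0$, with polynomial growth on vertical lines and with the only pole at $s=1$. The decisive use of the cuspidality of $\alpha$ is that this pole is \emph{simple} — not double, as it would be for a non‑cuspidal closed one‑form — and its residue is $(\vol{\GmodH})^{-1}$ times an explicit real‑analytic function of $z$ whose Fourier expansion at $\sb$ has zeroth coefficient a multiple of the period $\int_{\sb}^{\sa}\alpha$ and $n$‑th coefficient a multiple of $a_\sa(n)/n$; this residual function is precisely the partial primitive of $\alpha$ underlying the Riemann‑sum heuristic of the Introduction. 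Concretely, one obtains that each $\Phi_m(s)$ has a simple pole at $s=1$ with
\begin{equation*}
\operatorname{Res}_{s=1}\Phi_m(s)=\frac{1}{\pi\vol{\GmodH}}\cdot\begin{cases}\,2\pi i\int_{\sb}^{\sa}\alpha,&m=0,\\[1mm]\,a_\sa(|m|)/(2|m|),&m<0,\\[1mm]\,-\overline{a_\sa(|m|)}/(2|m|),&m>0.\end{cases}
\end{equation*}
In particular, in contrast with the untwisted case, the non‑constant Fourier coefficients of $E_\sa^{(1)}$ do carry a pole at $s=1$, and these produce the part of the main term that is non‑linear in $x$.

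Finally, ordering $T_{\sa\sb}(M)$ by $c(r)$ exhibits the left‑hand side of the theorem as the sum over $c(r)\le M$ of the Dirichlet coefficients of $D_x(s):=\sum_{r\in T_{\sa\sb}}\langle r\rangle_{\sa\sb}\,1_{[0,x]}(r)\,c(r)^{-2s}=\sum_{m}\widehat{1_{[0,x]}}(m)\,\Phi_m(s)$, where $\widehat{1_{[0,x]}}(m)=\int_0^x e(-mt)\,dt$. Since $1_{[0,x]}$ is not smooth one first sandwiches it between trigonometric polynomials of degree $N$ (Beurling--Selberg), reducing to the finitely many $\Phi_m$ with $|m|\le N$; a Perron‑type contour shift slightly past $\Re s=1$, using the continuation and bounds above uniformly in $m$, gives for each such term its residue at $s=1$ times $M^2$ plus $O_N(M^{2-\delta})$, while the smoothing loss is $\ll N^{-1}M^{2+\varepsilon}$, and optimizing $N$ leaves a power‑saving error $O_f(M^{2-\delta'})$. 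The main term is $M^2\sum_m\widehat{1_{[0,x]}}(m)\operatorname{Res}_{s=1}\Phi_m(s)$, and inserting the values above and summing yields exactly
\begin{equation*}
\frac{M^2}{\pi\vol{\GmodH}}\left(2\pi i\int_{\sb}^{\sa}\alpha\cdot x+\frac{1}{2\pi i}\sum_{n=1}^{\infty}\frac{\Re\!\left(a_\sa(n)(e(nx)-1)\right)}{n^2}\right),
\end{equation*}
as claimed. The main obstacle is twofold: conceptually, establishing the simple pole of the modular‑symbol‑twisted Eisenstein series at $s=1$ together with the precise identification of its residue — this is where the cuspidality of $\alpha$ enters and is exactly the analytic input advertised in the abstract; and technically, obtaining the continuation and polynomial growth of $\Phi_m(s)$ near $\Re s=1$ with enough uniformity in $m$ that it can be married to the Beurling--Selberg smoothing to yield a power‑saving error term.
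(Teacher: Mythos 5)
Your proposal is correct and follows essentially the same route as the paper: the series $\Phi_m(s)$ are the paper's $L^{(1)}_{\sa\sb}(s,m,0)$, realized (after the inversion $\gamma\mapsto\gamma^{-1}$ of Proposition \ref{myfingerhurts}) as Fourier coefficients of a modular-symbol-twisted Eisenstein series, whose simple pole at $s=1$ and residues — which you compute correctly, matching Theorem \ref{first-derivatives} — together with polynomial growth on vertical lines feed a contour-integration argument, followed by smoothing of $1_{[0,x]}$. The only differences are implementational (the paper reaches the residues via the automorphized series $D^{(1)}_\sa$ and an inner product with the Poincar\'e series $E_{\sb,n}$, and approximates $1_{[0,x]}$ by smooth functions of finite $H^{1/2}$-norm rather than by Beurling--Selberg polynomials), not conceptual.
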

\begin{theorem}\label{general-variance} Let $\norm{f}$ be the Petersson norm of $f$.
 There exists an explicit constant $D_{f,\sa \sb}$, depending on $\G$, $f$, $\sa, \sb$, which we call the \emph{variance shift}, such that
\begin{equation*}\dfrac{\displaystyle  \sum_{r\in T_{\sa\sb}(M)}\langle r\rangle_{\sa\sb}^2}{\displaystyle  \sum_{r\in T_{\sa\sb}(M)} 1}=C_f\log M+D_{f, \sa\sb}+o_f(1),\quad \hbox{as} \quad M\to\infty,
\end{equation*}
where
\begin{equation*}C_f=\frac{-16\pi^2\norm{f}^2}{\vol{\GmodH}}.\end{equation*}
The constant $C_f$ is  the  \emph{variance slope}.
\end{theorem}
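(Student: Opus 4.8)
The plan is to realise the two sums $\sum_{r\in T_{\sa\sb}(M)}\langle r\rangle_{\sa\sb}^{2}$ and $\sum_{r\in T_{\sa\sb}(M)}1$ as truncated Dirichlet series attached to an Eisenstein series twisted by modular symbols, and then to extract the asymptotics by a Tauberian argument --- the second-moment analogue of what underlies Theorem~\ref{general-partial-first}. First I would work with
\[
E_{\sa}(z,s,\e)=\sum_{\gamma\in\Gamma_{\sa}\backslash\Gamma}\exp(\e\langle\gamma\rangle_{\sa})\,\Im(\sigma_{\sa}^{-1}\gamma z)^{s},
\]
which is well defined on $\Gamma_{\sa}\backslash\Gamma$ because parabolic elements have vanishing modular symbol ($f$ being a cusp form). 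Unfolding (cf.\ Proposition~\ref{trivial}), the zeroth Fourier coefficient of $E_{\sa}(z,s,\e)$ at the cusp $\sb$ equals $\delta_{\sa\sb}y^{s}+\varphi_{\sa\sb}(s,\e)y^{1-s}$ with
\[
\varphi_{\sa\sb}(s,\e)=\sqrt{\pi}\,\frac{\Gamma(s-1/2)}{\Gamma(s)}\sum_{r\in T_{\sa\sb}}\frac{\exp(\e\langle r\rangle_{\sa\sb})}{c(r)^{2s}},
\]
so that, Taylor expanding in $\e$ at $\e=0$,
\[
L_{\sa\sb}(s):=\sum_{r\in T_{\sa\sb}}\frac{\langle r\rangle_{\sa\sb}^{2}}{c(r)^{2s}}=\frac{1}{\sqrt{\pi}}\,\frac{\Gamma(s)}{\Gamma(s-1/2)}\,\partial_{\e}^{2}\varphi_{\sa\sb}(s,\e)|_{\e=0},
\]
while $\varphi_{\sa\sb}(s,0)$ governs $\sum_{r}c(r)^{-2s}$, with $\res_{s=1}\varphi_{\sa\sb}(s,0)=1/\vol{\GmodH}$.

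The heart of the argument is the meromorphic continuation of $\partial_{\e}^{2}\varphi_{\sa\sb}(s,\e)|_{\e=0}$ and the precise nature of its pole at $s=1$. Conjugating $E_{\sa}(z,s,\e)$ by the multiplier $\exp(2\pi i\e\beta(z))$, with $d\beta=\alpha$, turns the character twist into a perturbed automorphic Laplacian $\Delta_{\e}=\Delta-\e L_{1}+\e^{2}L_{2}$, where $L_{1}g=-4\pi i\langle\nabla\beta,\nabla g\rangle$ is first order and --- crucially --- $L_{2}$ is the \emph{multiplication} operator by $4\pi^{2}\abs{\nabla\beta}^{2}=4\pi^{2}y^{2}\abs{f}^{2}$. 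Using the analytic properties of Eisenstein series twisted by modular symbols to get joint meromorphy in $(s,\e)$, and differentiating $(\Delta_{\e}-s(1-s))\widehat E_{\sa}(z,s,\e)=0$ in $\e$ at $0$, the Taylor coefficients $\widehat E_{\sa}^{(1)},\widehat E_{\sa}^{(2)}$ satisfy
\[
(\Delta-s(1-s))\widehat E_{\sa}^{(1)}=L_{1}E_{\sa}(z,s),\qquad(\Delta-s(1-s))\widehat E_{\sa}^{(2)}=L_{1}\widehat E_{\sa}^{(1)}-L_{2}E_{\sa}(z,s).
\]
I would invert these with the resolvent $R(s)=(\Delta-s(1-s))^{-1}$, whose only singularity near $s=1$ is a simple pole with residue the orthogonal projection $P_{0}$ onto the constants. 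Since $L_{1}$ annihilates constants and $\res_{s=1}E_{\sa}(z,s)=1/\vol{\GmodH}$ is constant, $\widehat E_{\sa}^{(1)}$ has at worst a simple pole at $s=1$ with \emph{constant} residue, so $L_{1}\widehat E_{\sa}^{(1)}$ is regular there; on the other hand $L_{2}E_{\sa}(z,s)=4\pi^{2}y^{2}\abs{f}^{2}E_{\sa}(z,s)$ has a simple pole at $s=1$ with residue $4\pi^{2}y^{2}\abs{f}^{2}/\vol{\GmodH}$. Hence $\widehat E_{\sa}^{(2)}$ acquires a genuine \emph{double} pole at $s=1$, whose $(s-1)^{-2}$-coefficient is $P_{0}$ applied to $(-1)$ times that residue, namely the (nonzero) constant
\[
-\frac{1}{\vol{\GmodH}}\int_{\GmodH}\frac{4\pi^{2}y^{2}\abs{f}^{2}}{\vol{\GmodH}}\,d\mu=-\frac{4\pi^{2}\norm{f}^{2}}{\vol{\GmodH}^{2}};
\]
the Petersson norm enters exactly here, via $\int_{\GmodH}\abs{\nabla\beta}^{2}\,d\mu=\int_{\GmodH}\abs{f}^{2}\,dx\,dy=\norm{f}^{2}$. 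Passing to the $\sb$-constant term and using $\Gamma(1)/\Gamma(1/2)=\pi^{-1/2}$ gives that $L_{\sa\sb}(s)$ has a double pole at $s=1$ with $(s-1)^{-2}$-coefficient equal to $-8\pi\norm{f}^{2}/\vol{\GmodH}^{2}$. I would also record that for $\Re(s)>1/2$ the only other singularities of $L_{\sa\sb}$ are at the finitely many residual/exceptional spectral points in $(1/2,1)$, and that $L_{\sa\sb}$ has polynomial growth on vertical lines --- both standard for Eisenstein series.

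Finally I would feed this into a Tauberian argument. Setting $w=2s$, the numerator series has a double pole at $w=2$ and the denominator series a simple pole there with residue $2/(\pi\vol{\GmodH})$; Perron's formula followed by a contour shift to $\Re(w)=2-\delta$ (with $\delta>0$ small enough to clear the residual points) then yields
\[
\sum_{r\in T_{\sa\sb}(M)}\langle r\rangle_{\sa\sb}^{2}=-\frac{16\pi\norm{f}^{2}}{\vol{\GmodH}^{2}}\,M^{2}\log M+\kappa M^{2}+O_{f}(M^{2-\delta}),\qquad\sum_{r\in T_{\sa\sb}(M)}1=\frac{M^{2}}{\pi\vol{\GmodH}}+O_{f}(M^{2-\delta}),
\]
for an explicit constant $\kappa$. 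Dividing, the $M^{2}$ terms cancel and
\[
\frac{\sum_{r\in T_{\sa\sb}(M)}\langle r\rangle_{\sa\sb}^{2}}{\sum_{r\in T_{\sa\sb}(M)}1}=-\frac{16\pi^{2}\norm{f}^{2}}{\vol{\GmodH}}\log M+D_{f,\sa\sb}+o_{f}(1),\qquad D_{f,\sa\sb}:=\pi\vol{\GmodH}\cdot\kappa,
\]
which is exactly the asserted statement with $C_{f}=-16\pi^{2}\norm{f}^{2}/\vol{\GmodH}$.

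The step I expect to be the main obstacle is the spectral one: establishing the joint meromorphic continuation of the twisted Eisenstein series in $(s,\e)$, handling the resolvent applications near $s=1$ rigorously (via incomplete Eisenstein series / truncation, so that the $y^{s}$-growth at cusps is controlled and the sources to which $R(s)$ is applied are genuinely $\mathrm{L}^{2}$), and pinning down that the double-pole coefficient is exactly a nonzero multiple of $\norm{f}^{2}$ so that the pole really is of order two. Alongside this, one needs the polynomial vertical growth of $L_{\sa\sb}$ to justify the contour shift with a power-saving error, and the honest bookkeeping of the exceptional eigenvalues and of the constants entering $D_{f,\sa\sb}$ --- that is where most of the remaining work lies.
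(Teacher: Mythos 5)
Your proposal follows essentially the same route as the paper: you conjugate the character-twisted Eisenstein series into an automorphic family governed by the perturbed Laplacian $\Delta-\e L_1+\e^2L_2$ (the paper's $D^{(k)}_\sa(z,s)$ and the recurrence of Lemma \ref{recurrence}/Corollary \ref{fundamental-recurrence}), use the resolvent's simple pole at $s=1$ together with the facts that $L_1$ kills constants and $L_2$ is multiplication by $-8\pi^2y^2\abs{f}^2$ to locate a double pole of $L^{(2)}_{\sa\sb}(s,0,0)$ with leading coefficient $-8\pi\norm{f}^2/\vol{\GmodH}^2$, and then finish by Perron/contour shift and division by $\#T_{\sa\sb}(M)$ — exactly the content of Theorems \ref{two-derivatives}, \ref{variance} and \ref{equidistribution}, with all your constants agreeing with the paper's. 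The technical points you flag (joint meromorphy, $L^2$-justification of the resolvent applications, vertical-line bounds) are precisely what the paper supplies in Lemmas \ref{deverdiere}, \ref{L1-bound}, Theorem \ref{D-continuation} and Lemma \ref{bounding-matrices}.
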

The formula for $D_{f,\sa\sb}$ is explicit but complicated, see \eqref{variance-shift}. It depends on $\norm{f}$, the period $\int_{\sa}^{\sb}\alpha$ and data from the Kronecker limit formula for the Eisenstein series for the cusps $\sa$ and $\sb$.
\begin{theorem}\label{theorem-distribution}
Let $I\subseteq \R\slash \Z$ be an interval of positive length. The values of the map
$g:T_{\sa\sb}\cap I\to \R$ with
\begin{equation*}
g(r)= \dfrac{\langle r\rangle_{\sa\sb}}{\sqrt{C_f\log c(r)}}
\end{equation*}
 ordered according to $c(r)$ have asymptotically  a standard normal
distribution, that is, for every $a, b \in [-\infty, \infty]$ with $a\le b$ we have 
\begin{equation*}
  \frac{\displaystyle\#\left\{r\in T_{\sa\sb}(M)\cap I,  \frac{\langle r\rangle_{\sa\sb}}{\sqrt{C_f\log c(r)}
    }\in[a,b]\right\}}{\#( T_{\sa\sb}(M)\cap I)}\to\frac{1}{\sqrt{2\pi} }\int_a^b\exp\left(-\frac{t^2}{2}\right)dt,
\end{equation*}
as $M\to\infty$.
\end{theorem}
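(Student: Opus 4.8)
The plan is to prove this by the method of moments. By the Fr\'echet--Shohat theorem, together with the fact that the standard normal law is determined by its moments (Carleman's condition), it suffices to show that for every integer $k\ge 0$
\begin{equation*}
\frac{1}{\#(T_{\sa\sb}(M)\cap I)}\sum_{r\in T_{\sa\sb}(M)\cap I}\left(\frac{\langle r\rangle_{\sa\sb}}{\sqrt{C_f\log c(r)}}\right)^{k}\longrightarrow \mu_{k}\qquad\text{as }M\to\infty,
\end{equation*}
where $\mu_{k}=0$ for $k$ odd and $\mu_{k}=(k-1)!!=k!/(2^{k/2}(k/2)!)$ for $k$ even. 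The first move is to strip off the slowly varying weight $(\log c(r))^{-k/2}$. Granting, for each $k$, the \emph{unweighted} asymptotic
\begin{equation*}
\sum_{r\in T_{\sa\sb}(M)\cap I}\langle r\rangle_{\sa\sb}^{k}=
\begin{cases}
(k-1)!!\,C_{f}^{k/2}\,\abs{I}\,\dfrac{M^{2}(\log M)^{k/2}}{\pi\vol{\GmodH}}\,(1+o(1)), & k\text{ even},\\
o\!\left(M^{2}(\log M)^{k/2}\right), & k\text{ odd},
\end{cases}
\end{equation*}
(the case $k=0$ being the equidistribution $\#(T_{\sa\sb}(M)\cap I)\sim\abs{I}M^{2}/(\pi\vol{\GmodH})$, the $\epsilon=0$ specialization of the machinery below, and the cases $k=1,2$ being contained in Theorems \ref{general-partial-first} and \ref{general-variance}), an Abel summation against $t\mapsto(\log t)^{-k/2}$ --- the range $c(r)\le M^{1-\eta}$ contributing only $o(M^{2})$ --- gives $\sum_{r\in T_{\sa\sb}(M)\cap I}\langle r\rangle_{\sa\sb}^{k}(\log c(r))^{-k/2}\sim\mu_{k}C_{f}^{k/2}\abs{I}M^{2}/(\pi\vol{\GmodH})$, and dividing by $\#(T_{\sa\sb}(M)\cap I)\,C_{f}^{k/2}$ produces exactly $\mu_{k}$.

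The core is therefore the unweighted moment asymptotic, which I would extract from Eisenstein series twisted by the modular symbol, as in the analysis behind Theorems \ref{general-partial-first} and \ref{general-variance}. Since $\gamma\mapsto\modsym{\gamma}{\alpha}$ is a homomorphism of $\Gamma$ into $i\R$, the assignments $\chi_{\epsilon}(\gamma)=\exp(\epsilon\modsym{\gamma}{\alpha})$ give, for $\epsilon\in\R$, a one-parameter family of unitary characters of $\Gamma$; extending $\epsilon$ to a complex neighbourhood of $0$, one forms the twisted Eisenstein series $E_{\sb}(z,s,\epsilon)$ and continues it meromorphically in $s$ (Selberg, Colin de Verdi\`ere). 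Its first pole moves to a point $s_{\epsilon}=1+\tfrac12 C_{f}\epsilon^{2}+O(\epsilon^{4})$ --- with \emph{no} linear term in $\epsilon$ --- and the attached residue is analytic in $\epsilon$ near $0$; the vanishing of the linear term and the value $\tfrac12 C_{f}$ of the quadratic coefficient (through the Petersson norm $\norm{f}^{2}$ and the Kronecker limit formula) are exactly the inputs used for the second moment in Theorem \ref{general-variance}. For a test function $\psi$ on $\R/\Z$, the Dirichlet series $Z_{\sa\sb}(s,\epsilon;\psi)=\sum_{r\in T_{\sa\sb}}\exp(\epsilon\langle r\rangle_{\sa\sb})\psi(r)c(r)^{-2s}$ is, via Proposition \ref{trivial}, essentially a Fourier coefficient of $E_{\sa}(z,s,\epsilon)$ at $\sb$; expanding it in powers of $\epsilon$ and inserting the known behaviour of $E_{\sa}$ near $s=1$, one reads off that $\sum_{r\in T_{\sa\sb}}\langle r\rangle_{\sa\sb}^{k}\psi(r)c(r)^{-2s}$ has at $s=1$ a pole of order $1+\lfloor k/2\rfloor$ whose leading Laurent coefficient is $(k-1)!!\,C_{f}^{k/2}\int_{\R/\Z}\psi$ for $k$ even and $0$ for $k$ odd --- the Gaussian combinatorics dropping out of the $\epsilon$-expansion of $(s-s_{\epsilon})^{-1}$, and the nonzero Fourier modes of $\psi$ feeding only the non-principal part of $E_{\sa}$, hence only lower-order terms. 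A Tauberian argument (contour shift, using polynomial bounds for $E_{\sa}(z,s,\epsilon)$ on vertical lines away from $s=1$) then converts this into the stated asymptotic for $\sum_{r\in T_{\sa\sb}(M)}\langle r\rangle_{\sa\sb}^{k}\psi(r)$, and finally squeezing $1_{I}$ between smooth functions $\psi_{-}\le 1_{I}\le\psi_{+}$ with $\int(\psi_{+}-\psi_{-})$ arbitrarily small removes the smoothing and replaces $\int\psi$ by $\abs{I}$.

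The main obstacle is this analytic input: controlling the twisted Eisenstein series $E_{\sb}(z,s,\epsilon)$ well enough to (i) know that the moving pole $s_{\epsilon}$ and its residue are analytic in $\epsilon$ in a fixed neighbourhood of $0$, (ii) verify that $s_{\epsilon}$ has vanishing linear term and quadratic coefficient $\tfrac12 C_{f}$, which is where Rankin--Selberg unfolding and the Kronecker limit formula enter, and (iii) secure error terms uniform both in $\epsilon$ near $0$ and in the Fourier frequency of $\psi$, so that the Tauberian step and the passage from smooth $\psi$ to the sharp cutoff $1_{I}$ are legitimate. Once this is in hand --- and much of it is already required for Theorems \ref{general-partial-first} and \ref{general-variance} --- pushing to all moments $k$ and extracting the Gaussian numbers $(k-1)!!$ is a formal manipulation of the $\epsilon$-expansion.
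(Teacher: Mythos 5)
Your high-level architecture coincides with the paper's proof: method of moments via Fr\'echet--Shohat, reduction to the unweighted asymptotics of $\sum_{r\in T_{\sa\sb}(M)\cap I}\langle r\rangle_{\sa\sb}^k$ (Theorem \ref{smooth-moments} and Corollary \ref{unsmooth-moments} in the paper), partial summation to absorb the slowly varying $(\log c(r))^{-k/2}$, smoothing of $1_I$, and contour integration against the Dirichlet series $\sum_r\langle r\rangle_{\sa\sb}^ke(nr)c(r)^{-2s}$. Where you genuinely diverge is in the central analytic step, the determination of the order and leading coefficient of the pole of these series at $s=1$. You propose to track the moving pole $s_\e$ of the $\e$-family of twisted Eisenstein series and to extract the $k$th moment from the $\e$-expansion of $\rho(\e)/(s-s_\e)$; this is the analytic-perturbation route of the authors' earlier work \cite{PetridisRisager:2004a}, which the present paper deliberately avoids. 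Here the authors work only with the derivatives at $\e=0$: they establish the resolvent recurrence of Corollary \ref{fundamental-recurrence} and run an induction on $k$ (Theorem \ref{order-of-poles}) using only the Laurent expansion \eqref{resolvent-at-1} of $R(s)$ and the orthogonality of Proposition \ref{integral-of-Dk}, isolating the $n$th Fourier coefficient by pairing with a Poincar\'e series. The resolvent route spares one from proving that $s_\e$ and its residue are jointly analytic (for you a genuine prerequisite: Kato-type perturbation theory for the family $L(\e)$, plus the fact that the perturbed bottom eigenvalue remains residual, i.e.\ non-cuspidal, for small $\e$), and it delivers the $n$-uniform bounds of Theorem \ref{many-derivatives-additive-twists} by direct $L^2$-estimates on $D^{(k)}$ rather than by uniform control of a two-parameter meromorphic family. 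Your item (iii) correctly identifies this uniformity in the Fourier frequency as the crux; without it neither the interchange of the $\e$-differentiation with the contour integral nor the summation over the Fourier modes of $\psi$ is justified, so in your plan it is not a side condition but the bulk of the work.

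One concrete error: the quadratic coefficient of the moving pole is $\tfrac14 C_f$, not $\tfrac12 C_f$. Second-order perturbation theory for the bottom eigenvalue gives $\lambda(\e)=s_\e(1-s_\e)=4\pi^2\norm{f}^2\e^2/\vol{\GmodH}+O(\e^4)=-\tfrac14C_f\e^2+O(\e^4)$ (the first-order term vanishes because $L^{(1)}$ annihilates constants), whence $s_\e=1+\tfrac14C_f\e^2+O(\e^4)$. The resulting leading coefficient of $(s-1)^{-(k/2+1)}$ in the $k$th $\e$-derivative is $k!\,\rho(0)\,(C_f/4)^{k/2}=\rho(0)\,\tfrac{k!}{2^{k/2}}(C_f/2)^{k/2}$, in agreement with Theorem \ref{many-derivatives}; with your constant every even moment is inflated by $2^{k/2}$, which already for $k=2$ contradicts Theorem \ref{general-variance}. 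This is a bookkeeping slip rather than a structural flaw, but it is exactly the constant by which the theorem normalises, so it must be fixed before the Gaussian moments come out right.
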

\begin{remark}
  We have obtained different but related normal distribution results for
modular symbols in \cite{PetridisRisager:2004a, Risager:2004a, PetridisRisager:2005a,
  PetridisRisager:2009a}. One difference between these papers and the current
one is in the ordering and normalization of the values of
$\modsym{\gamma}{\alpha}$. The orderings in these papers were more
geometric (action of $\Gamma$ on $\H$) and less arithmetic, and the modular symbols used closed paths in $\GmodH$. However, in Theorem \ref{distribution-theorem-special} we need to
combine statistics from various cusps, since not all rational cusps are
equivalent to $\infty$ for $\Gamma_0(q)$. Moreover, we allow  to restrict $\gamma(\infty)=a/c$ to a 
general $I\subseteq \R\slash\Z $. This  is a new feature, making  our current results significantly more refined.

The expression for   the variance shift in Theorems
\ref{theorem-variance} and \ref{general-variance}, see also
\eqref{variance-shift}, is very explicit. This is an unexpected facet. The
analogue in \cite[Theorem 2.19]{PetridisRisager:2004a} involves also
the reduced resolvent of the Laplace operator, which is much harder to understand.

\end{remark}
\begin{remark}
To prove Theorem \ref{theorem-distribution}  we study the
asymptotic $k$th moments of the modular symbols $\langle
r\rangle_{\sa\sb}$ for all $k$, see Theorem \ref{smooth-moments}.  We make no effort to optimize the
error bounds for the moments in Theorems \ref{general-partial-first}, \ref{general-variance}, and
\ref{smooth-moments}, but they can all be made explicit in terms
of spectral gaps.
\end{remark}

\begin{remark}\label{history}
An important tool in this paper and in \cite{PetridisRisager:2004a} is non-holomorphic
Eisenstein series twisted with modular symbols $E^{m, n}(z, s)$. These
were introduced by Goldfeld \cite{Goldfeld:1999a,Goldfeld:1999b} and
studied extensively by many authors, see
e.g. \cite{OSullivan:2000a,DiamantisOSullivan:2000a,JorgensonOSullivan:2008a,JorgensonOSullivanSmajlovic:2016a, BruggemanDiamantis:2016a} and the references therein. For their definition see \eqref{Cormak-series}. In \cite{PetridisRisager:2004a} we
used the Eisenstein series twisted with the $k$th power of modular
symbols  as a generating series  itself to study the $k$th moment of modular symbols. In this paper we need to understand  the $n$th Fourier coefficient of
$E_{\sa}^{(k)}(\sigma_\sb z, s)$, which involves twists by  the $k$th power of  $\langle
r\rangle_{\sa\sb}$. The reason why the results
here  are more arithmetic is because  the Fourier coefficients of Eisenstein series and of twisted Eisenstein series encode
arithmetic data and modular symbols.  To isolate the $n$th coefficient we use inner products with Poincar\'e series. 
\end{remark}

\begin{remark}
The structure of the paper is as follows. In Section \ref{gener} we
introduce the generating functions (Dirichlet series)
$L_{\sa\sb}^{(k)}(s, m, n)$ for the powers of  modular symbols. We
also introduce Poincar\'e series and Eisenstein series twisted by powers of modular symbols.

In Section \ref{giovanna} we analyze $L_{\sa\sb}(s, 0, n)$ for $\Re (s)>1/2$, 
and  conclude with the statement that $T_{\sa\sb}$ is equidistributed modulo $1$.

In Section \ref{section4} we study  Eisenstein series twisted by
modular symbols, or rather a related series $D_{\sa}^{(k)}(z, s)$. We
prove the analytic continuation for $\Re (s)>1/2$ and study the order
of the poles and leading singularity at $s=1$. The crucial identity is
Eq. \eqref{resolvent-relation} that allows us to understand
$D_{\sa}^{(k)}(z, s)$ inductively using the resolvent of the Laplace operator $R(s)$. 

In Section \ref{Functional-Equation} we find explicit expressions for
the functional equations of $D^{(k)}_\sa(z, s)$ and $E^{(k)}_{\sa}(z,
s)$, see Theorems \ref{FEEF} and \ref{FE-k-deriv-of-E}.

In Section \ref{analytic-prop} we study the analytic properties of the
derivatives $L_{\sa\sb}^{(k)}(s, 0, n)$ for $k\geq 1$. When $k=1$ we
find the residue  at $s=1$, and the whole singular part  when $k=2$. Finally, we identify the order of the pole and the leading singularity for all $k$. 

In Section \ref{distribution-results} we prove Theorems
\ref{general-partial-first}, \ref{general-variance}, and
\ref{theorem-distribution} for general finite covolume Fuchsian groups
with cusps. We use the method of contour integration.

In Section \ref{arithmetic} we specialize the general results to $\Gamma_0(q)$ for squarefree $q$. This leads to the proofs of Theorems \ref{partial-first-theorem}, \ref{theorem-variance}, and \ref{distribution-theorem-special}. 
\end{remark}
\begin{remark}
  In a recent preprint \cite{KimSun:2017a} the authors prove that,  when $c\to \infty$
  through primes, the
  limiting behavior in 
  Conjecture \ref{partial-first-conjecture} holds. Their method is different from ours.  
\end{remark}
\section*{Acknowledgments}{We would like to thank Peter Sarnak for
  alerting us to \cite{MazurRubin:2016a}. Also we would like to thank
  Barry Mazur and Karl Rubin for useful comments, and for providing us
  with numerical data.}

\label{intro}
\section{Generating series for powers of modular symbols}\label{gener}
From now on we allow $\G$
 to be {\emph any} cofinite Fuchsian group with cusps. All implied constants in our estimates depend on $\Gamma$ and $f$.  In this section we define a generating series for modular symbols, and
explain how it can be understood in terms of derivatives of Eisenstein
series with characters. 

\subsection{Modular symbols}
The modular symbols  defined by 
\begin{equation*}
\modsym{\g}{\a}:=\langle \g(\infty)\rangle=2\pi i \int_{z_0}^{\gamma z_0}\a
\end{equation*}
 are independent of $z_0\in \H^*$, and  independent of the path between
$z_0$ and $\gamma z_0$.   Fix a set of inequivalent
cusps for $\Gamma$.  For such a cusp $\sa$ we fix a 
scaling matrix $\sigma_{\sa}$, i.e. a real matrix of determinant  1
mapping $\infty$ to $\sa$, and satisfying
$\G_\sa=\sigma_{\sa}\Gamma_{\infty}\sigma_{\sa}^{-1}$. Here
$\Gamma_\sa$ is the stabilizer of $\sa$ in $\G$, and $\G_\infty$ is
the standard parabolic subgroup.  We have $\modsym{\gamma}{\a}=0$ for $\gamma$
parabolic since $\a$ is cuspidal.
 
For any real  number $\varepsilon$ and every  cuspidal real
differential 1-form $\a$ we have a family of  unitary
characters $\chi_\epsilon:\G\to S^1$ defined by
\begin{equation*}
  \chi_\varepsilon(\g)=\exp\left(2\pi i \varepsilon \int_{z_0}^{\g z_0}\a\right).
\end{equation*}
Note that this character is the conjugate of the one we considered in \cite{PetridisRisager:2004a}.
We also need the antiderivative of $\a$.  We define
\begin{equation*}\label{antiderivative}
A_{\sa}(z)=2\pi i \int_{\sa}^z \a.
\end{equation*}
We expand  $\a$ at a cusp $\sb$. Let us assume that $\a =\Re
(f(z)dz)$, where $f(z)\in S_2(\Gamma)$,
and \begin{equation*}j(\sigma_{\sb},
  z)^{-2}f(\sigma_{\sb}z)=\sum_{n>0}a_{\sb}(n)e(nz),\end{equation*}
where $j(\gamma, z)=cz+d$.
Then $$\sigma_\sb^*\a  =\sum_{n>0}\frac{1}{2}\left( a_{\sb}(n)e(nz) dz+\overline{a_{\sb}(n)e(nz)}d\bar z\right)$$ 
and $$\sigma_b^*\a = d(\sum_{n>0}\frac{1}{2\pi  n}\Im (a_{\sb}(n)e(nz)      ).$$
We consider the line integral 
\begin{equation}\label{fourier-expansion}\int_{\sb}^{\sigma _{\sb}z}\a=\int_{i\infty}^z\sigma_b^*\a=  \sum_{n>0}\frac{1}{2\pi  n}\Im (a_{\sb}(n)e(nz)  )  . \end{equation} 
By \cite[Eq. (3.3), (3.5)]{JorgensonOSullivan:2008a} we have the uniform bound
\begin{equation}\label{elementary-bound}
A_{\sa}(z)\ll_{\epsilon, \alpha} \Im (\sigma^{-1}_\sa z)^{\e}+\Im (\sigma^{-1}_\sa z)^{-\e}
\end{equation}
 for
$z\in\H$. 
Consequently, we have the estimate
\begin{equation}\label{elementary-bound-modsym}
\modsym{\gamma}{\alpha}=A_\sa(\g z)-A_\sa(z)\ll \Im (\sigma_{\sa}^{-1}\gamma z)^\e+ \Im (\sigma_{\sa}^{-1}\gamma z)^{-\e} + \Im (\sigma_{\sa}^{-1}z)^\e+ \Im (\sigma_{\sa}^{-1}z)^{-\e}.
\end{equation}
\begin{remark}\label{relation-to-their-raw}
If $f(z)$ be a cusp form for $\G_0(q)$ with real Fourier
  coefficients at infinity, then $\overline{f(z)}=f(-\bar z)
$.  Since $\overline{\pi i \int_{i\infty}^r f(z)dz}=\pi i
\int_{i\infty}^{-r} f(z)dz$ it follows that 
\begin{equation*}
  \langle r\rangle^{\pm}=\begin{cases}
\displaystyle 2\pi \int_{i \infty}^{r}\a_{if},& \textrm{ in the $+$ case},\\\\
 \displaystyle 2\pi i \int_{i \infty}^{r}\a_f,&\textrm{ in the $-$ case},\end{cases}
\end{equation*}
where $\a_g=\Re(g(z)dz)$. Consequently, taking $\alpha=\Re (f(z)dz)$ covers both cases of raw modular symbols.
\end{remark}
\subsection{The generating series} \label{hungry?}
Let $\sa,\sb$ be cusps and $\sigma_\sa,\sigma_\sb$ fixed scaling
matrices. Then we define
\begin{equation*}
T_{\sa\sb}=
\left\{\frac{a}{c} \bmod 1, \begin{pmatrix}a&b\\c&d\end{pmatrix}
\in \Gamma_\infty\backslash \sigma_{\sa}^{-1}\Gamma
\sigma_\sb\slash\Gamma_\infty \textrm{ and } c>0\right\}\subseteq \R\slash\Z.
\end{equation*}
It is easy to see that  ${a}/{c} \bmod 1$ is well-defined for the
double coset containing $\gamma$. Note also that
$\gamma \infty=a/c\bmod 1$.

\begin{prop}\label{trivial} Let $r\in T_{\sa\sb}$. Then there exists
  a
  unique \begin{equation*}\begin{pmatrix}a&b\\c&d\end{pmatrix}
\in \Gamma_\infty\backslash \sigma_{\sa}^{-1}\Gamma
\sigma_\sb\slash\Gamma_\infty
  \end{equation*}
satisfying 
\begin{equation*}r=\begin{pmatrix}a&b\\c&d\end{pmatrix}\infty\bmod 1.\end{equation*}
\end{prop}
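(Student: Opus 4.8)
The plan is to prove that the map sending a double coset $\Gamma_\infty\gamma\Gamma_\infty$ in $\Gamma_\infty\backslash\sigma_{\sa}^{-1}\Gamma\sigma_\sb/\Gamma_\infty$ with lower-left entry $c\neq 0$ to $\gamma\infty\bmod 1\in\R/\Z$ is a well-defined bijection onto $T_{\sa\sb}$. Existence of a preimage for a given $r\in T_{\sa\sb}$ is built into the definition of $T_{\sa\sb}$, so the entire content is the uniqueness (injectivity) assertion. We work in $\mathrm{PSL}_2(\R)$, so a double coset with nonzero lower-left entry has a unique representative with $c>0$; as already observed, $c$ is literally unchanged under left and right multiplication by $\begin{pmatrix}1&n\\0&1\end{pmatrix}$, while $\gamma\infty=a/c$ becomes $a/c+n$ on the left and is fixed on the right, so $\gamma\infty\bmod 1$ is a well-defined function of the double coset.

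For uniqueness, suppose $\gamma_1,\gamma_2\in\sigma_{\sa}^{-1}\Gamma\sigma_\sb$ have $c(\gamma_i)>0$ and $\gamma_1\infty\equiv\gamma_2\infty\pmod 1$. Choose $n\in\Z$ with $\gamma_2\infty=\gamma_1\infty+n$ and set $\gamma_1'=\begin{pmatrix}1&n\\0&1\end{pmatrix}\gamma_1$, so that $\gamma_1'\infty=\gamma_2\infty$ exactly. Using the scaling-matrix identity $\Gamma_\infty\sigma_{\sa}^{-1}=\sigma_{\sa}^{-1}\Gamma_\sa\subseteq\sigma_{\sa}^{-1}\Gamma$ we have $\gamma_1'\in\Gamma_\infty\gamma_1\subseteq\sigma_{\sa}^{-1}\Gamma\sigma_\sb$. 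Now put $h=(\gamma_1')^{-1}\gamma_2$. Then $h\infty=\infty$, and since $\Gamma^{-1}=\Gamma$ one has $h\in(\sigma_\sb^{-1}\Gamma\sigma_{\sa})(\sigma_{\sa}^{-1}\Gamma\sigma_\sb)=\sigma_\sb^{-1}\Gamma\sigma_\sb$.

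The crux is then to identify the elements of $\sigma_\sb^{-1}\Gamma\sigma_\sb$ fixing $\infty$. Writing $h=\sigma_\sb^{-1}g\sigma_\sb$ with $g\in\Gamma$, the condition $h\infty=\infty$ is equivalent to $g$ fixing the cusp $\sb=\sigma_\sb\infty$, i.e. to $g\in\Gamma_\sb$. Since $\sb$ is a genuine cusp of the Fuchsian group $\Gamma$, its stabilizer $\Gamma_\sb$ is infinite cyclic generated by a parabolic element, and by the defining property $\Gamma_\sb=\sigma_\sb\Gamma_\infty\sigma_\sb^{-1}$ of the scaling matrix this forces $h\in\sigma_\sb^{-1}\Gamma_\sb\sigma_\sb=\Gamma_\infty$. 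Hence $\gamma_2=\gamma_1'h\in\gamma_1'\Gamma_\infty\subseteq\Gamma_\infty\gamma_1\Gamma_\infty$, so the two double cosets coincide, which is the asserted uniqueness; in particular $r$ even determines the lower-left entry $c$, since $c$ is a double-coset invariant.

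I expect no real obstacle: the argument is essentially formal manipulation of cosets together with the elementary action of $\begin{pmatrix}1&n\\0&1\end{pmatrix}$ on $\partial\H$. The one place that requires genuine input is the identification $\{h\in\sigma_\sb^{-1}\Gamma\sigma_\sb:h\infty=\infty\}=\Gamma_\infty$, which uses that $\sb$ is a cusp (so $\Gamma_\sb$ is purely parabolic, with no stray hyperbolic or elliptic elements) and the normalization encoded in the choice of scaling matrix. This proposition is precisely the combinatorial fact — for each admissible $c>0$ the relevant double cosets are indexed by the residue of $a$ modulo $c$, and distinct $c$ give distinct $a/c\bmod 1$ — that underlies the extraction of Fourier coefficients of the (twisted) Eisenstein series later in the paper.
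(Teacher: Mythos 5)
Your proof is correct. It shares its skeleton with the paper's argument---both reduce uniqueness to studying a quotient element (the paper's $\gamma''=\gamma'^{-1}\gamma$, your $h$) lying in $\sigma_\sb^{-1}\G\sigma_\sb$, and both ultimately rest on the normalization $\G_\sb=\sigma_\sb\Gamma_\infty\sigma_\sb^{-1}$ built into the scaling matrices---but you dispose of the ``$\bmod 1$'' ambiguity by a genuinely different mechanism. The paper normalizes the two representatives so that $0\le a<c$ and $0\le a'<c'$ and then, when the lower-left entry $-ac'+a'c$ of $\gamma'^{-1}\gamma$ is nonzero, invokes the discreteness bound $\abs{-ac'+a'c}\ge c_{\sb}$ from \cite[Eq.~(2.30)]{Iwaniec:2002a} to obtain the quantitative separation $\abs{r-r'}\ge c_{\sb}/(cc')>0$, which rules out equality in $[0,1)$. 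You instead absorb the integer discrepancy into a left translation by $\left(\begin{smallmatrix}1&n\\0&1\end{smallmatrix}\right)\in\Gamma_\infty$ so that the two images of $\infty$ agree exactly, after which $h$ fixes $\infty$ and the purely qualitative identification of the stabilizer of $\infty$ in $\sigma_\sb^{-1}\G\sigma_\sb$ with $\Gamma_\infty$ (the one genuine input, which you correctly isolate) finishes the argument. Your route is softer, requiring no lower bound on the nonzero $c$'s occurring in $\sigma_\sb^{-1}\G\sigma_\sb$; the paper's route pays for that appeal to discreteness by delivering, as a by-product, the explicit spacing $c_{\sb}/(cc')$ between distinct elements of $T_{\sa\sb}$ with denominators $c$ and $c'$, which is of independent combinatorial interest. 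Either way the conclusion, including the double-coset invariance of $c(r)$ that you note at the end, is the same.
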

\begin{proof}
   We imitate \cite[p.~50]{Iwaniec:2002a}. Assume 
   \begin{equation*}     \gamma=\begin{pmatrix}a&b\\c&d\end{pmatrix} \textrm{ and }\gamma'=\begin{pmatrix}a'&b'\\c'&d'\end{pmatrix}
   \end{equation*}  and $r=\gamma
   \infty$, $r'=\gamma' \infty$. We may assume that $0\leq a<c$ and  $0\leq
   a'<c'$. The matrix $\gamma''=\gamma'^{-1}\gamma\in\sigma_{\sb}^{-1}\Gamma\sigma_{\sb}$ has
   lower left entry $-ac'+a'c$. If this is zero then
   $\gamma''\in\Gamma_\infty $ and $r=r'\bmod 1$. If not, then by
    \cite[Eq. (2.30)]{Iwaniec:2002a}  we have $\abs{-ac'+a'c}\geq c_{\sb}$, which
   implies that
   \begin{equation*}
     \abs{-r+r'}\geq \frac{c_{\sb}}{cc'}>0. 
   \end{equation*}
Therefore $r\neq r'$ and since $0\leq r,r'<1$ this
implies that $r\neq r'\bmod 1$.
\end{proof}
From Proposition \ref{trivial} we conclude the following result:
\begin{cor}  \label{define-r} Any $r\in
T_{\sa\sb}$ determines a unique number $c(r)>0$ and
unique cosets $a(r)\bmod c(r)$, and $d(r)\bmod c(r)$ satisfying
$a(r)d(r)=1\bmod c(r)$ and $a(r)/c(r)=r\bmod 1$. 
\end{cor}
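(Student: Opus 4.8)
\textbf{Proof proposal for Corollary \ref{define-r}.} The plan is to extract everything from Proposition \ref{trivial} together with an explicit bookkeeping of how the left and right actions of $\Gamma_\infty$ move the entries of a $2\times 2$ matrix. Given $r\in T_{\sa\sb}$, Proposition \ref{trivial} produces a \emph{unique} double coset $\gamma=\left(\begin{smallmatrix}a&b\\c&d\end{smallmatrix}\right)\in \Gamma_\infty\backslash\sigma_\sa^{-1}\Gamma\sigma_\sb/\Gamma_\infty$ with $r=\gamma\infty\bmod 1$, and by the very definition of $T_{\sa\sb}$ we may pick the representative with $c>0$. I would then define $c(r)=c$, $a(r)=a\bmod c$, $d(r)=d\bmod c$, and check that these are independent of the choice of representative in the double coset, which is exactly the content of the uniqueness claim.

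The core of the argument is the computation of the two group actions. Writing a generator of (the translation part of) $\Gamma_\infty$ as $\left(\begin{smallmatrix}1&1\\0&1\end{smallmatrix}\right)$, left multiplication sends $\left(\begin{smallmatrix}a&b\\c&d\end{smallmatrix}\right)$ to $\left(\begin{smallmatrix}a+mc&b+md\\c&d\end{smallmatrix}\right)$ and right multiplication sends it to $\left(\begin{smallmatrix}a&an+b\\c&cn+d\end{smallmatrix}\right)$, for $m,n\in\Z$. Hence $c$ is fixed by both actions, while $a$ changes only by multiples of $c$ (on the left) or not at all (on the right), and $d$ changes only by multiples of $c$ (on the right) or not at all (on the left). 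Therefore $c>0$, $a\bmod c$, and $d\bmod c$ are genuine invariants of the double coset, so $c(r)$, $a(r)$, $d(r)$ are well defined. The congruence $a(r)d(r)\equiv 1\bmod c(r)$ follows from $\det\gamma=1$, i.e. $ad=1+bc$; and $a(r)/c(r)=\gamma\infty=r\bmod 1$ is immediate.

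The only point requiring a word of care is the sign ambiguity coming from $-I$ (when $-I\in\Gamma$): multiplying $\gamma$ by $-I$ negates all four entries, but this is incompatible with the normalization $c>0$ once we note that $c\neq 0$ (which holds because $r$ is the image of a finite cusp, equivalently because membership in $T_{\sa\sb}$ forces $c>0$). Thus among the representatives of the double coset there is a well-defined subfamily with $c>0$, and within it $a\bmod c$ and $d\bmod c$ are constant by the computation above. I do not anticipate any genuine obstacle here; the statement is a formal consequence of Proposition \ref{trivial} and the shape of $\Gamma_\infty$.
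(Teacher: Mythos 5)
Your proposal is correct and matches the paper's (implicit) reasoning: the corollary is stated there as an immediate consequence of Proposition \ref{trivial}, and your explicit bookkeeping of the left and right $\Gamma_\infty$-actions (left translation shifts $a$ by multiples of $c$, right translation shifts $d$ by multiples of $c$, and $c$ is invariant) together with $\det\gamma=1$ is precisely the routine verification the authors leave to the reader. Your remark on the $-I$ ambiguity and the normalization $c>0$ is a sensible extra precaution and introduces no gap.
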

From Corollary \ref{define-r} it follows that  there exists a unique pair
$(c(r),a(r))$ such that $0\leq a(r)<c(r)$ and $a(r)/c(r)=r\bmod 1$. We
can therefore put an ordering on the elements of
$T_{\sa\sb}$ by putting the lexicographical ordering
on the $(c(r),a(r))$ i.e.
\begin{equation*}
  r\leq r'\textrm{ if and only if } c(r)< c(r')\textrm{ or }c(r)=
  c(r')\textrm{ with } a(r)\leq a(r') .
\end{equation*}
 For $r\in
T_{\sa\sb}$ we define $\bar{r} \bmod 1$ by
$\bar r=d(r)/c(r)$.
For $r\in T_{\sa\sb}$ we define
\begin{equation*}
  \langle r\rangle_{\sa\sb}=2\pi i
  \int_{\sb}^{\sigma_{\sa}r}\a.
\end{equation*}
We suppress $\a$ from the notation. We notice that  if $\gamma\in \Gamma_\infty\backslash \sigma_{\sa}^{-1}\Gamma
\sigma_\sb\slash\Gamma_\infty$ corresponds to $r$ as in Proposition \ref{trivial}
we have 
\begin{equation*}
  \langle r\rangle_{{\sa}{\sb}}=2\pi i
  \int_{\sb}^{\sigma_{\sa}\gamma\sigma_{\sb}^{-1}{\sb}}\a=\modsym{\sigma_{
     \sa}\gamma\sigma_{\sb}^{-1}}{\a},
\end{equation*} which we will also refer to as a modular symbol. 
The map $r\mapsto\langle r\rangle_{\sa\sb}$ does not grow too fast in
terms of $c(r)$:
\begin{prop}\label{small-symbols} The following estimate holds
  \begin{equation*}
    \langle r\rangle_{\sa\sb}\ll c(r)^\epsilon+c(r)^{-\epsilon}.
  \end{equation*}
\end{prop}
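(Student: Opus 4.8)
The plan is to deduce the bound from the uniform estimate \eqref{elementary-bound-modsym} for modular symbols, applied at a cleverly chosen point.

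First I would fix a convenient representative of $r$. By Proposition~\ref{trivial} and Corollary~\ref{define-r}, $r\in T_{\sa\sb}$ corresponds to a unique double coset in $\Gamma_\infty\backslash\sigma_\sa^{-1}\G\sigma_\sb/\Gamma_\infty$, represented by some $\gamma=\left(\begin{smallmatrix}a&b\\ c&d\end{smallmatrix}\right)\in\sigma_\sa^{-1}\G\sigma_\sb$ with $c=c(r)>0$. Replacing $\gamma$ by $\gamma\left(\begin{smallmatrix}1&m\\ 0&1\end{smallmatrix}\right)$ for a suitable $m\in\Z$ changes neither the double coset nor the value $\langle r\rangle_{\sa\sb}$, so I may additionally assume $0\le d<c$. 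Put $\delta=\sigma_\sa\gamma\sigma_\sb^{-1}\in\G$, so that $\langle r\rangle_{\sa\sb}=\modsym{\delta}{\a}$.

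Since $\modsym{\delta}{\a}$ is independent of the point at which it is evaluated, I would apply \eqref{elementary-bound-modsym} to $\delta$ at $z=\sigma_\sb w$ for a parameter $w\in\H$ still to be chosen. Using $\sigma_\sa^{-1}\delta\sigma_\sb=\gamma$ this gives
\[
\langle r\rangle_{\sa\sb}\ll \Im(\gamma w)^{\e}+\Im(\gamma w)^{-\e}+\Im(\sigma_\sa^{-1}\sigma_\sb w)^{\e}+\Im(\sigma_\sa^{-1}\sigma_\sb w)^{-\e}.
\]
The goal is to pick $w$ so that both $\Im(\gamma w)$ and $\Im(\sigma_\sa^{-1}\sigma_\sb w)$ lie, up to constants depending only on $\G,\sa,\sb$, in the interval $[c^{-1},c]$; then each of the four terms is $\ll c^{\e}+c^{-\e}$, which is exactly what we want. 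The choice $w=-d/c+i/c$ works: then $cw+d=i$, so $\Im(\gamma w)=\Im(w)/\abs{cw+d}^{2}=1/c$, which is in range.

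The step requiring a little care---and the one I would flag as the main (if modest) obstacle---is controlling $\Im(\sigma_\sa^{-1}\sigma_\sb w)$, which involves the fixed matrix $\sigma_\sa^{-1}\sigma_\sb$; write its bottom row as $(\rho,\tau)$. With $\Re w=-d/c\in(-1,0]$ and $\Im w=1/c$ one finds
\[
\Im(\sigma_\sa^{-1}\sigma_\sb w)=\frac{1/c}{(\tau-\rho d/c)^{2}+\rho^{2}/c^{2}}.
\]
Since $0\le d/c<1$, the denominator is at most the constant $(\abs{\rho}+\abs{\tau})^{2}+\rho^{2}$, so this imaginary part is $\gg 1/c$; and when $\rho\ne 0$ (that is, $\sa\ne\sb$) the denominator is $\ge\rho^{2}/c^{2}$, so it is $\ll c$. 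When $\rho=0$ we necessarily have $\sa=\sb$, and taking $\sigma_\sa=\sigma_\sb$ the term is just $\Im w=1/c$. Hence in all cases the four terms above are $\ll c^{\e}+c^{-\e}$, and since $c=c(r)$ this yields $\langle r\rangle_{\sa\sb}\ll c(r)^{\e}+c(r)^{-\e}$. Beyond this I anticipate no difficulty; the argument is essentially just the bookkeeping needed to normalise $0\le d<c$ and to verify that the stated $w$ forces both imaginary parts into $[c^{-1},c]$.
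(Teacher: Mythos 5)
Your proof is correct and is essentially the paper's argument: both apply the uniform bound \eqref{elementary-bound-modsym} to $\sigma_\sa\gamma\sigma_\sb^{-1}$ after normalising the lower row $(c,d)$ of the double-coset representative, and then control the relevant imaginary parts in terms of $c(r)$. The only (immaterial) difference is that the paper evaluates at a fixed $z$ and uses the elementary inequalities $\Im(\gamma w)^{\e}\le |c|^{-2\e}\Im(w)^{-\e}$ and $\Im(\gamma w)^{-\e}\ll |c|^{2\e}$, whereas you choose $w$ depending on $r$ so that $\Im(\gamma w)=1/c$ exactly, at the cost of the extra (correctly handled) bookkeeping for $\Im(\sigma_\sa^{-1}\sigma_\sb w)$.
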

\begin{proof}
We use \eqref{elementary-bound-modsym} with $\sigma_{\sa}\gamma
\sigma_{\sb}^{-1}$ and a fixed $z$. Writing the lower row of $\g$ as
$(c(r),d(r))$ we may assume that $\abs{d(r)}\leq \abs{c(r)}$. Writing
$\sigma_{\sb}^{-1}z=w$, we use the elementary inequalities
\begin{equation*}
\Im (\gamma w)^{\e}\leq |c(r)|^{-2\e}\Im (w)^{-\e},\quad  \Im (\gamma w)^{-\e}\leq 2^\e|c(r)|^{2\e} (\Im w/(|w|^2+1))^{-\e},
\end{equation*}
from which the result follows. 
\end{proof}
We can now define the main generating series:
\begin{definition}For $\Re(s)>1$ we define
\begin{equation*}
  L^{(k)}_{\sa\sb}(s,m,n)=\sum_{r\in
                                     T_{\sa\sb}}\frac{\langle
                                            r\rangle_{\sa\sb}^k e(m r+n\bar r)}{c(r)^{2s}}.
\end{equation*}  
\end{definition}

By Proposition \eqref{small-symbols}  and \cite[Prop 2.8]{Iwaniec:2002a} we see
that $L^{(k)}_{\sa\sb}(s,m,n)$ is absolutely convergent
for $\Re(s)>1$, and uniformly convergent on compacta of $\Re(s)>1$. It is the analytic properties of this series that
will eventually allow us to prove our main results.

\subsection{Relation to Eisenstein series}

To explain how $L^{(k)}_{\sa\sb}(s,m,n)$ relates to
Eisenstein series we recall that the generalized  Kloosterman sums are
defined by
\begin{align*}
  S_{\sa\sb}(m,n,c,\e)=&\sum_{\begin{pmatrix}a&b\\c&d\end{pmatrix}\in
  \Gamma_{\infty}\backslash \sigma_\sa^{-1}\Gamma\sigma_\sb\slash
  \Gamma_{\infty}}\chi_{\e}(\sigma_{\sa}\begin{pmatrix}a&b\\c&d\end{pmatrix}\sigma_{\sb}^{-1})e\left(\frac{ma+nd}{c}
  \right).\\
\end{align*}
For $m, n\in \Z$ we define
\begin{equation*}
L_{{\sa\sb}}(s,m, n,\e)=\sum_{c>0}\frac{S_{{\sa}{\sb}}(m,n,c,\e)}{c^{2s}},  
\end{equation*}
where the sum is over $c$ 
for
$\begin{pmatrix}*&*\\c&*\end{pmatrix}\in\sigma_\sa^{-1}\Gamma\sigma_\sb.$
This is a version of the Selberg--Linnik zeta function. When $\e=0$,
that is when the character is trivial, we omit it from the
notation.  Using
\cite[Prop. 2.8]{Iwaniec:2002a} we see that for $\Re(s)>1$ this is an
absolutely converging  Dirichlet series. 
Note that
\begin{equation*}\frac{d}{d\e}\chi_\e(\sigma_{\sa}\begin{pmatrix}a&b\\c&d\end{pmatrix}\sigma_{\sb}^{-1})\vert_{\e=0}=\modsym{\sigma_{\sa}\gamma\sigma_{\sb}^{-1}}{\a}=\langle
  r\rangle_{\sa\sb},\end{equation*}
where $r$ and $\begin{pmatrix}a&b\\c&d\end{pmatrix}$ are related as in
Proposition \ref{trivial}. It therefore follows that
\begin{equation}\label{relation-to-derivative}
L^{(k)}_{\sa\sb}(s,m, n)=\left.\frac{\partial^k}{\partial \e^k}
  L_{\sa\sb}(s,m, n,\e)\right\rvert_{\e=0}.
\end{equation}

\begin{prop}\label{myfingerhurts} For any cusps $\sa,\sb$ and any $m,n\in\Z$ we have
  \begin{align*}
 L^{(k)}_{\sa\sb}(s, m,
    n)&=(-1)^k\overline{L^{(k)}_{\sa\sb}(\overline{s},-m,-n)},\\
L^{(k)}_{\sa\sb}(s,m,n)&=(-1)^kL^{(k)}_{\sb\sa}(s,-n,-m).
\end{align*}
\end{prop}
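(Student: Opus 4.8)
The plan is to derive both symmetries directly from the definition of $L^{(k)}_{\sa\sb}(s,m,n)$ together with the defining identity \eqref{relation-to-derivative} relating it to the Selberg--Linnik zeta function $L_{\sa\sb}(s,m,n,\e)$, or equivalently by working term-by-term with the sum over $r\in T_{\sa\sb}$. For the first identity, the key observation is that the modular symbol $\langle r\rangle_{\sa\sb}=2\pi i\int_\sb^{\sigma_\sa r}\a$ is a \emph{real} number, because $\a=\Re(f(z)dz)$ is a real-valued $1$-form and the path of integration can be taken along a curve in $\H^*$; hence $\overline{\langle r\rangle_{\sa\sb}^k}=\langle r\rangle_{\sa\sb}^k$. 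Taking the complex conjugate of the series, using that $c(r)$ is real and positive so $\overline{c(r)^{-2s}}=c(r)^{-2\overline{s}}$, and using $\overline{e(mr+n\bar r)}=e(-mr-n\bar r)$, one gets
\begin{equation*}
\overline{L^{(k)}_{\sa\sb}(s,m,n)}=\sum_{r\in T_{\sa\sb}}\frac{\langle r\rangle_{\sa\sb}^k\, e(-mr-n\bar r)}{c(r)^{2\overline{s}}}=L^{(k)}_{\sa\sb}(\overline{s},-m,-n)\,?
\end{equation*}
but this is off by a sign: the subtlety is whether the set $T_{\sa\sb}$, the function $c(r)$, and the pair $(r,\bar r)$ behave correctly under the relevant involution. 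I expect the sign $(-1)^k$ actually comes from a different source and I should instead argue via $\chi_\e$, not via naive conjugation; see below.

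The cleaner route is through the character $\chi_\e$. Observe that $\chi_\e(\delta)=\exp(2\pi i\e\int_{z_0}^{\delta z_0}\a)$ has modulus $1$, so $\overline{\chi_\e(\delta)}=\chi_\e(\delta)^{-1}=\chi_{-\e}(\delta)$. From the definition of the generalized Kloosterman sum $S_{\sa\sb}(m,n,c,\e)$, complex conjugation sends $e((ma+nd)/c)\mapsto e((-ma-nd)/c)$ and $\chi_\e\mapsto\chi_{-\e}$, while the sum over $\begin{pmatrix}a&b\\c&d\end{pmatrix}$ can be reindexed by $\begin{pmatrix}a&b\\c&d\end{pmatrix}\mapsto\begin{pmatrix}-a&-b\\c&-d\end{pmatrix}$ (still in the same double coset space, with the same $c>0$), which undoes the sign in $e((ma+nd)/c)$ but flips $\chi_{-\e}$ back to $\chi_\e$ only after also flipping $\e$. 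Tracking this carefully yields $\overline{S_{\sa\sb}(m,n,c,\e)}=S_{\sa\sb}(m,n,c,-\e)$, hence $\overline{L_{\sa\sb}(s,m,n,\e)}=L_{\sa\sb}(\overline s,m,n,-\e)$ for real $\e$; here the arguments $m,n$ are \emph{unchanged}. Differentiating $k$ times in $\e$ and setting $\e=0$, the chain rule produces a factor $(-1)^k$ from $\partial_\e^k|_{\e=0}$ applied to $\e\mapsto(-\e)$, giving
\begin{equation*}
\overline{L^{(k)}_{\sa\sb}(s,m,n)}=(-1)^k L^{(k)}_{\sa\sb}(\overline s,m,n).
\end{equation*}
This does not match the claimed identity with $-m,-n$ on the right, so the correct bookkeeping must involve the conjugation acting on $(m,n)$ as well; the resolution is that $\overline{L_{\sa\sb}(s,m,n,\e)}$ equals $L_{\sa\sb}(\overline s,-m,-n,\e)$ with $\e$ \emph{unchanged} (one conjugates the exponentials but keeps the reindexing that preserves $\chi_\e$), and then no extra sign from $\e$ appears; but the paper's claimed $(-1)^k$ must come from the relation $\langle r\rangle_{\sa\sb}$ being built from $\a$ versus $\overline{\a}$ and the identity in Remark~\ref{relation-to-their-raw}. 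The cleanest formulation: conjugating the \emph{series} directly, we have $\overline{e(mr+n\bar r)}=e(-mr-n\bar r)$ and $\langle r\rangle_{\sa\sb}^k$ real, so $\overline{L^{(k)}_{\sa\sb}(s,m,n)}=L^{(k)}_{\sa\sb}(\overline s,-m,-n)$ on the nose with no sign --- unless $f$ is \emph{not} assumed to have real coefficients, in which case $\a=\Re(f(z)dz)$ is still real but $\langle r\rangle_{\sa\sb}$ may fail to be real; then one must use $\overline{\langle r\rangle_{\sa\sb}}=-\langle r\rangle_{\sa\sb}$ when $\a$ is replaced by the companion imaginary form, producing exactly the $(-1)^k$.

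For the second identity, the idea is a bijection between the indexing sets $T_{\sa\sb}$ and $T_{\sb\sa}$ given by $\g\mapsto\g^{-1}$ on the level of double cosets: inversion maps $\Gamma_\infty\backslash\sigma_\sa^{-1}\Gamma\sigma_\sb/\Gamma_\infty$ bijectively onto $\Gamma_\infty\backslash\sigma_\sb^{-1}\Gamma\sigma_\sa/\Gamma_\infty$. If $\g=\begin{pmatrix}a&b\\c&d\end{pmatrix}$ corresponds to $r\in T_{\sa\sb}$ with $c>0$, then $\g^{-1}=\begin{pmatrix}d&-b\\-c&a\end{pmatrix}$ has lower-left entry $-c<0$, so to land in $T_{\sb\sa}$ one multiplies by $-I\in\Gamma_\infty$ (or conjugates by the Weyl element) to get a representative with positive lower-left entry $c$, upper-left entry $d$, and lower-right entry $a$. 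Thus the image point $r'\in T_{\sb\sa}$ satisfies $c(r')=c(r)$, $a(r')=d(r)\bmod c$, $d(r')=a(r)\bmod c$, i.e. $r'=\bar r$ and $\bar{r'}=r$ with the roles of the two exponential arguments swapped. On modular symbols, $\langle r\rangle_{\sa\sb}=\modsym{\sigma_\sa\g\sigma_\sb^{-1}}{\a}$ and the corresponding modular symbol for the inverse is $\modsym{\sigma_\sb\g^{-1}\sigma_\sa^{-1}}{\a}=\modsym{(\sigma_\sa\g\sigma_\sb^{-1})^{-1}}{\a}$; since $\modsym{\delta^{-1}}{\a}=A_\sa(\delta^{-1}z)-A_\sa(z)=-(A_\sa(z)-A_\sa(\delta^{-1}z))$ and by $\Gamma$-equivariance this equals $-\modsym{\delta}{\a}$ (using $\modsym{\delta\eta}{\a}=\modsym{\delta}{\a}+\modsym{\eta}{\a}$, the additivity/cocycle property of modular symbols, with $\eta=\delta^{-1}$), we get $\langle r'\rangle_{\sb\sa}=-\langle r\rangle_{\sa\sb}$. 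Raising to the $k$-th power gives the factor $(-1)^k$. Substituting $c(r')=c(r)$, $\langle r'\rangle_{\sb\sa}^k=(-1)^k\langle r\rangle_{\sa\sb}^k$, and $(r',\bar r')=(\bar r,r)$ into the definition of $L^{(k)}_{\sb\sa}(s,-n,-m)$ and matching the exponential $e(-n r'-m\bar r')=e(-n\bar r-mr)$ with $e(mr+n\bar r)$ requires one more sign flip, which must be absorbed by noting $\overline{r}$ here means $d/c$ and the bijection actually sends $r\mapsto-\bar r\bmod 1$ or similar; the honest statement is that after reindexing $\sum_{r\in T_{\sb\sa}}\langle r\rangle_{\sb\sa}^k e(-nr-m\bar r)c(r)^{-2s}=(-1)^k\sum_{r\in T_{\sa\sb}}\langle r\rangle_{\sa\sb}^k e(mr+n\bar r)c(r)^{-2s}=(-1)^kL^{(k)}_{\sa\sb}(s,m,n)$, i.e. $L^{(k)}_{\sb\sa}(s,-n,-m)=(-1)^kL^{(k)}_{\sa\sb}(s,m,n)$, which is the claim.

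The main obstacle, and the part requiring genuine care rather than routine computation, is the precise bookkeeping of the involutions: making sure that under $\g\mapsto -\g^{-1}$ the quantities $c(r)$, the residues $a(r),d(r)\bmod c(r)$, and hence $r$ and $\bar r$, transform exactly as $(r,\bar r)\mapsto(\bar r,r)$ modulo $1$ (including getting the signs of $a$ and $d$ right, since $\g^{-1}=\begin{pmatrix}d&-b\\-c&a\end{pmatrix}$), and tracking how the sign $(-1)^k$ from $\langle\cdot\rangle^k$ interacts with the sign changes in the exponential arguments. The cleanest proof will likely be to establish the two identities first at the level of $L_{\sa\sb}(s,m,n,\e)$ — where the first follows from $\overline{\chi_\e}=\chi_{-\e}$ and reindexing Kloosterman sums, and the second from the bijection $\g\mapsto\g^{-1}$ together with $\chi_\e(\delta^{-1})=\chi_\e(\delta)^{-1}=\overline{\chi_\e(\delta)}$ — and then differentiate $k$ times in $\e$ at $\e=0$, invoking \eqref{relation-to-derivative}, with the $(-1)^k$ emerging from whichever of the two sides carries the $\e\mapsto-\e$ or $\a\mapsto-\a$ flip. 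I should double-check the sign conventions against Remark~\ref{relation-to-their-raw} and the cocycle identity $\modsym{\g\g'}{\a}=\modsym{\g}{\a}+\modsym{\g'}{\a}$ to be certain the parity is $(-1)^k$ and not trivial.
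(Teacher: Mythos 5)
Your proposal identifies the two correct mechanisms --- complex conjugation for the first identity and the inversion $\g\mapsto\g^{-1}$ for the second, which is exactly what the paper does at the level of the Kloosterman sums $S_{\sa\sb}(m,n,c,\e)$ --- but as written it is not a proof: you present several mutually contradictory sign computations and never commit to a correct one. The single fact you are missing for the first identity is that $\langle r\rangle_{\sa\sb}=2\pi i\int_{\sb}^{\sigma_\sa r}\a$ is \emph{purely imaginary}, not real: $\a=\Re(f(z)\,dz)$ is a real $1$-form (regardless of whether $f$ has real coefficients), so the integral along any path is real and the factor $2\pi i$ gives $\overline{\langle r\rangle_{\sa\sb}}=-\langle r\rangle_{\sa\sb}$. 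Termwise conjugation of the defining series (using $c(r)>0$ real, so $\overline{c(r)^{-2\bar s}}=c(r)^{-2s}$) then yields $\overline{L^{(k)}_{\sa\sb}(\bar s,-m,-n)}=(-1)^kL^{(k)}_{\sa\sb}(s,m,n)$ on the nose; there is no tension to resolve. Equivalently, in the character formulation the two flips happen \emph{simultaneously}: $\overline{S_{\sa\sb}(-m,-n,c,-\e)}=S_{\sa\sb}(m,n,c,\e)$ because conjugation sends $\chi_{-\e}\mapsto\chi_{\e}$ and $e((-ma-nd)/c)\mapsto e((ma+nd)/c)$ at the same time, and the $(-1)^k$ comes from $\partial_\e^k\vert_{\e=0}$ applied to a function of $-\e$. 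Your attempts to keep $(m,n)$ fixed while flipping $\e$, or vice versa, are what generate the spurious contradictions, and your final sentence about $f$ lacking real coefficients is a red herring.

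For the second identity your bijection is right, but the bookkeeping is off exactly where you flag it: with $\g=\left(\begin{smallmatrix}a&b\\c&d\end{smallmatrix}\right)$ the representative of the inverse coset with positive lower-left entry is $-\g^{-1}=\left(\begin{smallmatrix}-d&b\\c&-a\end{smallmatrix}\right)$, so the image point satisfies $(r',\bar r')=(-\bar r,-r)\bmod 1$, not $(\bar r,r)$. With the correct signs, $e(-nr'-m\bar r')=e(n\bar r+mr)=e(mr+n\bar r)$ matches exactly, and combined with $\modsym{\delta^{-1}}{\a}=-\modsym{\delta}{\a}$ (the homomorphism property, which you do use correctly) and $c(r')=c(r)$, the argument closes with no leftover ``sign flip to be absorbed.'' The paper's proof is precisely this two-line idea carried out for $S_{\sa\sb}(m,n,c,\e)$ and then differentiated via \eqref{relation-to-derivative}; redo both identities in that form, tracking the signs of $\chi_\e$, of $(m,n)$, and of the matrix entries in a single pass, and the statement follows.
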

\begin{proof}
  This follows easily from \eqref{relation-to-derivative}, and the
  following basic
  properties of Kloosterman sums: 
By inspection we see that \begin{equation*}S_{\sa\sb}(m,n,c,\e)=\overline{S_{\sa\sb}(-m,-n,c,-\e)}.
\end{equation*}
Also, we have
\begin{equation*}
S_{\sa\sb}(m,n,c,\e)=S_{\sb\sa}(-n,-m,c,-\e),
\end{equation*}
as  seen by inverting $\g$ in the definition of the Kloosterman sums. 
\end{proof}
We  now recover $ L^{(k)}_{\sa\sb}(s,0,n)$ as Fourier coefficients of Eisenstein series twisted by modular symbols.
For 
$m\in\N\cup\{0\}$ we define Poincar\'e series
\begin{equation*}
  E_{\sa, m}(z,s,\e)=\sum_{\gamma\in\Gamma_\sa\backslash
    \Gamma}\chi_{\e}(\gamma)e(m\sigma_{\sa}^{-1}\gamma
  z)\Im(\sigma_{\sa}^{-1}\gamma z)^s, \quad{\Re(s)>1}.
\end{equation*} When $m=0$, i.e. in the case of the usual Eisenstein
series we will often omit the subscript $m$. When we omit to specify $\e$ in the notation, we have set $\e=0$.
For $m>0$ it is known that $E_{\sa, m}(z,s,\e)\in L^2 (\GmodH)$ and 
 that it admits meromorphic continuation to
$s\in \C$, see \cite[p.~247]{GoldfeldSarnak:1983a}.
 The usual  Eisenstein series has 
Fourier expansion at a cusp $\sb$ given by (see e.g \cite[p.~640--641]{Selberg:1989a})
\begin{equation}\label{fourier-expansion-eisenstein}
  E_{\sa}(\sigma_{\sb}z,s,
  \e)=\delta_{\sa\sb}y^s+\phi_{\sa\sb}(s,\e)y^{1-s}+\sum_{n\neq
  0}\phi_{\sa\sb}(s,n,\e)\sqrt{y}K_{s-1/2}(2\pi\abs{n}y)e(nx),
\end{equation}
where
\begin{align}\label{phi-matrices}
  \phi_{\sa\sb}(s,\e)&=\pi^{1/2}\frac{\Gamma(s-1/2)}{\Gamma(s)}\sum_{c>0}\frac{S_{\sa\sb}(0,0,c,\e)}{c^{2s}},\\\label{phi-n-matrices}
  \phi_{\sa\sb}(n,s,\e)&=2\pi^{s}\frac{\abs{n}^{s-1/2}}{\Gamma(s)}\sum_{c>0}\frac{S_{\sa\sb}(0,n,c,\e)}{c^{2s}} .
\end{align}
As usual $\delta_{\sa\sb}=1$ if $\sa=\sb$ and is $0$ otherwise.

The derivatives of $\phi_{\sa\sb}(s, \e)$ and $\phi_{\sa\sb}(n, s, \e)$ in $\e$ are given by
\begin{align}\label{phi-matrices-diff}
  \phi^{(k)}_{\sa\sb}(s)&=\pi^{1/2}\frac{\Gamma(s-1/2)}{\Gamma(s)}L_{\sa\sb}^{(k)}(s,0,0),\\
  \phi^{(k)}_{\sa\sb}(n,s)&=2\pi^{s}\frac{\abs{n}^{s-1/2}}{\Gamma(s)}L^{(k)}_{\sa\sb}(s,0,n) .\label{phi-n-matrices-diff}
\end{align}

It follows that the series $L_{\sa\sb}^{(k)}(s,0,n)$ can be understood by
understanding derivatives of Eisenstein series.

We consider the $k$th derivative in $\e$ of $E_\sa(z,s, \e)$
 at $\e=0$, which we denote by
 $E_\sa^{(k)}(z,s)$. It is easily seen that, when $\Re(s)>1$, we have
 \begin{equation*}
E_{\sa}^{(k)}(z,s)=\sum_{\gamma\in \Gamma_{\sa}\backslash \Gamma}\modsym{\gamma}{\alpha}^k\Im (\sigma_{\sa}^{-1}\gamma z)^s.\end{equation*}
This series 
is absolutely and uniformly convergent on compact subsets of $\Re(s) >
1$, as can be seen by \eqref{elementary-bound-modsym} and using the
standard region of absolute convergence of the Eisenstein series
$E_{\sa}(z, s)$. We note that for $k\in\N$
\begin{equation}\label{Ek-expansion}
  E^{(k)}_{\sa}(\sigma_{\sb}z,s)=\phi_{\sa\sb}^{(k)}(s)y^{1-s}+\sum_{n\neq
  0}\phi^{(k)}_{\sa\sb}(s,n)\sqrt{y}K_{s-1/2}(2\pi\abs{n}y)e(nx). 
  \end{equation}
 Here the Fourier expansion is computed by termwise differentiation of
 the Fourier expansion of $E_{\sa}(z, s, \e)$ at the cusp $\sb$, which is allowed. Hence,
  the generating series $L_{\sa\sb}^{(k)}(s,0,n)$ for the modular symbols
appear as Fourier coefficients of $E_{\sa}^{(k)}(z, s)$, see
\eqref{phi-matrices-diff} and \eqref{phi-n-matrices-diff}.

\subsection{Automorphic Poincar\'e series with modular symbols}
While the generating series for the modular symbols
appear as Fourier coefficients of $E_{\sa}^{(k)}(z, s)$,  the series
$E_{\sa}^{(k)}(z, s)$ are \emph{not} automorphic when $k>0$. They are 
\emph{higher order modular forms}. Properties of such forms has been
studied extensively in many papers such as \cite{ChintaDiamantisOSullivan:2002a,  DiamantisSreekantan:2006a, DeitmarDiamantis:2009,
  DiamantisSim:2008, Sreekantan:2009, ImamogluOSullivan:2009,
  Deitmar:2011a, BruggemanDiamantis:2012a}. 
 
Since
$2\modsym{\gamma}{\a}=\modsym{\gamma}{f(z)dz}+\overline{\modsym{\gamma}{f(z)dz}}$,
we see that our $E^{(k)}(z, s)$ is indeed a linear combination of the Eisenstein series twisted by powers of $\modsym{\gamma}{f(z)dz}$ and $\overline{\modsym{\gamma}{f(z)dz}}$ with $m+n=k$: 
\begin{equation}\label{Cormak-series}
E_{\sa}^{m, n}(z, s)=\sum_{\gamma\in\Gamma_{\sa}\backslash\Gamma}  \modsym{\gamma}{f(z)dz}^m\overline{\modsym{\gamma}{f(z)dz}}^n\Im (\sigma_{\sa}^{-1}\gamma z)^s.
\end{equation}
For background on $E_{\sa}^{m, n}(z, s)$ see Remark \ref{history}.
For our purpose it is convenient to consider a related function, which has the advantage of
also being automorphic:
Recall \eqref{antiderivative}.  Let  $m, k\in \N\cup\{0\}$. For $\Re(s)>1$ we define 
\begin{equation}
  \label{definition-D}
D^{(k)}_{\sa,m}(z,s)=\sum_{\gamma\in\Gamma_\sa\backslash
  \Gamma}A_\sa(\g z)^ke(m\sigma_\sa^{-1}\gamma
  z)\Im(\sigma_\sa^{-1}\gamma z)^s.
\end{equation}
Using \eqref{elementary-bound}, and  by comparison with the standard
Eisenstein series, we see that 
the function $D^{(k)}_{\sa,m}(z,s)$ is absolutely and uniformly
convergent on compact subsets of $\Re(s)>1$.  It follows immediately
that $D^{(k)}_{\sa,m}(z,s)$ is $\G$-automorphic, and holomorphic for
$s$ in this half-plane.

We consider also
\begin{equation*}
D_{\sa, m}(z, s, \e)=\sum_{\gamma\in \Gamma_{\sa}\backslash \Gamma}\exp(\e A_{\sa}(\gamma z))e(m \sigma_{\sa}^{-1}\gamma z)\Im (\sigma_{\sa}^{-1}\gamma z)^s , \quad \Re (s)>1,
\end{equation*}
so that  $D^{(k)}_{\sa,m}(z,s)$ is the $k$th derivative in $\e$ of $D_{\sa,  m}(z, s, \e)$ at $\e=0$. As usual when $m=0$ we omit it from the
notation. 

We now explain how $E_{\sa,m}^{(k)}(z,s)$ and $D_{\sa,m}^{(k)}(z,s)$
are related. 
We arrange our Eisenstein and Poincar\'e series in column vectors indexed by the cusps as follows:
\begin{equation*}
E_{m}(z,s,\e)=(E_{\sa,m}(z,s,\e))_\sa, \quad D_{m}(z,s,\e)=(D_{\sa,m}(z,s,\e))_{\sa}
\end{equation*}
and 
\begin{equation*}
E^{(k)}_{m}(z,s)=(E^{(k)}_{\sa,m}(z,s))_\sa, \quad D^{(k)}_{m}(z,s)=(D^{(k)}_{\sa,m}(z,s))_{\sa}.
\end{equation*}
 We define the diagonal matrix $U(z,
\e)$ with diagonal entries
\begin{equation*}
U_{\sa}(z, \e)=\exp (-\e A_{\sa}(z)),
\end{equation*}
 so that 
  \begin{equation}\label{mappings}
    U(z, \e)D_m(z,s,\e)=E_m(z,s,\varepsilon).
\end{equation}

Let
$A(z)$ be the diagonal matrix with diagonal entries the antiderivatives $2\pi i
\int_{\sa}^z\alpha=A_{\sa}(z)$. It follows from \eqref{mappings}
by differentiation at $\e=0$
that for $\Re(s)>1$ we have the vector equations
\begin{align}
\label{D-E-translation}  D_m^{(k)}(z,s)&=\sum_{j=0}^k\binom{k}{j}A(z)^jE_m^{(k-j)}(z,s),\\
\label{E-D-translation}E_m^{(k)}(z,s)&=\sum_{j=0}^k\binom{k}{j}(-A(z))^jD_m^{(k-j)}(z,s).
\end{align}
Hence, we can freely translate between $D_m^{(k)}(z,s)$ and
$E_m^{(k)}(z,s)$.

\section{The generating series $L_{\sa\sb}(s, 0, n)$ }\label{giovanna}
In this section we discuss the analytic properties of $L_{\sa\sb}(s,
0, n)$ for  $\Re (s)>1/2$. In order to do so we first need some
general bounds on Eisenstein series and Poincar\'e series with modular
symbols.

\subsection{Bounds on Eisenstein series}

We first discuss a construction of Eisenstein series of weight $k$
for $\sigma=\Re (s)>1/2$, generalizing the construction for weight $0$ in  \cite[Section
2]{Colin-de-Verdiere:1983a}. This  approach is useful to estimate Eisenstein series in the cusps for $\Re (s)>1/2$. 

For $\gamma\in {\hbox{SL}_2( {\mathbb
    R})} $ we define   $j_{\gamma}(z)=(cz+d)/\abs{cz+d}$. Let
\begin{equation*}\Delta_k=y^2\left(\frac{\partial^2}{\partial x^2}+\frac{\partial^2}{\partial y^2}\right)-iky\frac{\partial}{\partial x}\end{equation*} be the Laplace operator of weight $k$ and $\tilde\Delta_k$
the closure of $\Delta_k$ acting on smooth, weight $k$  functions
such that $f,\Delta_k f$ are square integrable.
We fix 
 a fundamental domain $F$  of $\Gamma$, and notice that $\sigma_{\sa}^{-1}F$  is a  fundamental domain for 
$\sigma_{\sa}^{-1}\Gamma\sigma_{\sa}$.
However,  the spectral analysis of the $k$-Laplacian 
remains the same, since the manifold $\GmodH$ is isometric to
$\sigma_{\sa}^{-1}\Gamma\sigma_{\sa}\backslash\H$. Recall the
decomposition of $F$ as $F(Y)\cup_{\sa}F_{\sa}(Y)$ for $Y$
sufficiently large, see \cite[p. 40]{Iwaniec:2002a}.
\begin{lem}\label{deverdiere}
Let $k\in \mathbb Z$. Let $h(y)$ be  a smooth function that is identically
  $1$ for $y\geq Y+1$ and identically $0$ for $y<Y+1/2$. Then for  $\Re (s)>1/2$ and $s(1-s)\not \in \spec(-\tilde\Delta_k)$ there exists a unique function $E_{\sa}(z, s, k) $
satisfying the eigenvalue equation
\begin{align*}
(\Delta _k+s(1-s))E_{\sa}(z, s, k)=0,
\end{align*}
and such that
\begin{equation}\label{g-norm}
j_{\sigma_{\sa}}(z)^{-k}E_{\sa}(\sigma_{\sa}z, s, k)-h(y)y^s\in L^2 (\sigma_{\sa}^{-1}\Gamma\sigma_{\sa}\backslash \H, k).
\end{equation}
Moreover, the $L^2$-norm in \eqref{g-norm} is $O_k((2\sigma-1)^{-1})$.
\end{lem}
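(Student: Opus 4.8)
The plan is to follow the Colin de Verdière construction: first build an approximate Eisenstein series by truncating the leading term $y^s$ smoothly, compute the resulting ``defect'' (the image under $\Delta_k + s(1-s)$ of this approximation), verify that the defect lies in $L^2$, and then correct it using the resolvent $(\tilde\Delta_k + s(1-s))^{-1}$, which exists precisely because $s(1-s)\notin\spec(-\tilde\Delta_k)$. Concretely, set $h_{\sa}(z) = h(\Im(\sigma_{\sa}^{-1}z))\,(\Im\sigma_{\sa}^{-1}z)^s$, transplanted via the weight-$k$ automorphy factor so that it defines a weight-$k$ function on $\sigma_{\sa}^{-1}\Gamma\sigma_{\sa}\backslash\H$ supported near the cusp $\sa$ (and extended by zero elsewhere). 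Since $y^s$ is an eigenfunction of $\Delta_k$ with eigenvalue $-s(1-s)$ away from the cutoff, the function $\psi_{\sa}(z):=(\Delta_k + s(1-s))h_{\sa}(z)$ is supported in the compact collar $Y+1/2 \le \Im(\sigma_{\sa}^{-1}z) \le Y+1$, hence is smooth and compactly supported, in particular in $L^2(\sigma_{\sa}^{-1}\Gamma\sigma_{\sa}\backslash\H, k)$. We then \emph{define}
\begin{equation*}
j_{\sigma_{\sa}}(z)^{-k}E_{\sa}(\sigma_{\sa}z,s,k) = h_{\sa}(\sigma_{\sa}z) - \bigl((\tilde\Delta_k + s(1-s))^{-1}\psi_{\sa}\bigr)(\sigma_{\sa}z).
\end{equation*}
By construction this satisfies the eigenvalue equation $(\Delta_k + s(1-s))E_{\sa}=0$ and the membership \eqref{g-norm}.

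For uniqueness: if $E,E'$ both satisfy the eigenvalue equation with the difference of their leading-term-subtracted versions in $L^2$, then $E-E'$ is a weight-$k$, $L^2$ eigenfunction with eigenvalue $-s(1-s)$; since $s(1-s)\notin\spec(-\tilde\Delta_k)$ this forces $E-E'=0$. For the norm bound, we estimate $\norm{(\tilde\Delta_k + s(1-s))^{-1}\psi_{\sa}} \le \operatorname{dist}(s(1-s),\spec(-\tilde\Delta_k))^{-1}\norm{\psi_{\sa}}$; the point is that for $\Re(s)=\sigma>1/2$ the spectral parameter $s(1-s)$ has real part $\sigma(1-\sigma) + (\Im s)^2$ which for the continuous spectrum $[1/4,\infty)$ and the relevant range stays at distance $\gtrsim (2\sigma-1)$ from the spectrum—more precisely one writes $s(1-s) = 1/4 + (s-1/2)^2$ and notes that $s-1/2$ has positive real part $\sigma - 1/2$, so $s(1-s)$ is bounded away from $[1/4,\infty)$ by a quantity comparable to $(\sigma-1/2)\cdot(\text{something bounded below})$, giving the $O_k((2\sigma-1)^{-1})$ bound. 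One also checks $\norm{\psi_{\sa}}$ is bounded uniformly on compacta in $s$, or absorbs its polynomial growth into the constant.

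The main obstacle is the resolvent bound near the edge $\sigma\to 1/2^+$: one must argue carefully that $\operatorname{dist}(s(1-s),\spec(-\tilde\Delta_k))$ does not degrade faster than linearly in $2\sigma-1$. Writing $\lambda = s(1-s)$ and the spectrum of $-\tilde\Delta_k$ as a subset of $[\lambda_0,\infty)$ (with $\lambda_0 \ge 0$ the bottom eigenvalue; the continuous spectrum is $[1/4,\infty)$), the worst case is when $\lambda$ approaches the continuous spectrum, i.e. when $\Re((s-1/2)^2)\to 0^-$, which happens as $\sigma\to 1/2$; since $\Re((s-1/2)^2) = (\sigma-1/2)^2 - (\Im s)^2$, one sees $|\lambda - 1/4| \ge |\Im((s-1/2)^2)| = 2(\sigma-1/2)|\Im s|$ when $\Im s \ne 0$, and $= (\sigma - 1/2)^2$ when $\Im s = 0$; in either regime combined with the gap below $\lambda_0$ for the discrete part, one extracts a lower bound $\gtrsim_k (2\sigma-1)$ uniformly, which is exactly what is needed. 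Care is required to handle the transition region and the possibility of small eigenvalues $\lambda_j \in (0,1/4)$, but these contribute only finitely many terms and their distance to $\lambda$ is likewise controlled from below by a multiple of $2\sigma-1$ in the relevant range.
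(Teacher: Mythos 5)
Your proposal is correct and follows essentially the same route as the paper: truncate the leading term $h(y)y^s$, observe that the defect under $\Delta_k+s(1-s)$ is compactly supported (hence in $L^2$), correct by the resolvent $(\tilde\Delta_k+s(1-s))^{-1}$, and bound the correction by the distance from $s(1-s)$ to the (real) spectrum, which the paper estimates via $\abs{\Im(s(1-s))}=\abs{t}(2\sigma-1)$. The only slip is the identity $s(1-s)=\tfrac14-(s-\tfrac12)^2$ (you wrote a plus sign), which does not affect your subsequent real/imaginary-part computations; your explicit uniqueness argument is a welcome addition the paper leaves implicit.
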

\begin{proof}
If such a solution exists we denote the left-hand side of \eqref{g-norm} by $g(z, s, k)$. We apply $\Delta_k+s(1-s)$ to deduce
\begin{equation*}
(\Delta_k+s(1-s))g(z, s, k)=H(z, s),
\end{equation*}
where \begin{equation}\label{H-def}H(z, s)=-(\Delta_k +s(1-s))
  (h(y)y^s)\end{equation} is a compactly supported function, is independent of $k$,  and is
holomorphic in $s$.

We now define $H(z,s)$ by \eqref{H-def}, and apply to it
  the resolvent $R(s, k)=(\tilde\Delta_k+s(1-s))^{-1}$ of
  $\tilde\Delta_k$, which is defined for $\Re (s)>1/2$ with
  $s(1-s)\not\in \spec(-\tilde\Delta_k)$. This produces a unique
  function $g(z, s, k)\in
  L^2(\sigma_{\sa}^{-1}\Gamma\sigma_{\sa}\backslash \H, k)$.
The standard inequality for the operator norm of the resolvent 
\begin{equation}\label{resolvent-bound}
\norm{R(s, k)}\le \frac{1}{\hbox{dist}(s(1-s), \spec(-\tilde\Delta_k))}\le \frac{1}{\abs{\Im (s(1-s)}}=\frac{1}{\abs{t}(2\sigma-1)}
\end{equation}
 allows to estimate $\norm{g(z, s, k)}_{L^2} \ll (2\sigma-1)^{-1}$.
\end{proof}
\begin{lem}\label{rest-of-the-series}\label{cestcommeunjour}
Let $\Re (s)\in [1+\e, A]$. Then for cusps $\sa,\sb$ we have
\begin{equation*}
\sum_{I\ne \gamma\in \Gamma_{\sa}\backslash\Gamma}\Im (\sigma_{\sa}^{-1}\gamma \sigma_{\sb} z)^s=O_{\e, A}(y^{1-\sigma}), \quad y\to\infty.
\end{equation*}
In particular
\begin{equation*}
\sum_{I\ne \gamma\in \Gamma_{\sa}\backslash\Gamma}\Im  (\sigma_{\sa}^{-1}\gamma           z)^s=O_{\e,       A}(1),\quad{
                                                    \textrm{ for $z\in F$.       }}
  \end{equation*}
\end{lem}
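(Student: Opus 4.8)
The plan is to reduce the sum over $I\ne \gamma\in\Gamma_{\sa}\backslash\Gamma$ to a Kloosterman-type sum in the lower-left entries and then estimate it by the standard counting bounds for Fuchsian groups. First I would note that for $\gamma\ne I$ in $\Gamma_{\sa}\backslash\Gamma$ the matrix $\sigma_{\sa}^{-1}\gamma\sigma_{\sb}$ has nonzero lower-left entry $c$, and by \cite[Eq.\ (2.30)]{Iwaniec:2002a} these entries are bounded below by a positive constant $c_{\sa\sb}$; moreover for fixed $c$ the number of cosets with that entry is controlled. Using $\Im(\sigma_{\sa}^{-1}\gamma\sigma_{\sb}z) = \Im(z)/\abs{cz+d}^2 \le \Im(z)/(c^2\Im(z)^2) = 1/(c^2 \Im(z))$, the real part of the exponent being $\sigma=\Re(s)>1/2$ (so only the modulus matters), I would bound
\begin{equation*}
\sum_{I\ne\gamma}\abs{\Im(\sigma_{\sa}^{-1}\gamma\sigma_{\sb}z)^s} \ll y^{-\sigma}\sum_{c>0}\frac{\#\{\gamma : \text{lower-left entry}=c\}}{c^{2\sigma}}.
\end{equation*}

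Next I would invoke \cite[Prop.\ 2.8]{Iwaniec:2002a} (the same estimate already used in the excerpt to get absolute convergence of $L^{(k)}_{\sa\sb}(s,m,n)$ and of the Selberg--Linnik zeta function for $\Re(s)>1$), which gives that the counting function of lower-left entries $\le C$ grows like $O(C^2)$, so the Dirichlet series $\sum_{c>0} c^{-2\sigma}\#\{\cdots\}$ converges for $\sigma>1$ and is bounded uniformly for $\sigma\in[1+\e,A]$. Combined with the displayed inequality this yields the first claim $\sum_{I\ne\gamma}\Im(\sigma_{\sa}^{-1}\gamma\sigma_{\sb}z)^s = O_{\e,A}(y^{1-\sigma})$ as $y\to\infty$; the extra $y$ compared to the naive $y^{-\sigma}$ comes from being slightly more careful with $\abs{cz+d}^2\ge c^2y^2$ versus keeping one factor of $y$, i.e.\ writing $\Im(\sigma_{\sa}^{-1}\gamma\sigma_{\sb}z) \le y \cdot \min(1, (c^2(x^2+y^2)+\cdots)^{-1})$ and splitting the $c$-sum at $c\asymp y^{-1}$ — this is exactly the classical estimate for the non-identity part of an Eisenstein series near a cusp, e.g.\ \cite[(3.5)]{JorgensonOSullivan:2008a} or \cite{Iwaniec:2002a}. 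For the "in particular" statement, when $z$ ranges over the fixed fundamental domain $F$, the quantity $\Im(\sigma_{\sb}^{-1}z)$ (equivalently $y$ after conjugating) is bounded above, so $y^{1-\sigma} = O_{\e,A}(1)$ and uniformity over $F$ follows; strictly one writes $z = \sigma_{\sb}w$ is not needed here since $F$ is compact after removing neighborhoods of the cusps, and in the cuspidal neighborhoods the first estimate applies.

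The main obstacle is purely bookkeeping: making the split of the $c$-sum at the threshold $c\asymp y^{-1}$ precise so that one genuinely gets the exponent $y^{1-\sigma}$ rather than $y^{-\sigma}$, and handling the dependence on $d$ (which ranges over a progression for each $c$) uniformly. This is standard — it is the same computation that shows the constant term of $E_{\sa}(\sigma_{\sb}z,s)$ minus its leading term decays like $y^{1-\sigma}$ — so I expect no real difficulty beyond citing \cite[Prop.\ 2.8]{Iwaniec:2002a} and \cite[pp.\ 40, (2.30)]{Iwaniec:2002a} carefully. Note that the weight-$k$ construction of Lemma \ref{deverdiere} is not actually needed for this particular estimate; Lemma \ref{rest-of-the-series} is an elementary convergence bound that will later be combined with Lemma \ref{deverdiere} to control the full Eisenstein series (including its $L^2$ part) in the cusps for $\Re(s)>1/2$.
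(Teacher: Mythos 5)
Your proposal is essentially sound, but it takes the ``unfold and count'' route where the paper takes a shortcut: the paper bounds the sum termwise by its absolute values, recognizes the result as $E_{\sa}(z,\sigma)-\Im(\sigma_{\sa}^{-1}z)^\sigma$ with $\sigma=\Re(s)$, quotes Iwaniec's Corollary 3.5 for the growth $E_{\sa}(\sigma_{\sb}z,\sigma)=\delta_{\sa\sb}y^{\sigma}+O(y^{1-\sigma})$ (made uniform on $\sigma\in[1+\e,A]$), and then only has to check that the subtracted identity term either cancels the $\delta_{\sa\sb}y^\sigma$ (when $\sa=\sb$) or is $\ll y^{-\sigma}$ via the lower bound on the $c$-entry of $\sigma_{\sa}^{-1}\sigma_{\sb}$ (when $\sa\neq\sb$). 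Your computation is, in effect, a re-derivation of that corollary, so the two arguments rest on the same estimate; the paper's version just outsources the work.

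That said, there are two slips in your write-up worth fixing. First, your displayed inequality cannot be taken literally: in $\Gamma_{\sa}\backslash\Gamma$ (equivalently $\Gamma_\infty\backslash\sigma_{\sa}^{-1}\Gamma\sigma_{\sb}$, with no quotient on the right) the set of cosets with a given lower-left entry $c$ is infinite, since $d$ runs over full arithmetic progressions $d_0+c\Z$; bounding each term by the $d$-independent quantity $1/(c^2y)$ and then summing would diverge. The extra factor of $y$ in $y^{1-\sigma}$ comes precisely from the $d$-sum: for fixed $c$ one has $\sum_{n\in\Z}\abs{z+d_0/c+n}^{-2\sigma}\ll y^{1-2\sigma}$ (roughly $y$ translates of size $y^{-2\sigma}$ plus a convergent tail), and it is this translate sum that gets split at $\abs{x+d_0/c+n}\asymp y$ --- not the $c$-sum at $c\asymp y^{-1}$, which is vacuous since $c\geq c_{\sa\sb}>0$. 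After that, \cite[Prop.~2.8]{Iwaniec:2002a} controls $\sum_c (\#\{d_0\bmod c\})/c^{2\sigma}$ for $\sigma>1$ exactly as you say. Second, for the ``in particular'' statement, $y$ is of course not bounded on $F$; your own correction (compact part plus cuspidal zones, where $y^{1-\sigma}\ll 1$ because $\sigma>1$) is the argument the paper uses, so keep that version and drop the first claim.
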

\begin{proof}
We use  
\begin{equation}\label{luxemburgerli}
\abs{\sum_{I\ne \gamma\in \Gamma_{\sa}\backslash\Gamma}\Im (\sigma_{\sa}^{-1}\gamma z)^s}\le \sum_{I\ne \gamma\in \Gamma_{\sa}\backslash\Gamma}\Im (\sigma_{\sa}^{-1}\gamma z)^\sigma=E_{\sa}(z, \sigma)-\Im (\sigma_{\sa}^{-1}z)^\sigma.
\end{equation}
and the estimate \cite[corollary 3.5]{Iwaniec:2002a}, which can be made uniform for $\Re (s)\in[1+\e, A]$, $y\ge Y_0$. We estimate the right-hand side of \eqref{luxemburgerli} on the compact part $F(Y)$ of $F$ and on the cuspidal zones 
 \begin{equation*}
 F_{\sb}(Y)=\sigma_{\sb}\{z;\Re (z)\in [0,1], \Im (z)>Y\},
 \end{equation*} 
 see \cite[p.~40]{Iwaniec:2002a}.
For different cusps $\sa$ and $\sb$, the matrix $\sigma_\sa^{-1}\sigma_{\sb}$ has nonzero $c$ bounded  from below by $c({\sa, \sb})$, see \cite[Section 2.6]{Iwaniec:2002a}. We have $\Im (\sigma_{\sa}^{-1}\sigma_{\sb}z)^\sigma \le y^\sigma/(cy)^{2\sigma}\ll y^{-\sigma}$. For $\sa=\sb$ the terms $y^\sigma$ cancel.
\end{proof}

As usual we define the Eisenstein series of weight $k$  by
\begin{equation*}
E_{\sa}(z, s, k)=\sum_{\gamma\in \Gamma_{\sa}\backslash \Gamma} j_{\sigma_{\sa}^{-1}\gamma }(z)^{-k} \Im (\sigma_{\sa}^{-1}\gamma z)^s, \quad \Re (s)>1.
\end{equation*}
Using Lemma \ref{rest-of-the-series}
we see that
\begin{equation*}
j_{\sigma_{\sa}}(z)^{-k}E_{\sa}(\sigma_{\sa}z, s, k)=y^s+O(1).
\end{equation*}
This equation and Lemma \ref{deverdiere} show that $E_{\sa}(z, s, k)$ agrees with the construction of Lemma \ref{deverdiere}. Therefore,  the conclusions of Lemma \ref{deverdiere} hold for the  Eisenstein series of weight $k$  in the region $\Re (s)>1/2$, $s(1-s)\not\in\spec(-\tilde\Delta_k)$.

We can also estimate $D^{(k)}_{\sa,m}( z, s)$ defined in \eqref{definition-D}
using \eqref{elementary-bound} and Lemma \ref{cestcommeunjour}. We  write
\begin{equation}\label{D-k-Selberg-result}
D^{(k)}_{\sa,m}(\sigma_\sb z, s)=A_{\sa}^k(\sigma_\sb z)e(m\sigma_{\sa}^{-1}\sigma_{\sb} z)\Im( \sigma_{\sa}^{-1}\sigma_{\sb} z)^s+O(y^{1-\sigma+\e}).
\end{equation}
 If $m>0$, and $1+\e\le \Re (s)\le A$ we see that  the contribution of the identity
term $A_\sa( z)^ke(m\sigma_\sa^{-1} z)\Im(\sigma_\sa^{-1}z)^s$ decays exponentially at the cusp $\sa$ i.e. $\ll \exp (-(2\pi -\e)y)$    and is $O(y^{-\sigma+\e})$ at the other cusps.
When $m=0$, $k>0$ we notice that in  the cuspidal zone for $\sa$, the expansion \eqref{fourier-expansion} shows that $A_{\sa}(\sigma_{\sa}z)$ decays exponentially. We can deduce that for all cusps $\sb$ we have $D^{(k)}_\sa(\sigma_{\sb}z, s)=O(y^{1-\sigma+\e})$. 
These estimates show that $ D^{(k)}_{\sa,m} (z, s)$ is square
integrable uniformly in the strip $1+\e\le \Re(s)\le A$ for $m>0$ or $k>0$.

\begin{lem} \label{L1-bound}Let $m\geq 1$, $1/2+\e<\Re(s) <1+\e$,
  $1+2\e<\Re(w)<A$. Moreover, assume that $s(1-s)\not\in \spec(-\tilde\Delta)$. Then the $\Gamma$-invariant functions
  \begin{equation*}
    E_{\sa}(z, s)\overline{D_{\sb, m}^{(k)}(z, \bar w)}
  \end{equation*} belong to $L^1(\GmodH)$. In fact
  \begin{align*}
\norm{E_{\sa}(z, s)\overline{D_{\sb, m}^{(k)}(z, \bar w)}}_{L^1}\ll 1, 
  \end{align*}
where  the implied constant depends only on $k, \Gamma, \e$, and $A$.
\end{lem}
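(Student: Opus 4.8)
The plan is to bound the $L^1$-norm by splitting the fundamental domain $F$ into its compact core $F(Y)$ and the cuspidal zones $F_{\sc}(Y)$, and to estimate each factor separately on each piece. On the compact part $F(Y)$ there is nothing to prove: both $E_{\sa}(z,s)$ and $D^{(k)}_{\sb,m}(z,\bar w)$ are continuous (holomorphic in the relevant half-planes away from the spectrum), hence bounded there, and $\vol{F(Y)}<\infty$, so the contribution to the $L^1$-norm is $O(1)$. The real work is in the cuspidal zones, where we must exploit the decay of $D^{(k)}_{\sb,m}$ together with the at-most-polynomial growth of $E_{\sa}$.

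First I would recall, for the cuspidal zone $F_{\sc}(Y)$ associated to a cusp $\sc$, that by \eqref{D-k-Selberg-result} and the discussion following it we have $D^{(k)}_{\sb,m}(\sigma_{\sc}z,\bar w)=O(y^{1-\Re(w)+\e})$ for $m\geq 1$, uniformly in the strip $1+2\e<\Re(w)<A$; indeed for the identity term the factor $e(m\sigma_{\sb}^{-1}\sigma_{\sc}z)$ contributes genuine exponential decay when $\sc=\sb$ and the factor $(cy)^{-2\Re(w)}$ contributes $O(y^{-\Re(w)+\e})$ when $\sc\neq\sb$, while the non-identity terms give $O(y^{1-\Re(w)+\e})$ by Lemma \ref{rest-of-the-series}. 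For the Eisenstein factor, the standard bound \cite[Corollary 3.5]{Iwaniec:2002a}, made uniform for $\Re(s)$ in a compact subset of $(1/2,\infty)$, gives $E_{\sa}(\sigma_{\sc}z,s)=\delta_{\sa\sc}y^s+O(y^{1-\Re(s)})+O(y^{\e})$, so in particular $E_{\sa}(\sigma_{\sc}z,s)\ll y^{\sigma}+y^{1-\sigma}$ on the zone. Since $dz\,d\bar z/y^2$ on $F_{\sc}(Y)$ is $\int_0^1\int_Y^{\infty}(\cdot)\,\frac{dx\,dy}{y^2}$, the contribution of the zone to $\norm{E_{\sa}(z,s)\overline{D^{(k)}_{\sb,m}(z,\bar w)}}_{L^1}$ is bounded by
\begin{equation*}
\int_Y^{\infty}\bigl(y^{\sigma}+y^{1-\sigma}\bigr)\cdot O\bigl(y^{1-\Re(w)+\e}\bigr)\,\frac{dy}{y^2}
\ll \int_Y^{\infty}\bigl(y^{\sigma-\Re(w)-1+\e}+y^{-\sigma-\Re(w)+\e}\bigr)\,dy,
\end{equation*}
which converges because $\Re(w)>1+2\e>\sigma+\e$ (as $\sigma<1+\e$) forces both exponents to be $<-1$; the resulting bound depends only on $k,\Gamma,\e,A$ and the lower bound $Y$ for the zones.

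Summing the compact-part contribution and the finitely many cuspidal-zone contributions gives $\norm{E_{\sa}(z,s)\overline{D^{(k)}_{\sb,m}(z,\bar w)}}_{L^1}\ll 1$ with the asserted dependence on the parameters, and in particular the product lies in $L^1(\GmodH)$. The main obstacle is purely bookkeeping: one must check that the growth of the Eisenstein series and the decay of $D^{(k)}_{\sb,m}$ can be made simultaneously uniform over the stated strips in $s$ and $w$ (including uniformity of the implied constants near, but bounded away from, the exceptional set $s(1-s)\in\spec(-\tilde\Delta)$), and that when $\sc=\sa$ the dangerous term $y^{\sigma}$ from $E_{\sa}$ is still beaten by the decay $y^{1-\Re(w)+\e}$ of $D^{(k)}_{\sb,m}$ — which it is, precisely because of the hypothesis $\Re(w)>1+2\e$ relative to $\Re(s)<1+\e$. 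No delicate analytic input beyond the already-cited bounds of Iwaniec and Lemma \ref{rest-of-the-series} is needed.
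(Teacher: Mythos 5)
Your decomposition is not the paper's, and the place where your argument breaks is exactly the point the paper's argument is designed to avoid: you need \emph{pointwise} control of the analytically continued Eisenstein series $E_{\sa}(z,s)$ for $1/2+\e<\Re(s)<1+\e$, uniformly in $\Im(s)$, and you do not have it. On the cuspidal zones you invoke \cite[Corollary 3.5]{Iwaniec:2002a} ``made uniform for $\Re(s)$ in a compact subset of $(1/2,\infty)$'', but that estimate is a consequence of the absolute convergence of the defining series and holds only for $\Re(s)>1$; the paper itself uses it only in the range $\Re(s)\in[1+\e,A]$ (Lemma \ref{rest-of-the-series}). To push the bound $E_{\sa}(\sigma_{\sc}z,s)\ll y^{\sigma}+y^{1-\sigma}$ into the critical strip you would need uniform bounds on the nonzero Fourier coefficients $\phi_{\sa\sb}(s,n)$ of the continued Eisenstein series --- but in this paper those bounds are essentially the content of Lemma \ref{L-s-0-n-e}, which is \emph{proved using} Lemma \ref{L1-bound}, so your route is circular as it stands, and supplying the missing input independently (e.g.\ via the Maass--Selberg relation) is not ``purely bookkeeping''. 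The same uniformity problem infects your treatment of the compact core: ``continuous hence bounded'' gives a constant depending on the particular $s$ and $w$, whereas the lemma requires a constant depending only on $k,\Gamma,\e,A$, i.e.\ uniform as $\abs{\Im(s)}\to\infty$ along the strip; the sup-norm of $E_{\sa}(z,s)$ on $F(Y)$ is not $O(1)$ in that limit.

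The paper avoids all of this by never estimating $E_{\sa}(z,s)$ pointwise in the critical strip. It splits the Eisenstein series itself, not the domain: $E_{\sa}(z,s)=\bigl(E_{\sa}(z,s)-h(\Im(\sigma_{\sa}^{-1}z))\Im(\sigma_{\sa}^{-1}z)^{s}\bigr)+h(\Im(\sigma_{\sa}^{-1}z))\Im(\sigma_{\sa}^{-1}z)^{s}$. The first piece lies in $L^{2}$ with norm $\ll(2\sigma-1)^{-1}\ll_{\e}1$ by the resolvent construction of Lemma \ref{deverdiere} --- this is where the uniformity in $\Im(s)$ comes from --- and pairing it with the uniform $L^{2}$ bound on $D^{(k)}_{\sb,m}(z,\bar w)$ via Cauchy--Schwarz controls that contribution to the $L^{1}$ norm over \emph{all} of $F$ at once. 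Only the explicit second piece, supported in the single cuspidal zone $F_{\sa}(Y)$, is estimated pointwise, and there your computation with the decay $D^{(k)}_{\sb,m}(\sigma_{\sa}z,\bar w)\ll y^{1-\Re(w)+\e}$ coincides with the paper's. To salvage your approach you would have to import an independent uniform pointwise bound for $E_{\sa}(z,s)$ in the strip; the more economical fix is to adopt the $L^{2}$/Cauchy--Schwarz device.
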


\begin{proof}
We use the result from Lemma \ref{deverdiere} and the notation in its proof. We have
\begin{align*}\int_{F}\abs{(E_{\sa}( z,s)-h(\Im (\sigma_{\sa}^{-1}z)) \Im (\sigma_{\sa}^{-1}z)^s)D^{(k)}_{ \sb,m}(z,\bar w)}d\mu(z)\\\leq \norm{g(\sigma_{\sa}^{-1}z,s)}_{L^2}\norm{D^{(k)}_{\sb,m}(z, \bar w)}_{L^2}\ll 1.\end{align*}
We need to analyze also 
\begin{equation*}\int_{F}\abs{h(\Im (\sigma_{\sa}^{-1}z))\Im (\sigma_{\sa}^{-1}z)^sD^{(k)}_{\sb,m}(z,\bar w)}d\mu(z).
\end{equation*}
It suffices to concentrate on the cuspidal sector for $\sa$, since $h(\Im (\sigma_{\sa}^{-1}z))$ vanished elsewhere. We have
\begin{equation*}
\int_{F_{\sa}(Y)}\abs{h(\Im (\sigma_{\sa}^{-1}z))\Im (\sigma_{\sa}^{-1}z)^sD^{(k)}_{\sb,m}(z,\bar w)}d\mu(z)=\int_Y^\infty \int_0^1\abs{h(y)y^sD^{(k)}_{\sb, m}(\sigma_{\sa} z, \bar w)}d\mu .
\end{equation*}
We use \eqref{D-k-Selberg-result}. When $\sb=\sa$, we see that 
\begin{equation*}
h(y)y^s  A_{\sb}^k(\sigma_\sa z) e(m\sigma_{\sb}^{-1}\sigma_{\sa}z)\Im( \sigma_{\sb}^{-1}\sigma_{\sa}z)^{\bar w}
\end{equation*}
 decays exponentially, otherwise it is bounded by $h(y)y^{\sigma-\Re (w)+\e}$. We   easily see that   $\int_{Y}^{\infty}h(y)y^\sigma y^{1-\Re (w)+\e}d\mu (z)$ is bounded. 
\end{proof}

\subsection{Meromorphic continuation and bounds on $L_{\sa\sb}(s,0,n)$}
It is well known that  the  inner product of an automorphic function  $G$ and
$E_\sb(z,s,\e)$ is directly related to the $n$th Fourier coefficient
of $G$ at the cusp $\sb$. We first use this to find an integral expression
for $L_{\sa\sb}(s,0,n)$ for $n>0$.
\begin{lem}\label{integral-rep-L-s-0-n-e} Let $n> 0$ and $\Re (s),  \Re (w)>1$. The Dirichlet series $L_{\sa\sb}(s, 0, n, \e)$ has the integral representation
\begin{align*}
L_{\sa\sb}(s, 0, n, \e)=
 \frac{(4\pi n)^{w-1/2}}{2\pi^{s+1/2}{n}^{s-1/2}}\frac{\Gamma(w)\Gamma(s)}{\Gamma(w+s-1)\Gamma(w-s)} \int_{\GmodH}E_{{\sa}}(z,s,\e)\overline{E_{{\sb},n}(z,\overline{w},\e)}d\mu
  (z).
\end{align*}
\end{lem}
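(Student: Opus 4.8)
The plan is to compute the inner product $\langle E_{\sa}(\cdot,s,\e),E_{\sb,n}(\cdot,\overline w,\e)\rangle$ by unfolding the Poincaré series $E_{\sb,n}$ and recognizing the $n$th Fourier coefficient of $E_{\sa}$ at the cusp $\sb$. Since we are working with $\Re(s),\Re(w)>1$, both series converge absolutely, the integrand is integrable (this is the $m\ge 1$ case, and in fact already follows from the kind of estimates behind Lemma \ref{L1-bound} with $k=0$), and all manipulations below are justified by absolute convergence.

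First I would unfold. Writing $E_{\sb,n}(z,\overline w,\e)=\sum_{\gamma\in\Gamma_\sb\backslash\Gamma}\chi_\e(\gamma)e(n\sigma_\sb^{-1}\gamma z)\Im(\sigma_\sb^{-1}\gamma z)^{\overline w}$ and using $\Gamma$-invariance of $E_{\sa}(z,s,\e)\,\overline{\phantom{x}}$ together with the fact that $\overline{\chi_\e(\gamma)}=\chi_\e(\gamma)^{-1}$ transforms $E_{\sa}$ correctly, the integral over $\GmodH$ collapses to an integral over $\Gamma_\sb\backslash\H$, which after the change of variables $z\mapsto\sigma_\sb z$ becomes
\begin{equation*}
\int_{\Gamma_\infty\backslash\H}E_{\sa}(\sigma_\sb z,s,\e)\,\overline{e(nz)}\,y^{w}\,\frac{dx\,dy}{y^{2}}
=\int_0^\infty\!\!\int_0^1 E_{\sa}(\sigma_\sb z,s,\e)\,e(-nx)\,y^{w-2}\,dx\,dy.
\end{equation*}
The inner $x$-integral picks out the $n$th Fourier coefficient of $E_{\sa}(\sigma_\sb z,s,\e)$, which by \eqref{fourier-expansion-eisenstein} equals $\phi_{\sa\sb}(s,n,\e)\sqrt{y}\,K_{s-1/2}(2\pi|n|y)$ (the terms $\delta_{\sa\sb}y^s$ and $\phi_{\sa\sb}(s,\e)y^{1-s}$ have zero $n$th coefficient for $n\neq 0$).

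Next I would evaluate the remaining $y$-integral $\int_0^\infty K_{s-1/2}(2\pi n y)\,y^{w-3/2}\,dy$ via the standard Mellin transform of the $K$-Bessel function, $\int_0^\infty K_\nu(ay)y^{\mu-1}dy=2^{\mu-2}a^{-\mu}\Gamma\!\big(\tfrac{\mu+\nu}{2}\big)\Gamma\!\big(\tfrac{\mu-\nu}{2}\big)$, applied with $\nu=s-1/2$, $\mu=w-1/2$, $a=2\pi n$; this produces a product of Gamma factors $\Gamma((w+s-1)/2)\,\Gamma((w-s)/2)$ divided by a power of $2\pi n$. Finally I would substitute the explicit value of $\phi_{\sa\sb}(s,n,\e)$ from \eqref{phi-n-matrices}, namely $2\pi^s|n|^{s-1/2}\Gamma(s)^{-1}L_{\sa\sb}(s,0,n,\e)$, and solve for $L_{\sa\sb}(s,0,n,\e)$. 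Collecting the powers of $\pi$, $n$, $4$, and $2$ and assembling the Gamma factors should reproduce exactly the claimed constant $\dfrac{(4\pi n)^{w-1/2}}{2\pi^{s+1/2}n^{s-1/2}}\cdot\dfrac{\Gamma(w)\Gamma(s)}{\Gamma(w+s-1)\Gamma(w-s)}$; note the duplication formula will be needed to combine $\Gamma((w+s-1)/2)\Gamma((w+s)/2)$ into $\Gamma(w+s-1)$ and similarly for the $\Gamma(w)$ factor coming from the $\Gamma_\infty\backslash\H$ normalization against $\Gamma(\overline w)$ hidden in $E_{\sb,n}$'s definition.

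The main obstacle here is purely computational rather than conceptual: keeping track of all the elementary factors ($2$'s, $\pi$'s, powers of $n$, and especially the Legendre duplication formula $\Gamma(z)\Gamma(z+\tfrac12)=2^{1-2z}\sqrt{\pi}\,\Gamma(2z)$) so that the bookkeeping matches the stated normalization exactly. The only genuine analytic point to check is that the unfolding is legitimate and the triple integral converges absolutely for $\Re(s),\Re(w)>1$, which follows from the region of absolute convergence of $E_{\sb,n}$ together with the moderate growth of $E_{\sa}$ on the fundamental domain; the factor $e(n\sigma_\sb^{-1}\gamma z)$ with $n>0$ ensures rapid decay of the unfolded integrand at the cusp.
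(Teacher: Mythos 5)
Your overall strategy is exactly the paper's: unfold the Poincar\'e series $E_{\sb,n}$, pick out the $n$th Fourier coefficient of $E_{\sa}$ at $\sb$, and evaluate the resulting Bessel integral. However, there is a concrete computational gap that derails the final answer. When you unfold, the factor you inherit from $E_{\sb,n}$ is $\overline{e(nz)}=e(-n\bar z)=e(-nx)\,e^{-2\pi n y}$, not $e(-nx)$: you have dropped the exponential damping $e^{-2\pi ny}$. Consequently the $y$-integral you propose to evaluate, $\int_0^\infty K_{s-1/2}(2\pi n y)\,y^{w-3/2}\,dy$, is the wrong one. The correct integral is
\begin{equation*}
\int_0^\infty \sqrt{y}\,K_{s-1/2}(2\pi n y)\,e^{-2\pi n y}\,y^{w-2}\,dy,
\end{equation*}
which is a Laplace-type transform (this is \cite[6.621.3]{GradshteynRyzhik:2007a}, with the hypergeometric factor degenerating to $1$ because $\alpha=\beta=2\pi n$), and it is precisely this integral that produces the quotient $\Gamma(w+s-1)\Gamma(w-s)/\Gamma(w)$ appearing in the statement.

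The error is not repairable by bookkeeping: the pure Mellin transform gives $\Gamma\bigl(\tfrac{w+s-1}{2}\bigr)\Gamma\bigl(\tfrac{w-s}{2}\bigr)$, and no application of the duplication formula turns this into $\Gamma(w+s-1)\Gamma(w-s)/\Gamma(w)$ --- the duplication formula would require the companion factors $\Gamma\bigl(\tfrac{w+s}{2}\bigr)$ and $\Gamma\bigl(\tfrac{w-s+1}{2}\bigr)$, which your integral does not supply, and the ratio of the two candidate answers is a genuinely non-constant function of $s$ and $w$. (Relatedly, there is no ``$\Gamma(\overline w)$ hidden in $E_{\sb,n}$'s definition''; the Poincar\'e series carries no Gamma-factor normalization.) Once the factor $e^{-2\pi ny}$ is restored and the Laplace-transform formula is used, the rest of your argument --- the justification of the unfolding by absolute convergence for $\Re(s),\Re(w)>1$, the extraction of the Fourier coefficient via \eqref{fourier-expansion-eisenstein}, and the substitution of \eqref{phi-n-matrices} --- matches the paper's proof.
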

\begin{proof}
 We unfold and, using \eqref{fourier-expansion-eisenstein}, we find
\begin{align*}
  \int_{\GmodH}&E_{\sa}(z,s,\e)\overline{E_{\sb,n}(z,\overline{w},\e)}d\mu
  (z)\\
&=\int_{0}^\infty\int_{0}^1E_{\sa}(\sigma_{b}z,s,\e)e(-n\bar z)y^{w-2}dxdy\\
&=
  \phi_{\sa\sb}(n,s,\e)\int_0^\infty\sqrt{y}K_{s-{1/2}}(2\pi{n}y
  )e^{-2\pi n y}y^{w-2}dy\\
&=
  \frac{\phi_{\sa\sb}(n,s,\e)}{(2\pi
  {n})^{w-1/2}}\sqrt{\pi}\frac{1}{2^{w-1/2}}\frac{\Gamma(w+s-1)\Gamma(w-s)}{\Gamma(w)},
\end{align*}
where we have used \cite[6.621.3, p.~700]{GradshteynRyzhik:2007a}. 
The result  follows from the above computations and \eqref{phi-n-matrices}.
\end{proof}

We can now use the integral expression in Lemma \ref{integral-rep-L-s-0-n-e} to find the analytic properties
of $L_{\sa\sb}(s, 0, n)$.
\begin{lem}\label{L-s-0-n-e}
 For any cusps $\sa,\sb$, and $ n\in
  \Z$  the Dirichlet
  series
\begin{equation*}
L_{\sa\sb}(s, 0, n)=\sum_{c>0}\frac{S_{\sa\sb}(s, 0, n)}{c^{2s}}
\end{equation*}
  admits meromorphic continuation to $s \in \C$. For $n\ne 0$ the continuation is holomorphic at
  $s=1$, 
  while for $n=0$  
  the continuation 
  has a simple pole with residue 
  \begin{equation*}
  \res_{s=1}L_{\sa\sb}(s, 0, 0)=\frac{1}{\pi\vol{\GmodH}}.
  \end{equation*} 
For $1/2+\e<\Re(s)< 1+\e$, and $s(1-s)$ bounded away
from $\spec(-\tilde\Delta)$, the following estimate holds:
  \begin{equation*}
 L_{\sa\sb}(s, 0, n) \ll_{\e}(1+\abs{ n})^{1-\Re (s)+\e}\abs{s}^{1/2}.
  \end{equation*}
     \end{lem}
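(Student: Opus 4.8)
The key tool is the integral representation from Lemma \ref{integral-rep-L-s-0-n-e}, which for $n>0$ and $\Re(s),\Re(w)>1$ expresses $L_{\sa\sb}(s,0,n,\e)$ (with $\e=0$) in terms of the inner product $\int_{\GmodH}E_{\sa}(z,s)\overline{E_{\sb,n}(z,\bar w)}\,d\mu(z)$ times an explicit ratio of Gamma factors. First I would fix $\e=0$ throughout. For $n\neq 0$: the Poincaré series $E_{\sb,n}(z,\bar w)$ is square integrable and admits meromorphic continuation in $w$ to all of $\C$ (as recalled after \eqref{fourier-expansion-eisenstein}), and $E_{\sa}(z,s)$ is $L^2$ away from $s=1$ for $\Re(s)>1/2$. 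The strategy is to \emph{fix} $w$ with $\Re(w)$ large (say $\Re(w)>A$) and meromorphically continue the inner product in $s$ alone. By Lemma \ref{deverdiere} and the discussion following it, for $\Re(s)>1/2$ with $s(1-s)\notin\spec(-\tilde\Delta)$ one can write $E_{\sa}(z,s)=h(\Im(\sigma_{\sa}^{-1}z))\Im(\sigma_{\sa}^{-1}z)^s+g_{\sa}(z,s)$ with $g_{\sa}(\cdot,s)\in L^2$ of norm $O((2\sigma-1)^{-1})$, and this decomposition continues $E_{\sa}(z,s)$ (as a function, locally uniformly) into $\Re(s)>1/2$; pairing against the square-integrable $\overline{E_{\sb,n}(z,\bar w)}$ then gives the meromorphic continuation of the inner product, hence of $L_{\sa\sb}(s,0,n)$ via Lemma \ref{integral-rep-L-s-0-n-e}, into the strip $1/2+\e<\Re(s)<1+\e$. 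The Gamma ratio $\Gamma(w)\Gamma(s)/(\Gamma(w+s-1)\Gamma(w-s))$ is holomorphic and nonzero in $s$ there for fixed large $\Re(w)$, so no spurious poles are introduced; in particular at $s=1$ (where $1\cdot(1-1)=0$ is at the bottom of the spectrum but corresponds to the constant eigenfunction, which is orthogonal to the cuspidal-type pairing with $n\neq 0$) the continuation is holomorphic, because the truncated Eisenstein series $E_{\sa}(z,s)$ has at worst a simple pole at $s=1$ with \emph{constant} residue $1/(\pi\vol{\GmodH})$, and $\int_{\GmodH}\overline{E_{\sb,n}(z,\bar w)}\,d\mu(z)=0$ for $n\neq 0$ since that Fourier coefficient integrates to zero over a fundamental domain. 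For $n=0$ one argues identically but now the constant residue of $E_{\sa}$ pairs nontrivially: $\res_{s=1}L_{\sa\sb}(s,0,0)=\res_{s=1}E_{\sa}(z,s)$ evaluated via the known constant $1/(\pi\vol{\GmodH})$, matching \eqref{phi-matrices} and the standard residue of the scattering matrix.

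**The bound.** For the estimate $L_{\sa\sb}(s,0,n)\ll_\e (1+|n|)^{1-\Re(s)+\e}|s|^{1/2}$ in the strip $1/2+\e<\Re(s)<1+\e$ with $s(1-s)$ bounded away from the spectrum, I would again use Lemma \ref{integral-rep-L-s-0-n-e}, now tracking all $n$- and $s$-dependence. For $n\neq 0$: choose $w$ with $\Re(w)=1+2\e$ fixed. The inner product is bounded by $\norm{E_{\sa}(z,s)\,\textrm{(truncated)}}$ against $\norm{E_{\sb,n}(z,\bar w)}$; more precisely, split $E_{\sa}(z,s)=h\cdot\Im(\sigma_{\sa}^{-1}z)^s+g_{\sa}(z,s)$. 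The $g_{\sa}$ part contributes $\norm{g_{\sa}(\cdot,s)}_{L^2}\norm{E_{\sb,n}(\cdot,\bar w)}_{L^2}\ll (2\sigma-1)^{-1}$; the $h$-part is handled exactly as in the proof of Lemma \ref{L1-bound}, integrating $h(y)y^s$ against the explicit main term of the Fourier expansion of $E_{\sb,n}$ at cusp $\sa$, which produces a convergent $y$-integral. To extract the $(1+|n|)^{1-\Re(s)}$ saving and the $|s|^{1/2}$, I combine: (i) the factor $(4\pi n)^{w-1/2}/(2\pi^{s+1/2}n^{s-1/2})$ in front, which contributes $n^{w-s}\asymp n^{1+2\e-\Re(s)}$ in modulus (absorb the $\e$); (ii) the Gamma ratio $\Gamma(w)\Gamma(s)/(\Gamma(w+s-1)\Gamma(w-s))$, which by Stirling with $w$ fixed and $\Re(s)$ bounded is $\ll |s|^{1/2}$ as $|\Im(s)|\to\infty$ (the $\Gamma(s)/\Gamma(w+s-1)$ piece gives $|s|^{\,2-w}\asymp|s|^{1-2\e}$ in the denominator... — more carefully: $|\Gamma(s)/\Gamma(w-s)|$ and $|\Gamma(w)/\Gamma(w+s-1)|$ together yield polynomial growth of the right order; the net power of $|s|$ works out to $1/2$ after combining with the $(2\sigma-1)^{-1}$ from the resolvent, using that $|t|(2\sigma-1)$ appears in \eqref{resolvent-bound} but the hypothesis "bounded away from the spectrum" lets us keep only $|s|^{1/2}$); and (iii) the normalizing factor $|n|^{s-1/2}/\Gamma(s)$ hidden in \eqref{phi-n-matrices} — actually this is already accounted for in $L_{\sa\sb}$ versus $\phi_{\sa\sb}(n,s)$. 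The bookkeeping must be done carefully so that the $n$-powers from the prefactor and from the $y$-integral of the Fourier coefficient combine to exactly $(1+|n|)^{1-\Re(s)+\e}$, and the $s$-dependence from the Gamma ratio combined with the resolvent bound gives exactly $|s|^{1/2}$. For $n=0$ one subtracts off the polar part $\res_{s=1}L_{\sa\sb}(s,0,0)/(s-1)$ first and bounds the remainder by the same method.

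**Main obstacle.** The analytic continuation itself is essentially routine given Lemmas \ref{deverdiere} and \ref{L1-bound}; the delicate part is the \emph{uniformity} in the bound — in particular getting the exponent of $(1+|n|)$ down to exactly $1-\Re(s)+\e$ and the $|s|$-power down to exactly $1/2$. This requires: (a) pinning the contour at $\Re(w)=1+2\e$ rather than letting $w$ drift, so that the prefactor $n^{w-s}$ gives precisely $n^{1-\Re(s)+2\e}$; (b) a clean Stirling estimate of the four Gamma factors with $w$ in a fixed vertical strip and $s$ in a fixed vertical strip but $|\Im(s)|$ large, which gives a power $|s|^{\Re(w+s-1)+\Re(w-s)-\Re(w)-\Re(s)} = |s|^{\Re(w)-\Re(s)-1}$-ish — one must check this is $\ll |s|^{1/2}$ after combining with (c) the resolvent norm bound $\norm{R(s)}\ll (2\sigma-1)^{-1}$ from \eqref{resolvent-bound}, which in the strip $1/2+\e<\Re(s)$ with $s(1-s)$ bounded from the spectrum is simply $O(1)$ (the distance to the spectrum is bounded below by hypothesis, so one does \emph{not} pick up a growing $(2\sigma-1)^{-1}$ here — that growth only matters near $\sigma=1/2$, which is excluded). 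So in the end the $|s|^{1/2}$ comes purely from the Gamma ratio, and the job is to verify Stirling gives no more than $|s|^{1/2}$; I expect this to be the one computation worth writing out in full, and the $n$-bookkeeping the one place where a careless choice of $w$ would lose an $\e$ or worse.
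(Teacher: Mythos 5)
Your overall strategy is the paper's: use the integral representation of Lemma \ref{integral-rep-L-s-0-n-e}, kill the residue at $s=1$ for $n\neq 0$ by unfolding $\inprod{1}{E_{\sb,n}(\cdot,\bar w)}=0$, read off the $n=0$ residue from \eqref{phi-matrices}, and get the vertical-line bound from Stirling plus an $L^1$/$L^2$ estimate on the inner product. But there is a genuine gap in the key estimate, and it sits exactly at the point you flag as "the one computation worth writing out in full." You \emph{fix} $w$ (a single complex number with $\Re(w)=1+2\e$) and then let $\abs{\Im(s)}\to\infty$. With $w$ fixed, Stirling gives
\begin{equation*}
\abs{\frac{\Gamma(w)\Gamma(s)}{\Gamma(w+s-1)\Gamma(w-s)}}\asymp \abs{t}^{\sigma-2\Re(w)+3/2}\,e^{\pi\abs{t}/2},\qquad t=\Im(s),
\end{equation*}
because the numerator carries only one factor decaying like $e^{-\pi\abs{t}/2}$ while the denominator carries two. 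So the ratio grows \emph{exponentially} in $\abs{\Im(s)}$ and no polynomial bound, let alone $\abs{s}^{1/2}$, can come out of this choice; your exponent bookkeeping "$\abs{s}^{\Re(w)-\Re(s)-1}$-ish" silently drops these exponential factors. The paper's one non-routine move — which you do not make — is to choose $w=1+2\e+i\Im(s)$, i.e. to let $\Im(w)$ track $\Im(s)$. This is legitimate since the left-hand side of Lemma \ref{integral-rep-L-s-0-n-e} is independent of $w$ and the inner-product bound of Lemma \ref{L1-bound} is uniform in $\Im(w)$. With that choice $w-s$ is real and bounded, the exponential factors cancel exactly, and Stirling yields precisely $\abs{s}^{1/2}$; the prefactor still gives $n^{w-s}\asymp n^{1+2\e-\Re(s)}$ since only $\Re(w)$ enters its modulus. (You are right, incidentally, that the resolvent factor $(2\sigma-1)^{-1}$ contributes nothing to the $\abs{s}$-power; the $\abs{s}^{1/2}$ comes entirely from the Gamma ratio.)

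Two secondary points. First, the lemma asserts meromorphic continuation to all of $\C$, while your resolvent-based argument only reaches $\Re(s)>1/2$; the paper disposes of the full continuation in one sentence by citing the standard meromorphic continuation of the Fourier coefficients of $E_\sa(z,s)$ via \eqref{phi-matrices} and \eqref{phi-n-matrices}. Second, for $n=0$ you cannot argue "identically," because Lemma \ref{integral-rep-L-s-0-n-e} requires $n>0$ (the unfolded $y$-integral against $e^{-2\pi ny}$ diverges for $n=0$, and $E_{\sb,0}$ is not square integrable). The paper instead works directly with \eqref{phi-matrices}: the residue comes from $\res_{s=1}\phi_{\sa\sb}(s)=\vol{\GmodH}^{-1}$ together with the value $\pi^{1/2}\Gamma(1/2)/\Gamma(1)=\pi$, and the vertical-line bound comes from Selberg's estimate $\phi_{\sa\sb}(s)=O(1)$ away from the spectrum. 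You gesture at the scattering matrix for the residue, which is the right idea, but your proposed bound for $n=0$ ("the same method") has no content as stated.
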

\begin{proof} It is well known that the Fourier coefficients of
  $E_\sa(z,s)$ admit meromorphic continuation to $s\in \C$ and the
  meromorphic continuation of the functions $L_{\sa\sb}(s,0,n)$ follows from
  \eqref{phi-matrices} and \eqref{phi-n-matrices}.

To analyze further when $\Re(s)>1/2$, we use Lemma
\ref{integral-rep-L-s-0-n-e} when  $n\ne 0$. Using Proposition
\ref{myfingerhurts} we see that it suffices to consider $n>0$. 
We let $w=1+2\e+it$, where $t=\Im(s)$. With this choice of $w$ Stirling's formula gives that  the quotient of Gamma factors is $O(\abs{s}^{1/2})$. The claim about growth on
vertical lines follows from Lemma \ref{L1-bound}. For $n>0$  the residue
at $s=1$ has
$\inprod{1}{E_{{\sb},n}(z,\bar w,0)}$ as a factor. But this
vanishes by unfolding. 

For $n=0$ we examine \eqref{phi-matrices}. Using the well-known fact that $\phi_{\sa\sb}(s)$ has a pole of order $1$ at $s=1$ with residue $1/\vol{\GmodH}$, the claim for the residue of $L_{\sa\sb}(s, 0 , 0)$ follows. For the growth on vertical lines, we observe that 
$\abs{\phi_{\sa\sb}(s)}=O( 1) $ in the region $\Re (s)\ge 1/2+\e$, and
away from the spectrum of $\Delta$,  see \cite[Eq. (8.5)--(8.6), p.~655]{Selberg:1989a}.
The result follows from Stirling's formula.
\end{proof}

\begin{remark}
The analytic properties of $L_{\sa\sb}(s, m, n, \e)$ for $mn\ne 0$ have been studied in \cite{GoldfeldSarnak:1983a}. 
\end{remark}

\begin{remark}
If $\G$ is the full modular group and $\sa=\sb=\infty$ with trivial
scaling matrices then
$T_{\sa\sb}=\Q\slash\Z$ so that in this case 
\begin{align*}
  L_{\sa\sb}(s,0,0)&=\sum_{c=1}^\infty\frac{\phi(c)}{c^{2s}}=\frac{\zeta(2s-1)}{\zeta(2s)},\\
L_{\sa\sb}(s,0,n)&=\sum_{c=1}^{\infty}\frac{S(0,n,c)}{c^{2s}}=\frac{\sigma_{2s-1}(\abs{n})}{\abs{n}^{2s-1}}\frac{1}{\zeta(2s)}. 
\end{align*}
Here $S(0,n,c)$ is the standard
Ramanujan sum. We notice that in this case the bound in Lemma \ref{L-s-0-n-e} on
$L_{\sa\sb}(s,0,n)$ for $n\in \Z$  is far
from optimal.  
\end{remark}

\subsection{Equidistribution of $T_{\sa\sb}$. }
We can use Lemma \ref{L-s-0-n-e} to observe that
$T_{\sa\sb}$ is equidistributed modulo 1. The generating series of  $e(nr)$ for $r\in T_{\sa\sb}$ is $L_{\sa\sb}(s, n, 0)$.  Proposition \ref{myfingerhurts} allows us to understand its behavior through $L_{\sb\sa}(s, 0, -n)$. We define
\begin{equation*}
T_{\sa\sb}(M)=\{ r\in T_{\sa\sb}, c(r)\le M\}.
\end{equation*}
Using contour integration and the polynomial estimates on vertical lines for $L_{\sb\sa}(s, 0, -n)$ in Lemma \ref{L-s-0-n-e}
we deduce that, for some $\delta>0$ depending on the spectral gap for $\Delta$,
\begin{equation}\label{Weyl-sums}
\sum_{r\in T_{\sa\sb}(M)}e(nr)=\delta_{0}(n)\frac{1}{\pi\vol{\GmodH}}M^2+O((1+|n|)^{1/2}M^{2-\delta}).
\end{equation}
These are the Weyl sums for the sequence $T_{\sa\sb}$. As usual $\delta_m(n)=1$ if $n=m$ and is $0$ otherwise and similarly for a set $A$, $\delta_A(n)=1$ if $n\in A$ and is $0$ otherwise. Good
studied the asymptotics of such Weyl sums in \cite{Good:1983b}. 

To state our results we need to introduce norms on $L^1$-functions on $\R\slash \Z$: For $A\geq 
0$
let \begin{equation}\label{H-norm}\norm{h}_{H^{A}}=\displaystyle\sum_{n}\abs{\hat
    h(n)}(1+\abs{n})^{A},\end{equation} where  $\hat h(n)$
denotes the $n$th Fourier coefficient of $h$.
We then have the following result:
\begin{theorem}\label{equidistribution}
The sequence $T_{\sa\sb}$ is equidistributed modulo $1$, i.e. for any continuous function $h:\R\slash\Z\to \C$ we have
\begin{equation*}
\dfrac{\displaystyle\sum_{r\in T_{\sa\sb}(M)} h(r)}{\#
  T_{\sa\sb}(M)}\to \int_{\R\slash\Z}h(r)\, dr, \quad M\to\infty.
\end{equation*}
If, moreover, 
$\norm{h}_{H^{1/2}}<\infty,$ then
\begin{equation*}
\sum_{r\in T_{\sa\sb}(M)}h(r)=\int_{\R\slash\Z}h(r)\, dr \frac{M^2}{\pi\vol{\GmodH}}+O( \norm{h}_{H^{1/2}}M^{2-\delta}).
\end{equation*}
\end{theorem}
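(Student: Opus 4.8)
The plan is to deduce Theorem \ref{equidistribution} from the Weyl sum asymptotics \eqref{Weyl-sums} by the standard approximation-of-continuous-functions argument, combined with the observation that $\#T_{\sa\sb}(M)\sim M^2/(\pi\vol{\GmodH})$, which is itself the $n=0$ case of \eqref{Weyl-sums}. First I would record that \eqref{Weyl-sums} applied with $n=0$ gives
\begin{equation*}
\#T_{\sa\sb}(M)=\sum_{r\in T_{\sa\sb}(M)}1=\frac{M^2}{\pi\vol{\GmodH}}+O(M^{2-\delta}),
\end{equation*}
so that the normalizing denominator has the expected size and, in particular, is nonzero for $M$ large.

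For the quantitative statement, suppose $\norm{h}_{H^{1/2}}<\infty$. Then $h$ has an absolutely convergent Fourier expansion $h(r)=\sum_n\hat h(n)e(nr)$, and since $\sum_n\abs{\hat h(n)}(1+\abs{n})^{1/2}<\infty$ I may interchange summation over $n$ with the finite sum over $r\in T_{\sa\sb}(M)$:
\begin{equation*}
\sum_{r\in T_{\sa\sb}(M)}h(r)=\sum_n\hat h(n)\sum_{r\in T_{\sa\sb}(M)}e(nr).
\end{equation*}
Inserting \eqref{Weyl-sums} term by term, the $n=0$ contribution produces $\hat h(0)M^2/(\pi\vol{\GmodH})=\int_{\R\slash\Z}h(r)\,dr\cdot M^2/(\pi\vol{\GmodH})$, while the error terms contribute $\sum_n\abs{\hat h(n)}\cdot O((1+\abs{n})^{1/2}M^{2-\delta})=O(\norm{h}_{H^{1/2}}M^{2-\delta})$, which is exactly the claimed bound. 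The only mild technical point is to justify the interchange of the infinite $n$-sum with the application of \eqref{Weyl-sums}; this is legitimate because, for each fixed $M$, $\sum_{r\in T_{\sa\sb}(M)}e(nr)$ is a finite sum bounded by $\#T_{\sa\sb}(M)$ uniformly in $n$, so dominated convergence against the summable sequence $\abs{\hat h(n)}$ applies, and the resulting rearrangement of the main and error terms is absolutely convergent.

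For the qualitative equidistribution statement with merely continuous $h$, I would use a density argument: trigonometric polynomials are dense in $C(\R\slash\Z)$ in the sup norm (Weierstrass/Fejér), and every trigonometric polynomial has finite $H^{1/2}$-norm, so the quantitative result applies to them and gives, after dividing by $\#T_{\sa\sb}(M)\sim M^2/(\pi\vol{\GmodH})$, convergence of the normalized sums to $\int_{\R\slash\Z}h$. Given $\eta>0$ and continuous $h$, pick a trigonometric polynomial $P$ with $\norm{h-P}_\infty<\eta$; then the normalized sum for $h-P$ is bounded in absolute value by $\eta$ uniformly in $M$ (since it is an average of values of $h-P$), while the normalized sum for $P$ converges to $\int P$, which is within $\eta$ of $\int h$. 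Letting $M\to\infty$ and then $\eta\to 0$ gives the claim. I do not expect a genuine obstacle here; the entire content has already been extracted into \eqref{Weyl-sums}, whose proof (via Lemma \ref{L-s-0-n-e} and contour integration) is the real work, and the present theorem is the routine packaging of that estimate into equidistribution form. The one place to be slightly careful is bookkeeping the dependence on $\norm{h}_{H^{1/2}}$ so that it genuinely controls the error uniformly.
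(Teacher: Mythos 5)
Your proposal is correct and follows essentially the same route as the paper: the quantitative statement is obtained exactly as in the paper by expanding $h$ in its absolutely convergent Fourier series, interchanging the sums, and inserting the Weyl-sum asymptotic \eqref{Weyl-sums}, while for the qualitative statement the paper simply invokes Weyl's equidistribution criterion, whose standard proof is precisely the trigonometric-polynomial approximation argument you spell out.
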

\begin{proof}
The first claim follows from Weyl's equidistribution criterion. For the second 
we write the Fourier series of $h$, which is absolutely convergent, and interchange the summation over $n$ and over $r\in T_{\sa\sb}(M)$. 
We have
\begin{align*}
\sum_{r\in T_{\sa\sb}(M)}h(r)&=\sum_{n\in\Z}\hat h(n)\sum_{r\in T_{\sa\sb}(M)}e(nr)\\ &=\sum_{n\in \Z}\hat h(n)\left(\delta_{0}(n)\frac{1}{\pi\vol{\GmodH}}M^2+O((1+|n|)^{1/2}M^{2-\delta})\right).
\end{align*}
\end{proof}
\begin{remark}
  In Theorem \ref{equidistribution} the Sobolev norm $\norm{h}_{H^{1/2}}$ can be replaced by
  $\norm{h}_{H^{\e}}$ for any $0<\e<1/2$, at the expense of a worse
  exponent $\delta=\delta(\e)$. This can be done by making the
  appropriate changes in the contour integration argument leading to
  \eqref{Weyl-sums}. Note that if we do so the  exponent depends both on the spectral
  gap and on $\e$. 
\end{remark}
\begin{remark} Usually equidistribution modulo 1 of a sequence
$\{x_n\}_{n=1}^\infty$ is stated as 
\begin{equation*}
  \frac{1}{N}\sum_{n=1}^N h(x_n)\to \int_{\R\slash\Z} h(r)dr, \quad N\to\infty.
\end{equation*}
In Theorem \ref{equidistribution} we are only looking at
a subsequence of the left-hand side, namely only $N$ equal to
$\# T_{\sa\sb}(M)$  for some $M$. It is possible to consider
 the whole sequence by noticing that it follows from
\eqref{Weyl-sums} that
\begin{equation*}
  \sum_{\substack{r\in T_{\sa\sb}\\c(r)=c}}1=S_{\sa\sb}(0,0,c)\ll c^{2-\delta}.
\end{equation*}
This improves on the trivial bound in \cite[(2.37)]{Iwaniec:2002a}.
\end{remark}

\section{Eisenstein series with modular symbols}\label{section4}

In this section we analyze further the series $D^{(k)}_{\sa,m}(z,s)$ defined in
\eqref{definition-D}, when
$m=0$. We will omit $m$ from the notation and simply write
$D_{\sa}^{(k)}(z,s)$. We will show that this function admits meromorphic
continuation and prove $L^2$-bounds for it.
Let 
 \begin{eqnarray*}\modsym{f_1dz+f_2d\overline z}{g_1dz+g_2d\overline z}&=&2y^ 2(f_1\overline{g_1}+f_2\overline{g_2}),\\
\delta(pdx+qdy)&=&-y^2(p_x+q_y).
\end{eqnarray*}
Define    
\begin{align}\label{formulas-L1-L2}
L^{(1)}h&=-4\pi i\inprod{dh}{\a},\\
\label{formula-L2}L^{(2)}h&=-8\pi^2 \inprod{\alpha}{\alpha }h.
\end{align}

\begin{remark}These definitions are motivated by perturbation
  theory. Even if we are not using perturbation theory directly, as in
  our previous work \cite{PetridisRisager:2004a, PetridisRisager:2013a},  it is useful to have the
  following in mind.

 Let  $\alpha$ be a closed smooth $1$-form
  rapidly decaying at the cusps but not necessarily harmonic. We define unitary operators
\begin{equation*}\begin{array}{rccc}
U_{{\sa}}(\e):&L^2(\GmodH)&\to& L^2(\GmodH,\overline \chi(\cdot,\e))\\
&f&\mapsto&\exp\left(-2\pi i\e\int_{ \sa}^z\a \right)f(z)=U_{\sa}(z, \e)f(z). 
\end{array}
\end{equation*}
We also define
\begin{equation*}\label{L-e-operators}
L(\e)=U_{ \sa}^{-1}(\e)\tilde L(\e)U_{ \sa}(\e),
\end{equation*}
where the  automorphic Laplacian $\tilde L(\e)$ is
the closure of the operator $\Delta$  acting  on smooth functions $f$ with $f,
\Delta f\in
L^2(\GmodH,\overline \chi(\cdot,\epsilon))$.
This ensures that $L(\e)$ acts on the fixed space $L^2(\GmodH)$. It is then straightforward to verify that 
\begin{align*}
L(\e)h=\Delta h -4\pi i\e\modsym{dh}{\a}&+2\pi i\e\delta(\a )h-4\pi ^2\e^ 2\modsym{\a}{\a}h.
\end{align*}
We notice that $L(\e)$ does not depend on the cusp $\sa$. We observe
that $\delta(\a)=0$, if $\a$ is harmonic.  From now on we assume $\alpha$ to be harmonic. We remark that
$L(\e)$ as a function of $\e$ is a polynomial of degree 2, and that  $L^{(1)}$
and $L^{(2)}$ defined in \eqref{formulas-L1-L2} and \eqref{formula-L2}
are the first and second derivative of $L(\e)$ at $\e=0$.

The eigenvalue equation for $E_{\sa}(z, s, \e)$ and \eqref{mappings} imply that
 \begin{equation*}(L(\e)+s(1-s))D_{\sa}(z, s, \e)=0.
 \end{equation*}
  A formal differentiation of this eigenvalue equation leads to the
  formula in Lemma \ref{recurrence} below.  This lemma
is the main ingredient in
understanding the meromorphic continuation of
$D^{(k)}_\sa(z,s)$:

\end{remark}

 \begin{lem} \label{recurrence}The function $D^{(k)}_\sa(z, s)$ satisfies the
  relation 
\begin{equation*}
(\Delta+s(1-s))D^{(k)}_\sa(z, s)=-\binom{k}{1}L^{(1)}D_\sa^{(k-1)}(z, s)-\binom{k}{2}L^{(2)}D^{(k-2)}_\sa(z, s),
\end{equation*}  
when $k\geq 1$, where the last term should be omitted for $k=1$. 
\end{lem}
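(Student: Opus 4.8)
The plan is to derive the recurrence by differentiating, $k$ times in $\e$ at $\e=0$, the eigenvalue equation
\begin{equation*}
(L(\e)+s(1-s))D_{\sa}(z,s,\e)=0,
\end{equation*}
which holds for $\Re(s)>1$ by the very definition of $D_{\sa}(z,s,\e)$ together with \eqref{mappings} and the fact that $E_{\sa}(z,s,\e)$ is an eigenfunction of the twisted Laplacian $\tilde L(\e)$. First I would record that, since $\alpha$ is harmonic, the operator $L(\e)$ is a \emph{quadratic} polynomial in $\e$: explicitly
\begin{equation*}
L(\e)=\Delta+\e L^{(1)}+\frac{\e^2}{2}L^{(2)},
\end{equation*}
with $L^{(1)}h=-4\pi i\inprod{dh}{\a}$ and $L^{(2)}h=-8\pi^2\inprod{\a}{\a}h$ the first and second $\e$-derivatives of $L(\e)$ at $\e=0$, as noted after \eqref{formula-L2}. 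Hence $\frac{d}{d\e}L(\e)=L^{(1)}+\e L^{(2)}$, $\frac{d^2}{d\e^2}L(\e)=L^{(2)}$, and all higher derivatives vanish.

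Next I would apply the Leibniz rule to the product $L(\e)D_{\sa}(z,s,\e)$. Because $L(\e)$ has only three nonzero Taylor coefficients at $\e=0$, the $k$-fold derivative collapses to at most three terms:
\begin{equation*}
\left.\frac{\partial^k}{\partial\e^k}\big(L(\e)D_{\sa}(z,s,\e)\big)\right|_{\e=0}
=\Delta D_{\sa}^{(k)}(z,s)+\binom{k}{1}L^{(1)}D_{\sa}^{(k-1)}(z,s)+\binom{k}{2}L^{(2)}D_{\sa}^{(k-2)}(z,s),
\end{equation*}
where by construction $D_{\sa}^{(j)}(z,s)=\left.\partial_\e^j D_{\sa}(z,s,\e)\right|_{\e=0}$ and the last term is dropped when $k=1$. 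The term $s(1-s)D_{\sa}(z,s,\e)$ differentiates trivially, contributing $s(1-s)D_{\sa}^{(k)}(z,s)$. Setting the $k$th derivative of the whole eigenvalue equation to zero and rearranging yields exactly the claimed identity
\begin{equation*}
(\Delta+s(1-s))D_{\sa}^{(k)}(z,s)=-\binom{k}{1}L^{(1)}D_{\sa}^{(k-1)}(z,s)-\binom{k}{2}L^{(2)}D_{\sa}^{(k-2)}(z,s).
\end{equation*}

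The one genuine point requiring care — and the step I expect to be the main obstacle — is justifying that differentiation in $\e$ commutes with the differential operator $\Delta$ (and with $\inprod{d\cdot}{\a}$ and multiplication by $\inprod{\a}{\a}$), and that the resulting series $D_{\sa}^{(k)}(z,s)$ is the honest term-by-term derivative. This is legitimate because, for $\Re(s)>1$, the defining series \eqref{definition-D} for $D_{\sa,m}(z,s,\e)$ converges absolutely and uniformly on compacta in $(z,\e)$ — this follows from the bound \eqref{elementary-bound} on $A_{\sa}(z)$ and comparison with the ordinary Eisenstein series, exactly as in the paragraph establishing absolute convergence of $D^{(k)}_{\sa,m}(z,s)$. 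Uniform convergence of the series and of all its $z$-derivatives (obtained by differentiating the smooth summands, again dominated via \eqref{elementary-bound} and the convergence region of the Eisenstein series) permits interchanging $\partial_\e$, $\partial_z$, and the summation. One then only needs the elementary operator identity for $L(\e)$ recalled above, whose derivation from the definition of $\tilde L(\e)$ and the harmonicity $\delta(\a)=0$ is already carried out in the remark preceding the lemma.
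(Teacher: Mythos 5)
Your argument is correct, but it takes a genuinely different route from the paper's. The paper proves the recurrence by a direct computation: it applies $(\Delta+s(1-s))$ to the single summand $A_\sa(z)^k\Im(\sigma_\sa^{-1}z)^s$ using the product rule together with $\partial_zA_\sa=\pi i f$, $\partial_{\bar z}A_\sa=\pi i\overline{f(z)}$ from \eqref{la-la-land}, identifies the resulting terms as $\binom{k}{1}L^{(1)}$ and $\binom{k}{2}L^{(2)}$ applied to the summands of $D^{(k-1)}_\sa$ and $D^{(k-2)}_\sa$, and then automorphizes over $\Gamma_\sa\backslash\Gamma$; no differentiation in $\e$ ever occurs. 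You instead perform the $k$-fold Leibniz differentiation of the conjugated eigenvalue equation $(L(\e)+s(1-s))D_\sa(z,s,\e)=0$, exploiting that $L(\e)$ is quadratic in $\e$ --- precisely the ``formal differentiation'' that the paper's preceding remark offers as motivation but deliberately does not adopt as the proof. Your route is cleaner on the surface, but you should be aware of where its content actually sits. First, the identity $L(\e)=\Delta+\e L^{(1)}+\tfrac{\e^2}{2}L^{(2)}$ is only \emph{asserted} in that remark (``it is then straightforward to verify''); proving it means computing $e^{\e A_\sa}\Delta\bigl(e^{-\e A_\sa}h\bigr)$ by the same product rule and the same use of \eqref{la-la-land} and harmonicity that constitute the paper's proof, so you have relocated rather than avoided the essential computation, and a self-contained write-up should include it. Second, the eigenvalue equation must be read pointwise, as a classical PDE satisfied by the locally uniformly convergent series: for $\Re(s)>1$ the Eisenstein series is not square integrable, so it does not literally lie in the domain of the closed operator $\tilde L(\e)$, and $L(\e)$ should be treated as the explicit second-order differential operator. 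Granting these two points, your justification of interchanging $\partial_\e^k$ with the summation and with $\partial_z,\partial_{\bar z}$ via \eqref{elementary-bound} and comparison with the ordinary Eisenstein series is sound, and the Leibniz computation then yields exactly the claimed recurrence.
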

\begin{proof}
Since $\alpha=\Re (f(z)dz)$ we get 
\begin{equation}\label{la-la-land}
\partial_zA_\sa(z)  =\pi i
f(z), \quad \partial_{\bar z}A_\sa(z) =\pi i\overline{f(z)}
\end{equation}
 so that
$dA_\sa(z)=2\pi i \alpha$.  
Using the product rule we have that
\begin{align*}
(\Delta+s&(1-s))A_\sa(z)^k\Im
           (\sigma^{-1}_\sa z)^s\\
&=\left(-(z-\bar z)^2\frac{\partial^2}{\partial z\partial\bar
  z}+s(1-s)\right)A_\sa(z)^k\Im (\sigma^{-1}_\sa z)^s\\
&=-(z-\bar z )^2\left(f(z)\overline{f(z)}(\pi i )^2k(k-1)A_\sa(z)^{k-2}\right)\Im (\sigma_\sa^{-1}z)^s\\
&\quad \quad -(z-\bar z)^2\left(k\pi i  f(z)A_\sa(z)^{k-1}\frac{\partial}{\partial\bar z}\Im (\sigma_\sa^{-1}z)^s\right)\\
&\quad \quad -(z-\bar z)^2\left(k\pi i \overline{f(z)}A_\sa(z)^{k-1}\frac{\partial}{\partial z}\Im (\sigma_\sa^{-1}z)^s\right),
\end{align*}
where we have used that $(\Delta+s(1-s)) \Im (\sigma^{-1}_\sa z)^s=0$.
We recognize the first term as
\begin{equation*}
-4\pi ^2k(k-1)\inprod{\alpha}{\alpha}A_\sa(z)^{k-2}\Im (\sigma_\sa^{-1}z)^s=\binom{k}{2}L^{(2)}A_\sa(z)^{k-2}\Im (\sigma_\sa^{-1}z)^s.
\end{equation*}
The other two terms give
\begin{align*}
4\pi i k&A_\sa(z)^{k-1}\inprod{d \Im(\sigma_\sa^{-1}z)^s}{\alpha}\\
&=4\pi i k \left(\inprod{d\left(A_\sa(z)^{k-1}\Im (\sigma_\sa^{-1}z)^s\right)}{\alpha}- \inprod{dA_\sa(z)^{k-1}}{\alpha}\Im(\sigma_\sa^{-1}z)^s\right)\\
&=4\pi i k \left(\inprod{d\left(A_\sa(z)^{k-1}\Im (\sigma_\sa^{-1}z)^s\right)}{\alpha}       -(k-1)A_\sa(z)^{k-2}2\pi i \inprod{\alpha}{\alpha} \Im(\sigma_\sa^{-1}z)^s\right)\\
 & =4\pi i k \inprod{ d\left(A_\sa(z)^{k-1}\Im (\sigma_\sa^{-1}z)^s\right)}{\alpha}   -2\binom{k}{2}L^{(2)}A_\sa(z)^{k-2}\Im (\sigma_\sa^{-1}z)^s.
\end{align*}
This proves that 
\begin{align*}
  (\Delta&+s(1-s))A_\sa(z)^k\Im
           (\sigma^{-1}_\sa z)^s\\ &=\binom{k}{1} 4\pi i \inprod{ d\left(A_\sa(z)^{k-1}\Im (\sigma_\sa^{-1}z)^s\right)}{\alpha}   -\binom{k}{2}L^{(2)}A_\sa(z)^{k-2}\Im (\sigma_\sa^{-1}z)^s.
\end{align*}
Now we automorphize this equation over $\g\in \G_\sa\backslash\G$, and
use that $\Delta T_\g=T_\g\Delta$. 
We notice that for any two differential forms $\omega_1$, $\omega_2$ we have for the action of $\Gamma$ on functions
\begin{equation*}
T_{\gamma}\inprod{\omega_1}{\omega_2}=\inprod{{\g}^*\omega_1}{{\g}^*\omega_2}.
\end{equation*}
  Using that
$\alpha$ is invariant under $\Gamma$, we arrive at the result.
\end{proof}
From Lemma \ref{recurrence} we find that, if
$L^{(1)}D^{(k-1)}_\sa(z,s)$ and $L^{(2)}D^{(k-2)}_\sa(z,s)$ are square
integrable, then 
\begin{equation}\label{resolvent-relation}
D^{(k)}_\sa(z, s)=-R(s)\left(\binom{k}{1}L^{(1)}D_\sa^{(k-1)}(z, s)+\binom{k}{2}L^{(2)}D^{(k-2)}_\sa(z, s)\right),
\end{equation}
where $R(s)$ is the resolvent of $\tilde\Delta$. We recall that
\begin{equation*}
R(s)=(\tilde\Delta+s(1-s))^{-1}:L^{2}(\GmodH)\to L^{2}(\GmodH)
\end{equation*} is a bounded
operator on $L^{2}(\GmodH, d\mu(z))$ satisfying \eqref{resolvent-bound}
and 
\begin{equation}
  \label{resolvent-at-1}
R(s)=-\frac{P_0}{(s-1)}+R_0(s),
\end{equation} 
where $P_0$ is the projection to the  eigenspace  of
$\tilde\Delta$ for the eigenvalue $0$, and  $R_0(s)$ is holomorphic in a neighborhood of $s=1$.

Before we can prove the meromorphic continuation of $D^{(k)}_\sa(z, s)$ we
need a small technical lemma about Eisenstein series:
\begin{lem}\label{L1Eis}For $1/2+\e\leq \Re(s) \leq 2$ the function
  $L^{(1)}E_\sa(z,s)$ is square integrable over $\GmodH$. More
  precisely we have
  \begin{equation*}
    \norm{L^{(1)}E_\sa(z,s)}_{L^2}\ll \abs{s}.
  \end{equation*}
\end{lem}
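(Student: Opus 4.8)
The plan is to estimate $L^{(1)}E_\sa(z,s)=-4\pi i \inprod{dE_\sa(z,s)}{\alpha}$ by exploiting the structure $dE_\sa = \partial_z E_\sa\, dz + \partial_{\bar z}E_\sa\, d\bar z$ together with $\alpha=\Re(f(z)dz)$ and the pointwise bound $|f(z)|\ll 1$ on $\H$ with exponential decay at every cusp (since $f\in S_2(\Gamma)$). Thus $\inprod{dE_\sa(z,s)}{\alpha}$ is, up to bounded factors, $y^2$ times a product of a first derivative of $E_\sa$ with $f$ or $\overline{f}$. The key point is that although $\partial E_\sa$ is not $L^2$ on its own near a cusp (because of the $y^s$ and $y^{1-s}$ terms in the Fourier expansion \eqref{fourier-expansion-eisenstein}), multiplying by $f$ — which decays like $e(nz)$, $n\geq 1$ — kills the growth. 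So the strategy is to split $\GmodH$ into a compact part $F(Y)$ and the cuspidal zones $F_\sb(Y)$, and estimate separately.

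On the compact part $F(Y)$, I would use the construction of Lemma \ref{deverdiere} (in weight $0$): write $E_\sa(z,s)=h(\Im\sigma_\sa^{-1}z)\Im(\sigma_\sa^{-1}z)^s + g(\sigma_\sa^{-1}z,s)$, where $\|g(\cdot,s)\|_{L^2}\ll (2\sigma-1)^{-1}$, but more importantly $g$ satisfies $(\Delta+s(1-s))g = H(z,s)$ with $H$ compactly supported and $s$-holomorphic. By elliptic regularity / local Sobolev estimates for the resolvent, $\|\nabla g(\cdot,s)\|_{L^2(F(Y))}$ is controlled by $\|g\|_{L^2}\|s(1-s)\| + \|H\|_{L^2}$, which is $O(|s|)$ in the range $1/2+\e\leq\Re s\leq 2$ (the leading term $y^s$ has bounded gradient on the compact piece since $h$ is supported where $y\geq Y+1/2$; actually on $F(Y)$ the $h$-term may be absent or smooth, so its contribution is trivially bounded). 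Then $\|L^{(1)}E_\sa\|_{L^2(F(Y))}\ll \|dE_\sa\|_{L^2(F(Y))}\ll |s|$ since $\alpha$ is bounded there.

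On each cuspidal zone $F_\sb(Y)$, I would insert the Fourier expansion \eqref{fourier-expansion-eisenstein} for $E_\sa(\sigma_\sb z,s)$ and the expansion $\sigma_\sb^*\alpha = \sum_{n>0}\tfrac12(a_\sb(n)e(nz)dz + \overline{a_\sb(n)e(nz)}d\bar z)$ from \eqref{fourier-expansion}. Computing $\inprod{dE_\sa(\sigma_\sb z,s)}{\sigma_\sb^*\alpha}$ and then the $L^2$-norm $\int_Y^\infty\int_0^1 |\cdot|^2\, dx\, dy/y^2$, the $x$-integral picks out matching Fourier frequencies; every surviving term carries a factor $e^{-2\pi n y}$ from $f$ against at worst polynomial growth ($y^{\sigma}$, $y^{1-\sigma}$, or Bessel factors $\sqrt{y}K_{s-1/2}(2\pi|m|y)$ which decay exponentially for $m\neq 0$), so the $y$-integral converges and is $O(1)$ after accounting for the polynomial bounds $\phi_{\sa\sb}(s)\ll 1$ and $\phi_{\sa\sb}(m,s)\ll |s|^{1/2}|m|^{\Re s - 1/2}\cdots$ together with the rapid decay in $m$ of $K_{s-1/2}$; combined with the Fourier decay of $\alpha$ one gets a total bound $O(|s|)$, the $|s|$ coming from the Bessel/derivative factors. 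Summing the finitely many cusps gives the claim.

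The main obstacle is the cuspidal-zone estimate: one must be careful that the non-$L^2$ pieces $\delta_{\sa\sb}y^s$ and $\phi_{\sa\sb}(s)y^{1-s}$ of $E_\sa$ do not spoil square-integrability of $L^{(1)}E_\sa$. The resolution — and the reason the lemma is true — is precisely that pairing $dE_\sa$ against $\alpha$ forces a factor of $f$ or $\overline f$, whose exponential decay at the cusp tames those otherwise-divergent terms; verifying this cleanly, with the correct power of $|s|$ tracked through Stirling's formula for the Gamma factors in $\phi_{\sa\sb}(m,s)$ and through the differentiation of $\sqrt{y}K_{s-1/2}(2\pi|m|y)$ in $z$, is the only delicate computation. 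Everything else is bookkeeping of absolutely convergent integrals.
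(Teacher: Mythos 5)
Your strategy rests on the same key observation as the paper's proof --- pairing $dE_\sa$ with $\alpha$ forces a factor of $f$ or $\overline{f}$, whose exponential decay at every cusp tames the non-square-integrable zero modes $y^s$ and $\phi_{\sa\sb}(s)y^{1-s}$, while the factor $\abs{s}$ comes from differentiating the Eisenstein series --- but your route is genuinely different and considerably heavier. The paper writes $L^{(1)}h=-2\pi y(\overline{f}K_0h-fL_0h)$ using the raising and lowering operators of \eqref{raising}, invokes the identity $K_0E_\sa(z,s)=sE_\sa(z,s,2)$, and then applies Lemma \ref{deverdiere} with $k=2$: the weight-$2$ Eisenstein series is $h(y)y^s$ plus an $L^2$ function of norm $O(1)$, so multiplying by the bounded, cuspidally decaying $yf$ and by $s$ gives the claim in three lines, with no Fourier expansion and no elliptic regularity. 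Your decomposition into $F(Y)$ plus cuspidal zones can be made to work (the cuspidal bookkeeping needs the uniform bounds on $\phi_{\sa\sb}(n,s)$ from \eqref{phi-n-matrices} and Lemma \ref{L-s-0-n-e}, which are available), but one step needs repair: the estimate $\norm{\nabla g}_{L^2}\ll\norm{g}_{L^2}\abs{s(1-s)}+\norm{H}_{L^2}$ is $O(\abs{s}^2)$, not $O(\abs{s})$, since $\norm{g}_{L^2}\ll 1$ in your range. To recover $O(\abs{s})$ you should instead use $\norm{K_0g}_{L^2}^2=\inprod{g}{-\Delta g}_{L^2}\ll\norm{g}_{L^2}\norm{H-s(1-s)g}_{L^2}\ll\abs{s}^2$, i.e. the identity \eqref{roelcke-input} that the paper exploits later for $D^{(k)}_\sa$; note that this bound is global on $\GmodH$, so it also absorbs the nonzero-mode analysis in the cuspidal zones and leaves only the elementary term $d(h(y)y^s)$ paired against $\alpha$ to estimate there.
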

  \begin{proof}
We have 
\begin{equation*}dh =\frac{1}{2i y}\left((K_0h )\, dz-(L_0 h) \, d\bar
    z\right),\end{equation*}
where 
\begin{equation}\label{raising}
K_0= (z-\bar z)\frac{\partial}{\partial z}   ,\quad  L_0=\bar K_0 =(\bar z- z)\frac{\partial}{\partial\bar  z} 
\end{equation}
 are the raising and lowering
  operators as in \cite[Eq.~(3)]{Fay:1977a}. It follows that 
  \begin{equation}\label{alt-L1}
L^{(1)}h=-2\pi y(\overline{f(z)}K_0h-f(z)L_0h).
  \end{equation}
Applying this with $h=E_\sa(z,s)$ we see that it suffices to study
$y\overline{f(z)}K_0E_\sa(z,s)$ and $yf(z)L_0E_\sa(z,s)$. We
analyze the first and notice that the analysis of the latter is
similar. We have $K_0E_{\sa}(z,s)=sE_{\sa}(z,s,2)$, where
$E_{\sa}(z,s,2)$ is the Eisenstein series of weight $2$, see  \cite[Eq. (10.8),  (10.9)]{Roelcke:1966a}.
Using that $f$ decays exponentially at all cusps the claim now follows
easily from Lemma \ref{deverdiere}.

  \end{proof} We are now ready to prove the meromorphic continuation of $D_{\sa}^{(k)}(z, s)$.
\begin{theorem}\label{D-continuation}Let $k>0$. The function $D^{(k)}_\sa(z, s)$ admits
  meromorphic continuation to  $\Re(s)\geq 1/2+\e$. For $1/2+\e\leq \Re(s) \leq 2$
 and  $s(1-s)\not\in \spec(-\tilde\Delta)$  
 the functions $D^{(k)}_\sa(z,
  s)$ and $L^{(1)}D^{(k)}_\sa(z,
  s)$ are smooth and square integrable. Moreover,  
    we have 
  \begin{align*}
   \norm{ D^{(k)}_\sa(z,s)}_{L^2}&\ll_{k,f, \e}1,\\
\norm{ L^{(1)}D^{(k)}_\sa(z,s)}_{L^2}&\ll_{k,f, \e}\abs{s}.
  \end{align*}
\end{theorem}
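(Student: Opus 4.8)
The plan is to prove Theorem \ref{D-continuation} by induction on $k\geq 1$, using the recursive resolvent identity \eqref{resolvent-relation} as the engine. For the base case $k=1$, Lemma \ref{recurrence} gives $(\Delta+s(1-s))D^{(1)}_\sa(z,s) = -L^{(1)}D^{(0)}_\sa(z,s) = -L^{(1)}E_\sa(z,s)$, since $D^{(0)}_\sa = E_\sa$. By Lemma \ref{L1Eis} the right-hand side is square integrable for $1/2+\e\leq\Re(s)\leq 2$ with $L^2$-norm $\ll\abs{s}$, so applying the resolvent $R(s)$ (defined and bounded for $\Re(s)>1/2$ away from $\spec(-\tilde\Delta)$) yields $D^{(1)}_\sa(z,s) = -R(s)L^{(1)}E_\sa(z,s)$, which provides the meromorphic continuation. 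The norm bound \eqref{resolvent-bound}, $\norm{R(s)}\ll (\abs{t}(2\sigma-1))^{-1}$, combined with $\norm{L^{(1)}E_\sa(z,s)}_{L^2}\ll\abs{s}$, gives $\norm{D^{(1)}_\sa(z,s)}_{L^2}\ll 1$ away from the pole at $s=1$ (the pole coming from $P_0$ in \eqref{resolvent-at-1}, handled by working on compacta avoiding $s(1-s)\in\spec(-\tilde\Delta)$). For the bound on $L^{(1)}D^{(1)}_\sa(z,s)$ one then needs that $L^{(1)}$ applied to an $L^2$ eigenfunction-type object is controlled; this is where a variant of the argument in Lemma \ref{L1Eis} — rewriting $L^{(1)}$ via raising/lowering operators as in \eqref{alt-L1} and using elliptic regularity to control derivatives of $D^{(1)}_\sa$ in terms of its $L^2$-norm and the $L^2$-norm of $(\Delta+s(1-s))D^{(1)}_\sa$ — is required.

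For the inductive step, assume the statement holds for all indices $<k$. Then $L^{(1)}D^{(k-1)}_\sa(z,s)$ is square integrable with norm $\ll_{k,f,\e}\abs{s}$ by the inductive hypothesis, and $L^{(2)}D^{(k-2)}_\sa(z,s)$ is square integrable because $L^{(2)}h = -8\pi^2\inprod{\alpha}{\alpha}h$ is simply multiplication by the bounded rapidly-decaying function $-8\pi^2\inprod{\alpha}{\alpha}$ (here I use that $\alpha=\Re(f(z)dz)$ decays exponentially at the cusps, so $\inprod{\alpha}{\alpha}$ is bounded on $\GmodH$), hence $\norm{L^{(2)}D^{(k-2)}_\sa(z,s)}_{L^2}\ll\norm{D^{(k-2)}_\sa(z,s)}_{L^2}\ll 1$. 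Therefore the hypotheses of \eqref{resolvent-relation} are met, and
\begin{equation*}
D^{(k)}_\sa(z,s) = -R(s)\left(\binom{k}{1}L^{(1)}D^{(k-1)}_\sa(z,s) + \binom{k}{2}L^{(2)}D^{(k-2)}_\sa(z,s)\right)
\end{equation*}
furnishes the meromorphic continuation to $\Re(s)\geq 1/2+\e$. Using \eqref{resolvent-bound} on a region where $s(1-s)$ is bounded away from $\spec(-\tilde\Delta)$ (so that $\abs{t}$ may be small but $\dist(s(1-s),\spec)$ is bounded below), the factor $\abs{s}$ from $\norm{L^{(1)}D^{(k-1)}_\sa}_{L^2}$ is absorbed and one gets $\norm{D^{(k)}_\sa(z,s)}_{L^2}\ll_{k,f,\e}1$ as claimed; on the complementary region where $\abs{t}$ is large, \eqref{resolvent-bound} already gives $\norm{R(s)}\ll\abs{s}^{-1}$ directly. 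Smoothness of $D^{(k)}_\sa(z,s)$ follows from elliptic regularity applied to the eigenvalue-type equation in Lemma \ref{recurrence}, since the inhomogeneous terms are smooth.

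The bound $\norm{L^{(1)}D^{(k)}_\sa(z,s)}_{L^2}\ll_{k,f,\e}\abs{s}$ is the step I expect to be the main obstacle, and it should be treated separately after establishing the $L^2$-bound on $D^{(k)}_\sa$ itself. The approach is to use \eqref{alt-L1}, writing $L^{(1)}D^{(k)}_\sa = -2\pi y(\overline{f(z)}K_0 D^{(k)}_\sa - f(z)L_0 D^{(k)}_\sa)$, so it suffices to bound $y\overline{f(z)}K_0 D^{(k)}_\sa(z,s)$ and $yf(z)L_0 D^{(k)}_\sa(z,s)$ in $L^2$. Since $f$ decays exponentially at every cusp, the weight factor $y\abs{f(z)}$ is bounded (indeed rapidly decaying in the cuspidal zones), so one needs to control $K_0 D^{(k)}_\sa$ and $L_0 D^{(k)}_\sa$ only on the compact part and with a mild polynomial weight near the cusps — and this is handled by interior elliptic estimates together with Lemma \ref{recurrence}, which expresses $(\Delta+s(1-s))D^{(k)}_\sa$ in terms of lower-order data already known to be $L^2$-bounded with norm $\ll\abs{s}$. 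The extra factor of $\abs{s}$ in the target bound is exactly what one loses (and no more) when passing a first-order operator past the resolvent: heuristically $K_0 R(s) \approx R(s,2) K_0$ plus lower order, and the additional $\abs{s}$ comes from the eigenvalue shift, paralleling the identity $K_0 E_\sa(z,s) = sE_\sa(z,s,2)$ used in Lemma \ref{L1Eis}. Care must be taken to make all these estimates uniform in $s$ over the strip $1/2+\e\leq\Re(s)\leq 2$ away from the spectrum, but since $R(s)$, $R_0(s)$ and the weight-$2$ Eisenstein series all have the requisite uniformity from Lemma \ref{deverdiere}, this is a matter of bookkeeping rather than a new idea.
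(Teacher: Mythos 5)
Your overall architecture --- induction on $k$ driven by the resolvent identity \eqref{resolvent-relation}, with the base case $D^{(1)}_\sa(z,s)=-R(s)L^{(1)}E_\sa(z,s)$ and Lemma \ref{L1Eis} as input, the dichotomy between $\abs{t}$ large (where \eqref{resolvent-bound} absorbs the factor $\abs{s}$ coming from $\norm{L^{(1)}D^{(k-1)}_\sa}_{L^2}$) and $\abs{t}$ small (where $s(1-s)$ stays away from the spectrum), and elliptic regularity for smoothness --- is exactly the paper's. The meromorphic continuation, the bound $\norm{D^{(k)}_\sa(z,s)}_{L^2}\ll 1$, and the observation that $L^{(2)}$ is multiplication by the bounded, cuspidally decaying function $-8\pi^2 y^2\abs{f(z)}^2$ are all fine as you present them.

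The gap is in the one estimate you yourself flag as the main obstacle, $\norm{L^{(1)}D^{(k)}_\sa(z,s)}_{L^2}\ll\abs{s}$, and it matters because this is precisely the bound the inductive step consumes at level $k+1$. The paper closes it with the exact global identity (Roelcke, quoted as \eqref{roelcke-input})
\begin{equation*}
\norm{K_0h}_{L^2}^2=\norm{L_0h}_{L^2}^2=\inprod{h}{-\Delta h}_{L^2},
\end{equation*}
valid for $h,\Delta h\in L^2(\GmodH)$, which together with \eqref{alt-L1}, Cauchy--Schwarz and Lemma \ref{recurrence} yields $\norm{L^{(1)}D^{(k)}_\sa}_{L^2}^2\ll\norm{D^{(k)}_\sa}_{L^2}\norm{\Delta D^{(k)}_\sa}_{L^2}\ll 1\cdot\abs{s}^2$. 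Your two substitutes do not deliver this. Interior elliptic estimates give $\norm{\nabla u}\ll\norm{\Delta u}+\norm{u}$ without the square-root structure, hence only $\norm{K_0D^{(k)}_\sa}\ll\norm{\Delta D^{(k)}_\sa}\ll\abs{s}^2$; feeding $\abs{s}^2$ back into the induction degrades the exponent by one power of $\abs{s}$ at each level and never recovers the stated $\ll 1$ and $\ll\abs{s}$ (and uniform patching of interior estimates into the cusps is itself nontrivial, since $D^{(k)}_\sa$ grows like $y^{1-\sigma+\e}$ there). The commutator heuristic $K_0R(s)\approx R(s,2)K_0$ is likewise not established and, if pursued, would require control of second derivatives of $E_\sa$ together with a rigorous error analysis. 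So you need either to import \eqref{roelcke-input} --- whose hypotheses ($D^{(k)}_\sa$ and, via Lemma \ref{recurrence}, $\Delta D^{(k)}_\sa$ in $L^2$) your induction does supply --- or to prove an equivalent global integration-by-parts inequality; without one of these the claimed exponents are not reached.
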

\begin{proof} We use  induction on $k$. For $k=1$ Lemma \ref{L1Eis}, Eq.~\eqref{resolvent-relation}, and
  the mapping properties of the resolvent 
  imply that $D_\sa^{(1)}(z,s)=-R(s)L^{(1)}E_\sa(z,s)$ is
  meromorphic when $\Re(s)\geq 1/2+\e$ and, using
  \eqref{resolvent-bound}, we easily prove  the bound $\norm{D_\sa^{(1)}(z,s)}_{L^2}\ll
  1$. Recall that for   a twice differentiable function $h\in L^{2}(\GmodH)$ with $\Delta h\in L^{2}(\GmodH)$ we have
  $K_0h,L_0h\in L^{2}(\GmodH)$ and 
  \begin{equation}\label{roelcke-input}
    \norm{K_0h}_{L^2}^2=\norm{L_0h}_{L^2}^2=\inprod{h}{-\Delta h}_{L^2},
  \end{equation}
see \cite[Satz 3.1]{Roelcke:1966a}.  Since $L^{(1)}E_{\sa}(z, s)$ is
in $C^{\infty}(\GmodH)$, we use  Lemma \ref{recurrence} for $k=1$ and elliptic regularity to conclude that 
$D_{\sa}^{(1)}(z, s)\in C^{\infty}(\GmodH)$. By \eqref{alt-L1} the
function $L^{(1)}D_{\sa}^{(1)}(z, s)$ is smooth as well. Using Lemma \ref{recurrence} for $k=1$ again we deduce that $\Delta D_{\sa}^{(1)}(z, s)\in L^2(\GmodH)$. We find from \eqref{alt-L1} and the estimate $\norm{yf(z)}_{\infty}\ll 1$ that it suffices to estimate $\norm{K_0D^{(1)}_{\sa}(z, s)}_{L^2}$ and $\norm{L_0D^{(1)}_{\sa}(z, s)}_{L^2}$. We use  \eqref{roelcke-input}  and Lemma \ref{recurrence} to conclude 
\begin{equation*}
  \norm{L^{(1)}D_\sa^{(1)}(z,s)}_{L^2}^2\ll_f{\abs{\inprod{D_\sa^{(1)}(z,s)}{s(1-s)D_\sa^{(1)}(z,s)+L^{(1)}E_\sa(z,s)}_{L^2}}}\ll
  \abs{s}^2,
\end{equation*}
where we have used Cauchy-Schwarz and Lemma \ref{L1Eis}. This proves
the case $k=1$.

Assume now that the claim has been proved for every $l< k$. Then
for $\Re(s)\geq 1/2+\e$ the function $\binom{k}{1}L^{(1)}D^{(k-1)}_\sa(z,s)
+\binom{k}{2}L^{(2)}D^{(k-2)}_\sa(z,s)$ is meromorphic, smooth, and square integrable.
Hence, by \eqref{resolvent-relation}, the mapping properties of 
 the resolvent, and  \eqref{resolvent-bound}, we find that $D^{(k)}_\sa(z,s)$ is
square integrable and satisfies the bound
\begin{equation*}\norm{D^{(k)}_\sa(z,s)}_{L^2}\ll \frac{1}{\abs{t}}(\abs{s}+1)\ll 1.\end{equation*}
 By elliptic regularity and Lemma \ref{recurrence} it follows that 
$D_{\sa}^{(k)}(z, s)\in C^{\infty}(\GmodH)$. Therefore, the function $L^{(1)}D_{\sa}^{(k)}(z, s)$ is also smooth.
We  now use \eqref{alt-L1}, \eqref{roelcke-input}, and Lemma
\ref{recurrence} to see that 
\begin{align}\label{winterreise}
  &\norm{L^{(1)}D^{(k)}_\sa(z,s)}_{L^2}^2\ll_{f}\norm{K_0D^{(k)}_\sa(z,s)}_{L^2}^2+\norm{L_0D^{(k)}_\sa(z,s)}_{L^2}^2\\
\nonumber  & \quad\ll \abs{\inprod{D^{(k)}_\sa(z,s)}{\Delta D^{(k)}_\sa(z,s)}_{L^2}}\ll {\norm{\Delta D^{(k)}_\sa(z,s)}}_{L^2}\\
\nonumber&\quad= {\norm{-s(1-s)
  D^{(k)}_\sa(z,s)-\binom{k}{1}L^{(1)}D^{(k-1)}_\sa(z,s)-\binom{k}{2}L^{(2)}D^{(k-2)}_\sa(z,s)}}_{L^2}\\
\nonumber&\quad\ll \abs{s}^2.
\end{align}
This completes the inductive step.
\end{proof}
Lemma \ref{L1Eis} and Theorem \ref{D-continuation}  validate the conditions for \eqref{resolvent-relation}, so  we can now conclude the following fundamental recurrence relation for $D^{(k)}_{\sa}(z, s)$.
\begin{cor}\label{fundamental-recurrence}
For $\Re (s)>1/2$ the following identity holds
\begin{equation*}
D^{(k)}_\sa(z, s)=-R(s)\left(\binom{k}{1}L^{(1)}D_\sa^{(k-1)}(z, s)+\binom{k}{2}L^{(2)}D^{(k-2)}_\sa(z, s)\right),
\end{equation*}
when $k\geq 1$, where the last term should be omitted for $k=1$. 
\end{cor}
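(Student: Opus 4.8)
The plan is to assemble Corollary \ref{fundamental-recurrence} from the three pieces already at hand: Lemma \ref{recurrence}, Theorem \ref{D-continuation}, and Lemma \ref{L1Eis}. The recurrence relation as an identity of functions was already derived formally in Lemma \ref{recurrence}; the only gap between that lemma and the claimed identity \eqref{resolvent-relation} is that in order to pass from $(\Delta+s(1-s))D^{(k)}_\sa(z,s)=F_k(z,s)$ to $D^{(k)}_\sa(z,s)=R(s)F_k(z,s)$, one must know that the right-hand side $F_k(z,s)=-\binom{k}{1}L^{(1)}D_\sa^{(k-1)}(z,s)-\binom{k}{2}L^{(2)}D^{(k-2)}_\sa(z,s)$ lies in $L^2(\GmodH)$, so that $R(s)$ may legitimately be applied, and that $D^{(k)}_\sa(z,s)$ is itself in $L^2$, so that it is the \emph{unique} $L^2$-preimage. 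Both of these facts are supplied by Theorem \ref{D-continuation}: for $1/2+\e\le\Re(s)\le 2$ and $s(1-s)\notin\spec(-\tilde\Delta)$ the functions $D^{(j)}_\sa(z,s)$ and $L^{(1)}D^{(j)}_\sa(z,s)$ are smooth and square integrable for all $0\le j\le k$, and $L^{(2)}D^{(k-2)}_\sa(z,s)=-8\pi^2\inprod{\a}{\a}D^{(k-2)}_\sa(z,s)$ is square integrable because $\inprod{\a}{\a}=2y^2|f|^2$ is bounded on $\GmodH$ (as $f$ decays exponentially at every cusp).

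First I would fix $s$ with $1/2+\e\le\Re(s)\le 2$ and $s(1-s)\notin\spec(-\tilde\Delta)$, and invoke Theorem \ref{D-continuation} (together with the case $k=0$, where $D^{(0)}_\sa(z,s)=E_\sa(z,s)$ and Lemma \ref{L1Eis} handles $L^{(1)}E_\sa$) to conclude that $F_k(z,s)\in L^2(\GmodH)$ and $D^{(k)}_\sa(z,s)\in L^2(\GmodH)$. Next I would apply Lemma \ref{recurrence}, which gives $(\tilde\Delta+s(1-s))D^{(k)}_\sa(z,s)=F_k(z,s)$ in $L^2$; since $s(1-s)\notin\spec(-\tilde\Delta)$, the operator $R(s)=(\tilde\Delta+s(1-s))^{-1}$ is a bounded everywhere-defined inverse, so applying $R(s)$ to both sides yields exactly the claimed identity $D^{(k)}_\sa(z,s)=R(s)F_k(z,s)$ — i.e.\ \eqref{resolvent-relation} — for all such $s$ with $\Re(s)\le 2$. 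Then I would upgrade the range from $1/2<\Re(s)\le 2$ to all of $\Re(s)>1/2$: both sides of the asserted identity are meromorphic in $s$ for $\Re(s)>1/2$ (the left-hand side by Theorem \ref{D-continuation}, the right-hand side because $R(s)$ continues meromorphically and $D^{(k-1)}_\sa$, $D^{(k-2)}_\sa$ do too, by the inductive content of Theorem \ref{D-continuation}), so an identity valid on the non-empty open set $\{1/2+\e<\Re(s)<2\}\setminus\{s:s(1-s)\in\spec(-\tilde\Delta)\}$ propagates to the full half-plane by analytic continuation. This handles $k\ge 2$; for $k=1$ the same argument applies verbatim with the $L^{(2)}$-term omitted, and in fact the identity $D^{(1)}_\sa(z,s)=-R(s)L^{(1)}E_\sa(z,s)$ was already recorded in the proof of Theorem \ref{D-continuation}.

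The main obstacle, such as it is, is purely bookkeeping: one must be careful that the hypotheses needed to write $D^{(k)}_\sa=R(s)F_k$ are exactly the square-integrability statements proved in Theorem \ref{D-continuation}, and that the $L^{(2)}$-term poses no problem — here the point is that $L^{(2)}$ is multiplication by the bounded function $-8\pi^2\inprod{\a}{\a}$, so it maps $L^2$ to $L^2$ boundedly and introduces no growth in $s$, whereas $L^{(1)}$ is a first-order operator and is the only source of the $\abs{s}$ in the bounds. There is no genuinely hard analytic step; Corollary \ref{fundamental-recurrence} is essentially the observation that the \emph{formal} computation of Lemma \ref{recurrence} is now rigorously justified on the whole region $\Re(s)>1/2$ because all the functions appearing have been shown to live in the spaces on which $R(s)$ operates.
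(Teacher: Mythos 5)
Your proposal is correct and follows essentially the same route as the paper, which disposes of the corollary in one sentence by noting that Lemma \ref{L1Eis} and Theorem \ref{D-continuation} supply exactly the square-integrability hypotheses needed to justify \eqref{resolvent-relation}. Your additional remarks --- that $L^{(2)}$ is multiplication by a bounded function and that the range of $s$ extends to the full half-plane by meromorphy --- merely spell out details the paper leaves implicit.
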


\begin{prop}\label{integral-of-Dk}
For all $k\ge0$ and $\Re (s)>1/2$ we have
\begin{equation*}
\inprod{1}{L^{(1)}D_\sa^{(k)}(z,s)}_{L^2}=0.
\end{equation*}
\end{prop}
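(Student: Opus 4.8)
The plan is to reduce the identity to the vanishing of the integral $\int_{\GmodH} L^{(1)}D_\sa^{(k)}(z,s)\,d\mu(z)$; since $1$ is real, the vanishing of this integral is equivalent to the vanishing of $\inprod{1}{L^{(1)}D_\sa^{(k)}(z,s)}_{L^2}$. By \eqref{formulas-L1-L2} and the pointwise identity $\inprod{dh}{\a}\,d\mu = dh\wedge\star\a$ (which follows from the definitions of $\inprod{\cdot}{\cdot}$ and $d\mu$, using that $\a$ is real), we have
\begin{equation*}
L^{(1)}D_\sa^{(k)}(z,s)\,d\mu(z) = -4\pi i\, dD_\sa^{(k)}(z,s)\wedge\star\a .
\end{equation*}
Since $\a$ is harmonic, $\delta\a = 0$, which is equivalent to $d(\star\a) = 0$; hence the right-hand side equals $-4\pi i\, d\bigl(D_\sa^{(k)}(z,s)\,\star\a\bigr)$, an exact $2$-form on $\GmodH$.

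I would first treat the range $\Re(s)>1$, where $D_\sa^{(k)}(z,s)$ is given by the absolutely convergent series \eqref{definition-D}, has at most polynomial growth in every cusp by the discussion following \eqref{D-k-Selberg-result}, and $L^{(1)}D_\sa^{(k)}(z,s)$ decays exponentially in every cusp (by \eqref{alt-L1} it carries a factor of $f$ or $\overline f$), so the integral converges absolutely. Applying Stokes' theorem on the fundamental domain $F$ truncated at height $Y$ in each cusp, the integrals over the non-cuspidal sides of $F$ cancel in pairs because $D_\sa^{(k)}(z,s)$ is $\G$-automorphic and $\star\a$ is a $\G$-invariant $1$-form; what remains are the integrals over the horocycles $\{\Im(\sigma_\sc^{-1}z)=Y\}$. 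Pulling back by $\sigma_\sc$ and using \eqref{fourier-expansion} together with the cuspidality of $f$, the form $\sigma_\sc^{*}(\star\a)$ restricts on $\Im w = Y$ to something of size $O(e^{-2\pi Y})$, while $D_\sa^{(k)}(\sigma_\sc z,s)$ grows at most polynomially in $Y$; hence each horocycle integral is $O(Y^{C}e^{-2\pi Y})$ and tends to $0$. Letting $Y\to\infty$ gives $\int_{\GmodH} L^{(1)}D_\sa^{(k)}(z,s)\,d\mu(z)=0$ for $\Re(s)>1$.

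To pass to the full range $\Re(s)>1/2$, I would invoke Theorem \ref{D-continuation} (for $k\ge 1$) and the standard analytic continuation of Eisenstein series together with Lemma \ref{L1Eis} (for $k=0$): these show that $s\mapsto L^{(1)}D_\sa^{(k)}(z,s)$ is a meromorphic $L^2(\GmodH)$-valued function on $\Re(s)>1/2$, so $s\mapsto \inprod{1}{L^{(1)}D_\sa^{(k)}(z,s)}_{L^2}$ is meromorphic there; vanishing on the half-plane $\Re(s)>1$, it vanishes identically. The only genuinely delicate point is the boundary estimate in the second step, namely balancing the exponential decay of $\star\a$ on the horocycles against the polynomial growth of $D_\sa^{(k)}(z,s)$ in the cusps; once the integrand is known to lie in $L^1(\GmodH)$ and the truncated Stokes argument is set up, the rest is formal, and the extension to $\Re(s)>1/2$ is just analytic continuation.
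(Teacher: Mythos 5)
Your proof is correct and is essentially the paper's argument: the paper also reduces the claim to an application of Stokes' theorem (phrased there as the self-adjointness of $L^{(1)}$ together with $L^{(1)}1=0$, which is exactly your observation that $L^{(1)}D^{(k)}_\sa(z,s)\,d\mu$ is the exact form $-4\pi i\,d\bigl(D^{(k)}_\sa(z,s)\star\a\bigr)$), with the cuspidality of $\a$ killing the horocycle boundary terms and the $k=0$ case singled out because $E_\sa(z,s)$ is not square integrable. The only cosmetic difference is that you first establish the identity for $\Re(s)>1$ and then invoke analytic continuation, whereas the paper applies the argument directly on $\Re(s)>1/2$ using the $L^2$ bounds from Theorem \ref{D-continuation} and Lemma \ref{L1Eis}.
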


\begin{proof}
Applying Stokes' theorem, as in  e.g. \cite[p.~1026]{PetridisRisager:2004a}, 
  we find
that $L^{(1)}$ is a self-adjoint operator. Alternatively, we notice that $L^{(1)}$ is the infinitesimal variation of the family of self-adjoint operators $L(\e)$ given in Remark \ref{L-e-operators}.
 Therefore, for $k\ge 1$ we have
 \begin{equation*}\inprod{1}{L^{(1)}D^{(k)}_{\sa}(z,s)}_{L^2}=\inprod{L^{(1)}1}{D^{(k)}_{\sa}(z,s)}_{L^2}=0,\end{equation*}
 since $L^{(1)}$ is a differentiation operator, see \eqref{formulas-L1-L2}.
  The case $k=0$ involves  $E_{\sa}(z,s)$, which is not square integrable. This
is easily compensated by the fact that $\alpha$ is cuspidal.  
\end{proof}

From Corollary \ref{fundamental-recurrence} it is evident that $D_\sa^{(k)}(z,s)$
has singularities at the spectrum of $\Delta$. 
We now describe the
nature of the pole at $s=1$.
\begin{prop}
\label{D1-regular}
The functions $L^{(1)}E_\sa(z,s)$ and $D^{(1)}_\sa(z,s)$ are regular at $s=1$.
\end{prop}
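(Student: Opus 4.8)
The plan is to show both functions are regular at $s=1$ by using the recurrence relation from Corollary \ref{fundamental-recurrence} together with the structure of the resolvent near $s=1$ given in \eqref{resolvent-at-1}. We have $D^{(1)}_\sa(z,s)=-R(s)L^{(1)}E_\sa(z,s)$, and by \eqref{resolvent-at-1} the resolvent has a simple pole at $s=1$ with residue $-P_0$, where $P_0$ is the orthogonal projection onto the constant functions (the $\lambda=0$ eigenspace). Hence the only possible singularity of $D^{(1)}_\sa(z,s)$ at $s=1$ is a simple pole with leading coefficient $P_0 L^{(1)}E_\sa(z,1)/(s-1)$. Since $P_0$ is projection onto constants, this coefficient is a constant multiple of $1$, namely $\vol{\GmodH}^{-1}\inprod{1}{L^{(1)}E_\sa(z,1)}_{L^2}\cdot 1$. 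The first key step is therefore to show that this inner product vanishes.

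The vanishing is exactly the content of Proposition \ref{integral-of-Dk} in the case $k=0$: $\inprod{1}{L^{(1)}E_\sa(z,s)}_{L^2}=0$. Concretely, $L^{(1)}E_\sa(z,s)=-4\pi i\inprod{d E_\sa(z,s)}{\alpha}$ is (up to scalars) $\delta$ applied to a $1$-form, i.e.\ a divergence; integrating against the constant $1$ and applying Stokes' theorem gives zero, with the boundary terms at the cusps controlled because $\alpha=\Re(f(z)dz)$ is cuspidal (so $f$ decays exponentially at every cusp, killing any contribution from truncating the fundamental domain near the cusps). Thus the putative residue of $D^{(1)}_\sa(z,s)$ at $s=1$ vanishes, so $D^{(1)}_\sa(z,s)$ is regular at $s=1$. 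For $L^{(1)}E_\sa(z,s)$ itself: the Eisenstein series $E_\sa(z,s)$ has a simple pole at $s=1$ with constant residue $\vol{\GmodH}^{-1}$, and applying the first-order differential operator $L^{(1)}=-4\pi i\inprod{d(\cdot)}{\alpha}$ to a constant yields $0$; hence the pole of $E_\sa$ contributes nothing to $L^{(1)}E_\sa(z,s)$, which is therefore regular at $s=1$ as well. (Lemma \ref{L1Eis} already guarantees $L^{(1)}E_\sa(z,s)$ is $L^2$ on $1/2+\e\le\Re(s)\le 2$ with norm $\ll\abs s$, so there is no issue away from $s=1$.)

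The order in which I would carry this out: first establish regularity of $L^{(1)}E_\sa(z,s)$ at $s=1$ by examining the Laurent expansion of $E_\sa(z,s)$ at $s=1$ and noting $L^{(1)}(\text{const})=0$; then, with this in hand, write $D^{(1)}_\sa(z,s)=-R(s)L^{(1)}E_\sa(z,s)$, substitute \eqref{resolvent-at-1}, and observe that the only possible pole has coefficient $P_0 L^{(1)}E_\sa(z,1)$, which is a constant multiple of $\inprod{1}{L^{(1)}E_\sa(z,1)}_{L^2}$; finally invoke the $k=0$ case of Proposition \ref{integral-of-Dk} (Stokes/cuspidality of $\alpha$) to kill this inner product. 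I expect the only delicate point to be the justification of the vanishing of $\inprod{1}{L^{(1)}E_\sa(z,s)}_{L^2}$ — since $E_\sa$ is not in $L^2$, one must argue via a truncation of $\GmodH$ and check that the boundary integrals over the horocycles at height $Y$ tend to zero as $Y\to\infty$; this uses that $\alpha$ decays exponentially at every cusp while $E_\sa$ grows only polynomially, so the product integrated over a shrinking-measure boundary vanishes. Everything else is a formal consequence of the resolvent structure already recorded in the excerpt.
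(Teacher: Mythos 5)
Your proposal is correct and follows essentially the same route as the paper: regularity of $L^{(1)}E_\sa(z,s)$ from the constancy of the residue of $E_\sa(z,s)$ plus the fact that $L^{(1)}$ is a differentiation operator, then the resolvent expansion \eqref{resolvent-at-1} reducing the possible residue of $D^{(1)}_\sa(z,s)$ to $\vol{\GmodH}^{-1}\inprod{1}{L^{(1)}E_\sa(z,s)}_{L^2}\big\vert_{s=1}$, which vanishes by the $k=0$ case of Proposition \ref{integral-of-Dk}. Your added remarks on justifying that case via truncation and the cuspidality of $\alpha$ match the paper's own treatment of Proposition \ref{integral-of-Dk}.
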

\begin{proof}
We have 
\begin{equation*}
D^{(1)}_\sa(z, s)=-R(s)L^{(1)}D_\sa^{(0)}(z, s).
\end{equation*}
We note that $L^{(1)}D_\sa^{(0)}(z, s)$ is regular since
$D_\sa^{(0)}(z, s)=E_\sa(z,s)$ has a constant residue and $L^{(1)}$ is a
differentiation operator. It then follows from  \eqref{resolvent-at-1} that
$D^{(1)}_\sa(z, s)$ can have at most a simple pole at $s=1$.  By
\eqref{resolvent-at-1} and Proposition \ref{integral-of-Dk} we see \begin{equation*}
\res_{s=1}D^{(1)}_{\sa}(z, s)=\left.\inprod{1}{L^{(1)}E_\sa(z,s)}_{L^2}\right\vert_{s=1}\vol{\GmodH}^{-1} =0.
\end{equation*}
 It follows that  $D^{(1)}_\sa(z, s)$ is regular at $s=1$. 
\end{proof}
\begin{theorem}\label{order-of-poles} Let $\norm{f}$ denote the Petersson norm of $f$. 
  If $k$ is even, then $D^{(k)}_\sa(z,s)$ has a pole at $s=1$ of order
  $1+k/2$. The leading term in the corresponding expansion around
  $s=1$, that is, the coefficient of $(s-1)^{-k/2-1}$ is the constant
  \begin{equation*}
    \frac{(-8\pi^2)^{k/2}\norm{f}^{k}}{\vol{\GmodH}^{{k/2+1}}}\frac{k!}{2^{k/2}}.
  \end{equation*}
If $k$ is odd, then $D^{(k)}_\sa(z,s)$ has a pole at $s=1$ of order
 at most $(k-1)/2$. 
\end{theorem}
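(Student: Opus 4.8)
The plan is to prove the two assertions together by induction on $k$, running the recurrence of Corollary \ref{fundamental-recurrence} through the Laurent expansion $R(s)=-P_0/(s-1)+R_0(s)$ of \eqref{resolvent-at-1}, where $P_0g=\vol{\GmodH}^{-1}\inprod{g}{1}_{L^2}$ is the projection onto the constants and $R_0(s)$ is holomorphic near $s=1$. For the base cases, $D^{(0)}_\sa(z,s)=E_\sa(z,s)$ has a simple pole at $s=1$ with \emph{constant} residue $\vol{\GmodH}^{-1}$ (the standard Kronecker limit formula, consistent with $\res_{s=1}L_{\sa\sb}(s,0,0)=(\pi\vol{\GmodH})^{-1}$ and \eqref{phi-matrices}), which matches the claimed leading term for $k=0$; and $D^{(1)}_\sa(z,s)$ is holomorphic at $s=1$ by Proposition \ref{D1-regular}, which is the bound $(k-1)/2=0$ for $k=1$. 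Throughout, the Laurent expansions in $s$ near $s=1$ may be manipulated term by term, the coefficients being smooth $L^2$-functions on which $L^{(1)}$ and $L^{(2)}$ act; this is justified by the meromorphic continuation, elliptic regularity, and $L^2$-bounds of Theorem \ref{D-continuation}.

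Two observations carry the inductive step. First, by Proposition \ref{integral-of-Dk} (equivalently, because $L^{(1)}$ is a self-adjoint differentiation operator with $L^{(1)}1=0$) we have $P_0\bigl(L^{(1)}D^{(k-1)}_\sa(z,s)\bigr)=0$, so the $L^{(1)}$-term in Corollary \ref{fundamental-recurrence} feeds into $D^{(k)}_\sa$ only through $R_0(s)$ and therefore cannot raise the pole order. Second, since $L^{(1)}$ annihilates constants, whenever the inductive hypothesis gives $D^{(k-1)}_\sa$ a pole with \emph{constant} leading coefficient (that is, when $k-1$ is even), the function $L^{(1)}D^{(k-1)}_\sa$ has strictly smaller pole order at $s=1$ than $D^{(k-1)}_\sa$.

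Now let $k\ge 2$ and assume the statements below $k$. If $k$ is odd, then $k-1$ is even and $k-2$ is odd, so by the second observation the pole order of $L^{(1)}D^{(k-1)}_\sa$ at $s=1$ is at most $(k-1)/2$, while $L^{(2)}D^{(k-2)}_\sa=-8\pi^2\inprod{\alpha}{\alpha}D^{(k-2)}_\sa$ has pole order at most $(k-3)/2$. Applying $R(s)$: the part $R_0(s)$ does not raise orders, and the part $-P_0/(s-1)$ raises the order of $\binom{k}{2}L^{(2)}D^{(k-2)}_\sa$ by at most one while annihilating the $L^{(1)}$-term by the first observation; hence $D^{(k)}_\sa$ has a pole at $s=1$ of order at most $(k-1)/2$, as claimed.

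If $k$ is even, then $k-1$ is odd and $k-2$ is even. Here $L^{(1)}D^{(k-1)}_\sa$ has pole order at most $(k-2)/2$ by the inductive bound alone, whereas by the inductive hypothesis $D^{(k-2)}_\sa$ has a pole of exact order $k/2$ with constant leading coefficient $c=\dfrac{(-8\pi^2)^{(k-2)/2}\norm{f}^{k-2}}{\vol{\GmodH}^{k/2}}\dfrac{(k-2)!}{2^{(k-2)/2}}$, so $L^{(2)}D^{(k-2)}_\sa$ has a pole of exact order $k/2$ with leading coefficient $-8\pi^2\inprod{\alpha}{\alpha}\,c$. Pairing with $1$ and using $\int_{\GmodH}\inprod{\alpha}{\alpha}\,d\mu=\norm{f}^2$ (a one-line computation since $\inprod{\alpha}{\alpha}=y^2\abs{f}^2$), we find that $\inprod{L^{(2)}D^{(k-2)}_\sa(z,s)}{1}_{L^2}$ has a pole of exact order $k/2$ with leading coefficient $-8\pi^2\norm{f}^2c\ne 0$ — this nonvanishing is the one place where it matters that no cancellation occurs. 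Feeding this into $D^{(k)}_\sa=-R(s)\bigl(kL^{(1)}D^{(k-1)}_\sa+\binom{k}{2}L^{(2)}D^{(k-2)}_\sa\bigr)$, the term $(P_0/(s-1))\binom{k}{2}L^{(2)}D^{(k-2)}_\sa$ produces a pole of exact order $1+k/2$ with leading coefficient $\binom{k}{2}\vol{\GmodH}^{-1}\bigl(-8\pi^2\norm{f}^2c\bigr)$, while every other contribution has strictly smaller order. Using $\binom{k}{2}(k-2)!=k!/2$ and $2^{-(k-2)/2}=2\cdot 2^{-k/2}$ this leading coefficient simplifies to $\dfrac{(-8\pi^2)^{k/2}\norm{f}^{k}}{\vol{\GmodH}^{k/2+1}}\dfrac{k!}{2^{k/2}}$, completing the induction. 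The main obstacle is precisely the bookkeeping behind these two facts — that the $L^{(1)}$-terms never reach the top-order pole, and that in the even case the factor $\int_{\GmodH}\inprod{\alpha}{\alpha}\,d\mu=\norm{f}^2>0$ keeps the top-order pole from cancelling — together with the (routine, via Theorem \ref{D-continuation}) justification of manipulating the Laurent expansions term by term.
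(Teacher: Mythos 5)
Your proof is correct and follows essentially the same route as the paper: induction on $k$ through Corollary \ref{fundamental-recurrence} and the resolvent expansion \eqref{resolvent-at-1}, using Proposition \ref{integral-of-Dk} to kill the $P_0$-contribution of the $L^{(1)}$-term, the fact that $L^{(1)}$ annihilates the constant leading coefficient at even stages, and $\inprod{1}{L^{(2)}1}_{L^2}=-8\pi^2\norm{f}^2$ to pin down the nonvanishing leading term. The only difference is organizational (you step from $k-1,k-2$ to $k$ with a parity split, while the paper steps from odd $k$ to $k+1$ and $k+2$), and the leading-coefficient computation matches the paper's.
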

\begin{proof}
  The case $k=0$  simply describes  the well-known pole and residue  of the standard
  Eisenstein series. For $k=1$  the result is Proposition \ref{D1-regular}.

To run an inductive argument we assume that the claim has been proved up to some $k$ odd. Then
$k+1$ is even and we see from \eqref{resolvent-at-1} that
$-R(s)L^{(1)}D^{(k)}_\sa(z,s)$ can have at most a pole of order
$(k-1)/2+1$. The leading term 
comes from \begin{equation*}\inprod{1}{L^{(1)}D_\sa^{(k)}(z,s)}_{L^2},\end{equation*} which
 is zero by Proposition \ref{integral-of-Dk}. So
$-R(s)L^{(1)}D^{(k)}_\sa(z,s)$ has  pole of  order  at most  $(k-1)/2$.

 On
the other hand  by inductive hypothesis $-R(s)L^{(2)}D^{(k-1)}_\sa(z,s)$ has a
pole or order $1+(k-1)/2+1=1+(k+1)/2$ with leading coefficient
\begin{equation*}
  \inprod{1}{L^{(2)}\frac{(-8\pi^2)^{(k-1)/2}\norm{f}^{(k-1)}}{\vol{\GmodH}^{{(k-1)/2+1}}}\frac{(k-1)!}{2^{(k-1)/2}}}\frac{1}{\vol{\GmodH}}.
\end{equation*}
Here we have again used \eqref{resolvent-at-1}.

It follows from Corollary \ref{fundamental-recurrence} that $D^{(k+1)}_{\sa}(z,s)$
has a pole of order $1+(k+1)/2$ with leading coefficient
\begin{equation}\label{constant-constant}
  \binom{k+1}{2} \inprod{1}{L^{(2)}\frac{(-8\pi^2)^{(k-1)/2}\norm{f}^{(k-1)}}{\vol{\GmodH}^{{(k-1)/2+1}}}\frac{(k-1)!}{2^{(k-1)/2}}}\frac{1}{\vol{\GmodH}}.  
\end{equation}
We observe that 
$\inprod{1}{L^{(2)}1}_{L^2}=-8\pi^2\norm{f}^2$. The order of the pole and leading singularity of $D_{\sa}^{(k+1)}(z, s)$ agrees with the claim of the theorem.

We now prove  that $D^{(k+2)}_{\sa}(z,s)$ has at most a pole of order
$(k+1)/2$. We use  Corollary \ref{fundamental-recurrence} for $k+2$. Since $L^{(1)}$ annihilates the leading term in
$D^{(k+1)}_{\sa}(z,s)$, as it is a constant, see \eqref{constant-constant}, the
function \begin{equation*}-R(s)L^{(1)}D^{(k+1)}_{\sa}(z,s)\end{equation*}
 can
have at most a pole of order $(k+1)/2+1$. This order of singularity at $s=1$ is attained
only if
$  \inprod{1}{L^{(1)}D^{(k+1)}_{\sa}(z,s)} $  is not identically zero. But it \emph{is} indeed identically
zero by Proposition \ref{integral-of-Dk}. Hence,
$-R(s)L^{(1)}D^{(k+1)}_{\sa}(z,s)$  has at most a pole of order
$(k+1)/2$. By  \eqref{resolvent-at-1} and the inductive hypothesis on $D^{(k)}_{\sa}(z, s)$  it is straightforward  that $-R(s)L^{(2)}D^{(k)}_{\sa}(z,s)$ has at most a pole of order
$(k+1)/2$.
This concludes the inductive step.
\end{proof}

\section{Functional equations}\label{Functional-Equation}
 Selberg's theory of Eisenstein series \cite[p.~84-94]{Iwaniec:2002a} gives 
that $E(z, s)$ 
satisfies the functional equation
\begin{equation*}
  E(z,s)=\Phi(s)E(z,1-s),
\end{equation*}
where the scattering matrix
$\Phi(s)=(\phi_{{\sa}{\sb}}(s))$ is
determined by  \eqref{fourier-expansion-eisenstein}.
Recall also that
\begin{equation*}
  \Phi(s)\Phi(1-s)=I.
\end{equation*}
In this section we show that $D^{(k)}(z,s)$ and $E^{(k)}(z,s)$ have
analogous properties.  

Recall the weighted $L^2$-spaces in \cite[p.~573]{Muller:1996a}. We choose a smooth function $\rho:\GmodH\to \R$ with 
$\rho(z)=1$ for $z\in F(Y)$ and $\rho(z)=\Im(\sigma_{\sa}^{-1}z)$ for $z\in F_{\sa}(Y+1)$.  For $\delta\in \R$ we define
\begin{equation*}
L^2_{\delta}(\GmodH)=\{f: \GmodH\to \C; \int_{\GmodH}\abs{f(z)}^2e^{2\delta\rho (z)}d\mu(z)<\infty\}.
\end{equation*}
The resolvent of the Laplace operator $R(s)=(\tilde\Delta+s(1-s))^{-1}$ defined on $L^2 (\GmodH)$ for $\Re (s)>1/2$ and $s(1-s)\not\in \spec(-\tilde\Delta)$  admits meromorphic continuation to $\mathbb C$ if we restrict the domain to a smaller function space. M\"uller in \cite[Theorem 1]{Muller:1996a} showed that
\begin{equation}\label{resolv-analytic-cont}
R(s):L^2_{\delta}(\GmodH)\to L^2_{-\delta}(\GmodH)
\end{equation}
can be defined as a bounded operator on weighted $L^2$-spaces for $s$ away from its poles.  
This is achieved by first continuing meromorphically the resolvent kernel (automorphic Green's function) $r(z, z', s)$ to $\mathbb C$.
The analytic continuation of the resolvent kernel $r(z, z', s)$ to $\mathbb C$ satisfies the limiting absorption principle: 
\begin{align} \nonumber
r(z, z', s)-r(z, z', 1-s)&=\frac{1}{1-2s}\sum_{\sa}E_{\sa}(z, s)E_{\sa}(z', 1-s)\\
\label{limitingabsorption} &=\frac{1}{1-2s}E(z, s)^t \cdot E(z', 1-s)\\&=\frac{1}{1-2s}E(z, 1-s)^t\cdot E(z' , s), \nonumber
\end{align}
see \cite[p.~352]{Lang:1985b}.

We choose $0<\delta <2\pi$ so that $e^{\delta\rho (z)}y |f(z)|$ is decaying exponentially at the cusps. 
\begin{lem}\label{on-the-whole-C}
Let $k\geq 0$. The function $D^{(k)}_\sa(z, s)$ admits
  meromorphic continuation to  $\C$. 
 Moreover,  
 we have 
  \begin{enumerate}[label=(\roman*)]
  \item \label{1} $D_{\sa}^{(k)}(z, s)\in C^{\infty}(\GmodH)$,
  \item \label{2}
  $D^{(k)}_\sa(z,
  s)\in L^{2}_{-\delta}(\GmodH)$,
  \item \label{3} $K_0D^{(k)}_{\sa}(z, s)$, $L_0D^{(k)}_{\sa}(z, s)\in L^2_{-\delta}(\GmodH)$, and\item \label{4} $L^{(1)}D^{(k)}_\sa(z,
  s)\in L^{2}_{\delta}(\GmodH)$.
  \end{enumerate}
\end{lem}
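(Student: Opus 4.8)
The plan is to prove Lemma \ref{on-the-whole-C} by induction on $k$, mirroring the proof of Theorem \ref{D-continuation} but now using M\"uller's meromorphically continued resolvent \eqref{resolv-analytic-cont} on the weighted spaces $L^2_{\pm\delta}(\GmodH)$ in place of the $L^2$-resolvent valid only for $\Re(s)>1/2$. The key structural input is again Corollary \ref{fundamental-recurrence}, namely
\begin{equation*}
D^{(k)}_\sa(z,s)=-R(s)\left(\binom{k}{1}L^{(1)}D_\sa^{(k-1)}(z,s)+\binom{k}{2}L^{(2)}D^{(k-2)}_\sa(z,s)\right),
\end{equation*}
so that the continuation of $D^{(k)}_\sa$ is reduced to: (a) checking that the right-hand side input lies in $L^2_\delta(\GmodH)$ so that $R(s)$ may be applied and lands in $L^2_{-\delta}(\GmodH)$, giving \ref{2}; (b) propagating smoothness \ref{1} via elliptic regularity using Lemma \ref{recurrence}; and (c) obtaining the $K_0,L_0$ and $L^{(1)}$ bounds \ref{3}--\ref{4} from the weight-raising/lowering identities \eqref{roelcke-input}, \eqref{alt-L1} together with the exponential decay of $f$ at the cusps, which for the weighted spaces is exactly why we chose $0<\delta<2\pi$ so that $e^{\delta\rho(z)}y\abs{f(z)}$ decays.

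For the base case $k=0$ one has $D^{(0)}_\sa(z,s)=E_\sa(z,s)$, whose meromorphic continuation to $\C$ is classical; \ref{1} is standard, \ref{2} follows since $E_\sa(z,s)$ grows at most polynomially (like $y^{\sigma}+y^{1-\sigma}$) in the cusps and $\rho$ is comparable to $\log y$ there so $e^{-2\delta\rho}$ kills it (strictly: $E_\sa\in L^2_{-\delta}$ for $\delta$ larger than the growth exponent, and one may enlarge $\delta$ toward $2\pi$), and \ref{3}--\ref{4} follow from $K_0E_\sa(z,s)=sE_\sa(z,s,2)$ as in Lemma \ref{L1Eis} together with the decay of $f$. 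For the inductive step I assume the statement for all $l<k$. Then $L^{(1)}D^{(k-1)}_\sa(z,s)$ and $L^{(2)}D^{(k-2)}_\sa(z,s)$ both lie in $L^2_\delta(\GmodH)$: for the $L^{(1)}$ term this is \ref{4} of the inductive hypothesis, while for the $L^{(2)}$ term we use $L^{(2)}h=-8\pi^2\inprod{\alpha}{\alpha}h$ from \eqref{formula-L2} and the fact that $\inprod{\alpha}{\alpha}=2y^2\abs{f(z)}^2$ decays like $e^{-4\pi y}$ in every cusp, which more than compensates the $e^{2\delta\rho}$ weight and the $L^2_{-\delta}$-membership of $D^{(k-2)}_\sa$ from \ref{2}. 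Hence $R(s)$ applies by \eqref{resolv-analytic-cont}, yielding the meromorphic continuation of $D^{(k)}_\sa$ to $\C$ and \ref{2}. Elliptic regularity applied to Lemma \ref{recurrence}, whose right-hand side is smooth by the inductive hypothesis and \eqref{alt-L1}, gives \ref{1}. For \ref{3} we note $\Delta D^{(k)}_\sa(z,s)=-s(1-s)D^{(k)}_\sa(z,s)-\binom{k}{1}L^{(1)}D^{(k-1)}_\sa(z,s)-\binom{k}{2}L^{(2)}D^{(k-2)}_\sa(z,s)$ lies in $L^2_{-\delta}(\GmodH)$, and then the weighted analogue of \eqref{roelcke-input}, i.e. an integration-by-parts identity $\norm{K_0h}^2_{L^2_{-\delta}}\approx \inprod{h}{-\Delta h}$ up to a controllable boundary/weight-derivative term (here one uses that $d\rho$ is bounded and supported in the cusps), gives $K_0D^{(k)}_\sa,L_0D^{(k)}_\sa\in L^2_{-\delta}(\GmodH)$. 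Finally \ref{4} follows from \eqref{alt-L1}, $L^{(1)}h=-2\pi y(\overline{f(z)}K_0h-f(z)L_0h)$: since $e^{\delta\rho(z)}y\abs{f(z)}$ is bounded (indeed exponentially decaying in the cusps) and $K_0D^{(k)}_\sa,L_0D^{(k)}_\sa\in L^2_{-\delta}$, multiplying by $yf(z)$ moves us from weight $-\delta$ back to weight $+\delta$, so $L^{(1)}D^{(k)}_\sa\in L^2_\delta(\GmodH)$.

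The main obstacle I anticipate is the integration-by-parts step underlying \ref{3}: on $L^2_{-\delta}$ the operators $K_0$, $L_0$ are no longer formally skew-adjoint in the naive way, so the identity \eqref{roelcke-input} must be replaced by one with an error term coming from $d(e^{-2\delta\rho})=-2\delta e^{-2\delta\rho}\,d\rho$; one must check this error is absolutely convergent and dominated by $\norm{D^{(k)}_\sa}_{L^2_{-\delta}}^2$ plus $\norm{\Delta D^{(k)}_\sa}_{L^2_{-\delta}}\norm{D^{(k)}_\sa}_{L^2_{-\delta}}$, using that $\rho$ is constant (so $d\rho=0$) on $F(Y)$ and that $\abs{d\rho}$ is bounded in the cuspidal zones. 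A secondary subtlety is keeping track of which weight ($+\delta$ versus $-\delta$) each object lives in as one passes through $R(s)$ and through multiplication by $yf(z)$; the bookkeeping is exactly arranged so that $f$'s exponential decay always wins, which is the sole reason the hypothesis $0<\delta<2\pi$ is imposed. No new analytic input beyond Lemma \ref{recurrence}, Corollary \ref{fundamental-recurrence}, M\"uller's theorem, and Roelcke's weight-raising identities is needed; everything else is bookkeeping in the weighted norms.
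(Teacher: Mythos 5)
Your route is genuinely different from the paper's. The paper does not bootstrap through the resolvent at all: it quotes Jorgenson--O'Sullivan for the meromorphic continuation to all of $\C$ of the twisted Eisenstein series $E^{m,n}_{\sa}(z,s)$ together with their bounds \eqref{Fourier-e-m-n} on the Fourier coefficients, deduces the continuation of $E^{(k)}_\sa$ (a linear combination of the $E^{m,n}_\sa$ with $m+n=k$) and hence of $D^{(k)}_\sa$ via the algebraic relation \eqref{D-E-translation}, and then obtains all four membership claims from a single pointwise estimate $\partial_x^a\partial_y^b D^{(k)}_\sa(\sigma_\sb z,s)\ll y^{\max(\sigma,1-\sigma)+\e}$, against which the weight $e^{-2\delta\rho}$ wins trivially. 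Your scheme --- induction on $k$ through Corollary \ref{fundamental-recurrence} with M\"uller's resolvent \eqref{resolv-analytic-cont} replacing the $L^2$-resolvent --- is a natural alternative and has the virtue of not importing the external continuation of $E^{m,n}_\sa$; the trade-off is that every regularity and growth statement must now be extracted from weighted-space functional analysis rather than read off a Fourier expansion.

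Two concrete problems. First, you misidentify the weight: the paper sets $\rho(z)=\Im(\sigma_\sa^{-1}z)$ in the cuspidal zones, so $\rho\sim y$, not $\log y$. Your parenthetical in the base case (``$E_\sa\in L^2_{-\delta}$ for $\delta$ larger than the growth exponent'') is therefore based on a false premise --- with the correct $\rho$ the exponential weight $e^{-2\delta y}$ absorbs any polynomial growth for every $\delta>0$ --- and it contradicts your own later (correct) use of ``$e^{\delta\rho(z)}y\abs{f(z)}$ decays exponentially,'' which only makes sense for $\rho\sim y$ and is the whole point of the hypothesis $\delta<2\pi$. Second, and more seriously, the weighted analogue of \eqref{roelcke-input} that you need for claim \ref{3} does not go through as you describe: the error term from differentiating the weight involves $K_0\rho$, and since $K_0\rho=\rho$ in the cusps this factor is \emph{unbounded}, not ``bounded in the cuspidal zones.'' The resulting cross term is of the shape $\delta\int\abs{h}\,\abs{K_0h}\,\rho\,e^{-2\delta\rho}\,d\mu$, which cannot be dominated by $\norm{h}_{L^2_{-\delta}}\norm{K_0h}_{L^2_{-\delta}}$ without first sacrificing part of the weight (e.g.\ bounding $\rho^2e^{-2\delta\rho}\ll e^{-2\delta'\rho}$ for $\delta'<\delta$) and then running an absorption argument; as written this is the one step of your proof that is not merely bookkeeping, and it is exactly the step the paper's pointwise-bound strategy is designed to avoid. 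You would also need to record that M\"uller's continued $R(s)$ still satisfies $(\Delta+s(1-s))R(s)g=g$ distributionally, so that Lemma \ref{recurrence} and elliptic regularity apply to the continued object. With these repairs your argument closes, but in its present form claim \ref{3} is not proved.
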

\begin{proof}
The Eisenstein series twisted by modular symbols $E_\sa^{m, n}(\sigma_\sb z, s)$ in \eqref{Cormak-series} has Fourier expansion 
\begin{equation*}
E_\sa^{m, n}(\sigma_\sb z, s)=\delta_{0, 0}^{m, n}\delta_{\sa\sb}y^s+\phi_{\sa\sb}^{m, n}(s)y^{1-s}+\sum_{k\ne 0}\phi_{\sa\sb}^{m, n}(k, s)W_s(kz),
\end{equation*}
with 
\begin{equation*}
W_s(kz)=2\sqrt{\abs{k} y}K_{s-1/2}(2\pi \abs{k} y)e(kx),
\end{equation*}
see Jorgenson and O'Sullivan \cite[Eq. (2.4)]{JorgensonOSullivan:2008a}. Here $\delta_{0, 0}(m , n)=1$ if $m=m=0$ and is $0$ otherwise.
We quote their work \cite[Thm 2.2]{JorgensonOSullivan:2008a} for the  meromorphic continuation of the series $E_{\sa}^{m, n}(z, s)$ for  $s\in \C$. Since $E^{(k)}_{\sa}(z, s)$ is a linear combination of $E^{m, n}_{\sa}(z, s)$ for $m+n=k$, it admits meromorphic continuation to $\C$. The function $D^{(k)}_{\sa}(z, s)$ is related to the  $E^{(k-j)}_{\sa}(z, s)$ for $j\le k$ through \eqref{D-E-translation}. Therefore, $D^{(k)}_{\sa}(z, s)$ admits meromorphic continuation to $\C$ as well.

Furthermore, we need bounds for the Fourier coefficients of $E^{m , n}_{\sa}(z, s)$  for all cusps.  Jorgenson and O'Sullivan \cite[Thm
2.3]{JorgensonOSullivan:2008a} proved the following: for $s $ in a compact set $S$, there exists a holomorphic function $\xi^{m, n}(s)$ such that for all $k\ne 0$ we have
\begin{equation}\label{Fourier-e-m-n}
\xi^{m, n}(s)\phi_{\sa\sb}^{m, n}(k, s)\ll (\log ^{m+n}\abs{k}+1)(\abs{k}^{\sigma}+\abs{k}^{1-\sigma}).
\end{equation}
It is easy to see that the derivatives of $W_s(y)$ decay exponentially in $y$:
we can use the integral representation of the $K$-Bessel function \cite[8.432.1, p.~917]{GradshteynRyzhik:2007a} to see that
\begin{equation*}
\left|\frac{d^{a}}{dy^a}K_{s}(y)\right|\le e^{-y/2}\int_0^\infty e^{-2\cosh v}(\cosh v)^a\cosh( \sigma v)dv, \quad y>4,
\end{equation*} 
or, alternatively,  use repeatedly \cite[8.486.11, p.~929]{GradshteynRyzhik:2007a}. 
Combining with \eqref{Fourier-e-m-n} we see that,
for every $a, b\in \N\cup \{0\}$, the function $E^{m, n}_{\sa}(z, s)$ is smooth and
\begin{equation*}
\frac{\partial^{a+b}}{\partial x^a\partial y^b}E^{m, n}_{\sa}(\sigma_{\sb}z, s)\ll_{a, b, S} y^{\max(\sigma, 1-\sigma)}.
\end{equation*}
Since $E^{(k)}_{\sa}(z, s)$ is a linear combination of $E^{m, n}_{\sa}(z, s)$ for $m+n=k$, its derivatives  satisfy the same upper bounds. By \eqref{D-E-translation} the functions $D^{(k)}_{\sa}(z, s)$  are   also smooth. This proves claim \ref{1}.

For the claims \ref{2}, \ref{3} we  need also to control the derivatives $\partial^{a+b}/\partial z^a\partial \bar z^b$ of $A_{\sa}(\sigma_{\sb} z)^j$. For $a+b=0$, we use \eqref{elementary-bound}. For $a+b\ge 1$ we have exponential decay by \eqref{la-la-land}.  We conclude that
\begin{equation*}
\frac{\partial^{a+b}}{\partial x^a\partial y^b}D^{(k)}_{\sa}(\sigma_{\sb}z, s)\ll_{a, b, k, S} y^{\max(\sigma, 1-\sigma)+\e}.
\end{equation*}
The claim \ref{2} follows by taking $a=b=0$, and the claim \ref{3} by noticing that
$K_0$ and $L_0$ in \eqref{raising} are expressed in terms of    $\partial/\partial x$ and $\partial/\partial y$.
Finally, claim \ref{4} follows from \eqref{alt-L1} using the cuspidality of $f(z)$ at all cusps.
\end{proof}

\allowdisplaybreaks 

\subsection{Functional Equation for $D_{\sa}^{(k)}(z, s)$}
\begin{theorem}\label{FEEF}
The meromorphic continuation of the vector-valued automorphic function $D^{(k)}(z, s)$  satisfies the functional equation
\begin{equation*}
D^{(k)}(z, s)=\sum_{j=0}^k\binom{k}{j}\Psi^{(j)}(s)D^{(k-j)}(z, 1-s)
\end{equation*}
with $\Psi^{(j)}(s)$ meromorphic matrices given by
\begin{equation}\label{D-scattering}
  \Psi^{(k)}_{\sa\sb}(s)=\frac{1}{2s-1}\int_{\GmodH}E_{\sb}(z,s)\left(\binom{k}{1}L^{(1)}D^{(k-1)}_{\sa}(z,s)+\binom{k}{2}L^{(2)}D^{(k-2)}_{\sa}(z,s)\right)d\mu(z),
\end{equation} if $k>0$ and $\Psi^{(0)}(s)=\Phi(s)$ is the standard
scattering matrix.
\end{theorem}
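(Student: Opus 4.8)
The plan is to mimic Selberg's derivation of the functional equation for the ordinary Eisenstein series, using the limiting absorption principle \eqref{limitingabsorption} for the resolvent kernel together with the recurrence in Corollary \ref{fundamental-recurrence}. Write $F^{(k)}(z,s)=\binom{k}{1}L^{(1)}D^{(k-1)}_{\sa}(z,s)+\binom{k}{2}L^{(2)}D^{(k-2)}_{\sa}(z,s)$, so that $D^{(k)}_{\sa}(z,s)=-R(s)F^{(k)}(z,s)$ for $\Re(s)>1/2$, and by Lemma \ref{on-the-whole-C}\ref{4} the function $F^{(k)}(\cdot,s)$ lies in $L^2_{\delta}(\GmodH)$, the space on which M\"uller's continuation \eqref{resolv-analytic-cont} of $R(s)$ acts. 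Thus $D^{(k)}_{\sa}(z,s)=-\int_{\GmodH}r(z,z',s)F^{(k)}(z',s)\,d\mu(z')$ continues meromorphically to all of $\C$, consistent with Lemma \ref{on-the-whole-C}\ref{1}--\ref{2}.

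First I would compute $D^{(k)}_{\sa}(z,s)-D^{(k)}_{\sa}(z,1-s)$ by applying \eqref{limitingabsorption}: this replaces $r(z,z',s)-r(z,z',1-s)$ by $\frac{1}{1-2s}\sum_{\sb}E_{\sb}(z,s)E_{\sb}(z',1-s)$, and (crucially) $F^{(k)}$ itself depends on $s$, so I must be careful to keep the spectral parameter matched --- I would use the form of \eqref{limitingabsorption} that pairs $E(z,1-s)^t$ against the $s$-dependent data, i.e. write $r(z,z',s)=r(z,z',1-s)+\frac{1}{1-2s}E(z,s)^t\cdot E(z',1-s)$ and integrate against $F^{(k)}(z',s)$. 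This yields
\begin{equation*}
D^{(k)}_{\sa}(z,s)=-\int_{\GmodH}r(z,z',1-s)F^{(k)}(z',s)\,d\mu(z')+\frac{1}{2s-1}\sum_{\sb}E_{\sb}(z,s)\int_{\GmodH}E_{\sb}(z',1-s)F^{(k)}(z',s)\,d\mu(z').
\end{equation*}
The last term is exactly $\sum_{\sb}\Psi^{(k)}_{\sa\sb}(s)E_{\sb}(z,s)$ with $\Psi^{(k)}$ as in \eqref{D-scattering} (after translating $E_{\sb}(z',1-s)$ into $E_{\sb}(z',s)$ via $\Phi(1-s)$, or by keeping the normalization as stated); the integral converges because $F^{(k)}(\cdot,s)\in L^2_{\delta}$ pairs against $E_{\sb}(\cdot,1-s)$ which has at worst polynomial growth absorbed by $e^{-\delta\rho}$. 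The remaining obstacle is to identify $-\int r(z,z',1-s)F^{(k)}(z',s)\,d\mu(z')$: it is $-R(1-s)F^{(k)}(z,s)$, but $F^{(k)}$ carries parameter $s$, not $1-s$, so this is not literally $D^{(k)}_{\sa}(z,1-s)$. I would resolve this by an induction on $k$: expand $F^{(k)}(z',s)=\binom{k}{1}L^{(1)}D^{(k-1)}_{\sa}(z',s)+\binom{k}{2}L^{(2)}D^{(k-2)}_{\sa}(z',s)$, apply the inductive functional equation $D^{(l)}_{\sa}(z',s)=\sum_{j}\binom{l}{j}\Psi^{(j)}(s)D^{(l-j)}_{\sa}(z',1-s)$ for $l<k$, use that $L^{(1)},L^{(2)}$ commute with the scattering matrices (scalars in the cusp index are handled by matrix multiplication), and re-sum using the Vandermonde/binomial identity $\sum$ over the composition of the two binomial expansions; the base case $k=0$ is Selberg's functional equation $E(z,s)=\Phi(s)E(z,1-s)$ and $k=1$ follows directly since $F^{(1)}=L^{(1)}E_{\sa}$.

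The main obstacle I anticipate is precisely this bookkeeping of spectral parameters together with the binomial resummation: one must track that the "$1-s$" produced by the limiting absorption principle interacts correctly with the "$s$" sitting inside $F^{(k)}$, and verify that after applying the inductive hypothesis and collecting terms the coefficients assemble into $\sum_{j=0}^{k}\binom{k}{j}\Psi^{(j)}(s)D^{(k-j)}(z,1-s)$. A secondary technical point is justifying the interchange of the resolvent-kernel integral with the operators $L^{(1)},L^{(2)}$ and the meromorphic continuation --- but this is covered by the smoothness and weighted-$L^2$ bounds in Lemma \ref{on-the-whole-C}, since all functions involved are smooth with controlled growth at the cusps and the kernels continue meromorphically by M\"uller's theorem. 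I would also note that the symmetry $\Phi(s)\Phi(1-s)=I$, together with the relation \eqref{D-E-translation} between $D^{(k)}$ and $E^{(k)}$, gives the companion functional equation for $E^{(k)}(z,s)$ claimed in the section (Theorem \ref{FE-k-deriv-of-E}), so once the $D^{(k)}$ case is done the $E^{(k)}$ case is a formal consequence.
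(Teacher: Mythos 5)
Your proposal follows essentially the same route as the paper: apply the limiting absorption principle \eqref{limitingabsorption} to the identity $D^{(k)}(z,s)=-R(s)F^{(k)}(z,s)$ of Corollary \ref{fundamental-recurrence}, peel off the scattering term, and handle the leftover $-R(1-s)F^{(k)}(z,s)$ by induction on $k$, expanding $D^{(k-1)}(z,s)$ and $D^{(k-2)}(z,s)$ via the inductive functional equation, commuting $L^{(1)},L^{(2)}$ past the constant matrices $\Psi^{(j)}(s)$, and recombining the binomials via $\binom{k}{i}\binom{k-i}{l}=\binom{k}{l}\binom{k-l}{i}$; the analytic justification via Lemma \ref{on-the-whole-C} and M\"uller's continuation is also as in the paper. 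The one concrete flaw is your choice of which form of \eqref{limitingabsorption} to integrate against $F^{(k)}(z',s)$: you take $r(z,z',s)-r(z,z',1-s)=\frac{1}{1-2s}E(z,s)^t\cdot E(z',1-s)$, which produces the term
\begin{equation*}
\frac{1}{2s-1}\sum_{\sb}E_{\sb}(z,s)\int_{\GmodH}E_{\sb}(z',1-s)F^{(k)}(z',s)\,d\mu(z'),
\end{equation*}
i.e.\ an outer Eisenstein factor at $s$ (not $1-s$) and an inner integral that is \emph{not} the $\Psi^{(k)}_{\sa\sb}(s)$ of \eqref{D-scattering}; your assertion that this term ``is exactly $\sum_{\sb}\Psi^{(k)}_{\sa\sb}(s)E_{\sb}(z,s)$'' is therefore wrong as stated, and the hinted repair via $\Phi(1-s)$ would still leave $E(z,s)$ rather than $E(z,1-s)$ on the right-hand side. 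The fix is simply to use the other form recorded in \eqref{limitingabsorption}, namely $r(z,z',s)-r(z,z',1-s)=\frac{1}{1-2s}E(z,1-s)^t\cdot E(z',s)$, so that the integration variable $z'$ carries the parameter $s$ and pairs with $F^{(k)}(\cdot,s)$ to give precisely \eqref{D-scattering}, while the external factor is $E(z,1-s)$ as required; with that correction your argument coincides with the paper's proof.
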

\begin{proof}
For notational purposes we define $D^{(-1)}(z, s)$ to be $0$.
Using Corollary \ref{fundamental-recurrence}   we get 
\begin{align}\label{rec-cont}
D^{(k)}(z,s)&=-R(s)\left(\binom{k}{1}L^{(1)}D^{(k-1)}(z,s)+\binom{k}{2}L^{(2)}D^{(k-2)}(z,s)\right),                           
\end{align}
for $ k\geq 1$, and  $\Re (s)>1/2$. We can extend the validity of this equation to $ \C$, since the resolvent is applied to a function   belonging to  $L^2_{\delta}(\GmodH)$,  which follows from Lemma \ref{on-the-whole-C}. Furthermore we see that $D^{(k)}_\sa(z, s)$ is holomorphic outside the poles of $R(s)$.

The proof of \eqref{D-scattering} is a relatively obvious generalization of \cite[Prop.~3.1]{PetridisRisager:2013a}.
  To justify the arguments below we quote Lemma \ref{on-the-whole-C}.
  We proceed by induction. For $k=0$ the claim of the theorem is the standard functional equation for the vector of Eisenstein series.

Assume the result for $m<k$. We define the matrix $\Psi^{(k)}(s)$ indexed by the cusps by \eqref{D-scattering}. Then
\begin{align*}
D^{(k)}(z,& s)=-R(s)\left(\binom{k}{1}L^{(1)}D^{(k-1)}(z, s)+\binom{k}{2}L^{(2)}D^{(k-2)}(z, s)\right)\\
=&-R(1-s)\left(\binom{k}{1}L^{(1)}D^{(k-1)}(z, s)+\binom{k}{2}L^{(2)}D^{(k-2)}(z, s)\right)\\
&+\frac{1}{2s-1}\left(\int_{\GmodH}E_{\sb}(\cdot, s)\left(\binom{k}{1}L^{(1)}D_{\sa}^{(k-1)}(\cdot, s)+\binom{k}{2}L^{(2)}D_{\sa}^{(k-2)}(\cdot, s)\right)d\mu \right)_{\sa\sb}\\ & \times E(z, 1-s)\\
=&-R(1-s)\left(\binom{k}{1}L^{(1)}D^{(k-1)}(z, s)+\binom{k}{2}L^{(2)}D^{(k-2)}(z, s)\right)+\Psi^{(k)}(s)E(z, 1-s),
\end{align*}
where we have used  \eqref{limitingabsorption} and  \eqref{rec-cont}. For the first  term above we now use the inductive hypothesis and finally  \eqref{rec-cont} to see that it equals
\begin{align*}
-R(1-s)&\left(\binom{k}{1}L^{(1)}\sum_{l=0}^{k-1}\binom{k-1}{l}\Psi^{(l)}(s)D^{(k-1-l)}(z, 1-s)\right.\\
&\quad\left.+\binom{k}{2}L^{(2)}\sum_{l=0}^{k-2}\binom{k-2}{l}\Psi^{(l)}(s)D^{(k-2-l)}(z, 1-s)\right)
\\
=&\sum_{l=0}^{k-1}\binom{k}{l}\Psi^{(l)}(s)\left(-R(1-s)\left(\binom{k-l}{1}L^{(1)}D^{(k-1-l)}(\cdot, 1-s)\right.\right.\\
&\left.\left.\hspace{2in}+\binom{k-l}{2}L^{(2)}D^{(k-2-l)}(\cdot, 1-s)\right)\right)\\
=&\sum_{l=0}^{k-1}\binom{k}{l}\Psi^{(l)}(s)D^{(k-l)}(z, 1-s).
\end{align*}
 This completes the inductive step.
\end{proof} 
Selberg proved \cite[Eq. (8.5)--(8.6), p.~655]
{Selberg:1989a} that for $\Re(s)> 1/2$ with $s(1-s)$ bounded away from
the spectrum the function $\Phi_{\sa\sb}(s)$ is bounded. We now show how this
generalizes to $\Psi^{(k)}_{\sa\sb}(s)$.
\begin{lem}\label{bounding-matrices} Fix $k\in \N$. For
  $1/2+\e\leq \Re(s)\leq 2$ with $s(1-s)$ bounded away
from $\spec(-\tilde\Delta)$ 
 the function $\Psi^{(k)}_{\sa\sb}(s)$ is bounded, i.e. 
  \begin{equation*}
    \Psi^{(k)}_{\sa\sb}(s)\ll_{k} 1.
  \end{equation*}
\end{lem}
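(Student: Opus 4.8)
The plan is to estimate the integral in \eqref{D-scattering} directly. For $1/2+\e\le\Re(s)\le 2$ one has $\abs{2s-1}\ge 2\e$, and in fact $\abs{s}/\abs{2s-1}\ll_\e 1$ throughout this strip (for $\abs{\Im s}\ge 1$ because then $\abs{s}\ll\abs{\Im s}$ while $\abs{2s-1}\ge 2\abs{\Im s}$; for $\abs{\Im s}<1$ because then $\abs{s}\ll 1\ll_\e\abs{2s-1}$). Hence it suffices to show
\begin{equation*}
\int_{\GmodH}\abs{E_\sb(z,s)}\left(\abs{L^{(1)}D^{(k-1)}_\sa(z,s)}+\abs{L^{(2)}D^{(k-2)}_\sa(z,s)}\right)d\mu(z)\ll_k\abs{s}
\end{equation*}
uniformly for $s$ in the region, the second term being absent when $k=1$.

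I would estimate this integral by a Cauchy--Schwarz inequality in the weighted spaces $L^2_{\pm\delta}(\GmodH)$. The Eisenstein series $E_\sb(z,s)$ is not square integrable, but it has only polynomial growth at the cusps, and I claim $E_\sb(z,s)\in L^2_{-\delta}(\GmodH)$ with $\norm{E_\sb(z,s)}_{L^2_{-\delta}}\ll 1$ uniformly in the region; this follows from Lemma~\ref{deverdiere} applied to the weight-zero Eisenstein series (as in the discussion after its proof) together with the boundedness of the scattering entries $\phi_{\sb\sc}(s)$ for $\Re(s)\ge 1/2+\e$ away from the spectrum, the hypothesis that $s(1-s)$ stays away from $\spec(-\tilde\Delta)$ being used to handle $E_\sb(z,s)$ when $\abs{\Im s}$ is small. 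On the other hand, because the cusp form $f$ enters $L^{(1)}$ and $L^{(2)}$ through \eqref{formulas-L1-L2}, \eqref{formula-L2} (recall $\inprod{\a}{\a}=y^2\abs{f(z)}^2$), both $L^{(1)}D^{(k-1)}_\sa(z,s)$ and $L^{(2)}D^{(k-2)}_\sa(z,s)$ decay exponentially at every cusp and lie in $L^2_\delta(\GmodH)$ (cf.\ Lemma~\ref{on-the-whole-C}). So the integral is at most $\norm{E_\sb(z,s)}_{L^2_{-\delta}}$ times $\norm{L^{(1)}D^{(k-1)}_\sa(z,s)}_{L^2_\delta}+\norm{L^{(2)}D^{(k-2)}_\sa(z,s)}_{L^2_\delta}$.

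It then remains to bound the weighted norms using Theorem~\ref{D-continuation}. Since $y^2\abs{f(z)}^2 e^{2\delta\rho(z)}$ is bounded on $\GmodH$ (here $0<\delta<2\pi$ and $y\abs{f(z)}$ is $\G$-invariant with cuspidal decay $\ll\rho e^{-2\pi\rho}$), formula \eqref{formula-L2} gives $\norm{L^{(2)}D^{(k-2)}_\sa(z,s)}_{L^2_\delta}\ll\norm{D^{(k-2)}_\sa(z,s)}_{L^2}\ll_k 1$, while \eqref{alt-L1} and the same boundedness give $\norm{L^{(1)}D^{(k-1)}_\sa(z,s)}_{L^2_\delta}\ll\norm{K_0 D^{(k-1)}_\sa(z,s)}_{L^2}+\norm{L_0 D^{(k-1)}_\sa(z,s)}_{L^2}$; by \eqref{roelcke-input}, Lemma~\ref{recurrence}, and the $L^2$-bounds of Theorem~\ref{D-continuation} the latter is $\ll_k\abs{s}$ --- this is precisely the estimate already carried out in \eqref{winterreise}. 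Combining, the displayed integral is $\ll_k\abs{s}$ and $\Psi^{(k)}_{\sa\sb}(s)\ll_k 1$. In the case $k=1$ one has $D^{(0)}_\sa(z,s)=E_\sa(z,s)$, which is not square integrable; there I would instead use Lemma~\ref{L1Eis} (giving $\norm{L^{(1)}E_\sa(z,s)}_{L^2}\ll\abs{s}$) together with the exponential cuspidal decay of $L^{(1)}E_\sa(z,s)$ --- coming via \eqref{alt-L1} from the factor $y\abs{f(z)}$ applied to $K_0E_\sa(z,s)=sE_\sa(z,s,2)$ and its analogue for $L_0$, whose polynomial growth at the cusps is controlled by Lemma~\ref{deverdiere} for weight $2$ --- to get $\norm{L^{(1)}E_\sa(z,s)}_{L^2_\delta}\ll\abs{s}$, and the rest is identical.

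The one genuinely delicate point is uniformity in $\Im(s)$: one must know that the cuspidal growth and the $L^2_{-\delta}$-norm of $E_\sb(z,s)$ (and, for $k=1$, of the weight-$2$ Eisenstein series appearing in $K_0E_\sa$) are bounded independently of $\Im(s)$. This is exactly what Lemma~\ref{deverdiere}---with its hypothesis that $s(1-s)$ stays away from $\spec(-\tilde\Delta)$, which is what matters near the exceptional points where $\abs{\Im s}$ is small---together with the boundedness of the scattering matrix provides; once these are granted, everything else is Cauchy--Schwarz and the $L^2$-estimates already established in Theorem~\ref{D-continuation} and Lemma~\ref{L1Eis}.
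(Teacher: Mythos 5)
Your argument is correct in substance but routes the estimate differently from the paper. The paper's proof of \eqref{uno} moves $L^{(1)}$ onto the Eisenstein series by self-adjointness (Stokes' theorem, as in Proposition \ref{integral-of-Dk}), so that the integrand becomes $L^{(1)}(E_\sb(z,s))\cdot D^{(k)}_\sa(z,s)$ with \emph{both} factors in plain $L^2$ --- the factor $y\abs{f}$ inside $L^{(1)}E_\sb$ kills the polynomial growth of $E_\sb$, and Lemma \ref{L1Eis} and Theorem \ref{D-continuation} give the bound $\ll\abs{s}$ by ordinary Cauchy--Schwarz; for \eqref{duo} it likewise moves the multiplication operator $L^{(2)}$ onto $E_\sb$ and uses $\norm{L^{(2)}E_\sb}_{L^2}\ll 1$ from Lemma \ref{deverdiere}. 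You instead leave the operators where they stand and pay for the non-integrability of $E_\sb$ with a weighted Cauchy--Schwarz in $L^2_{\pm\delta}$, re-deriving the bound $\norm{L^{(1)}D^{(k-1)}_\sa}_{L^2_\delta}\ll\abs{s}$ by the computation of \eqref{winterreise}. Both routes rest on the same inputs (Lemma \ref{deverdiere}, Lemma \ref{L1Eis}, Theorem \ref{D-continuation}, \eqref{roelcke-input}); the paper's version is shorter because transposing the operators avoids the weighted spaces altogether, while yours has the mild advantage of not invoking self-adjointness of $L^{(1)}$. One small point to repair: for $k=2$ the second term of \eqref{D-scattering} is $L^{(2)}D^{(0)}_\sa=L^{(2)}E_\sa$, and your chain $\norm{L^{(2)}D^{(k-2)}_\sa}_{L^2_\delta}\ll\norm{D^{(k-2)}_\sa}_{L^2}$ fails there since $E_\sa\notin L^2$; you need the same device you already use for $k=1$, namely that the exponentially decaying factor $y^2\abs{f}^2$ dominates the polynomial growth of $E_\sa$ (controlled by Lemma \ref{deverdiere}), which gives $\norm{L^{(2)}E_\sa}_{L^2_\delta}\ll 1$ directly. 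With that one-line fix the proof is complete.
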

\begin{proof}
  Considering \eqref{D-scattering} it suffices to show that for every
  $k$ we have
  \begin{align}
  \label{uno}  &\int_{\GmodH}E_{\sb}(z,s)L^{(1)}D^{(k)}_{\sa}(z,s)d\mu(z)\ll \abs{s}, \\
 \label{duo}&\int_{\GmodH}E_{\sb}(z,s)L^{(2)}D^{(k)}_{\sa}(z,s)d\mu(z)\ll \abs{1}.
  \end{align}
For \eqref{uno} we use Stokes' theorem, as in the
proof of Proposition \ref{integral-of-Dk}, and  bound
\begin{equation*}
  \int_{\GmodH}L^{(1)}(E_{\sb}(z,s))D^{(k)}_{\sa}(z,s)d\mu(z). 
\end{equation*}
By  Theorem \ref{D-continuation}, Lemma \ref{L1Eis}, and Cauchy--Schwarz this is bounded by a constant times
$\abs{s}$. 

For \eqref{duo} we note that we can move $L^{(2)}$ in front of $E_{\sb}(z, s)$, since $L^{(2)}$ is a multiplication
operator. 
Since  $\norm{L^{(2)}E_\sb(z,s)}\ll 1$ by Lemma
\ref{deverdiere}, and $\norm{D^{(k)}_\sb(z,s)}\ll 1$, see Theorem \ref{D-continuation}, the result follows using Cauchy--Schwarz.
\end{proof}

\subsection{Functional equation for $E_{\sa}^{(k)}(z,s)$} 
Let $k$ be a natural number. 
Using \eqref{E-D-translation} and \eqref{D-E-translation} we can find
the functional equation for $E^{(k)}(z,s)$ as follows: 
\begin{align*}
  E^{(k)}(z,s)&=\sum_{j=0}^k\binom{k}{j}(-A(z))^jD^{(k-j)}(z,s)\\
&=\sum_{j=0}^k\binom{k}{j}(-A(z))^j\sum_{h=0}^{k-j}\binom{k-j}{h}\Psi^{(h)}(s)D^{(k-j-h)}(z,1-s)\\
&=\sum_{j=0}^k\binom{k}{j}(-A(z))^j\sum_{h=0}^{k-j}\binom{k-j}{h}\Psi^{(h)}(s) 
\sum_{l=0}^{k-j-h}\binom{k-j-h}{l}A(z)^lE^{(k-j-h-l)}(z,1-s).
\end{align*}
Setting $r=j+h+l$ we see that
\begin{align*}
   E^{(k)}(z,s)&=\sum_{r=0}^k\binom{k}{r} \sum_{j+h+l=r}\binom{r}{h}\binom{r-h}{l}
                 (-A(z))^j\Psi^{(h)}(s) 
A(z)^lE^{(k-r)}(z,1-s)\\
&=\sum_{r=0}^k\binom{k}{r}\left[ \sum_{h=0}^r\sum_{l=0}^{r-h} \binom{r}{h}\binom{r-h}{l}
                 (-A(z))^{r-h-l}\Psi^{(h)}(s) 
A(z)^l\right]E^{(k-r)}(z,1-s).\\
\end{align*}
If we set 
\begin{equation*}
\Phi^{(r)}(s):= \sum_{h=0}^r\sum_{l=0}^{r-h} \binom{r}{h}\binom{r-h}{l}
                 (-A(z))^{r-h-l}\Psi^{(h)}(s) 
A(z)^l,
\end{equation*}
then
\begin{equation}\label{E-der-FE}
E^{(k)}(z, s)=\sum_{r=0}^k\binom{k}{r}\Phi^{(r)}(s)E^{(k-r)}(z,1-s).
\end{equation}
We can rewrite this to see that
\begin{align} \nonumber \Phi^{(r)}_{{\sa}{\sb}}(s)
&=\sum_{{\sc}{\sd}}\sum_{h=0}^r\sum_{l=0}^{r-h}\binom{r}{h}\binom{r-h}{l}(-A(z)_{{\sa}{\sc}})^{r-h-l}
               \Psi_{{\sc}{\sd}}^{(h)}(s) 
A(z)_{{\sd}{\sb}}^l\\ \nonumber
&=\sum_{h=0}^r\binom{r}{h}\Psi_{{\sa}{\sb}}^{(h)}(s)
  \left(-A(z)_{{\sa}{\sa}}+A(z)_{{\sb}{\sb}}\right)^{r-h}\\ 
\nonumber &=\sum_{h=0}^r\binom{r}{h}\Psi_{{\sa}{\sb}}^{(h)}(s)
  \left(-2\pi i \int_{{\sa}}^{{\sb}}\alpha\right)^{r-h}.
\end{align}
We emphasize that $\Phi^{(r)}_{\sa\sb}(s)$ does not depend on $z$.  We also note that if  there is only one cusp we have
$\Psi^{(r)}(s)=\Phi^{(r)}(s)$. 

Looking at  the $\sa$-entry of  \eqref{E-der-FE} and its Fourier expansion \eqref{Ek-expansion}  at the cusp $\sb$ we get for the zero Fourier coefficients:
\begin{equation}\label{removing-todos}
  \phi_{{\sa}{\sb}}^{(k)}(s)y^{1-s}=\sum_{r=0}^{k}\binom{k}{r}
  \sum_{{\sc}}
  \Phi_{{\sa}{\sc}}^{(r)}(s)(\delta_{\sc\sb}\delta_{0}(k-r)y^{1-s}+ \phi_{\sc\sb}^{(k-r)}(1-s)y^s).
  \end{equation}
 This  gives
 $\phi_{\sa\sb}^{(k)}(s)=\Phi_{\sa\sb}^{(k)}(s)$.
We summarize the results for $E^{(k)}(z, s)$.
\begin{theorem}\label{FE-k-deriv-of-E}
The vector of Eisenstein series twisted by modular symbols $E^{(k)}(z, s)$ satisfies the functional equation 
\begin{equation*}
E^{(k)}(z, s)=\sum_{r=0}^k\binom{k}{r}\Phi^{(r)}(s)E^{(k-r)}(z,1-s),
\end{equation*}
where 
\begin{equation*}
\Phi_{\sa\sb}^{(r)}(s)=\sum_{h=0}^r\binom{r}{h}\Psi_{{\sa}{\sb}}^{(h)}(s)
  \left(-2\pi i \int_{{\sa}}^{{\sb}}\alpha\right)^{r-h},
\end{equation*}
and the $\Psi_{\sa\sb}^{(k)}(s)$ are given by \eqref{D-scattering}.
Moreover,   $\Phi^{(k)}_{\sa\sb}(s)=\phi^{(k)}_{\sa\sb}(s)$, where $ \phi^{(k)}_{\sa\sb}(s)y^{1-s}$ is the zero Fourier coefficient of $E^{(k)}_{\sa}(\sigma_{\sb}z, s)$, see \eqref{Ek-expansion}.
\end{theorem}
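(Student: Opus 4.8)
The plan is to obtain the functional equation for $E^{(k)}(z,s)$ purely formally from the functional equation for $D^{(k)}(z,s)$ already proved in Theorem~\ref{FEEF}. First I would substitute, in order: the translation formula \eqref{E-D-translation} writing $E^{(k)}$ as a combination of the $D^{(j)}$ with coefficients polynomial in $A(z)$; then Theorem~\ref{FEEF}, replacing each $D^{(j)}(z,s)$ by $\sum_{h}\binom{j}{h}\Psi^{(h)}(s)D^{(j-h)}(z,1-s)$; then \eqref{D-E-translation}, turning each $D^{(j-h)}(z,1-s)$ back into a combination of the $E^{(m)}(z,1-s)$. This produces a triple sum over the parameters $j,h,l$ introduced at the three steps. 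Grouping by $r=j+h+l$ and using the multinomial identity $\binom{k}{j}\binom{k-j}{h}\binom{k-j-h}{l}=\binom{k}{r}\binom{r}{h}\binom{r-h}{l}$ (valid since $r=j+h+l$) isolates the coefficient of $E^{(k-r)}(z,1-s)$, namely
\begin{equation*}
\Phi^{(r)}(s)=\sum_{h=0}^{r}\sum_{l=0}^{r-h}\binom{r}{h}\binom{r-h}{l}(-A(z))^{r-h-l}\,\Psi^{(h)}(s)\,A(z)^{l},
\end{equation*}
which is \eqref{E-der-FE}. All of this is legitimate on the whole of $\C$ because Theorem~\ref{FEEF} and Lemma~\ref{on-the-whole-C} provide the meromorphic continuation of every object in sight.

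The next step is to check that $\Phi^{(r)}(s)$ is independent of $z$. Since $A(z)$ is diagonal with entries $A_{\sa}(z)=2\pi i\int_{\sa}^{z}\alpha$, in the $(\sa,\sb)$-entry only the summands with intermediate cusp labels forced by diagonality survive, and the remaining sum over $l$ collapses by the binomial theorem to $(A_{\sb}(z)-A_{\sa}(z))^{r-h}$. But $A_{\sb}(z)-A_{\sa}(z)=2\pi i\int_{\sb}^{\sa}\alpha=-2\pi i\int_{\sa}^{\sb}\alpha$ is the period of $\alpha$, independent of $z$, so $\Phi_{\sa\sb}^{(r)}(s)=\sum_{h=0}^{r}\binom{r}{h}\Psi_{\sa\sb}^{(h)}(s)\bigl(-2\pi i\int_{\sa}^{\sb}\alpha\bigr)^{r-h}$ with the $\Psi_{\sa\sb}^{(h)}(s)$ given by \eqref{D-scattering}; this is the asserted formula.

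For the last identity $\Phi_{\sa\sb}^{(k)}(s)=\phi_{\sa\sb}^{(k)}(s)$ I would take the $\sa$-entry of \eqref{E-der-FE}, expand it at the cusp $\sb$, and compare zeroth Fourier coefficients. The left side contributes $\phi_{\sa\sb}^{(k)}(s)y^{1-s}$ by \eqref{Ek-expansion}. On the right side, every term with $r<k$ involves the constant term of $E_{\sc}^{(k-r)}(\sigma_{\sb}z,1-s)$ with $k-r\geq 1$, which by \eqref{Ek-expansion} is a multiple of $y^{s}$ only, while the term $r=k$ involves the ordinary Eisenstein series $E_{\sc}(\sigma_{\sb}z,1-s)$, whose constant term is $\delta_{\sc\sb}y^{1-s}+\phi_{\sc\sb}(1-s)y^{s}$ by \eqref{fourier-expansion-eisenstein}. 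Matching the coefficient of $y^{1-s}$ — legitimate because $y^{1-s}$ and $y^{s}$ are linearly independent for $s\neq 1/2$, hence everywhere by meromorphy — leaves only the $r=k$, $\sc=\sb$ contribution, namely $\Phi_{\sa\sb}^{(k)}(s)$; this is exactly the computation \eqref{removing-todos}.

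I do not expect a serious obstacle: the one genuinely analytic ingredient, the meromorphic continuation and functional equation of the vector $D^{(k)}(z,s)$, is already established in Theorem~\ref{FEEF}, so what remains is the combinatorial reindexing of the triple sum, the diagonality argument collapsing $\Phi^{(r)}(s)$ to a period polynomial in the $\Psi^{(h)}(s)$, and the careful separation of the two powers of $y$ in the constant term — all routine once the meromorphy is granted.
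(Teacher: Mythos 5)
Your proposal is correct and follows essentially the same route as the paper: the same three substitutions (\eqref{E-D-translation}, Theorem~\ref{FEEF}, \eqref{D-E-translation}), the same reindexing by $r=j+h+l$, the same binomial collapse of the diagonal matrices $A(z)$ to the period $-2\pi i\int_{\sa}^{\sb}\alpha$, and the same comparison of the $y^{1-s}$ parts of the zeroth Fourier coefficients as in \eqref{removing-todos}. No gaps.
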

\begin{remark} The functional equations for $E^{(k)}(z ,s)$ can be deduced also from the functional equations for $E^{m, n}_{\sa}(z, s)$, see \cite[Thm. 7.1]{JorgensonOSullivan:2008a}. However, the explicit expressions for $\Phi_{\sa\sb}^{(r)}(s)$ in Theorem \ref{FE-k-deriv-of-E} are new. In this work we need the integral representation of $\Psi_{\sa\sb}^{(k)}(s)$ in Section \ref{analytic-prop} below.

The matrices $\Phi^{(k)}(s)$ satisfy the functional equation 
\begin{equation*}
 \sum_{j=0}^k\binom{k}{j}\Phi^{(j)}(s)\Phi^{(k-j)}(1-s)=\begin{cases}I,&\quad\textrm{
    if $k=0$,}\\
0,&\quad\textrm{
    if $k>0$,} \end{cases}
\end{equation*}
cf. \cite[Th. 2.2]{JorgensonOSullivan:2008a}. For $k=0$ this is due to
Selberg and for $k\geq 1$ it follows by 
comparing the coefficient of $y^{s}$ in \eqref{removing-todos}. We
notice also that for $1/2+\e\leq \Re(s)\leq A$ with $s$ bounded away
from the spectrum we have
\begin{equation*}
  \Phi^{(k)}_{\sa\sb}(s)\ll_\a1,
\end{equation*}
as follows from Lemma \ref{bounding-matrices}.
\end{remark}

\section{Analytic properties of the generating series $L_{\sa\sb}^{(k)}(s,0,n)$
}\label{analytic-prop}
In this section we use the results from Sections  \ref{giovanna}, \ref{section4}, \ref{Functional-Equation}    to
study the analytic continuation of 
$
  L_{\sa\sb}^{(k)}(s,0,n)
$ for $k\geq 0$ and $\Re (s)>1/2$.

\subsection{Meromorphic continuation}
If $k=0$ we obtained the meromorphic continuation of $ L_{\sa\sb}^{(k)}(s,0,n)$ in Lemma \ref{L-s-0-n-e}.
For $k\geq 1$ we consider first the case $n=0$. From \eqref{phi-matrices-diff}, and Theorem \ref{FE-k-deriv-of-E}   we find that 
\begin{equation}\label{one-good-expression}
 L_{\sa\sb}^{(k)}(s,0,0)=\frac{\Gamma(s)}{\pi^{1/2}\Gamma(s-1/2)}\sum_{h=0}^{k}\binom{k}{h}\Psi_{\sa\sb}^{(h)}(s)\left(
   -2\pi i \int_{\sa}^\sb\a\right)^{k-h}.
\end{equation}
The right-hand side  is meromorphic by Theorem \ref{FEEF}. If $n\geq 1$
we deduce from Lemma \ref{integral-rep-L-s-0-n-e} and \eqref{mappings}  that, for  $\Re(s)>1$,  $\Re(w)>1$,
\begin{equation*}L_{\sa\sb}(s, 0, n,
  \e)=F(
s,w,n)e\left({\e \int_{\sb}^{\sa}\!\!\!\alpha}\right)\int_{\GmodH}\!\!\!\! D_{\sa}(z, s, \e)\overline{D_{{\sb}, n}(z, \bar w, \e)}\, d\mu (z),
\end{equation*}
where 
\begin{equation*} F(s, w, n)=\frac{\Gamma(s)\Gamma(w)\abs{n}^{w-s}2^{2w-2}\pi^{w-s-1}}{\Gamma (s+w-1)\Gamma(w-s)}.\end{equation*}
 We differentiate to get
\begin{align}
\nonumber L^{(k)}_{\sa\sb}&(s, 0, n)=F(s, w,
n)\\ &\times\!\!\!\!\sum_{k_1+k_2+k_3=k}\frac{k!}{k_1!k_2! k_3!}\left(2\pi i\int_{\sb}^{\sa}\!\!\!\alpha\right)^{k_1}\!\!\!\int_{\GmodH}\!\!\!\!D^{(k_2)}_{\sa}(z, s)\overline{D^{(k_3)}_{{\sb}, n}(z, \bar w)}\, d\mu (z).\label{another-good-expression}
\end{align}
The differentiation is allowed and the right-hand side is meromorphic for
$\Re(s)>1/2+\e$ by Theorem \ref{D-continuation}, Lemma \ref{L1-bound}
and the fact that $D_{\sb,n}^{(k_3)}(z,w)$ is bounded for $\Re(w)>1$.
Using Proposition \ref{myfingerhurts} we can deal also with $n\leq
-1$. To summarize we have proved the following result:
\begin{theorem}For any cusps $\sa, \sb$ and any integers $k\geq 0$, $n\in\Z$  the function  $L^{(k)}_{\sa\sb}(s,
  0, n)$ admits meromorphic continuation to $\Re(s)>1/2+\e$.
\end{theorem}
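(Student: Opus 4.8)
The plan is to reduce, in each of three regimes, the meromorphic continuation of $L^{(k)}_{\sa\sb}(s,0,n)$ to properties of Eisenstein series twisted by modular symbols that are already in hand. For $k=0$ there is nothing new: the continuation of $L_{\sa\sb}(s,0,n)$ to all of $\C$ is Lemma \ref{L-s-0-n-e}.

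For $k\ge 1$ and $n=0$, I would start from \eqref{phi-matrices-diff}, which writes $L^{(k)}_{\sa\sb}(s,0,0)$ as a meromorphic gamma-factor times the $\e$-derivative $\phi^{(k)}_{\sa\sb}(s)$ of the constant term of the twisted Eisenstein series. By Theorem \ref{FE-k-deriv-of-E} one has $\phi^{(k)}_{\sa\sb}(s)=\Phi^{(k)}_{\sa\sb}(s)$, and $\Phi^{(k)}_{\sa\sb}(s)$ is the explicit finite combination of the matrices $\Psi^{(h)}_{\sa\sb}(s)$, $0\le h\le k$, with coefficients built from the period $\int_\sa^\sb\alpha$, given in that theorem. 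Each $\Psi^{(h)}_{\sa\sb}(s)$ is meromorphic on $\C$: for $h=0$ it is the classical scattering matrix, and for $h\ge1$ the integral representation \eqref{D-scattering} defines a meromorphic function, since the limiting absorption principle \eqref{limitingabsorption} expresses the relevant resolvent action on functions lying in $L^2_\delta(\GmodH)$ — which is exactly the content of Theorem \ref{FEEF}, whose hypotheses are supplied by Lemma \ref{on-the-whole-C}. This yields \eqref{one-good-expression}, hence the meromorphy (in fact on all of $\C$) of $L^{(k)}_{\sa\sb}(s,0,0)$.

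For $k\ge 1$ and $n\ge1$, I would rewrite the integral representation of Lemma \ref{integral-rep-L-s-0-n-e} by means of \eqref{mappings}, obtaining for $\Re(s),\Re(w)>1$
\begin{equation*}
L_{\sa\sb}(s,0,n,\e)=F(s,w,n)\,e\!\left(\e\int_\sb^\sa\alpha\right)\int_{\GmodH}D_\sa(z,s,\e)\overline{D_{\sb,n}(z,\bar w,\e)}\,d\mu(z),
\end{equation*}
with $F(s,w,n)$ an explicit quotient of gamma functions. Differentiating $k$ times in $\e$ at $\e=0$ (Leibniz across the three factors) gives \eqref{another-good-expression}, a finite sum of terms $F(s,w,n)(\int_\sb^\sa\alpha)^{k_1}\int_{\GmodH}D^{(k_2)}_\sa(z,s)\overline{D^{(k_3)}_{\sb,n}(z,\bar w)}\,d\mu(z)$. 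Fixing, say, $\Re(w)=1+2\e$, each inner product is a holomorphic function of $s$ on $1/2+\e<\Re(s)<1$ away from the poles of $D^{(k_2)}_\sa(z,s)$: indeed $D^{(k_2)}_\sa(z,s)$ is square integrable there with $L^2$-norm $\ll 1$ by Theorem \ref{D-continuation}, while $D^{(k_3)}_{\sb,n}(z,\bar w)$ is bounded for $\Re(w)>1$, so the product is integrable (cf. Lemma \ref{L1-bound}) and depends holomorphically on $s$. The factor $F(s,w,n)$ contributes only standard gamma-poles. This gives the continuation to $\Re(s)>1/2+\e$ for $n\ge1$, and the case $n\le-1$ follows from the symmetry $L^{(k)}_{\sa\sb}(s,0,n)=(-1)^kL^{(k)}_{\sb\sa}(s,-n,0)$ of Proposition \ref{myfingerhurts}.

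The only substantive point — the machinery being already built — is the justification of both the differentiation under the integral sign and the term-by-term $\e$-differentiation of the Fourier expansion needed to pass from $L_{\sa\sb}(s,0,n,\e)$ to its $k$th derivative, carried out uniformly in $s$ throughout the strip $1/2+\e<\Re(s)<1$, where $D_\sa(z,s,\e)$ is no longer an absolutely convergent series. This is exactly what the uniform $L^2$- and $L^1$-bounds of Theorem \ref{D-continuation} and Lemma \ref{L1-bound}, together with the membership statements of Lemma \ref{on-the-whole-C}, are designed to deliver.
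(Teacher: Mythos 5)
Your proposal follows essentially the same route as the paper: Lemma \ref{L-s-0-n-e} for $k=0$, the identity \eqref{one-good-expression} via Theorem \ref{FE-k-deriv-of-E} and the meromorphy of the $\Psi^{(h)}_{\sa\sb}(s)$ from Theorem \ref{FEEF} for $n=0$, and the differentiated integral representation \eqref{another-good-expression} controlled by Theorem \ref{D-continuation} and Lemma \ref{L1-bound} for $n\geq 1$. One small correction: for $n\leq -1$ the relation you quote sends $L^{(k)}_{\sa\sb}(s,0,n)$ to $L^{(k)}_{\sb\sa}(s,-n,0)$, which has a nonzero \emph{first} index and so is not among the cases you have treated; you should instead use the first identity of Proposition \ref{myfingerhurts}, namely $L^{(k)}_{\sa\sb}(s,0,n)=(-1)^k\overline{L^{(k)}_{\sa\sb}(\overline{s},0,-n)}$, which reduces directly to the case $-n\geq 1$.
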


\subsection{The first derivative.}
We now study in more detail the analytic 
properties of $L^{(1)}_{\sa\sb}(s, 0, n)$.
\begin{theorem}\label{first-derivatives}The function  $L^{(1)}_{\sa\sb}(s,
  0, n)$  has a simple pole at $s=1$ with residue
  \begin{equation*}
    {\res}_{s=1}L^{(1)}_{\sa\sb}(s, 0,
    n)=\frac{-1}{\pi\vol{\GmodH}}\begin{cases}\displaystyle\frac{a_{\sb}(n)}{2n},&\textrm{
        if
      }n>0,\\
\displaystyle2\pi i\int_\sa^\sb\alpha,&\textrm{
        if
      }n=0,\\
\displaystyle
\frac{\overline{a_{\sb}}(-n)}{2n},&\textrm{
        if
      }n<0.\end{cases}
  \end{equation*}
For $s(1-s)$ bounded away
from $\spec(-\tilde\Delta)$, and $1/2+\e\leq  \Re(s)\leq 1+\varepsilon$ we
have 
\begin{equation*}
  L^{(1)}_{\sa\sb}(s,
  0, n)\ll_\e \abs{s}^{1/2}(1+\abs{n}^{1-\Re(s)+\e}).
\end{equation*}
\end{theorem}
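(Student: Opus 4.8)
The plan is to extract the analytic information about $L^{(1)}_{\sa\sb}(s,0,n)$ from the two explicit formulas already derived: the ``zero Fourier coefficient'' expression \eqref{one-good-expression} when $n=0$, and the ``inner product'' expression \eqref{another-good-expression} when $n\ne 0$. For $n\ne 0$ it suffices by Proposition \ref{myfingerhurts} to treat $n>0$; the case $n<0$ then follows by conjugation, which explains the $\overline{a_\sb}(-n)/2n$ in the statement.

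\textbf{The case $n=0$.} Here I would differentiate \eqref{one-good-expression} once in $\e$, i.e. take $k=1$, obtaining
\begin{equation*}
L^{(1)}_{\sa\sb}(s,0,0)=\frac{\Gamma(s)}{\pi^{1/2}\Gamma(s-1/2)}\left(\Psi^{(1)}_{\sa\sb}(s)-2\pi i\int_\sa^\sb\alpha\cdot\Phi_{\sa\sb}(s)\right).
\end{equation*}
Now $\Phi_{\sa\sb}(s)$ has a simple pole at $s=1$ with residue $1/\vol{\GmodH}$, while $\Psi^{(1)}_{\sa\sb}(s)$, given by \eqref{D-scattering} with $k=1$, is $\frac{1}{2s-1}\inprod{E_\sb(z,s)}{L^{(1)}E_\sa(z,s)}$ (up to the omitted $L^{(2)}$ term), which by Proposition \ref{D1-regular} and Lemma \ref{bounding-matrices} is regular at $s=1$ (indeed bounded). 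Since $\Gamma(s)/(\pi^{1/2}\Gamma(s-1/2))$ equals $1/\pi$ at $s=1$, only the $\Phi_{\sa\sb}$ term contributes to the residue, giving $\res_{s=1}L^{(1)}_{\sa\sb}(s,0,0)=-2\pi i\int_\sa^\sb\alpha\cdot\frac{1}{\pi\vol{\GmodH}}$, as claimed. The polynomial bound on vertical lines follows from Lemma \ref{bounding-matrices} for $\Psi^{(1)}$, the Selberg bound $\Phi_{\sa\sb}(s)\ll 1$, and Stirling for the Gamma quotient.

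\textbf{The case $n>0$.} Here I would take $k=1$ in \eqref{another-good-expression}, so the sum over $k_1+k_2+k_3=1$ has three terms:
\begin{equation*}
L^{(1)}_{\sa\sb}(s,0,n)=F(s,w,n)\left(2\pi i\!\int_\sb^\sa\!\alpha\,\inprod{D_\sa(z,s)}{D_{\sb,n}(z,\bar w)}+\inprod{D^{(1)}_\sa(z,s)}{D_{\sb,n}(z,\bar w)}+\inprod{D_\sa(z,s)}{D^{(1)}_{\sb,n}(z,\bar w)}\right),
\end{equation*}
where $D_\sa=E_\sa$ and $D^{(1)}_\sa$ is regular at $s=1$ by Proposition \ref{D1-regular}. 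As in the proof of Lemma \ref{L-s-0-n-e}, I would choose $w=1+2\e+it$ with $t=\Im(s)$ so that the pole at $s=1$ comes only from the pole of $E_\sa(z,s)$, whose residue is the constant $\vol{\GmodH}^{-1}$. Thus the residue at $s=1$ picks up $2\pi i\int_\sb^\sa\alpha\cdot\vol{\GmodH}^{-1}\inprod{1}{D_{\sb,n}(z,\bar w)}$ from the first term — but $\inprod{1}{E_{\sb,n}(z,\bar w)}=0$ by unfolding (as already observed in Lemma \ref{L-s-0-n-e}), and similarly the third term contributes $\inprod{1}{D^{(1)}_{\sb,n}(z,\bar w)}$, which I must compute. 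This last inner product is the one surviving contribution, and evaluating it — using \eqref{D-E-translation}, the Fourier expansion \eqref{fourier-expansion}, and the unfolding computation in Lemma \ref{integral-rep-L-s-0-n-e} with the $K$-Bessel integral \cite[6.621.3]{GradshteynRyzhik:2007a} — should produce the factor $a_\sb(n)/(2n)$ after combining with $F(s,w,n)$ evaluated at $s=1$. The vertical-line bound follows from Theorem \ref{D-continuation} (giving $\norm{D^{(1)}_\sa(z,s)}\ll\abs{s}$, hence after Cauchy–Schwarz the $\abs{s}^{1/2}$ once the Gamma quotient in $F$ is accounted for by Stirling), Lemma \ref{L1-bound} for the integrability, and the bound on $D^{(k_3)}_{\sb,n}(z,\bar w)$ for $\Re(w)>1$; the $\abs{n}^{1-\Re(s)+\e}$ comes from the $\abs{n}^{w-s}$ in $F(s,w,n)$ together with the Fourier-coefficient growth, exactly as in Lemma \ref{L-s-0-n-e}.

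\textbf{Main obstacle.} The routine parts are the meromorphy (already granted) and the vertical-line estimates (mechanical, given Theorem \ref{D-continuation} and Lemma \ref{L1-bound}). The delicate point is the \emph{exact} residue computation in the $n>0$ case: one must carefully track how the constant residue of $E_\sa(z,s)$ interacts with $\inprod{1}{D^{(1)}_{\sb,n}(z,\bar w)}$, verify that the $w$-dependence cancels so the answer is genuinely $w$-independent, and pin down the precise constant $a_\sb(n)/(2n)$ rather than merely its shape. A secondary subtlety is justifying that, in the $n=0$ case, $\Psi^{(1)}_{\sa\sb}(s)$ really is regular at $s=1$ (not just a priori a simple pole from $R(s)$): this is exactly Proposition \ref{D1-regular} combined with the self-adjointness argument of Proposition \ref{integral-of-Dk}, applied to the representation \eqref{D-scattering}.
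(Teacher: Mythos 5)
Your proposal follows essentially the same route as the paper's proof: for $n=0$ it uses \eqref{one-good-expression} with $k=1$, with the pole coming only from $\Psi^{(0)}=\Phi$ and the regularity of $\Psi^{(1)}_{\sa\sb}$ at $s=1$ supplied by Propositions \ref{D1-regular} and \ref{integral-of-Dk}; for $n>0$ it uses the three-term expansion of \eqref{another-good-expression} with $k=1$, the first term's residue killed by $\int_{\GmodH}\overline{D_{\sb,n}(z,\bar w)}\,d\mu=0$, the second term regular, and the residue produced entirely by $\int_{\GmodH}D_\sa(z,s)\overline{D^{(1)}_{\sb,n}(z,\bar w)}\,d\mu$, unfolded against \eqref{fourier-expansion} and combined with $F(1,w,n)=(4\pi n)^{w-1}/(\pi\Gamma(w-1))$ so that the $w$-dependence cancels — exactly as in the paper. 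The only cosmetic slips are that Theorem \ref{D-continuation} gives $\norm{D^{(1)}_\sa(z,s)}_{L^2}\ll 1$ (not $\ll\abs{s}$), so the $\abs{s}^{1/2}$ comes purely from Stirling applied to $F(s,w,n)$ with $\Im(w)=\Im(s)$ while Lemma \ref{L1-bound} bounds the integrals by $O(1)$, and the final unfolding is an elementary exponential integral rather than a $K$-Bessel one.
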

\begin{proof}
Using Proposition \ref{myfingerhurts} the claim for $n\leq -1$ follows from the  case  $n\geq
1$. So we can assume that $n\geq 1$. Consider \eqref{another-good-expression}
when $k=1$. For $\Re (w)\geq 1+2\e$ fixed, the functions $F(s, w, n)$ is holomorphic as
long as  $\Re(s)>0$, so we must analyze the three expressions
\begin{align}
\label{one}
  \left(2\pi i\int_{\sb}^{\sa} \alpha\right)\int_{\GmodH}D_{\sa}(z, s)\overline{D_{{\sb}, n}(z, \bar w)}\, d\mu (z),\\
\label{two}
\int_{\GmodH}D^{(1)}_{\sa}(z, s)\overline{D_{{\sb}, n}(z, \bar w)}\,
  d\mu (z),\\
\label{three}
\int_{\GmodH}D_{\sa}(z, s)\overline{D^{(1)}_{{\sb}, n}(z, \bar w)}\, d\mu (z).
\end{align}
To analyze \eqref{one} we get by Lemma \ref{L1-bound} 
\begin{equation*}
  \int_{\GmodH}D_{\sa}(z, s)\overline{D_{{\sb}, n}(z, \bar
    w)}\, d\mu (z)\ll_{\e} 1.
\end{equation*}
There is a pole of the Eisenstein series $D_{\sa}(z, s)$ at $s=1$, which gives
rise to a residue for \eqref{one}
\begin{equation*}
\frac{1}{\vol{\GmodH}}\int_{\GmodH}\overline{D_{{\sb}, n}(z, \bar w)}\, d\mu (z).
\end{equation*}
To see that this vanishes we unfold the integral as in the Rankin method. The integrand  contains the factor $e(n\sigma^{-1}_{\sb}z  )$, with $n\ne 0$, and we notice that,  
as $\gamma$ varies over the cosets $ \Gamma_{\sb}\backslash \Gamma$, the sets $\sigma_{\sb}^{-1}\gamma F$ cover the strip $\{z\in \H: \Re (z)\in [0, 1]\}.$

To analyze \eqref{two} we note that by 
Theorem \ref{D-continuation}, Proposition \ref{D1-regular} and \eqref{D-k-Selberg-result}  the term is
holomorphic at $s=1$ and,  for $s$ bounded away from the spectrum, satisfies 
\begin{equation*}
  \int_{\GmodH}D^{(1)}_{\sa}(z, s)\overline{D_{{\sb}, n}(z, \bar
    w)}d\mu (z)\ll_{\e} 1.
\end{equation*}
Finally, we analyze \eqref{three}.
Since the Eisenstein series $D_{\sa}(z,
s)$ has a pole at $s=1$ we find that the integral has a simple pole at $s=1$ 
with residue
\begin{align*}
  \frac{1}{\vol{\GmodH}}&\int_{\GmodH}\overline{D^{(1)}_{{\sb}, n}(z, \bar w)}\, d\mu (z)\\
=&\frac{-2\pi i }{\vol{\GmodH}}\int_0^{\infty}\int_0^1\left(\int_{\sb}^{\sigma_{\sb}z}\alpha \right)\overline{ e(nz)} y^w\, y^{-2}dxdy\\
=&\frac{-2 \pi i }{\vol{\GmodH}}\int_0^{\infty}\frac{a_{\sb}(n)e^{-4\pi n y}}{4\pi i n}y^{w-2}\,dy =\frac{-a_{\sb}(n) }{2n\vol{\GmodH}}\frac{\Gamma (w-1)}{(4\pi n)^{w-1}},
\end{align*}
where we have unfolded using \eqref{definition-D} and
\eqref{fourier-expansion}. We notice also that by Lemma \ref{L1-bound}
we have 
\begin{equation*}
  \int_{\GmodH}D_{\sa}(z, s)\overline{D^{(1)}_{{\sb}, n}(z, \bar w)}\,
  d\mu (z)\ll_{\e}1.
\end{equation*}
Using the above analysis of the  three integrals we can finish the
proof for $n>0$ as follows.
Observing that $F(1, w, n)={(4\pi
  n)^{w-1}}/({\Gamma(w-1)\pi})$ we get  the residue at $s=1$ for $n>0$. For
the growth on vertical lines  we choose $\Im(w)=\Im(s)$ and use  Stirling's
asymptotics  to get
\begin{equation}\label{F-bound}
F(s,w,n)\ll \abs{s}^{1/2}n^{\Re(w)-\Re(s)}.
\end{equation}
 The bound on vertical lines now is obvious when we choose $\Re(w)=1+2\e$.

As far as $L^{(1)}(s, 0, 0)$ is concerned we use
\eqref{one-good-expression} with $k=1$, which leads to analyze
$\Psi_{\sa\sb}^{(0)}(s)$ and  $\Psi_{\sa\sb}^{(1)}(s)$. We start by
noticing that by Lemma \ref{bounding-matrices} they are both bounded
for $1/2+\e\leq \Re(s)\leq 1+\e$. With the help of the Stirling asymptotics on the quotient
of Gamma factors we easily prove the bound on vertical lines for $L^{(1)}(s, 0, 0)$.

Since $\Psi^{(0)}(s)=\Phi(s)$ is the standard scattering matrix it is
well known that $\Psi_{\sa\sb}^{(0)}(s)$ has a simple pole with
residue $\vol{\GmodH}^{-1}$. 
Using  Theorem \ref{FEEF} and
Proposition \ref{D1-regular} we see that, if $\Psi_{\sa\sb}^{(1)}$ has
a pole, it must be a simple pole with residue a constant times $\int
L^{(1)}D_\sa(z,1)d\mu(z)$. This vanishes by Proposition
\ref{integral-of-Dk} so $\Psi_{\sa\sb}^{(1)}$ is regular at $s=1$.  The conclusion follows.
\end{proof}

\subsection{The second derivative}
We will now describe the \emph{full} singular part of
$L^{(2)}_{\sa\sb}(s, 0, 0)$ at $s=1$. We denote
the constant term in the Laurent expansion of $E_\sa(z,s)$ by
$B_{\sa}(z)$, i.e. 
\begin{equation}\label{Kronecker-limit}
  E_{\sa}(z,s)=\frac{\vol{\GmodH}^{-1}}{(s-1)}+B_{\sa}(z)+O(s-1),
\end{equation}
as $s\to 1$. For $\G=\pslz$, the function $B_{\infty}(z)$ can be described
in terms of the Dedekind eta function (Kronecker's limit formula). For
general groups $\Gamma$ the function  $B_{\sa}(z)$ is given in terms of generalized Dedekind sums, see e.g. \cite{Goldstein:1973a}.

\begin{theorem}\label{two-derivatives}
The function  $L^{(2)}_{\sa\sb}(s,
  0, 0)$  has a pole of order 2 at $s=1$. The full singular
  part of the Laurent expansion at $s=1$ equals
  \begin{equation*}
    \frac{a_{-2}}{(s-1)^2}+\frac{a_{-1}}{s-1},
  \end{equation*}
where 
\begin{align*}
a_{-2}&=\frac{-8\pi^2\norm{f}^2}{\pi\vol{\GmodH}^2},\\
a_{-1}&= \frac{-8\pi^2(2\log(2)-2)\norm{f}^2}{\pi\vol{\GmodH}^2}\\
&\quad\quad+
 \frac{\left(-2\pi i
\int_{{\sa}}^{{\sb}}\alpha\right)^2 -8\pi^2\displaystyle\int_{\GmodH}(B_{{\sb}}(z)+B_{{\sa}}(z))y^2\abs{f(z)}^2d\mu(z)}{\pi\vol{\GmodH}}.
\end{align*}
For $s(1-s)$ bounded away
from $\spec(-\tilde\Delta)$, and $1/2+\e\leq \Re(s)\leq 1+\e$ we
have 
\begin{equation*}
  L^{(2)}_{\sa\sb}(s,
  0, 0)\ll \abs{s}^{1/2}.
\end{equation*}
\end{theorem}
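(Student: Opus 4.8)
The plan is to extract the full singular part of $L^{(2)}_{\sa\sb}(s,0,0)$ from the explicit formula \eqref{one-good-expression} with $k=2$, namely
\begin{equation*}
L_{\sa\sb}^{(2)}(s,0,0)=\frac{\Gamma(s)}{\pi^{1/2}\Gamma(s-1/2)}\left(\Psi_{\sa\sb}^{(2)}(s)+2\Psi_{\sa\sb}^{(1)}(s)\left(-2\pi i\int_\sa^\sb\alpha\right)+\Psi_{\sa\sb}^{(0)}(s)\left(-2\pi i\int_\sa^\sb\alpha\right)^2\right).
\end{equation*}
Since the Gamma prefactor $\Gamma(s)/(\pi^{1/2}\Gamma(s-1/2))$ equals $1/(\pi\vol{\GmodH})\cdot\pi\vol{\GmodH}\cdot(\cdots)$ — more precisely it is holomorphic and nonzero near $s=1$ with value $1/\pi$ at $s=1$ and first Taylor coefficient computable from $\psi$-functions (it contributes the $\log 2$ via $\Gamma'(1/2)/\Gamma(1/2)=-\gamma-2\log2$) — I would Taylor expand it to first order. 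By Theorem \ref{first-derivatives}'s proof $\Psi_{\sa\sb}^{(1)}(s)$ is regular at $s=1$, and $\Psi_{\sa\sb}^{(0)}(s)=\Phi_{\sa\sb}(s)$ has a simple pole with residue $\vol{\GmodH}^{-1}$. So the pole of order $2$ must come entirely from $\Psi_{\sa\sb}^{(2)}(s)$, and I first need its Laurent expansion at $s=1$ up to the constant term.

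For $\Psi_{\sa\sb}^{(2)}(s)$ I would use \eqref{D-scattering} with $k=2$:
\begin{equation*}
\Psi_{\sa\sb}^{(2)}(s)=\frac{1}{2s-1}\int_{\GmodH}E_{\sb}(z,s)\left(2L^{(1)}D_{\sa}^{(1)}(z,s)+L^{(2)}D_{\sa}^{(0)}(z,s)\right)d\mu(z).
\end{equation*}
For the $L^{(1)}D_{\sa}^{(1)}$ term, Stokes' theorem (as in Proposition \ref{integral-of-Dk}) moves $L^{(1)}$ onto $E_{\sb}$, giving $\int L^{(1)}(E_{\sb}(z,s))D_{\sa}^{(1)}(z,s)d\mu$; by Proposition \ref{D1-regular} both $L^{(1)}E_{\sb}(z,s)$ and $D_{\sa}^{(1)}(z,s)$ are regular at $s=1$, so this contributes only to the constant term, and one has to identify its value at $s=1$. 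For the dominant $L^{(2)}D_{\sa}^{(0)}=L^{(2)}E_{\sa}(z,s)$ term, write $L^{(2)}E_{\sa}(z,s)=-8\pi^2\langle\alpha,\alpha\rangle E_{\sa}(z,s)$ with $\langle\alpha,\alpha\rangle=2y^2|f(z)|^2$ (up to normalization — this is where $\norm{f}^2$ enters), and insert the Kronecker limit expansion \eqref{Kronecker-limit} of $E_{\sa}(z,s)$ near $s=1$ \emph{and} of $E_{\sb}(z,s)$, keeping all terms contributing up to $O(1)$: the product of the two simple poles gives $1/((s-1)^2)$ times $\vol{\GmodH}^{-2}\int_{\GmodH}L^{(2)}1\,d\mu = -8\pi^2\norm{f}^2\vol{\GmodH}^{-2}$, and the cross terms pole$\times$constant give $\int B_{\sb}(z)\cdot(-8\pi^2)2y^2|f(z)|^2 d\mu$ and the symmetric one with $B_{\sa}$. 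Dividing by $2s-1=1+2(s-1)$ and multiplying by the Gamma prefactor's Taylor expansion then assembles $a_{-2}$ and $a_{-1}$; the $2\log2-2$ correction in $a_{-1}$ comes partly from $1/(2s-1)$ and partly from $\Gamma'(1/2)/\Gamma(1/2)$, together with the regular contribution from the $L^{(1)}D^{(1)}$ term and the $(-2\pi i\int_\sa^\sb\alpha)^2$ term coming from $\Psi^{(0)}(s)$'s residue. Careful bookkeeping of these constants is the main obstacle — one must be sure the spurious pieces cancel and that the $\int_\sa^\sb\alpha$ contributions land exactly as stated.

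For the bound on vertical lines, I would argue exactly as in the proof of Theorem \ref{first-derivatives}: by Lemma \ref{bounding-matrices} each $\Psi_{\sa\sb}^{(h)}(s)$ for $h=0,1,2$ is $O(1)$ for $1/2+\e\le\Re(s)\le 1+\e$ with $s(1-s)$ bounded away from $\spec(-\tilde\Delta)$, and Stirling's formula gives $\Gamma(s)/\Gamma(s-1/2)\ll\abs{s}^{1/2}$, so \eqref{one-good-expression} yields $L^{(2)}_{\sa\sb}(s,0,0)\ll\abs{s}^{1/2}$. The periods $\int_\sa^\sb\alpha$ are fixed constants, so they do not affect the growth rate. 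Thus the estimate follows immediately once the formula \eqref{one-good-expression} and Lemma \ref{bounding-matrices} are in hand; the only real work is the constant-chasing for $a_{-2}$ and $a_{-1}$.
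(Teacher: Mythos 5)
Your proposal follows essentially the same route as the paper: start from \eqref{one-good-expression} with $k=2$, isolate the singular parts of $\Psi^{(0)}$, $\Psi^{(1)}$, $\Psi^{(2)}$ separately (with $\Psi^{(1)}$ regular and the $L^{(1)}D^{(1)}$ piece of $\Psi^{(2)}$ killed by Propositions \ref{D1-regular} and \ref{integral-of-Dk}), extract the double pole and cross terms from the Kronecker limit expansion \eqref{Kronecker-limit} applied to $E_{\sb}(z,s)L^{(2)}E_{\sa}(z,s)$, and get the vertical-line bound from Lemma \ref{bounding-matrices} plus Stirling. The constant bookkeeping you defer (the $2\log 2 - 2$ from $\Gamma'(1/2)/\Gamma(1/2)$ and the $1/(2s-1)$ factor) is exactly what the paper also leaves implicit, so this is a correct proof in the same spirit.
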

\begin{proof}
Using \eqref{one-good-expression} we see that
$L^{(2)}_{{\sa}{\sb}}(s,0,0)$ equals 
\begin{equation}\label{kafferiet}\frac{1}{\sqrt{\pi}}\frac{\Gamma(s)}{\Gamma(s-1/2)}\left( \Psi^{(0)}_{{\sa}{\sb}}(s)\left(-2\pi
i \int_{{\sa}}^{{\sb}}\alpha\right)^2 +2\Psi^{(1)}_{{\sa}{\sb}}(s)\left(-2\pi
i \int_{{\sa}}^{{\sb}}\alpha\right)+ \Psi_{{\sa}{\sb}}^{(2)}(s)  \right).
\end{equation}
We consider each of the three terms separately: 

We start by noting that since $\Psi^{(0)}(s)=\Phi (s)$, the first term
has  singular part
\begin{equation*}\frac{\left(-2\pi i
\int_{{\sa}}^{{\sb}}\alpha\right)^2}{\pi\vol{\GmodH}}\frac{1}{s-1}.\end{equation*}

To analyze the second term we note that by Theorem \ref{FEEF} we have
\begin{equation*}
  \Psi^{(1)}_{{\sa}{\sb}}(s)=\frac{1}{2s-1}\int_{\GmodH}E_\sb(z,s)L^{(1)}E_{\sa}(z,s)d\mu(z),
\end{equation*}
which is regular by Proposition \ref{D1-regular} and Proposition
\ref{integral-of-Dk}.

To analyze the third term we note that by Theorem \ref{FEEF}
\begin{equation*}
  \Psi^{(2)}_{{\sa}{\sb}}(s)=\frac{1}{2s-1}\int_{\GmodH}E_{\sb}(z,s)\left(2L^{(1)}D^{(1)}_{\sa}(z,s)+L^{(2)}D^{(0)}_{\sa}(z,s)\right)d\mu(z),
\end{equation*}
and analyze the contribution of the two summands.
Since $D^{(1)}_{\sa}(z,s)$ is regular at $s=1$ (Proposition
\ref{D1-regular}),  the first summand has at most a first order pole. The
corresponding 
residue is zero  by  Proposition \ref{integral-of-Dk}, so the first summand is regular.

It follows that the singular part of $\Psi^{(2)}_{{\sa}{\sb}}(s)$
equals the singular part of
${(2s-1)}^{-1}\int_{\GmodH}E_{\sb}(z,s)L^{(2)}E_{\sa}(z,s)d\mu(z)$.
It follows that the singular part of the third term of
\eqref{kafferiet} equals the singular part  of 
\begin{equation*}
  \frac{1}{\sqrt{\pi}}\frac{\Gamma(s)}{\Gamma(s-1/2)}\frac{1}{2s-1}\int_{\GmodH}E_{\sb}(z,s)L^{(2)}E_{\sa}(z,s)d\mu(z). 
\end{equation*}
The result follows using \eqref{Kronecker-limit}, \eqref{formula-L2}, and standard values of $\Gamma'(z)/\Gamma(z)$ \cite[8.366]{GradshteynRyzhik:2007a}. 

The bound on vertical lines follow from Theorem
\ref{bounding-matrices} and Stirling's
asymptotics.
\end{proof}
\begin{remark}
  We remark that Theorem \ref{two-derivatives} allows us to write the
  singular expansion of $L^{(2)}_{\sa\sb}(s,0,0)$ exclusively in terms of data of
  Rankin--Selberg integrals and periods. Indeed, writing 
  \begin{equation*}
    \int_{\GmodH}y^2\abs{f(z)}^2E_\sa(z,s)d\mu(z)=\frac{c_{-1}(\sa)}{s-1}+c_0(\sa)+O(s-1)
  \end{equation*} as $s\to 1$, we have
\begin{align*}  
c_{-1}(\sa)=\frac{\norm{f}^2}{\vol{\GmodH}},\quad c_0(\sa)=\int_{\GmodH}y^2\abs{f(z)}^2B_{\sa}(z)d\mu(z),\end{align*}
so that the singular expansion of $L^{(2)}_{\sa\sb}(s,0,0)$ equals
\begin{equation*}
\frac{-8\pi^2}{\pi\vol{\GmodH}}\left(\frac{c_{-1}(\sa)}{(s-1)^2}+\frac{\frac{1}{2}\left(\int_{\sa}^\sb\a\right)^2+(2\log(2)-2)c_{-1}(\sa)+c_0(\sa)+c_0(\sb)}{(s-1)}\right).
\end{equation*}
\end{remark}

\subsection{Higher derivatives}
\begin{theorem}\label{many-derivatives}
If $k$ is even the function  $L^{(k)}_{\sa\sb}(s,
  0, 0)$ has a pole at $s=1$ of order  $k/2+1$. The leading term in the
  singular expansion around $s=1$ equals 
  \begin{equation*}
    \frac{(-8\pi^2)^{k/2}\norm{f}^{k}}{\pi\vol{\GmodH}^{k/2+1}}\frac{k!}{2^{k/2}}.
  \end{equation*}
If $k$ is odd the function  $L^{(k)}_{\sa\sb}(s,
  0, 0)$ has a pole at $s=1$ of order less than or equal to  $(k-1)/2+1$.
For $s(1-s)$ bounded away
from $\spec(-\tilde\Delta)$, and $1/2+\e\leq \Re(s)\leq 1+\e$ we
have 
\begin{equation*}
  L^{(k)}_{\sa\sb}(s,
  0, 0)\ll \abs{s}^{1/2}.
\end{equation*}
\end{theorem}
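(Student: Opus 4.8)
The plan is to reduce the statement about $L^{(k)}_{\sa\sb}(s,0,0)$ entirely to the already-established facts about $D^{(k)}_\sa(z,s)$ in Section~\ref{section4}, via the closed expression \eqref{one-good-expression}, which reads
\begin{equation*}
L_{\sa\sb}^{(k)}(s,0,0)=\frac{\Gamma(s)}{\pi^{1/2}\Gamma(s-1/2)}\sum_{h=0}^{k}\binom{k}{h}\Psi_{\sa\sb}^{(h)}(s)\left(-2\pi i \int_{\sa}^\sb\a\right)^{k-h}.
\end{equation*}
Since $\Gamma(s)/(\pi^{1/2}\Gamma(s-1/2))$ is holomorphic and nonzero at $s=1$ (its value is $1/\pi$), the order of the pole and the leading coefficient of $L^{(k)}_{\sa\sb}(s,0,0)$ at $s=1$ are governed by the terms $\Psi^{(h)}_{\sa\sb}(s)$. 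The periods $\int_\sa^\sb\a$ are finite constants, so the $h<k$ terms contribute at most the pole order of $\Psi^{(h)}_{\sa\sb}$, and the top-order contribution — if any survives — comes from $h=k$, i.e.\ from $\Psi^{(k)}_{\sa\sb}(s)$.

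First I would establish the pole order and leading singularity of $\Psi^{(k)}_{\sa\sb}(s)$ at $s=1$. By Theorem~\ref{FEEF},
\begin{equation*}
\Psi^{(k)}_{\sa\sb}(s)=\frac{1}{2s-1}\int_{\GmodH}E_{\sb}(z,s)\left(\binom{k}{1}L^{(1)}D^{(k-1)}_{\sa}(z,s)+\binom{k}{2}L^{(2)}D^{(k-2)}_{\sa}(z,s)\right)d\mu(z).
\end{equation*}
The factor $1/(2s-1)$ is holomorphic at $s=1$, so I analyze the integral. Write $E_\sb(z,s)=\vol{\GmodH}^{-1}(s-1)^{-1}+B_\sb(z)+O(s-1)$ via \eqref{Kronecker-limit}. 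The $L^{(1)}D^{(k-1)}_\sa$ term: its pairing against the constant $\vol{\GmodH}^{-1}$ vanishes by Proposition~\ref{integral-of-Dk}, so it contributes a pole of order one less than $\operatorname{ord}_{s=1}D^{(k-1)}_\sa$; by Theorem~\ref{order-of-poles}, when $k$ is even, $D^{(k-1)}_\sa$ has pole order at most $(k-2)/2$, giving a contribution of order at most $(k-2)/2$, which is below $k/2+1$. The $L^{(2)}D^{(k-2)}_\sa$ term is the dominant one: $L^{(2)}$ is multiplication by $-8\pi^2\inprod{\a}{\a}=-8\pi^2 y^2|f(z)|^2$ (see \eqref{formula-L2}), so integrating $\vol{\GmodH}^{-1}(s-1)^{-1}\cdot L^{(2)}$ against the leading singularity of $D^{(k-2)}_\sa$ — which by Theorem~\ref{order-of-poles} is the constant $(-8\pi^2)^{(k-2)/2}\norm{f}^{k-2}\vol{\GmodH}^{-(k-2)/2-1}(k-2)!/2^{(k-2)/2}$ with pole order $(k-2)/2+1=k/2$ — produces a pole of order $k/2+1$, with leading coefficient obtained by multiplying by $\binom{k}{2}$, by $\inprod{1}{L^{(2)}1}_{L^2}=-8\pi^2\norm{f}^2$, and by $\vol{\GmodH}^{-1}$ from the residue of $E_\sb$. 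Collecting factors, $\Psi^{(k)}_{\sa\sb}(s)$ has a pole of order $k/2+1$ at $s=1$ with leading coefficient $(-8\pi^2)^{k/2}\norm{f}^{k}\vol{\GmodH}^{-(k/2+1)}k!/2^{k/2}$, precisely as in Theorem~\ref{order-of-poles} for $D^{(k)}_\sa$. Multiplying by the value $1/\pi$ of the Gamma-quotient at $s=1$ yields the claimed leading term for $L^{(k)}_{\sa\sb}(s,0,0)$. When $k$ is odd, Theorem~\ref{order-of-poles} bounds the pole order of $D^{(k-1)}_\sa$ by $(k-1)/2+1$ and of $D^{(k-2)}_\sa$ by $(k-3)/2$ (both arguments even and odd respectively), and the vanishing from Proposition~\ref{integral-of-Dk} again gives an extra drop for the $L^{(1)}$ term, so the total pole order is at most $(k-1)/2+1$; one must also check the $h<k$ terms, but their pole orders, coming from $\Psi^{(h)}_{\sa\sb}$ with $h\le k-1$, do not exceed this.

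The bound on vertical lines follows the pattern of Theorems~\ref{first-derivatives} and \ref{two-derivatives}: by Lemma~\ref{bounding-matrices} each $\Psi^{(h)}_{\sa\sb}(s)$ is $O_k(1)$ for $1/2+\e\le\Re(s)\le 1+\e$ with $s(1-s)$ bounded away from $\spec(-\tilde\Delta)$, and Stirling's asymptotics give $\Gamma(s)/\Gamma(s-1/2)\ll\abs{s}^{1/2}$ on this strip; since the periods are fixed constants, \eqref{one-good-expression} immediately yields $L^{(k)}_{\sa\sb}(s,0,0)\ll\abs{s}^{1/2}$. The main obstacle is bookkeeping: one must carefully track, through the recursion \eqref{D-scattering} and the Kronecker limit expansion, that \emph{only} the $L^{(2)}$ branch in $\Psi^{(k)}_{\sa\sb}$ reaches the top pole order — every appearance of $L^{(1)}$ loses a power of $(s-1)$ by Proposition~\ref{integral-of-Dk} — and that the combinatorial factor $\binom{k}{2}\cdot(k-2)!\,/2^{(k-2)/2}\cdot(-8\pi^2\norm{f}^2)$ collapses to $k!\,/2^{k/2}\cdot(-8\pi^2\norm{f}^2)^{?}$ correctly; this is exactly the induction already carried out for $D^{(k)}_\sa$ in the proof of Theorem~\ref{order-of-poles}, so in practice I would simply invoke that theorem and the structure of \eqref{D-scattering} rather than redo the recursion.
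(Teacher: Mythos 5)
Your proposal is correct and follows essentially the same route as the paper: the paper's own proof simply invokes \eqref{one-good-expression}, \eqref{D-scattering}, Theorem \ref{order-of-poles}, Proposition \ref{integral-of-Dk}, Lemma \ref{bounding-matrices}, and Stirling's formula, which is exactly the reduction you carry out (you merely spell out the bookkeeping showing that only the $L^{(2)}D^{(k-2)}_{\sa}$ branch of $\Psi^{(k)}_{\sa\sb}$ attains the top pole order, and that its leading coefficient matches that of $D^{(k)}_{\sa}$). The combinatorial check $\binom{k}{2}\cdot\frac{(k-2)!}{2^{(k-2)/2}}\cdot(-8\pi^2\norm{f}^2)\cdot(-8\pi^2)^{(k-2)/2}\norm{f}^{k-2}=\frac{k!}{2^{k/2}}(-8\pi^2)^{k/2}\norm{f}^{k}$ and the factor $1/\pi$ from the Gamma quotient at $s=1$ are both right.
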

\begin{proof}
By \eqref{one-good-expression}  we must understand the
leading expansion of each $\Psi^{(h)}_{\sa\sb}(s)$ for $h\leq k$. The
claim about the order of the pole for all $k$, and the leading singularity for $k$ even follows from \eqref{D-scattering}, Theorem
\ref{order-of-poles}, and Proposition \ref{integral-of-Dk}.

The claim on  bounds on vertical lines follow from \eqref{one-good-expression}, Stirling's formula, and Lemma \ref{bounding-matrices}.
\end{proof}

\begin{theorem}\label{many-derivatives-additive-twists}
Let $n\neq 0$. Then we have:
\begin{enumerate}[label=(\text{\roman*})]
\item \label{claim1}
The function  $L^{(k)}_{\sa\sb}(s,
  0, n)$ has a pole at $s=1$ of order strictly less than
  $[k/2]+1$. 
\item\label{claim2}
For $s(1-s)$ bounded away
from $\spec(-\tilde\Delta)$, and $1/2+\e\leq \Re(s)\leq 1+\e$ we
have 
\begin{equation*}
  L^{(k)}_{\sa\sb}(s,
  0, n)\ll \abs{s}^{1/2}\abs{n}^{1-\Re(s)+\e}.
\end{equation*}
\item\label{claim3}
All coefficients in the singular expansion of $L^{(k)}_{\sa\sb}(s,
  0, n)$ are bounded independently of $n$. 
\end{enumerate}
\end{theorem}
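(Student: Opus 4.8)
The plan is to read off the analytic behaviour of $L^{(k)}_{\sa\sb}(s,0,n)$ from the integral identity \eqref{another-good-expression}, in the same spirit as the proof of Theorem \ref{first-derivatives}. By Proposition \ref{myfingerhurts} we may assume $n\geq 1$ throughout. Fix $w$ with $\Re(w)=1+2\e$ and $\Im(w)=\Im(s)$; then $F(s,w,n)$ is holomorphic for $\Re(s)>1/2+\e$, the factor $D^{(k_3)}_{\sb,n}(z,\bar w)$ does not depend on $s$, and by \eqref{D-k-Selberg-result} it is bounded on $\GmodH$ (hence square integrable) with all bounds uniform in $n$. Thus in \eqref{another-good-expression} every pole in $s$ comes from a factor $D^{(k_2)}_\sa(z,s)$, whose pole at $s=1$ is controlled by Theorem \ref{order-of-poles}: order $1+k_2/2$ with a \emph{constant} leading coefficient if $k_2$ is even, and of strictly smaller order if $k_2$ is odd.

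For part \ref{claim1} the mechanism is the vanishing, by Rankin unfolding (the integrand carries the factor $e(n\sigma_\sb^{-1}z)$ with $n\neq 0$), of
\begin{equation*}
\int_{\GmodH}\overline{D_{\sb,n}(z,\bar w)}\,d\mu(z)=0,\qquad n\neq 0,
\end{equation*}
exactly as in the analysis of the term \eqref{one} in the proof of Theorem \ref{first-derivatives}. Feeding the Laurent expansions of the $D^{(k_2)}_\sa(z,s)$ into \eqref{another-good-expression} and using this identity, together with $\inprod{1}{L^{(1)}D^{(k_2)}_\sa(z,s)}_{L^2}=0$ (Proposition \ref{integral-of-Dk}), the fact that $L^{(1)}$ annihilates the constant leading terms identified in Theorem \ref{order-of-poles}, and the reduction of any $D^{(k_2)}_\sa$ to lower order via Corollary \ref{fundamental-recurrence}, one finds that the top-order singular contribution that is present for $n=0$ (Theorem \ref{many-derivatives}) drops out, so that the pole order for $n\neq 0$ is strictly smaller than for $n=0$. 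Propagating these cancellations through the recursion is the delicate point, and the step I expect to be the main obstacle.

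Part \ref{claim2} is routine. With $w$ as above, Stirling's formula gives $F(s,w,n)\ll\abs{s}^{1/2}n^{\Re(w)-\Re(s)}=\abs{s}^{1/2}n^{1-\Re(s)+2\e}$, just as in \eqref{F-bound}; and each integral $\int_{\GmodH}D^{(k_2)}_\sa(z,s)\overline{D^{(k_3)}_{\sb,n}(z,\bar w)}\,d\mu(z)$ is $\ll 1$ uniformly in $n$, by Cauchy--Schwarz together with Theorem \ref{D-continuation} (and the uniform boundedness of $D^{(k_3)}_{\sb,n}(z,\bar w)$) when $k_2\geq 1$, and by Lemma \ref{L1-bound} when $k_2=0$. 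Substituting these into \eqref{another-good-expression} and relabelling $\e$ yields the stated bound.

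For part \ref{claim3} one writes the singular part of $L^{(k)}_{\sa\sb}(s,0,n)$ at $s=1$ by pairing the Laurent coefficients $c^{(j)}_\sa(z)$ of the $D^{(k_2)}_\sa(z,s)$ at $s=1$ — which lie in $L^2(\GmodH)$ and are independent of $n$ — against $\overline{D^{(k_3)}_{\sb,n}(z,\bar w)}$, all multiplied by the holomorphic factor $F(s,w,n)$. Unfolding these integrals as in the analysis of the term \eqref{three} in the proof of Theorem \ref{first-derivatives} shows that each is $O(n^{-(\Re(w)-1)+\e})$, which exactly compensates the growth $F(1,w,n)\ll n^{\Re(w)-1}$ (and similarly for the $s$-derivatives of $F$ entering the lower coefficients); since $L^{(k)}_{\sa\sb}(s,0,n)$ is independent of $w$, the $w$-dependence cancels and every coefficient in the singular expansion is bounded independently of $n$. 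For $k=1$ this is already explicit: by Theorem \ref{first-derivatives} the residue at $s=1$ is $-a_\sb(n)/(2n\pi\vol{\GmodH})$, which is $O(1)$.
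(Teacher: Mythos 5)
Your proposal follows the paper's own proof closely in all three parts. Part \ref{claim2} is obtained exactly as you describe, from \eqref{F-bound} with $\Im(w)=\Im(s)$ and $\Re(w)=1+2\e$ together with Lemma \ref{L1-bound}, Theorem \ref{D-continuation}, and the uniform boundedness of $D^{(k_3)}_{\sb,n}(z,\bar w)$. For part \ref{claim3} the paper does precisely what you propose: it extracts the Laurent coefficients $g(z)$ of $D^{(k_2)}_{\sa}(z,s)$ at $s=1$ by the contour integral $\frac{1}{2\pi i}\oint_{C(r)}D^{(k_2)}_{\sa}(z,s)(s-1)^{j}ds$, notes $g\in L^2(\GmodH)$ by Theorem \ref{D-continuation}, and applies Cauchy--Schwarz against $\overline{D^{(k_3)}_{\sb,n}(z,\bar w)}$; your finer claim that the unfolded integrals decay like $n^{-(\Re(w)-1)}$ to cancel $F(1,w,n)\ll n^{\Re(w)-1}$ is neither proved nor needed in the paper, which settles for the factor $n^{O(\e)}$ coming from $\Re(w)=1+2\e$.

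For part \ref{claim1}, the step you flag as ``the main obstacle'' requires no propagation through the recursion: everything needed is already packaged in Theorem \ref{order-of-poles}. In \eqref{another-good-expression} the only summands that can contribute a pole of order $[k/2]+1$ are those with $k_2$ even and maximal; for such $k_2$ the leading Laurent coefficient of $D^{(k_2)}_{\sa}(z,s)$ is a \emph{constant} by Theorem \ref{order-of-poles}, so the top singular coefficient of $\int_{\GmodH}D^{(k_2)}_{\sa}(z,s)\overline{D^{(k_3)}_{\sb,n}(z,\bar w)}\,d\mu(z)$ is that constant times $\int_{\GmodH}\overline{D^{(k_3)}_{\sb,n}(z,\bar w)}\,d\mu(z)$, which for $k_3=0$ vanishes by unfolding. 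That is the paper's entire argument. There is, however, a wrinkle that affects both your sketch and the literal statement: when $k$ is odd the maximal even $k_2$ is $k-1$, which leaves room for $(k_1,k_3)=(0,1)$, and $\int_{\GmodH}\overline{D^{(1)}_{\sb,n}(z,\bar w)}\,d\mu(z)$ does \emph{not} vanish --- the proof of Theorem \ref{first-derivatives} computes it to be essentially $-a_{\sb}(n)\Gamma(w-1)/(2n(4\pi n)^{w-1})$. So for odd $k$ one only gets a pole of order at most $[k/2]+1$, not strictly less; indeed for $k=1$ claim \ref{claim1} as stated contradicts the nonzero residue in Theorem \ref{first-derivatives}. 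This is harmless for the application in Theorem \ref{smooth-moments}, which only needs order at most $[(k+1)/2]$ and whose main term vanishes for odd $k$, but your assertion that ``the top-order singular contribution drops out'' is not correct for odd $k$, and you would have discovered this had you carried out the bookkeeping you deferred.
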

\begin{proof} Using Proposition \ref{myfingerhurts} it suffices to
  treat the case $n\geq 1$.
 Considering \eqref{another-good-expression} we see that  claim \ref{claim1}
 about the  orders of the pole follows from Theorem \ref{order-of-poles} and
 the fact that $\int_{\GmodH}\overline{D_{{\sb}, n}(z, \bar w)}\, d\mu
 (z)=0$ as is seen by unfolding.

Claim \ref{claim2}  follows from the bound
\eqref{F-bound} (with $\Re(w)=1+2\e$) valid when $\Im(w)=\Im(s)$ combined with Lemma
\ref{L1-bound}, Theorem \ref{D-continuation} and the fact that $D_{\sb,
  n}^{(k_3)}(z, w)$ is bounded independently of $w$ and $n$ for $1+2\e\leq
\Re(w) \leq A$, see  the discussion after \eqref{D-k-Selberg-result}.

For claim \ref{claim3} we 
note that the constants in all singular expansions are  linear combinations of 
\begin{equation}\label{sleepy-sleepy}
  \int_{\GmodH}g(z)\overline{D^{(k_3)}_{{\sb}, n}(z, \bar w, \e)}\, d\mu (z),
\end{equation}
where $g(z)$ is   one of the coefficients in the singular expansion of
$D_{\sa}^{(k_2)}(z,s)$. For $k_2=0$ the function $g(z)$ is constant so,
in particular, is square integrable. For $k_2>0$ we note that
\begin{equation*}
g(z)=  \frac{1}{2\pi i}\oint_{C(r)}D_{\sa}^{(k_2)}(z,s)(s-1)^{j}ds
\end{equation*}
for some $j\ge0$ and sufficiently small $r$. Here $C(r)$ is the
circle centered at $1$ with radius $r$. The radius $r$ is chosen so that there are no other singularities of $D^{(k_2)}(z, s)$  inside $C(r)$ apart from $s=1$. It follows from Theorem
\ref{D-continuation} that $g(z)$ is square integrable. 
 By using Cauchy--Schwarz we see that \eqref{sleepy-sleepy}
 is bounded independently of $n$. See again  the discussion after \eqref{D-k-Selberg-result}.
 \end{proof}

\section{Distribution results}\label{distribution-results}
We are now ready to prove Theorems \ref{general-partial-first}, \ref{general-variance} and \ref{theorem-distribution}. Since  we have identified the behavior of the generating functions $L^{(k)}_{\sa\sb}(s, 0, n)$ at $s=1$ and on vertical lines in Section \ref{analytic-prop}, we can use the well-known method of contour integration to deduce the asymptotics of $\langle r\rangle_{\sa\sb}$.
\subsection{First moment with restrictions.}
In this subsection we study sums of the form 
\begin{equation}\label{partial-first-moment}
  \sum_{r\in T_{\sa\sb}(M)}\langle r\rangle_{\sa\sb} h(r)
\end{equation}
for smooth functions $h$ or indicator functions $h=1_{[0,x]}$. Hence
we are studying a (partial) first moment of the modular symbol but
with restrictions on $r$ imposed by $h$.

\begin{remark}\label{heuristics}
We present a variant of the Mazur--Rubin--Stein heuristics: By Theorem \ref{equidistribution} $T_{\sa\sb}$ is equidistributed on
${\R\slash\Z}$. 
If it had been possible to extend the function  $
h(r)=\langle r\rangle_{\sa\sb} 1_{[0,x]}(r)$ to a continuous function
of $r$, this would give the asymptotics of \eqref{partial-first-moment} immediately.
Using \eqref{fourier-expansion} it would be tempting to define the
modular symbol for \emph{all}
$r\in \R\slash\Z$ by
\begin{equation*}\label{wishfull-thinking}
\langle r\rangle_{\sa\sb}=2\pi i \int_{\sb}^{\sa}\a+2\pi i \int_{\sa}^{\sigma_\sa r}\a=
2\pi i \int_{\sb}^{\sa}\a+2\pi i \sum_{n>0}\frac{1}{2\pi  n}\Im (a_{\sa}(n)e(nr)).
\end{equation*}
We cannot do so as the series is not convergent, even if Wilton's
classical estimate, see \cite{Wilton:1929a},
\cite[Thm 5.3]{Iwaniec:1997a}, 
shows that it just barely fails to converge conditionally. 

 By termwise integration against $1_{[0,x]}$ we would  get the result
\begin{equation}\label{integration-result}
2\pi i \int_{\sb}^{\sa}\a\cdot x+\frac{1}{2\pi i} \sum_{n>0}\frac{\Re \left(a_{\sa}(n)(e(nx)-1)\right)}{n^2}.  
\end{equation}
This series 
converges to a continuous function as is easily seen from Hecke's
average bound, \cite[Thm 5.1]{Iwaniec:1997a}.

If instead we consider, for a fixed $\delta>0$,
 \begin{equation*}
   \langle r\rangle_{\sa\sb, \delta}:=2\pi i \int_{\sb}^{\sigma_\sa (r+i\delta)}\a, 
 \end{equation*} then \emph{this} function does indeed define a continuous
 function on $\R\slash\Z$, and we can use equidistribution
 with$\langle r\rangle_{\sa\sb, \delta}1_{[0,x]}(r)$ as a test function. If we do
 so, and then let $\delta \to 0$ we arrive again at
 \eqref{integration-result}. However, it is not easy to justify that one 
 can interchange the limits $M\to\infty$ and $\delta\to 0$. On the
 other hand Mazur, Rubin and Stein  have
 numerics suggesting that  \eqref{integration-result} is indeed the
 correct limit. 
\end{remark}

The above heuristics gives the correct answer. This is the content of Theorem
\ref{partial-first} below. 
For a formal series
\begin{equation*}
  F(t)=\sum_{n\in \Z}\hat F(n)e(nt), \quad \textrm{ with $\hat F(n)$    polynomially bounded,} 
\end{equation*}
we have a linear functional (distribution) $h\mapsto \langle h,F\rangle_{{\R\slash\Z}}$ from the set of smooth functions on
${\R\slash\Z}$ given by
\begin{equation*}
  \langle h,F\rangle_{{\R\slash\Z}}:=\sum_{n\in\Z}\hat h(n)\overline{\hat F(n)},
\end{equation*}
where $\hat h(n)$ denotes the $n$th Fourier coefficient of
$h$. 

Recall the norm \eqref{H-norm}. We are now ready to prove the main result of this section:
\begin{theorem}\label{partial-first}
  Let $h$ be a smooth function on ${\R\slash\Z}$ with $\norm{h}_{H^{1/2}}<\infty$. Then there
  exists  a $\delta>0$ such that 
  \begin{equation*}
\sum_{r\in T_{\sa\sb}(M)}\langle r\rangle_{\sa\sb} h(r)    
=
\langle h, F_{\sa\sb}\rangle_{{\R\slash\Z}}
\frac{M^2}{\pi\vol{\GmodH}}+O(\norm{h}_{H^{{1/2}}}M^{2-\delta}),
\end{equation*}
where $F_{\sa\sb}$ is the formal series given by
\begin{equation*}
F_{\sa\sb}(t)=-2\pi
i\int_{\sb}^{\sa}\alpha-i\sum_{n=1}^\infty\frac{\Im(a_{\sa}(n)e(nt))}{
  n}.
\end{equation*}
\end{theorem}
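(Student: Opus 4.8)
The plan is to reduce the weighted sum to Weyl sums twisted by modular symbols and then run the same contour–integration argument that produced \eqref{Weyl-sums}, now fed by the sharper analytic input of Theorem \ref{first-derivatives}. First I would expand $h$ in its Fourier series $h(r)=\sum_{n\in\Z}\hat h(n)e(nr)$, which converges absolutely since $\norm{h}_{H^{1/2}}<\infty$, and interchange it with the finite sum over $r\in T_{\sa\sb}(M)$; this is legitimate because, by Proposition \ref{small-symbols} together with the bound $\#\{r\in T_{\sa\sb}:c(r)=c\}\ll c^{2-\delta}$ noted after Theorem \ref{equidistribution}, the partial sum $\Sigma_n(M):=\sum_{r\in T_{\sa\sb}(M)}\langle r\rangle_{\sa\sb}e(nr)$ is $O(M^{2+\e})$ uniformly in $n$, while $\sum_n|\hat h(n)|<\infty$. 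So the problem is to evaluate $\Sigma_n(M)$ with good uniformity in $n$. The relevant Dirichlet series $\sum_{r\in T_{\sa\sb}}\langle r\rangle_{\sa\sb}e(nr)c(r)^{-2s}$ is precisely $L^{(1)}_{\sa\sb}(s,n,0)$, which by Proposition \ref{myfingerhurts} equals $-L^{(1)}_{\sb\sa}(s,0,-n)$; hence by Theorem \ref{first-derivatives} it is absolutely convergent for $\Re(s)>1$, continues meromorphically to $\Re(s)>1/2+\e$ with at most a simple pole at $s=1$, and satisfies $L^{(1)}_{\sa\sb}(s,n,0)\ll_\e|s|^{1/2}(1+|n|^{1-\Re(s)+\e})$ on vertical lines with $s(1-s)$ bounded away from $\spec(-\tilde\Delta)$.

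The next step is to match the residue at $s=1$ to the Fourier coefficients of $F_{\sa\sb}$. Writing $\Im(a_\sa(n)e(nt))=(a_\sa(n)e(nt)-\overline{a_\sa(n)}e(-nt))/(2i)$ one reads off $\hat F_{\sa\sb}(0)=-2\pi i\int_{\sb}^{\sa}\a$ and, for $n>0$, $\hat F_{\sa\sb}(n)=-a_\sa(n)/(2n)$ and $\hat F_{\sa\sb}(-n)=\overline{a_\sa(n)}/(2n)$. Comparing with the three cases of the residue formula in Theorem \ref{first-derivatives}, tracking the sign flip $L^{(1)}_{\sa\sb}(s,n,0)=-L^{(1)}_{\sb\sa}(s,0,-n)$ and using that $\a$ is a real $1$-form so that $\int_{\sb}^{\sa}\a\in\R$, a short computation gives the uniform identity $\res_{s=1}L^{(1)}_{\sa\sb}(s,n,0)=\overline{\hat F_{\sa\sb}(n)}/(\pi\vol{\GmodH})$ valid for every $n\in\Z$.

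With this in hand I would apply the truncated Perron formula to $L^{(1)}_{\sa\sb}(s,n,0)$ exactly as in the derivation of \eqref{Weyl-sums}: start on the line $\Re(s)=1+\e$ with kernel $M^{2s}/s$, and shift the contour to $\Re(s)=1-\delta'$ for a small $\delta'>0$ chosen to avoid the finitely many real poles in $(1/2,1)$ coming from exceptional eigenvalues of $\tilde\Delta$ — possible since $0$ is isolated in $\spec(-\tilde\Delta)$, so there is a genuine gap below $s=1$ — and small enough for the vertical–line bound to apply. Crossing the simple pole at $s=1$ contributes $\frac{M^2}{\pi\vol{\GmodH}}\overline{\hat F_{\sa\sb}(n)}$, and estimating the shifted integral and the Perron truncation error with the bound above gives $\Sigma_n(M)=\frac{M^2}{\pi\vol{\GmodH}}\overline{\hat F_{\sa\sb}(n)}+O\big((1+|n|)^{1/2}M^{2-\delta}\big)$ for some $\delta>0$, uniformly in $n$. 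Summing against $\hat h(n)$, the main terms assemble into $\langle h,F_{\sa\sb}\rangle_{\R\slash\Z}\,M^2/(\pi\vol{\GmodH})$ by the very definition of $\langle\cdot,\cdot\rangle_{\R\slash\Z}$, and the error becomes $O\big(M^{2-\delta}\sum_n|\hat h(n)|(1+|n|)^{1/2}\big)=O(\norm{h}_{H^{1/2}}M^{2-\delta})$, which is the asserted estimate.

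I expect the main obstacle to be the uniformity in $n$ of the contour estimate and packaging the resulting error into the $H^{1/2}$–norm: on the shifted line the bound of Theorem \ref{first-derivatives} carries a factor $(1+|n|)^{\delta'+\e}$, so one must take $\delta'$ (and hence $\delta$) small enough that this stays $\le(1+|n|)^{1/2}$, which means $\delta$ is constrained simultaneously by the spectral gap and by this matching requirement; alongside this, the residue computation needs care so that the complex conjugates and signs produced by Proposition \ref{myfingerhurts} and Theorem \ref{first-derivatives} land exactly on $\overline{\hat F_{\sa\sb}(n)}$.
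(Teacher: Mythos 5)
Your proposal is correct and follows essentially the same route as the paper: expand $h$ in its Fourier series, identify the twisted generating series as $L^{(1)}_{\sa\sb}(s,n,0)=-L^{(1)}_{\sb\sa}(s,0,-n)$ via Proposition \ref{myfingerhurts}, and apply a contour-integration (Perron) argument using the residue and vertical-line bounds of Theorem \ref{first-derivatives}, uniformly in $n$, before summing against $\hat h(n)$. The residue bookkeeping matching $\overline{\hat F_{\sa\sb}(n)}$ and the uniformity discussion are accurate; the paper's own proof is just a terser version of the same argument.
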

\begin{proof}
The generating series of $\langle r\rangle_{\sa\sb}e(nr)$ is $L^{(1)}_{\sa\sb}(s, n , 0)$. 
  Writing $h(t)=\sum_{n\in\Z}\hat h(n)e(nt),$ we have by Proposition \ref{myfingerhurts}, Theorem
  \ref{first-derivatives},  and  a complex integration argument that  \begin{align*}
\displaystyle    \sum_{r\in T_{\sa\sb}(M)}&\langle r\rangle_{\sa\sb} h(r)  =   \sum_{n\in \Z}\hat h(n)\sum_{r\in T_{\sa\sb}(M)}\langle r\rangle_{\sa\sb} e(nr)\\
&=\hat
  h(0)\left(2\pi i\int_{\sb}^\sa\alpha \frac{M^2}{{\pi\vol{\GmodH}}}+O(M^{2-\delta})\right)+\\
&\quad \quad \sum_{n=1}^\infty \hat
  h(n)\left(\frac{-\overline{a_\sa}(n)}{2n}\frac{M^2}{\pi\vol{\GmodH}}+O(\abs{n}^{1/2}M^{2-\delta})\right)\\
&\quad\quad +\sum_{n=-1}^{-\infty} \hat h(n)\left(\frac{-a_\sa(-n)}{2n}\frac{M^2}{\pi\vol{\GmodH}}+O(\abs{n}^{1/2}M^{2-\delta})\right),
  \end{align*}
from which the result follows.
\end{proof}
Let $0\leq x\leq 1$. Approximating $1_{[0,x]}$ by smooth periodic
functions we can conclude the following result, which makes rigorous
the heuristic conclusions in Remark \ref{heuristics}.
\begin{cor}\label{power-first-moment} Let $x\in [0,1]$. There exists a $\delta>0$ such that 
  \begin{align*}
\sum_{r\in T_{\sa\sb}(M)}&\langle r\rangle_{\sa\sb} 1_{[0,x]}(r)    \\
&=\left(2\pi i \int_{\sb}^{\sa}\a\cdot x+\frac{1}{2\pi i} \sum_{n=1}^\infty\frac{\Re
  \left(a_{\sa}(n)(e(nx)-1)\right)}{n^2}\right)\frac{M^2}{\pi\vol{\GmodH}}\\
&\quad\quad\quad
+O(M^{2-\delta}).
\end{align*}
\end{cor}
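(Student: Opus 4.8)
The plan is to deduce Corollary \ref{power-first-moment} from Theorem \ref{partial-first} by an approximation argument, replacing the discontinuous test function $1_{[0,x]}$ by smooth periodic functions that bracket it from above and below. First I would fix $x\in(0,1)$ (the endpoints $x=0,1$ being trivial or following by a limiting argument) and, for a small parameter $\eta>0$, construct smooth periodic functions $h_\eta^{\pm}$ on $\R\slash\Z$ with $h_\eta^-\le 1_{[0,x]}\le h_\eta^+$, with $h_\eta^{\pm}$ equal to $1_{[0,x]}$ outside two intervals of length $\eta$ around $0$ and $x$, and with $0\le h_\eta^{\pm}\le 1$. The point is to control the $H^{1/2}$-norm: a standard construction (e.g. convolving $1_{[0,x]}$ with a fixed bump scaled by $\eta$) gives $\norm{h_\eta^{\pm}}_{H^{1/2}}\ll \eta^{-1/2}$, and more importantly $\int_{\R\slash\Z}(h_\eta^+-h_\eta^-)(r)\,dr\ll \eta$.

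Next I would apply Theorem \ref{partial-first} to $h_\eta^{\pm}$, obtaining
\begin{equation*}
\sum_{r\in T_{\sa\sb}(M)}\langle r\rangle_{\sa\sb}h_\eta^{\pm}(r)=\langle h_\eta^{\pm},F_{\sa\sb}\rangle_{\R\slash\Z}\frac{M^2}{\pi\vol{\GmodH}}+O(\eta^{-1/2}M^{2-\delta}).
\end{equation*}
Since $\langle r\rangle_{\sa\sb}$ need not be of one sign, I cannot directly sandwich the sum; instead I would split $\langle r\rangle_{\sa\sb}=\langle r\rangle_{\sa\sb}^+-\langle r\rangle_{\sa\sb}^-$ into positive and negative parts, or more cleanly estimate the difference
\begin{equation*}
\Bigl|\sum_{r\in T_{\sa\sb}(M)}\langle r\rangle_{\sa\sb}\bigl(1_{[0,x]}(r)-h_\eta^{\pm}(r)\bigr)\Bigr|\le \sum_{r\in T_{\sa\sb}(M)}|\langle r\rangle_{\sa\sb}|\,|1_{[0,x]}-h_\eta^{\pm}|(r).
\end{equation*}
The right-hand side is supported on $r$ within $\eta$ of $0$ or $x$, where $|\langle r\rangle_{\sa\sb}|\ll c(r)^{\e}\ll M^{\e}$ by Proposition \ref{small-symbols}; and by the Weyl bound \eqref{Weyl-sums} applied to a smooth majorant of the indicator of an $\eta$-neighbourhood, the number of such $r$ in $T_{\sa\sb}(M)$ is $\ll \eta M^2+M^{2-\delta}$. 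Hence this error is $\ll M^{\e}(\eta M^2+M^{2-\delta})$. The main term $\langle h_\eta^{\pm},F_{\sa\sb}\rangle$ converges to $\langle 1_{[0,x]},F_{\sa\sb}\rangle$ as $\eta\to 0$ with rate $O(\eta)$, since $F_{\sa\sb}$ has coefficients bounded by $O(n^{-1+\e})$ (Hecke's bound on $a_\sa(n)$) so pairing against $h_\eta^+-h_\eta^-$, whose Fourier mass is $O(\eta)$ in $L^1$, is $O(\eta^{1-\e})$; alternatively one computes directly that $\langle 1_{[0,x]},F_{\sa\sb}\rangle_{\R\slash\Z}$ equals the bracketed expression in the statement by integrating the formal Fourier series of $F_{\sa\sb}$ termwise against $1_{[0,x]}$, which is exactly the passage displayed in Remark \ref{heuristics}.

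Finally I would optimize: choosing $\eta=M^{-\delta/2}$, say, balances $\eta M^2\asymp M^{2-\delta/2}$ against the term $\eta^{-1/2}M^{2-\delta}=M^{2-3\delta/4}$ and against the approximation error $\eta^{1-\e}M^2/\vol{\GmodH}\asymp M^{2-\delta/2+\e\delta/2}$, so all error terms are $O(M^{2-\delta'})$ for some $\delta'>0$ (after shrinking $\delta$ and absorbing the $M^{\e}$ factors). This yields the claimed asymptotic with main term $\langle 1_{[0,x]},F_{\sa\sb}\rangle_{\R\slash\Z}M^2/(\pi\vol{\GmodH})$, and the identification
\begin{equation*}
\langle 1_{[0,x]},F_{\sa\sb}\rangle_{\R\slash\Z}=2\pi i\int_{\sb}^{\sa}\a\cdot x+\frac{1}{2\pi i}\sum_{n=1}^\infty\frac{\Re\left(a_\sa(n)(e(nx)-1)\right)}{n^2}
\end{equation*}
completes the proof. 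The main obstacle is the lack of sign of $\langle r\rangle_{\sa\sb}$, which prevents a naive sandwich and forces the separate bound on the "boundary layer" contribution via Proposition \ref{small-symbols} together with the equidistribution count \eqref{Weyl-sums}; everything else is routine bookkeeping with the $H^{1/2}$-norm and Hecke's bound on Fourier coefficients.
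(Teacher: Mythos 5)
Your proposal is correct and follows essentially the same route as the paper, which disposes of this corollary in one line ("approximating $1_{[0,x]}$ by smooth periodic functions") on the basis of Theorem \ref{partial-first}; you have simply supplied the standard mollification details (majorant/minorant $h_\eta^{\pm}$, control of $\norm{h_\eta^{\pm}}_{H^{1/2}}\ll\eta^{-1/2}$, the boundary-layer count via Proposition \ref{small-symbols} and \eqref{Weyl-sums}, and the choice $\eta=M^{-\delta/2}$). One small quibble: for weight $2$ on a general cofinite group Hecke's bound gives $a_{\sa}(n)\ll n$, not $a_\sa(n)\ll n^{\e}$, so $\hat F(n)\ll 1$ rather than $n^{-1+\e}$; the convergence rate $\langle h_\eta^+-h_\eta^-,F_{\sa\sb}\rangle\ll\eta^{1/2-\e}$ still follows from the Rankin--Selberg average bound (as invoked in Remark \ref{heuristics}), which is all the optimization needs.
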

\subsection{The variance}
In this subsection we study the second moment of the modular symbols,
i.e. the variance. 
Following Mazur and Rubin we denote the \emph{variance slope} by 
\begin{equation}\label{variance-slope}C_f=2\frac{-8\pi^2\norm{f}^2}{\vol{\GmodH}}.\end{equation}
Recall \eqref{Kronecker-limit}.
 We also define the \emph{variance shift} by
\begin{equation}\label{variance-shift}
D_{f, \sa\sb}=  \frac{-8\pi^2(2\log(2)-3)\norm{f}^2}{\vol{\GmodH}}+\left(-2\pi i\int_{{\sa}}^{{\sb}}\alpha\right)^2 -8\pi^2\int_{\GmodH}(B_{{\sb}}(z)+B_{{\sa}}(z))y^2\abs{f(z)}^2d\mu(z).
\end{equation}
The following theorem follows directly from Theorem
\ref{two-derivatives} and a complex integration argument.
\begin{theorem}\label{variance}There exists a $\delta>0$ such that 
  \begin{equation*}
 \sum_{r\in T_{\sa\sb}(M)}\langle r\rangle_{\sa\sb}^2 =
 \frac{1}{\pi\vol{\GmodH}}(C_f M^2\log M+D_{f,\sa\sb}M^2)+O(M^{2-\delta}) .
  \end{equation*}
\end{theorem}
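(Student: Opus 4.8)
The plan is to read off the asymptotics of the second moment from the analytic behaviour of its Dirichlet generating series together with the method of contour integration, exactly as in the derivation of \eqref{Weyl-sums}. By definition $L^{(2)}_{\sa\sb}(s,0,0)=\sum_{r\in T_{\sa\sb}}\langle r\rangle_{\sa\sb}^2\, c(r)^{-2s}$, which by Proposition \ref{small-symbols} converges absolutely for $\Re(s)>1$ and whose coefficients satisfy the crude partial-sum bound $\sum_{r\in T_{\sa\sb}(M)}\abs{\langle r\rangle_{\sa\sb}}^2\ll M^{2+\e}$ (combine $\langle r\rangle_{\sa\sb}\ll c(r)^{\e}+c(r)^{-\e}$ with $\#T_{\sa\sb}(M)\ll M^2$ from \eqref{Weyl-sums}, using $c(r)\gg 1$). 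Applying the truncated Perron formula with kernel $M^{2s}/s$ at height $T$ gives
\begin{equation*}
  \sum_{r\in T_{\sa\sb}(M)}\langle r\rangle_{\sa\sb}^2=\frac{1}{2\pi i}\int_{1+\e-iT}^{1+\e+iT}L^{(2)}_{\sa\sb}(s,0,0)\frac{M^{2s}}{s}\,ds+O\!\left(\frac{M^{2+\e}}{T}\right).
\end{equation*}

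Next I would shift the line of integration from $\Re(s)=1+\e$ to $\Re(s)=1-\delta$, where $\delta>0$ is chosen small enough that the segment $[1-\delta,1)$ contains no $s_j$ with $s_j(1-s_j)$ a small eigenvalue of $-\tilde\Delta$ and that $(1-\delta)\delta\notin\spec(-\tilde\Delta)$; such $\delta$ exists because $0$ is isolated in the spectrum, and this is where the dependence on the spectral gap enters. On the region $1/2+\e\le\Re(s)\le 1+\e$ with $s(1-s)$ bounded away from the spectrum, Theorem \ref{two-derivatives} supplies $L^{(2)}_{\sa\sb}(s,0,0)\ll\abs{s}^{1/2}$, so the shifted vertical integral is $\ll M^{2-2\delta}\int_{-T}^{T}(1+\abs{t})^{-1/2}\,dt\ll M^{2-2\delta}T^{1/2}$, the two horizontal segments at height $\pm T$ contribute $\ll M^{2}T^{-1/2}$, and taking $T$ a small power of $M$ balances these against the Perron error to produce an overall $O(M^{2-\delta'})$ for some $\delta'>0$. (One may instead run the argument with the once-smoothed sum $\sum_{r}\langle r\rangle_{\sa\sb}^2(1-c(r)/M)$, which removes the horizontal segments; this is cosmetic.)

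The only surviving contribution is the residue picked up at $s=1$. Writing the singular part of $L^{(2)}_{\sa\sb}(s,0,0)$ supplied by Theorem \ref{two-derivatives} as $a_{-2}(s-1)^{-2}+a_{-1}(s-1)^{-1}$ and expanding $M^{2s}/s=M^2\bigl(1+(s-1)(2\log M-1)+O((s-1)^2)\bigr)$, one finds
\begin{align*}
  \res_{s=1}\!\left(L^{(2)}_{\sa\sb}(s,0,0)\frac{M^{2s}}{s}\right)
  &=a_{-2}M^2(2\log M-1)+a_{-1}M^2\\
  &=\frac{1}{\pi\vol{\GmodH}}\bigl(C_fM^2\log M+D_{f,\sa\sb}M^2\bigr),
\end{align*}
where the last equality is a direct computation: $2a_{-2}=C_f/(\pi\vol{\GmodH})$ with $C_f$ as in \eqref{variance-slope} reproduces the $\log M$ term, while the shift $-a_{-2}+a_{-1}$ converts the $2\log(2)-2$ occurring in $a_{-1}$ into the $2\log(2)-3$ of \eqref{variance-shift}, leaving exactly $D_{f,\sa\sb}/(\pi\vol{\GmodH})$. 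I expect no essential obstacle beyond this bookkeeping: all the analytic input (order of the pole, explicit singular part, polynomial growth on vertical lines) has already been assembled in Theorem \ref{two-derivatives}, the bound $\abs{s}^{1/2}$ is far more than the error estimates require, and the one genuine constraint is choosing $\delta$ so that the shifted contour clears the small eigenvalues.
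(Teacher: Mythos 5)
Your proposal is correct and is essentially the paper's own proof: the paper disposes of Theorem \ref{variance} with the single line that it ``follows directly from Theorem \ref{two-derivatives} and a complex integration argument,'' and you have carried out exactly that argument, including the right residue bookkeeping (the $-a_{-2}$ from expanding $M^{2s}/s$ is what turns the $2\log(2)-2$ of Theorem \ref{two-derivatives} into the $2\log(2)-3$ of \eqref{variance-shift}). The only point worth recording is that the truncated-Perron error near $c(r)\approx M$ should be controlled via $S_{\sa\sb}(0,0,c)\ll c^{2-\delta}$ (the remark after Theorem \ref{equidistribution}) or via your smoothing device together with the positivity of $\langle r\rangle_{\sa\sb}^2$; this is a standard technicality the paper also leaves implicit.
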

We deduce from Theorem \ref{variance} and Theorem \ref{equidistribution} that 
\begin{equation*}
\dfrac{\displaystyle  \sum_{r\in T_{\sa\sb}(M)}\langle r\rangle_{\sa\sb}^2}{\displaystyle  \sum_{r\in T_{\sa\sb}(M)} 1}=C_f\log M+D_{f,\sa\sb}+o(1).
\end{equation*}

\subsection{Normal distribution}
In this subsection we show that  the value distribution of modular
symbols (appropriately
normalized) obeys a standard normal distribution, even
if we restrict  $r$ 
to any interval. 

\begin{theorem} \label{smooth-moments} Let $h$ be a function on ${\R\slash\Z}$
  satisfying $\norm{h}_{H^\e}<\infty$, and let $k\in \N$. Then there exist $\delta, B>0$ such that 
  \begin{align*}
  \sum_{r\in T_{\sa\sb} (M)}\langle
  r\rangle_{\sa\sb}^k h(r)=\delta_{
    2\N}(k) &{C_f}^{k/2}\int_{\R\slash\Z}h(t)dt\frac{k!}{(k/2)!2^{{k}/{2}}}\frac{M^2\log^{{k/2}}M}{\pi\vol{\GmodH}}\\&+O_\e(\norm{h}_{H^\e}M^2\log^{[(k-1)/2]}M).
  \end{align*}
\end{theorem}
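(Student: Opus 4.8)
The plan is to reduce the computation of the $k$th moment with a smooth weight $h$ to the analytic information already assembled in Section~\ref{analytic-prop}, via contour integration. Writing $h(t)=\sum_{n\in\Z}\hat h(n)e(nt)$ (absolutely convergent since $\norm{h}_{H^\e}<\infty$), we have, by absolute convergence of $L^{(k)}_{\sa\sb}(s,n,0)$ for $\Re(s)>1$ and Fubini,
\begin{equation*}
\sum_{r\in T_{\sa\sb}(M)}\langle r\rangle_{\sa\sb}^k h(r)=\sum_{n\in\Z}\hat h(n)\sum_{r\in T_{\sa\sb}(M)}\langle r\rangle_{\sa\sb}^k e(nr).
\end{equation*}
By Proposition~\ref{myfingerhurts} the inner generating series $L^{(k)}_{\sa\sb}(s,n,0)$ equals (up to a sign) $L^{(k)}_{\sb\sa}(s,0,-n)$, so for each fixed $n$ I apply a Perron-type formula together with a contour shift from $\Re(s)=1+\e$ to a line $\Re(s)=1-\delta$ inside the strip where Theorems~\ref{many-derivatives} and \ref{many-derivatives-additive-twists} give the analytic continuation and the polynomial bound $\abs{s}^{1/2}(1+\abs{n}^{1-\Re(s)+\e})$. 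The standard contour-integration lemma (as used in the proofs of Theorems~\ref{partial-first} and \ref{variance}) then converts the singular part of $L^{(k)}_{\sb\sa}(s,0,-n)$ at $s=1$ into a main term of size $M^2\cdot(\text{polynomial in }\log M)$ plus an error $O((1+\abs{n})^{1/2}M^{2-\delta})$, where the $(1+\abs n)^{1/2}$ comes from the vertical-line bound and the exponent of $\log M$ is one less than the order of the pole.

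The key structural input is the dichotomy in Theorems~\ref{many-derivatives} and \ref{many-derivatives-additive-twists}: for $n=0$ and $k$ even, $L^{(k)}_{\sb\sa}(s,0,0)$ has a pole of order exactly $k/2+1$ with explicitly known leading coefficient $(-8\pi^2)^{k/2}\norm{f}^k(k!)/(\vol{\GmodH}^{k/2+1}2^{k/2}\pi)$; for $n=0$ and $k$ odd the pole has order at most $(k-1)/2+1=[(k-1)/2]+1$; and for $n\neq 0$ the pole has order \emph{strictly} less than $[k/2]+1$, with all singular coefficients bounded independently of $n$. Hence only the $n=0$ term, and only when $k$ is even, contributes a term of the top order $\log^{k/2}M$; its coefficient is $\hat h(0)=\int_{\R\slash\Z}h(t)\,dt$ times the leading-coefficient constant above, which after dividing out by the contour-integration normalization produces exactly $\delta_{2\N}(k)C_f^{k/2}\int h\,\frac{k!}{(k/2)!2^{k/2}}\frac{M^2\log^{k/2}M}{\pi\vol{\GmodH}}$ once one uses $C_f=-16\pi^2\norm f^2/\vol{\GmodH}$ and the elementary identity $k!/2^{k/2}=(k/2)!\cdot k!/((k/2)!2^{k/2})$, i.e. the relation between the $D$-normalization and the combinatorial factor $k!/((k/2)!\,2^{k/2})$ (the number of perfect matchings of $k$ elements, as expected for Gaussian moments). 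Everything of lower order in $\log M$—the subleading terms of the $n=0$ contribution when $k$ is even, the entire $n=0$ contribution when $k$ is odd, and the full $n\neq0$ contribution—is absorbed into $O_\e(\norm h_{H^\e}M^2\log^{[(k-1)/2]}M)$.

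The one point requiring genuine care is the interchange of the sum over $n$ with the contour integral and the summability of the resulting error: the per-$n$ error is $O((1+\abs n)^{1/2}M^{2-\delta})$, which is \emph{not} summable against $\hat h(n)$ unless one uses the $H^\e$-norm with $\e>1/2$, whereas the theorem only assumes $\norm h_{H^\e}<\infty$ for \emph{some} $\e>0$. The remedy, exactly as in the remark following Theorem~\ref{equidistribution} and as in the proof of Theorem~\ref{partial-first}, is to run the contour argument with a line $\Re(s)=1-\delta(\e)$ whose distance from $1$ is chosen small enough (depending on $\e$ and the spectral gap) that the vertical-line bound on $L^{(k)}_{\sb\sa}(1-\delta+it,0,-n)$ gains a factor $\abs n^{\delta}$ improvement, making the $n$-sum of the errors converge to $O_\e(\norm h_{H^\e}M^{2-\delta'})$; this is subsumed in the stated error. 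One must also handle the region $\Re(s)\in[1/2+\e,1+\e]$ where $s(1-s)$ can approach $\spec(-\tilde\Delta)$: one shifts the contour past these finitely many points in any bounded height range picking up holomorphic (hence harmless, $O(M^{2-\delta})$) residues, exactly as in Section~\ref{giovanna}. I expect the bookkeeping of the constant—verifying that the leading coefficient from Theorem~\ref{many-derivatives}, after the Perron normalization $\frac{1}{\pi\vol{\GmodH}}\cdot\frac{1}{(\text{order}-1)!}$ and the substitution of $C_f$, collapses precisely to $C_f^{k/2}k!/((k/2)!2^{k/2})$—to be the most error-prone but entirely mechanical step; the only conceptual subtlety is the $\delta(\e)$ trade-off just described, and it is already a known device in this paper.
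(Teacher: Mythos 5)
Your proposal is correct and follows essentially the same route as the paper: expand $h$ in Fourier series, use Proposition \ref{myfingerhurts} to reduce $L^{(k)}_{\sa\sb}(s,m,0)$ to $L^{(k)}_{\sb\sa}(s,0,-m)$, and run the contour integration in a narrow strip about $\Re(s)=1$ so that the singular data of Theorems \ref{many-derivatives} and \ref{many-derivatives-additive-twists} produce the $m=0$, $k$ even main term while the vertical-line bound contributes only a factor $(1+\abs{m})^{\e}$, summable against $\norm{h}_{H^{\e}}$. The $\delta(\e)$ trade-off you single out is exactly the device the paper uses (``a strip of width $\e$ around $\Re(s)=1$''), and your constant bookkeeping matches \eqref{somemos}.
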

\begin{proof}
We use  Proposition \ref{myfingerhurts}, Theorem \ref{many-derivatives}, and Theorem
\ref{many-derivatives-additive-twists}.
We apply a complex integration argument  in a strip of width  $\e$ around $\Re(s)=1$ to deduce  that 
\begin{align}
\label{somemos}  \sum_{r\in T_{\sa\sb} (M)}\langle
  r\rangle_{\sa\sb}^k e(mr)=\delta_{0}(m)\delta_{
    2\N}(k)&\left(\frac{C_f}{2}\right)^{k/2}\frac{k!}{({k}/{2})!2^{{k}/{2}}}\frac{M^2\log^{{k}/{2}}(M^2)}{\pi\vol{\GmodH}}\\
\nonumber  &+O_\e((1+\abs{m})^{\e}M^2\log^{[(k-1)/2]}(M^2)).
\end{align}
We insert this in 
\begin{equation*}
    \sum_{r\in T_{\sa\sb} (M)}\langle
  r\rangle_{\sa\sb}^k h(r)=\sum_{m\in\Z}\hat h(m)\sum_{r\in T_{\sa\sb} (M)}\langle
  r\rangle_{\sa\sb}^ke(mr)
\end{equation*}
and use $\hat h(0)=\int_{\R\slash\Z}h(t)dt$ to get  the result.
\end{proof}
\begin{remark}
  The result \eqref{somemos} can be strengthened to the following:
  There exists a polynomial $P_{k,m}$   such that 
  \begin{equation}\label{full-exp}
     \sum_{r\in T_{\sa\sb} (M)}\langle
  r\rangle_{\sa\sb}^k e(mr)=M^2 P_{m,k}(\log M)+ O_{m,k,\e}(M^{2-\delta}).
  \end{equation}
  The  degree of $P_{k, m}$ is strictly less than
  $k/2$ if 
  either  $m\neq 0$ or $k$ is odd, and  exactly $k/2$ for $k$
  even and $m=0$.

In Theorem \ref{variance} we identify this polynomial when $k=2$ and $m=0$. 
\end{remark}

Using a standard approximation argument based in Theorem
\ref{smooth-moments} we arrive at the following corollary:

\begin{cor}\label{unsmooth-moments}
  Let $I\subseteq \R\slash \Z$ be an interval and let $k\in \N$. Then 
  \begin{align*}
  \sum_{r\in T_{\sa\sb}(M)\cap I}\langle
  r\rangle_{\sa\sb}^k=\delta_{
    2\N}(k) C_f^{k/2}\abs{I} \frac{k!}{({k}/{2})!2^{{k}/{2}}}\frac{M^2\log^{{k}/{2}}M}{\pi\vol{\GmodH}}+O(M^2\log^{[(k-1)/2]}M).
  \end{align*}
\end{cor}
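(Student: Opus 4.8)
The plan is to deduce Corollary \ref{unsmooth-moments} from Theorem \ref{smooth-moments} by approximating the indicator function $1_I$ of the interval $I$ by smooth periodic functions from above and below. First I would fix $\eta>0$ and choose smooth functions $h_-, h_+$ on $\R\slash\Z$ with $h_-\le 1_I\le h_+$, with $\int_{\R\slash\Z}(h_+-h_-)\,dt\ll \eta$, and with $\norm{h_\pm}_{H^\e}\ll \eta^{-B_\e}$ for some constant $B_\e$ depending only on $\e$; such mollifications of an interval indicator are standard (convolve $1_I$ with a bump of width $\eta$ and estimate the Fourier coefficients). The point of controlling $\norm{h_\pm}_{H^\e}$ is that the error term in Theorem \ref{smooth-moments} is $O_\e(\norm{h}_{H^\e}M^2\log^{[(k-1)/2]}M)$, so we need the Sobolev norm not to blow up too fast relative to the savings we extract from $\eta$.

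Next, for $k$ even I would sandwich
\begin{equation*}
\sum_{r\in T_{\sa\sb}(M)}\langle r\rangle_{\sa\sb}^k h_-(r)\le \sum_{r\in T_{\sa\sb}(M)\cap I}\langle r\rangle_{\sa\sb}^k\le \sum_{r\in T_{\sa\sb}(M)}\langle r\rangle_{\sa\sb}^k h_+(r),
\end{equation*}
which is legitimate because $\langle r\rangle_{\sa\sb}^k\ge 0$ when $k$ is even. Applying Theorem \ref{smooth-moments} to $h_\pm$, the main terms are $\delta_{2\N}(k)C_f^{k/2}(\int h_\pm)\tfrac{k!}{(k/2)!2^{k/2}}\tfrac{M^2\log^{k/2}M}{\pi\vol{\GmodH}}$, and since $\int h_\pm=\abs{I}+O(\eta)$, the discrepancy between the two main terms is $O(\eta M^2\log^{k/2}M)$; the error terms are $O_\e(\eta^{-B_\e}M^2\log^{[(k-1)/2]}M)$. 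Choosing $\eta=\eta(M)\to 0$ slowly, e.g. $\eta=(\log M)^{-1}$ (or any negative power of $\log M$ large enough to beat $\eta^{-B_\e}$ against one factor of $\log M$, using that $k/2-[(k-1)/2]\ge 1/2$ for $k$ even), makes both $\eta M^2\log^{k/2}M$ and $\eta^{-B_\e}M^2\log^{[(k-1)/2]}M$ of size $O(M^2\log^{[(k-1)/2]}M)$, so the sandwich collapses to the claimed asymptotic. For $k$ odd, $\langle r\rangle_{\sa\sb}^k$ need not be nonnegative, so instead I would write $1_I=h_+-(h_+-1_I)$ with $0\le h_+-1_I\le h_+-h_-$ and bound the error contribution crudely: $\abs{\sum_{r}\langle r\rangle_{\sa\sb}^k(h_+-1_I)(r)}\le \sum_r\abs{\langle r\rangle_{\sa\sb}}^k(h_+-h_-)(r)$, which by Cauchy--Schwarz (or directly, since $\abs{\langle r\rangle_{\sa\sb}}^k\le 1+\langle r\rangle_{\sa\sb}^{2k}$ up to constants, or by applying the $k+1$ or $2k$ case) is controlled using the even-moment asymptotics already established; since for $k$ odd the claimed main term is $0$ and the allowed error is $O(M^2\log^{[(k-1)/2]}M)=O(M^2\log^{(k-1)/2}M)$, it suffices to know the sum of $\abs{\langle r\rangle_{\sa\sb}}^k$ over $T_{\sa\sb}(M)$ is $O(M^2\log^{(k-1)/2}M)$, which follows from Theorem \ref{smooth-moments} with $h\equiv 1$ applied to $k$ (odd) directly, or to $2\lceil k/2\rceil$ together with Cauchy--Schwarz.

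The main obstacle is the bookkeeping in the choice of $\eta$: one must verify that a single mollification scale $\eta=\eta(M)$ simultaneously makes the \emph{gap} between the upper and lower main terms smaller than the target error and keeps the \emph{Sobolev-inflated} error terms below the target error. This forces a quantitative handle on how $\norm{h_\pm}_{H^\e}$ grows as $\eta\to 0$ — a routine but necessary Fourier-analytic estimate for the mollified indicator — and on the gain exponent $k/2-[(k-1)/2]$, which is exactly $1/2$ when $k$ is even and $1$ when $k$ is odd, so in both cases one power of $\log M$ is available to absorb a fixed power of $\eta^{-1}$. Everything else is the standard squeeze argument, and the nonnegativity of $\langle r\rangle_{\sa\sb}^k$ for even $k$ is what makes the two-sided bound clean; the odd case is then purely an error estimate with vanishing main term.
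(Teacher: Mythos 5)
Your overall strategy --- mollify $1_I$ from above and below, feed the mollifiers into Theorem \ref{smooth-moments}, and squeeze --- is precisely the ``standard approximation argument'' the paper invokes without detail, and for even $k$ it works after two small corrections. First, $\langle r\rangle_{\sa\sb}=2\pi i\int_{\sb}^{\sigma_\sa r}\alpha$ is purely imaginary (the paper notes this in Section \ref{arithmetic}), so for even $k$ one has $\langle r\rangle_{\sa\sb}^k=(-1)^{k/2}\abs{\langle r\rangle_{\sa\sb}}^k$, which is nonnegative only when $4\mid k$; the sandwich should be applied to $(-1)^{k/2}\langle r\rangle_{\sa\sb}^k$ (note $C_f<0$, so the main term carries the matching sign). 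Second, you have the gain exponents backwards: $k/2-[(k-1)/2]$ equals $1$ for $k$ even and $1/2$ for $k$ odd --- fortunately it is the even case, where the main term is nonzero, that enjoys the full power of $\log$. With the convolution construction one gets $\norm{h_\pm}_{H^\e}\ll_\e\eta^{-\e}$, so taking $\eta=(\log M)^{-1}$ the main-term gap is $O(M^2\log^{[(k-1)/2]}M)$ while the Sobolev-inflated error is $O_\e(M^2\log^{[(k-1)/2]+\e}M)$; strictly speaking the argument yields an extra $\log^\e M$ in the error term, a harmless imprecision shared by the paper's one-line proof and irrelevant for Corollary \ref{anot-dist}.

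The genuine problem is your fallback for odd $k$. You reduce it to the claim that $\sum_{r\in T_{\sa\sb}(M)}\abs{\langle r\rangle_{\sa\sb}}^k=O(M^2\log^{(k-1)/2}M)$, but this is false: the $k$th \emph{absolute} moment has true order $M^2\log^{k/2}M$ (consistent with the Gaussian limit being proved), and Cauchy--Schwarz against $\sum_r 1$ gives exactly this and no better. Theorem \ref{smooth-moments} with $h\equiv 1$ gives $O(M^2\log^{(k-1)/2}M)$ only for the \emph{signed} sum, by cancellation, which the absolute values destroy. The correct route is the one you mention first and then abandon: with $\abs{1_I-h}\le g$, $g$ smooth, supported near $\partial I$, $\int g\ll\eta$, $\norm{g}_{H^\e}\ll\eta^{-\e}$, apply Cauchy--Schwarz \emph{with the weight $g$ in both factors}, bounding $\abs{\sum_r\langle r\rangle_{\sa\sb}^k(1_I-h)(r)}$ by $\bigl(\sum_r\abs{\langle r\rangle_{\sa\sb}}^{2k}g(r)\bigr)^{1/2}\bigl(\sum_r g(r)\bigr)^{1/2}$. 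The first factor is $O\bigl(M(\eta\log^{k}M+\eta^{-\e}\log^{k-1}M)^{1/2}\bigr)$ by the even case $2k$, the second is $O\bigl(M(\eta+\eta^{-1/2}M^{-\delta})^{1/2}\bigr)$ by Theorem \ref{equidistribution}, and with $\eta=(\log M)^{-1}$ the product is $O(M^2\log^{(k-2+\e)/2}M)=o(M^2\log^{(k-1)/2}M)$, which is admissible; the smooth piece $\sum_r\langle r\rangle_{\sa\sb}^kh(r)$ is then handled by Theorem \ref{smooth-moments} directly, its main term vanishing for odd $k$. With these repairs your argument is complete and coincides with what the paper intends.
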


The above corollary allows us to renormalize the modular symbol map
and determine the distribution of the renormalized  map using the
method of moments:
\begin{cor}\label{anot-dist}
Let $I\subseteq \R\slash \Z$ be an interval of positive length. Then the values of the map 
\begin{equation*}
\begin{array}{ccc}
  T_{\sa\sb}\cap I&\to& \R \\
r & \mapsto & \dfrac{\langle r\rangle_{\sa\sb}}{{(C_f\log c(r))}^{1/2}}
\end{array}
\end{equation*}
ordered according to $c(r)$ have asymptotically  a standard normal
distribution, i.e. for every $-\infty\leq a\leq b\leq \infty$ we have 
\begin{equation*}
  \frac{\displaystyle\#\{r\in T_{\sa\sb}(M)\cap I,    \frac{\langle r\rangle_{\sa\sb}}{{(C_f\log c(r))}^{1/2}}\in[a,b]\}}{\#( T_{\sa\sb}(M)\cap I )}\to\frac{1}{\sqrt{2\pi} }\int_a^b\exp\left(-\frac{t^2}{2}\right)\,dt,
\end{equation*}
as $M\to\infty$.
\end{cor}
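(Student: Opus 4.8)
The plan is to prove Corollary~\ref{anot-dist} by the method of moments, using Corollary~\ref{unsmooth-moments} as the key input. Fix an interval $I\subseteq \R\slash\Z$ of positive length $\abs{I}$, and let $N(M)=\#(T_{\sa\sb}(M)\cap I)$ be the normalizing count. The first step is to record that, by Theorem~\ref{equidistribution} applied to (an approximation of) the indicator $1_I$, or by Corollary~\ref{unsmooth-moments} with $k=0$, we have the asymptotic
\begin{equation*}
N(M)=\#(T_{\sa\sb}(M)\cap I)\sim \abs{I}\,\frac{M^2}{\pi\vol{\GmodH}},\qquad M\to\infty.
\end{equation*}
Next, for each fixed $k\in\N$, divide the $k$th moment asymptotics of Corollary~\ref{unsmooth-moments} by $N(M)\cdot(C_f\log M)^{k/2}$ and use $\log c(r)=\log M+O(\log M)$... more precisely, since all but a negligible proportion of $r\in T_{\sa\sb}(M)$ have $c(r)\geq M^{1-\eta}$ for any fixed $\eta>0$ (again by \eqref{Weyl-sums}, $\sum_{r\in T_{\sa\sb}(M')}1\ll (M')^2$ so the points with $c(r)\le M^{1-\eta}$ number $O(M^{2-2\eta})=o(N(M))$), one has $\log c(r)=(1+o(1))\log M$ on the bulk. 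Therefore
\begin{equation*}
\frac{1}{N(M)}\sum_{r\in T_{\sa\sb}(M)\cap I}\left(\frac{\langle r\rangle_{\sa\sb}}{(C_f\log c(r))^{1/2}}\right)^{k}\longrightarrow \delta_{2\N}(k)\,\frac{k!}{(k/2)!\,2^{k/2}},
\end{equation*}
and the right-hand side is exactly the $k$th moment $\mu_k$ of the standard normal distribution (with $\mu_k=0$ for $k$ odd).

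Once the moment convergence is established, the conclusion follows from the classical method of moments: the standard normal distribution is uniquely determined by its moments (its moment generating function has positive radius of convergence, or equivalently Carleman's condition holds), so convergence of all moments of the empirical distributions
\begin{equation*}
\nu_M=\frac{1}{N(M)}\sum_{r\in T_{\sa\sb}(M)\cap I}\delta_{\langle r\rangle_{\sa\sb}/(C_f\log c(r))^{1/2}}
\end{equation*}
to those of $\mathcal{N}(0,1)$ implies weak convergence $\nu_M\Rightarrow \mathcal{N}(0,1)$. Since the Gaussian CDF is continuous, weak convergence upgrades to convergence of $\nu_M([a,b])$ for every $a\le b$ in $[-\infty,\infty]$, which is precisely the assertion of the corollary.

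The one point requiring a little care — and the main potential obstacle — is the handling of the $\log c(r)$ in the denominator versus the $\log M$ appearing in Corollary~\ref{unsmooth-moments}. The moments there are computed with the global normalization $\log M$, whereas the map in the corollary normalizes each term individually by $\log c(r)$. To bridge this, I would split the sum over $r\in T_{\sa\sb}(M)\cap I$ dyadically in $c(r)$, apply Corollary~\ref{unsmooth-moments} on each range $T_{\sa\sb}(2^{j+1})\setminus T_{\sa\sb}(2^j)$, and observe that on such a range $\log c(r)=j\log 2 + O(1)$ is comparable to $\log M$ up to a bounded factor; summing the contributions and using that the top dyadic block $c(r)\asymp M$ already carries a positive proportion of all points (indeed $\#T_{\sa\sb}(M)-\#T_{\sa\sb}(M/2)\sim(1-1/4)\abs{I}M^2/(\pi\vol{\GmodH})$) one checks that the leading term is unchanged and the error stays within $O(M^2\log^{[(k-1)/2]}M)=o(N(M)(\log M)^{k/2})$ for $k$ even. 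Alternatively, and more cleanly, one notes the elementary bound $\abs{(C_f\log c(r))^{-k/2}-(C_f\log M)^{-k/2}}\ll (\log M)^{-k/2}\cdot\frac{\log(M/c(r))}{\log M}$ together with the crude estimate $\langle r\rangle_{\sa\sb}^k\ll c(r)^\e$ from Proposition~\ref{small-symbols} to absorb the discrepancy. Everything else is routine, so this bookkeeping step is where I would concentrate the writing effort.
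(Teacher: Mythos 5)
Your proposal is correct and follows essentially the same route as the paper: the paper likewise deduces the moment convergence from Corollary~\ref{unsmooth-moments} together with Theorem~\ref{equidistribution} (handling the passage from the global normalization $\log M$ to the individual $\log c(r)$ by summation by parts, which is the continuous analogue of your dyadic splitting) and then invokes the Fr\'echet--Shohat moment theorem. The only caveat is that your ``cleaner'' alternative bound $\abs{(C_f\log c(r))^{-k/2}-(C_f\log M)^{-k/2}}\ll(\log M)^{-k/2}\log(M/c(r))/\log M$ fails for small $c(r)$, but since you already discard the range $c(r)\le M^{1-\eta}$ as negligible, this does not affect the argument.
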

\begin{proof}
  Using summation by parts we find from Corollary
  \ref{unsmooth-moments} and Theorem \ref{equidistribution} that 
 \begin{equation*}
 \frac{\displaystyle
\sum_{r\in T_{\sa\sb}(M)\cap I}
\left(\frac{\langle r\rangle_{\sa\sb}}{{(C_f\log c(r))}^{1/2}} \right)^k
}
{\#(     T_{\sa\sb}(M)\cap I )}
\to   \delta_{2\N}(k) \frac{k!}{(k/2)!2^{{k}/{2}}},
 \end{equation*}
which is the $k$th moment of the standard normal distribution. The
result follows from a classical result due to Fr\'echet and Shohat \cite[11.4.C]{Loeve:1977a}.

\end{proof}
\begin{remark}\label{extra-constant} From the above proof we see that
  the asymptotic moments do not change if you replace $C_f\log c(r)$
  with $C_f\log c(r)+D$ for any constant $D$. Therefore Corollary
  \ref{anot-dist} also holds if we normalize accordingly.
  
\end{remark}

\section{Results for Hecke congruence groups}\label{arithmetic}
In this section we translate the distribution results of  Section \ref{distribution-results} to the case of Hecke congruence groups $\G=\G_0(q)$, where $q$ is
a squarefree integer. In this case the cusps of $\G$
and their scaling matrices
can be described as follows, see \cite[Section
2.2]{DeshouillersIwaniec:1982a}): a complete set of inequivalent cusps
of $\G_{0}(q)$ are given by $\sa_d=1/d$ with $d\vert q$. Notice that if $d=q$
then $1/d$ is equivalent to the cusp at infinity. Write
$q=dv$. We may take \begin{equation}\label{scaling-matrix-arithmetic}
\sigma_{1/d}=\begin{pmatrix}\sqrt{v} &0\\d\sqrt{v}&
  1/\sqrt{v}\end{pmatrix}=\frac{1}{\sqrt{q/d}}\begin{pmatrix}{q/d} &0\\q&1\end{pmatrix}
 \end{equation}
for the corresponding scaling matrix.  It follows that
\begin{equation*}
  \sigma_{\infty}^{-1}\G_0(q)\sigma_{1/d}=\left\{
    \begin{pmatrix}
      A\sqrt{v} & B/\sqrt{v}\\
C\sqrt{v} &D/\sqrt{v}
    \end{pmatrix}; {A,B,C,D\in \Z, AD-BC=1 \atop  C\equiv 0 (d), dD\equiv C(v)} 
\right\}. 
\end{equation*}
Using this and definition \eqref{tab} we easily see that 
\begin{equation}
\label{one-more}  T_{\infty\frac{1}{d}}=\{r=\frac{a}{c}\in \Q\slash\Z, (a,c)=1, (c,q)=d\}
\end{equation} and for $r={a}/{c}\in T_{\infty\frac{1}{d}}$ we have
\begin{equation}\label{two-more}
  c(r)=c\sqrt{v}=c\sqrt{\frac{q}{d}}.
\end{equation}
Therefore,
\begin{equation}\label{three-more}
  \langle r\rangle_{\infty\frac{1}{d}}=2\pi i
  \int_{1/d}^{i\infty}\alpha+2\pi i
  \int_{i\infty}^{r}\alpha=
 2\pi i \int_{1/d}^{i\infty}\alpha+\langle
  r\rangle, 
\end{equation} where $\langle
r\rangle$ is as in \eqref{our-raw}.
\subsection{First moment with restrictions}
From Theorem \ref{partial-first} and Theorem \ref{equidistribution}
we now deduce the following corollary.
\begin{cor}\label{first-arithmetic}
Let $d\vert q$, and let $h$ be a smooth function on ${\R\slash\Z}$ with $\norm{h}_{H^{1/2}}<\infty$. Then there
  exists  $\delta>0$ such that 
  \begin{equation*}
\sum_{\substack{ 1\leq c\leq M\\(c,q)=d}}\sum_{\substack{0\leq a < c\\(a,c)=1}}
\langle a/c\rangle h\left(\frac{a}{c}\right)    
=
\langle h, F\rangle_{{\R\slash\Z}}
\frac{(q/d)M^2}{\pi\vol{\Gamma_0(q)\backslash\H}}+O(\norm{h}_{H^{{1/2}}}M^{2-\delta}),
\end{equation*}
where $F$ is the formal series 
\begin{equation*}
F(t)=-i\sum_{n=1}^\infty\frac{\Im(a(n)e(nt))}{
  n},
\end{equation*} with $a(n)$ the Fourier coefficients at infinity of $f(z)$.
\end{cor}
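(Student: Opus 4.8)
The plan is to obtain this as a direct specialization of Theorem \ref{partial-first} to the pair of cusps $\sa=\infty$, $\sb=\sa_d=1/d$ of $\Gamma_0(q)$, using the explicit dictionary \eqref{one-more}, \eqref{two-more}, \eqref{three-more}. The one bookkeeping point is that on $T_{\infty\frac{1}{d}}$ the ordering parameter is $c(r)=c\sqrt{q/d}$, not $c$; so I would apply Theorem \ref{partial-first} with $M$ replaced by $M\sqrt{q/d}$. With that substitution \eqref{one-more}--\eqref{two-more} identify
\begin{equation*}
T_{\infty\frac{1}{d}}\bigl(M\sqrt{q/d}\bigr)=\{\,a/c\in\Q\slash\Z:\ (a,c)=1,\ (c,q)=d,\ c\le M\,\},
\end{equation*}
whose elements, by the uniqueness of the representative $0\le a<c$ (Corollary \ref{define-r}) and the period-$1$ invariance of $\langle\cdot\rangle$, run exactly over the index set of the corollary; moreover $(M\sqrt{q/d})^2=(q/d)M^2$ and $(M\sqrt{q/d})^{2-\delta}\ll_q M^{2-\delta}$, which already accounts for the shapes of the main term and the error.

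Next I would use \eqref{three-more} in the form $\langle r\rangle=\langle r\rangle_{\infty\frac{1}{d}}-2\pi i\int_{1/d}^{i\infty}\alpha$, valid for $r\in T_{\infty\frac{1}{d}}$, so that summing $\langle r\rangle h(r)$ over $r\in T_{\infty\frac{1}{d}}(M\sqrt{q/d})$ splits the desired sum into $\sum_r\langle r\rangle_{\infty\frac{1}{d}}h(r)$ minus $2\pi i\int_{1/d}^{i\infty}\alpha$ times $\sum_r h(r)$. To the first piece I apply Theorem \ref{partial-first}; to the second, the quantitative form of Theorem \ref{equidistribution}. Both inputs require precisely the hypothesis $\norm{h}_{H^{1/2}}<\infty$, and both are used with the rescaled cutoff $M\sqrt{q/d}$.

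It remains to identify the constant. For $\Gamma_0(q)$ with the trivial scaling matrix at $\infty$, the Fourier coefficients of $f$ at $\infty$ are the $a(n)$, so the formal series attached to Theorem \ref{partial-first} is $F_{\infty\frac{1}{d}}(t)=-2\pi i\int_{1/d}^{i\infty}\alpha+F(t)$ with $F$ as in the statement. Since $\alpha$ is real, $\int_{1/d}^{i\infty}\alpha\in\R$ and the constant $-2\pi i\int_{1/d}^{i\infty}\alpha$ affects only the zeroth Fourier coefficient, whence
\begin{equation*}
\langle h,F_{\infty\frac{1}{d}}\rangle_{\R\slash\Z}=2\pi i\int_{1/d}^{i\infty}\alpha\cdot\int_{\R\slash\Z}h(r)\,dr+\langle h,F\rangle_{\R\slash\Z}.
\end{equation*}
Feeding the two asymptotics into the splitting, the term $2\pi i\int_{1/d}^{i\infty}\alpha\cdot\int h$ produced by $\langle h,F_{\infty\frac{1}{d}}\rangle$ cancels exactly against the main term $-2\pi i\int_{1/d}^{i\infty}\alpha\cdot\int h$ of the equidistribution sum, leaving $\langle h,F\rangle_{\R\slash\Z}\,(q/d)M^2/(\pi\vol{\Gamma_0(q)\backslash\H})$ plus $O_q(\norm{h}_{H^{1/2}}M^{2-\delta})$, with $\delta$ the smaller of the two spectral-gap exponents. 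As $q$ is fixed this is the assertion.

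Since the argument is essentially a change of variables, I do not anticipate a real obstacle. The only genuine thing to verify is the cancellation in the last step, that is, that the base-point shift in \eqref{three-more} is exactly absorbed by the zeroth Fourier coefficient of $F_{\infty\frac{1}{d}}$, so that no spurious $M^2$-term survives and the final constant involves only the cuspidal series $F$ and no period; a secondary consistency check is that the rescaling $M\mapsto M\sqrt{q/d}$ is applied identically in Theorems \ref{partial-first} and \ref{equidistribution}, so that their sums run over the same set before asymptotics are taken.
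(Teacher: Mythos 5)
Your proposal is correct and follows essentially the same route as the paper, which deduces the corollary from Theorem \ref{partial-first} and Theorem \ref{equidistribution} via the dictionary \eqref{one-more}--\eqref{three-more}: rescale the cutoff to $M\sqrt{q/d}$, split off the period $2\pi i\int_{1/d}^{i\infty}\alpha$ using \eqref{three-more}, and let its contribution to the zeroth Fourier mode of $F_{\infty\frac{1}{d}}$ cancel against the equidistribution term. The cancellation you flag as the one point to verify does go through exactly as you describe (using that $\alpha$ is real so the conjugation in the pairing is harmless), so nothing is missing.
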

Summing Corollary  \ref{first-arithmetic} over all positive divisors
$d\vert q$ and using
that \begin{equation*}\frac{1}{\vol{\Gamma_0(q)\backslash\H}}\sum_{d\vert
    q}\frac{q}{d}=\frac{1}{\vol{\Gamma_0(1)\backslash\H}}=\frac{3}{\pi},\end{equation*}
we  may remove the
divisibility condition on $c$ and conclude that
  \begin{equation*}
\sum_{\substack{ 1\leq c\leq M}}\sum_{\substack{0\leq a < c\\(a,c)=1}}
\langle a/c \rangle h\left(\frac{a}{c}\right)    
=
\langle h, F\rangle_{{\R\slash\Z}}
\frac{3M^2}{\pi^2}+O(\norm{h}_{H^{{1/2}}}M^{2-\delta}). 
\end{equation*}
We can also remove the condition  $(a,c)=1$, by summing 
according to $(a,c)=k$. 
Using that
$\zeta(2)=\pi^2/6$ we find  that 
  \begin{equation*}
\sum_{\substack{ 1\leq c\leq M}}\sum_{ 0\leq a < c}
\langle a/c \rangle h\left(\frac{a}{c}\right)    
=
\langle h, F\rangle_{{\R\slash\Z}}
\frac{M^2}{2}+O(\norm{h}_{H^{{1/2}}}M^{2-\delta}). 
\end{equation*}
Using partial summation we find that 
  \begin{equation*}
\sum_{\substack{ 1\leq c\leq M}}\frac{1}{c}\sum_{ 0\leq a < c}
\langle a/c \rangle h\left(\frac{a}{c}\right)    
=
\langle h, F\rangle_{{\R\slash\Z}}
M+O(\norm{h}_{H^{{1/2}}}M^{1-\delta}). 
\end{equation*}
By an approximation argument, where we approximate $h(t)=1_{[0,x]}(t)$
by  appropriate smooth functions, we find
\begin{equation*}
  \frac{1}{M}\sum_{1\leq c\leq M}\frac{1}{c}\sum_{0\leq a\leq
    cx}\langle a/c \rangle \longrightarrow  \frac{1}{2\pi i
  }\sum_{n=1}^\infty\frac{\Re(a(n)(e(nx)-1))}{n^2},
\end{equation*}
as  $M\to\infty$. 
This completes the proof of Theorem  \ref{partial-first-theorem}.

\subsection{The variance}
Similarly to the analysis above we can use Theorem \ref{variance},
Corollary \ref{power-first-moment}, and Theorem
\ref{equidistribution} to conclude the following corollary.
\begin{cor}\label{variance-arithmetic}
  There exists a $\delta>0$ such that 
  \begin{align*}
 &\sum_{\substack{ 1\leq c\leq M\\(c,q)=d}}\sum_{\substack{1\leq a \leq c\\(a,c)=1}}\langle a/c \rangle^2 =
  \frac{-8\pi^2\norm{f}^2}{\pi\vol{\Gamma_0(q)\backslash\H}^2}\frac{q}{d}M^2\log\left( \frac{q}{d}M^2\right)\\
&\quad + \left(\frac{-8\pi^2(2\log(2)-3)\norm{f}^2}{\pi\vol{\Gamma_0(q)\backslash\H}^2}+\frac{ -8\pi^2\displaystyle\int_{\Gamma_0(q)\backslash\H}(B_{{1/d}}(z)+B_{{\infty}}(z))y^2\abs{f(z)}^2d\mu(z)}{\pi\vol{\Gamma_0(q)\backslash\H}}
\right)\frac{q}{d}M^2\\
&\quad\quad\quad +O(M^{2-\delta}) .
  \end{align*}
\end{cor}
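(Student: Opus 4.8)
\emph{Plan of proof.} The plan is to deduce Corollary~\ref{variance-arithmetic} by specializing the general second moment asymptotics of Theorem~\ref{variance} to the cusps $\sa=\infty$ and $\sb=1/d$ of $\Gamma_0(q)$, and translating through the dictionary \eqref{one-more}--\eqref{three-more}. Write $P=2\pi i\int_{1/d}^{i\infty}\alpha$ for the relevant period. By \eqref{three-more}, every $r=a/c\in T_{\infty\frac{1}{d}}$ satisfies $\langle r\rangle_{\infty\frac{1}{d}}=P+\langle a/c\rangle$, and by \eqref{two-more} its denominator in the sense of $T_{\infty\frac{1}{d}}$ equals $c(r)=c\sqrt{q/d}$; hence the cutoff $c(r)\le M'$ with $M'=M\sqrt{q/d}$ is precisely $c\le M$. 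Note also that $P=-2\pi i\int_{\infty}^{1/d}\alpha$, so $P^2$ is exactly the period term $\bigl(-2\pi i\int_{\sa}^{\sb}\alpha\bigr)^2$ occurring in the variance shift \eqref{variance-shift}. (When $d=q$ the cusp $1/q$ is $\Gamma$-equivalent to $\infty$; this is just the $\sa=\sb$ case of Theorem~\ref{variance} and causes no trouble.)

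First I would apply Theorem~\ref{variance} with $(\sa,\sb)=(\infty,1/d)$ and $M$ replaced by $M'=M\sqrt{q/d}$. Using $(M')^2=(q/d)M^2$, $\log M'=\log M+\tfrac{1}{2}\log(q/d)$, and recalling $C_f=-16\pi^2\norm{f}^2/\vol{\GmodH}$, this yields
\begin{equation*}
\sum_{\substack{c\le M\\(c,q)=d}}\sum_{\substack{1\le a\le c\\(a,c)=1}}\langle a/c\rangle_{\infty\frac{1}{d}}^2=\frac{(q/d)M^2}{\pi\vol{\GmodH}}\Bigl(\tfrac{C_f}{2}\log\bigl((q/d)M^2\bigr)+D_{f,\infty\frac{1}{d}}\Bigr)+O(M^{2-\delta}).
\end{equation*}
Next I would expand $\langle a/c\rangle_{\infty\frac{1}{d}}^2=P^2+2P\langle a/c\rangle+\langle a/c\rangle^2$ and solve for $\sum\langle a/c\rangle^2$, reducing the problem to estimating the cross sum $2P\sum\langle a/c\rangle$ and the counting sum $P^2\sum 1$ over the same index set.

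The cross sum is handled by Corollary~\ref{first-arithmetic} with $h\equiv 1$: since the formal series $F$ there has vanishing zeroth Fourier coefficient, $\langle 1,F\rangle_{\R\slash\Z}=0$, so $\sum_{c\le M,\,(c,q)=d}\sum_{(a,c)=1}\langle a/c\rangle=O(M^{2-\delta})$ and is absorbed into the error. The counting sum equals $\#T_{\infty\frac{1}{d}}(M')=(M')^2/(\pi\vol{\GmodH})+O(M^{2-\delta})$ by Theorem~\ref{equidistribution} (equivalently \eqref{Weyl-sums} with $n=0$). Substituting both, the contribution $-P^2(M')^2/(\pi\vol{\GmodH})$ exactly cancels the $P^2=\bigl(-2\pi i\int_{\infty}^{1/d}\alpha\bigr)^2$ summand inside $D_{f,\infty\frac{1}{d}}$ from \eqref{variance-shift}. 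Inserting the two remaining pieces of \eqref{variance-shift} and $\vol{\GmodH}=\vol{\Gamma_0(q)\backslash\H}$ then gives the asserted formula, with $\delta>0$ taken to be the minimum of the three available exponent gains.

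I expect the only delicate point to be the bookkeeping around the rescaling $M\mapsto M\sqrt{q/d}$ together with the sign convention for the period $P$, since everything must line up so that the $P^2$ terms cancel cleanly and the logarithm assembles into $\log\bigl((q/d)M^2\bigr)$. There is no new analytic obstacle here: the meromorphic continuation and singularity analysis of $L^{(2)}_{\sa\sb}(s,0,0)$ is already encapsulated in Theorem~\ref{variance}, and the disappearance of the cross term is forced by the vanishing of the main term in the first moment of the bare symbols $\langle a/c\rangle$.
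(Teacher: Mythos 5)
Your proposal is correct and follows essentially the same route as the paper, which deduces the corollary from Theorem \ref{variance}, the first-moment asymptotics, and Theorem \ref{equidistribution} via exactly the decomposition $\langle r\rangle_{\infty\frac{1}{d}}^2=P^2+2P\langle a/c\rangle+\langle a/c\rangle^2$ with the rescaling $M\mapsto M\sqrt{q/d}$. The only cosmetic difference is that you invoke Corollary \ref{first-arithmetic} for the cross term where the paper cites Corollary \ref{power-first-moment} (with $x=1$); both give the required $O(M^{2-\delta})$ bound, and your cancellation of the $P^2$ terms and assembly of the logarithm are as in the paper.
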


Recall now the mean and variance from \eqref{mean-variance}.
\begin{lem}\label{exp-decay}
Assume $f$ is a  Hecke eigenform. Then $\Exp{f}{c}\ll
c^{-1/2+\e}$. 
\end{lem}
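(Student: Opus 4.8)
The plan is to pass from modular symbols to a value of an additive twist of the Hecke $L$-function of $f$, and then to use the Hecke-eigenform property to estimate that value.

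First I would record that a single modular symbol is (the imaginary part of) an $L$-value. Taking the path in \eqref{our-raw} to be the vertical line through $a/c$ and writing $f(z)=\sum_{n\ge1}a(n)e(nz)$, one computes $\langle a/c\rangle=2\pi i\int_0^\infty\Im\big(f(a/c+iy)\big)\,dy=i\,\Im\big(L_f(a/c,1)\big)$, where $L_f(a/c,s)=\sum_{n\ge1}a(n)e(na/c)n^{-s}$. Since $f$ is a cusp form, $f(a/c+iy)$ decays exponentially both as $y\to\infty$ and as $y\to0^+$, so $\int_0^\infty f(a/c+iy)y^{s-1}\,dy=(2\pi)^{-s}\Gamma(s)L_f(a/c,s)$ defines an entire function of $s$; this continues $L_f(a/c,s)$ and in particular makes sense of $L_f(a/c,1)$. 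Now I would sum over $a\bmod c$ coprime to $c$. For $\Re(s)>3/2$ the Dirichlet series converge absolutely, so interchanging sums and using $\sum_{a\bmod c,(a,c)=1}e(na/c)=r_c(n)$ (the Ramanujan sum) gives $\sum_{(a,c)=1}L_f(a/c,s)=\sum_{n\ge1}a(n)r_c(n)n^{-s}=:R_c(s)$. Both sides are entire (the right-hand side is readily seen to be entire from the factorization below), so the identity holds at $s=1$, and hence $\big|\sum_{a\bmod c,(a,c)=1}\langle a/c\rangle\big|=\big|\Im R_c(1)\big|\le |R_c(1)|$.

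Second, I would compute and bound $R_c(1)$ using that $f$ is a Hecke eigenform. Writing $r_c(n)=\sum_{e\mid(c,n)}\mu(c/e)e$ gives $R_c(s)=\sum_{e\mid c}\mu(c/e)e^{1-s}L_e(f,s)$ with $L_e(f,s):=\sum_{k\ge1}a(ek)k^{-s}$. Using multiplicativity of $n\mapsto a(n)$ together with the Hecke recursions at the primes dividing $e$, one obtains the factorization $L_e(f,s)=L(f,s)\prod_{p\mid e}g_p(p^{-s})$, where $g_p$ is the linear polynomial $g_p(X)=a(p^{v_p(e)})-p\,a(p^{v_p(e)-1})X$ for $p\nmid q$, and $g_p(X)=a(p^{v_p(e)})$ for $p\mid q$. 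By Deligne's bound $|a(p^{j})|\le(j+1)p^{j/2}$ one gets $|g_p(1/p)|\ll (v_p(e)+1)p^{v_p(e)/2}$, so $|L_e(f,1)|\ll_{f}|L(f,1)|\,e^{1/2}d(e)\ll_{f,\e}e^{1/2+\e}$. Consequently $|R_c(1)|\le\sum_{e\mid c}|L_e(f,1)|\ll_{f,\e}\sum_{e\mid c}e^{1/2+\e}\ll_{f,\e}c^{1/2+\e}$, the sum being dominated by the term $e=c$.

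Combining the two steps with $\phi(c)\gg_\e c^{1-\e}$ gives $|\Exp{f}{c}|=\phi(c)^{-1}\big|\sum_{(a,c)=1}\langle a/c\rangle\big|\le\phi(c)^{-1}|R_c(1)|\ll_{f,\e}c^{-1/2+\e}$, which is the claim after renaming $\e$. The main obstacle is the factorization $L_e(f,s)=L(f,s)\prod_{p\mid e}g_p(p^{-s})$ and the ensuing bound $|L_e(f,1)|\ll_{f,\e}e^{1/2+\e}$: this is a routine but slightly fiddly local computation with the Hecke relations and Deligne's bound, requiring a minor separate treatment of the primes $p\mid q$. As a sanity check, when $f$ has real Fourier coefficients $R_c(1)$ is real, so $\Exp{f}{c}=0$, in agreement with the bound.
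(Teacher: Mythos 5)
Your proof is correct, and it ultimately runs on the same engine as the paper's --- the Hecke eigenvalue relations collapse the averaged modular symbol to Hecke eigenvalues times the single period $L(f,1)$, after which Eichler's bound $a(n)\ll n^{1/2+\e}$ and $\phi(c)^{-1}\ll c^{-1+\e}$ finish the job --- but the packaging is genuinely different. The paper applies $T_n$ directly to the path from $i\infty$ to an integer, obtaining
\begin{equation*}
\sum_{ad=n}\sum_{0\le b<d}\langle b/d\rangle=2\pi i\,\Re\Bigl(a(n)\int_{i\infty}^{0}f(z)\,dz\Bigr),
\end{equation*}
and then performs two M\"obius inversions, first to isolate $\sum_{0\le a<c}\langle a/c\rangle$ and then to pass to the coprime sum. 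You instead identify $\langle a/c\rangle$ with $i\,\Im L_f(a/c,1)$, detect the coprimality condition in one stroke via the Ramanujan sum $r_c(n)=\sum_{e\mid (c,n)}\mu(c/e)e$, and encode the Hecke relations as the explicit local factorization $L_e(f,s)=L(f,s)\prod_{p\mid e}g_p(p^{-s})$. The two arguments are arithmetically equivalent: your identity $\prod_{p\mid e}g_p(1/p)=\sum_{d\mid e}\mu(e/d)a(d)$ is precisely the paper's M\"obius-inverted Hecke relation evaluated at $s=1$. Your route costs a fiddlier local computation (and at $p\mid q$ it uses $a(p^{j})=a(p)^{j}$, which is legitimate here since $f$ is assumed to be an eigenform of all Hecke operators, including $U_p$), but it buys the explicit entire function $R_c(s)$ and makes transparent the vanishing of $\Exp{f}{c}$ when the Fourier coefficients are real. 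Both proofs are complete.
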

\begin{proof}Since $f$ is an eigenfunction
of all Hecke operator $T_n$ with eigenvalue $a(n)$ it is easy
to see that for any rational $r$ we have
\begin{equation*}
  a(n)\int_{i\infty}^{r}f(z)dz=\sum_{ad=n}\sum_{0\leq
    b<d}\int_{i\infty}^{\frac{ar+b}{d}}f(z)dz.
\end{equation*}
 If $r$ is a fixed integer, then we deduce that 
\begin{equation*}
2\pi i
\Re\left(a(n)\int_{i\infty}^rf(z)dz\right)=\sum_{ad=n}\sum_{0\leq
  b<d}\langle b/d \rangle .
\end{equation*}
Using M\"obius inversion and the Eichler bound for weight $2$ holomorphic cusp forms on congruence groups \cite{Eichler:1954a}, i.e.  $a_n\ll n^{1/2+\e}$, we find that
\begin{equation*}
  \sum_{0\leq
  a<c}\langle a/c \rangle \ll c^{1/2+\e}.
\end{equation*}
Another application of M\"obius inversion, and the  well-known lower
bound  $\phi(c)^{-1}\ll c^{-1+\e}$
\cite[Thm. 329]{HardyWright:1979a} give $\Exp{f}{c}\ll c^{-1/2+\e}$. 
\end{proof}

 We  define  the variance
shift  by
\begin{equation}\label{variance-shift-arithmetic}
D_{f,d}=\frac{-8\pi^2(2\log(2)-2+\log\frac{q}{d}))\norm{f}^2}{\vol{\G_0(q)\backslash\H}}-8\pi^2\int_{\G_0(q)\backslash\H}(B_{{1/d}}(z)+B_{{\infty}}(z))y^2\abs{f(z)}^2d\mu(z).
\end{equation}
Using Lemma \ref{exp-decay} we see that 
\begin{equation*}
  \phi(c)\Var{f}{c}=\sum_{\substack{0\leq a<c\\(a,c)=1}}\langle a/c\rangle^2+O(c^\e).
\end{equation*}
Using this and Corollary \ref{variance-arithmetic} we deduce that 
\begin{align*}
\sum_{\substack{c\leq M\\(c,q)=d}}\phi(c)\left(\Var{f}{c}-C_f\log c\right)
 &=\sum_{\substack{c\leq
M\\(c,q)=d}}
\sum_{\substack{0\leq a<c\\(a,c)=1}}\left(\langle a/c\rangle^2 -C_f\log c\right)+O(M^{1+\e})\\
&= (D_{f,d}+o(1))\sum_{\substack{c\leq
  M\\(c,q)=d}}\phi(c),\end{align*}
 as $M\to \infty$.
  Here we have used \eqref{Weyl-sums} for the asymptotics of the
last sum. 

\subsubsection{Relation with the   symmetric square $L$-function} We  explain how to relate $C_f$, and $D_{f,d}$ to the symmetric
square $L$-function. We recall the definitions but refer to
\cite[Sec. 2-3]{IwaniecLuoSarnak:2000a} for additional details. Assume that 
$f$ is a Hecke eigenform normalized with first Fourier coefficient equal to
1, and let $\lambda_f(n)$ be its $n$th Hecke eigenvalues.
Let 
\begin{equation*}
  L( f\otimes f, s)=\sum_{n=1}^\infty\frac{\lambda_f^2(n)}{n^{s}}
\end{equation*} be the Rankin--Selberg $L$-function. It is known that $L(f\otimes f, s)$
admits meromorphic continuation to $s\in \C$ with a simple pole at
$s=1$ with corresponding residue
${(4\pi)^2\norm{f}^2}/{\vol{\G_0(q)\backslash\H}}$.  We have 
\begin{equation*}
  L( f\otimes f, s)=Z(f, s)\zeta_{(q)}(s),
\end{equation*}
where
\begin{equation*}
Z(f, s)=\sum_{n=1}^\infty\frac{\lambda_f(n^2)}{n^s},
\end{equation*} 
and $\zeta_{(q)}(s)$ is the Riemann zeta function with the Euler factors at $p\vert
q$  removed. The symmetric square $L$-function of $f$ is defined by
\begin{equation*}
  L( \sym^2 f, s)=\zeta_{(q)}(2s)Z(f, s).
\end{equation*}
Using these definitions,  the formula
$\vol{\G_0(q)\backslash\H}=({\pi}/{3})q\prod_{p\vert q}(1+p^{-1})$, and
that $\zeta(s)$ has a simple pole at $s=1$ with residue
$1$, we find that
\begin{align}
 \label{countdown2} C_f&=\frac{-16\pi^2\norm{f}^2}{\vol{\Gamma_0(q)\backslash\H}}=-\res_{s=1}L(f\otimes f, s)\\
\nonumber&=-Z(f, 1 )\prod_{p\vert q}(1-p^{-1})
=-\frac{6L( \sym^2f, 1)}{\pi^2\prod_{p\vert q}{(1+p^{-1})}}.
\end{align}
This verifies the variance slope of Conjecture
\ref{variance-slope-conjecture}. 

To express in more arithmetic terms the constant $D_{f,d}$ we notice
that  
 \begin{equation}\int_{\G_0(q)\backslash\H}B_{{\infty}}(z)y^2\abs{f(z)}^2d\mu(z)
\end{equation} is the constant term in Laurent expansion at $s=1$ of 
\begin{equation*}\int_{\G_0(q)\backslash\H}E_{{\infty}}(z,s)y^2\abs{f(z)}^2d\mu(z),\end{equation*}
by definition of $B_{\infty}(z)$.
By unfolding we see that the last integral equals \begin{equation*}\frac{\Gamma(s+1)}{(4\pi)^{s+1}}L(
f\otimes f, s)=\frac{\Gamma(s+1)}{(4\pi)^{s+1}}\frac{\zeta_{(q)}(s)}{\zeta_{(q)}(2s)}L(
\sym^2 f, s).\end{equation*}
Hence we conclude that \begin{equation}\label{countdown1}\int_{\G_0(q)\backslash\H}B_{{\infty}}(z)y^2\abs{f(z)}^2d\mu(z)=G(1)L'(\sym^2 f, 1)+G'(1)L( \sym^2 f, 1),
\end{equation}
where
\begin{equation}\label{gs}
G(s)=\frac{\Gamma(s+1)}{(4\pi)^{s+1}}\frac{(s-1)\zeta_{(q)}(s)}{\zeta_{(q)}(2s)}. 
\end{equation}

To understand the part of $D_d$ involving $B_{1/d}$ we  recall
the Atkin--Lehner involutions \cite{AtkinLehner:1970a}. We also recall that we assume that $q$ is squarefree. For every $d\vert q$ there exists an
 integer
matrix of determinant $d$ of the form
\begin{equation*}
  W_d=
  \begin{pmatrix}
    d&y\\q &dw
  \end{pmatrix}
\end{equation*}
with  $y,w\in\Z$. It is straightforward to verify that
$W_d$ normalizes $\G_0(q)$,  that is,
$W_d^{-1}\G_0(q)W_d=\G_0(q)$. 
 Since $f$
is assumed to be a Hecke eigenform it follows from Atkin--Lehner theory
that 
\begin{equation*}
  d\cdot j(W_d,z)^{-2}f(W_dz)=e_{f,d}f(z)
\end{equation*}
with $e_{f,d}=\pm 1$ the Atkin--Lehner eigenvalues. It follows easily
that  \begin{equation}\label{AL-consequence}y(W_d^{-1}z)^2\abs{f(W_d^{-1}z)}^2=y^2\abs{f(z)}^2.\end{equation}

\begin{lem}\label{permuting-Eisenstein} The Atkin--Lehner involutions permute the Eisenstein
  series. More precisely, for every $d\vert q$ we have
  \begin{equation*}
    E_\infty(W_{q/d},s)=E_{1/d}(z,s).
  \end{equation*}
\end{lem}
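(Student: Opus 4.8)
The plan is to prove the identity by directly reindexing the series defining $E_{1/d}(z,s)$, exploiting that the Atkin--Lehner matrix $W:=W_{q/d}$ normalises $\Gamma_0(q)$ and sends the cusp $\infty$ to the cusp $1/d$. Throughout we regard fractional linear transformations modulo scalars, so that $W$ acts on $\H$, and we read the statement as $E_\infty(W_{q/d}z,s)=E_{1/d}(z,s)$, with $\sigma_\infty$ taken to be the identity as in \eqref{three-more}.

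First I would record three elementary facts. (i) \emph{Relation between $W$ and $\sigma:=\sigma_{1/d}$.} Writing $W=\begin{pmatrix}q/d & y_0\\ q & (q/d)w_0\end{pmatrix}$ with $\det W=q/d$, i.e.\ $(q/d)w_0-dy_0=1$, a one-line matrix multiplication using \eqref{scaling-matrix-arithmetic} gives $\sigma^{-1}W=\sqrt{q/d}\begin{pmatrix}1 & y_0 d/q\\ 0 & 1\end{pmatrix}$. Hence $\sigma^{-1}W$ acts on $\H$ as the translation $z\mapsto z+y_0 d/q$, so that $\Im(\sigma^{-1}\gamma z)=\Im\big((\sigma^{-1}W)(W^{-1}\gamma z)\big)=\Im(W^{-1}\gamma z)$ for every $\gamma$ and every $z\in\H$. (ii) \emph{Action on cusps.} One has $W\infty=(q/d)/q=1/d$, and since $W$ normalises $\Gamma_0(q)$ this yields $W^{-1}\Gamma_0(q)W=\Gamma_0(q)$ and $W^{-1}\Gamma_{1/d}W=\Gamma_\infty$ (both sides being the stabiliser of $\infty$ in $\Gamma_0(q)$). (iii) \emph{Involution property.} A short computation using $(q/d)w_0-dy_0=1$ shows $(q/d)^{-1}W^2\in\Gamma_0(q)$, hence $W^{-1}=\gamma_0 W$ on $\H$ for a fixed $\gamma_0\in\Gamma_0(q)$.

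Granting these, the proof is immediate. From the definition of the Eisenstein series and fact (i), $E_{1/d}(z,s)=\sum_{\gamma\in\Gamma_{1/d}\backslash\Gamma_0(q)}\Im(\sigma^{-1}\gamma z)^s=\sum_{\gamma\in\Gamma_{1/d}\backslash\Gamma_0(q)}\Im(W^{-1}\gamma z)^s$. Putting $\eta=W^{-1}\gamma W$ and using $W^{-1}\gamma=\eta W^{-1}$ together with fact (ii), the standard reindexing $\Gamma_{1/d}\gamma\mapsto\Gamma_\infty\eta$ is a bijection, so $\eta$ runs exactly once over $\Gamma_\infty\backslash\Gamma_0(q)$ and $E_{1/d}(z,s)=\sum_{\eta\in\Gamma_\infty\backslash\Gamma_0(q)}\Im(\eta W^{-1}z)^s=E_\infty(W^{-1}z,s)$. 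Finally, by fact (iii) $W^{-1}z=\gamma_0(Wz)$, and the $\Gamma_0(q)$-invariance of $E_\infty(\cdot,s)$ gives $E_\infty(W^{-1}z,s)=E_\infty(Wz,s)$, which is the assertion.

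All the computations involved are routine; the only point that needs a little care is the bookkeeping with $W$ versus $W^{-1}$ (equivalently, verifying the Atkin--Lehner involution relation $(q/d)^{-1}W^2\in\Gamma_0(q)$), while the passage between the two cosets spaces is the usual device relating Eisenstein series attached to different cusps.
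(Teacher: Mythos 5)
Your proof is correct and follows essentially the same route as the paper: the paper observes that $\tfrac{1}{\sqrt{q/d}}W_{q/d}$ differs from $\sigma_{1/d}$ only by right multiplication by a translation, hence is an admissible scaling matrix for the cusp $1/d$, and then reindexes the coset sum using that $W_{q/d}$ normalises $\Gamma_0(q)$ --- which is exactly your facts (i) and (ii) packaged slightly differently. Your explicit treatment of the involution relation $(q/d)^{-1}W_{q/d}^2\in\Gamma_0(q)$ to pass from $E_\infty(W^{-1}z,s)$ to $E_\infty(Wz,s)$ is a point the paper leaves implicit, and it is a welcome clarification rather than a deviation.
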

\begin{proof}
  Let
  $\sigma_{1/d}'=\frac{1}{\sqrt{q/d}}W_{q/d}$.  Then a direct
  computation shows
  that \begin{equation*}\sigma_{1/d}=\sigma'_{1/d}\begin{pmatrix}1&-yd/q\\0&1\end{pmatrix} ,\end{equation*}
  so $\sigma'_{1/d}$ is an admissible scaling matrix for the cusp
  $1/d$. Since the Eisenstein series $E_{1/d}(z,s)$ is independent of
  the choice of scaling matrix we find that 
  \begin{equation*}
    E_{1/d}(z,s)=\sum_{\g\in
      \G_{\infty}\backslash\sigma^{'-1}_{1/d}\G_0(q)\sigma'_{1/d}}\Im(\g W_{q/d}z)^s=E_{\infty}(W_{q/d}z,s),
  \end{equation*}
where we have used that $W_{q/d}$ normalize $\G_0(q)$.

\end{proof}
Combining Lemma \ref{permuting-Eisenstein} and \eqref{AL-consequence}
we find that
\begin{equation*}
  \int_{\G_0(q)\backslash\H}E_{{1/d}}(z,s)y^2\abs{f(z)}^2d\mu(z)=\int_{\G_0(q)\backslash\H}E_{\infty}(z,s)y^2\abs{f(z)}^2d\mu(z).
\end{equation*}
It follows that
\begin{equation}\label{countdown3}
\int_{\G_0(q)\backslash\H}B_{{1/d}}(z)y^2\abs{f(z)}^2d\mu(z)=\int_{\G_0(q)\backslash\H}B_{\infty}(z)y^2\abs{f(z)}^2d\mu(z).
\end{equation}
Using the definition of $G(s)$ i.e. \eqref{gs}, we see that
\begin{equation*}G(1)=\frac{6}{16\pi^4}\prod_{p\vert q}\frac{1}{1+p^{-1}}, \quad \frac{G'(1)}{G(1)}=1-\log (4\pi)-\frac{12}{\pi^2}\zeta'(2)+\sum_{p\vert q}\frac{\log p}{p+1}.
\end{equation*}
Combining
\eqref{variance-shift-arithmetic},
\eqref{countdown2},
\eqref{countdown1},  and
\eqref{countdown3} we find the
following expression for the
variance shift. 
We have
\begin{equation*}
  D_{f,d}=A_{d,q}L(\sym^2f, 1)+ B_{q}L'(\sym^2f, 1),
\end{equation*}
where
\begin{equation}
  \begin{split}\label{variance-shift-coefficients}
    A_{d,q}&= \frac{6\left(-2^{-1}\log (q/d)- \sum_{p\vert q}\frac{\log p}{p+1}  +\frac{12}{\pi^2}\zeta'(2)+ \log (2\pi)\right)}{\pi^2\prod_{p\vert q}{(1+p^{-1})}}  ,\\
   B_{q}&=-\frac{6}{\pi^2\prod_{p\vert q}(1+p^{-1})}.
  \end{split}
\end{equation}
This  completes the proof of  Theorem \ref{theorem-variance}.

\subsection{Numerical investigations}

As an example we consider the elliptic curve
15.a1. We computed $L(\sym^2f, 1)=0.9364885435 $ and  
$L'(\sym^2f, 1)=0.03534541$ using lcalc in Sage \cite{sage} and the
data from \cite{lmfdb}\footnote{See \tt{lmfdb.org/L/SymmetricPower/2/EllipticCurve/Q/15.a/}}. These numbers should be accurate to   at least
$6$ decimal places. This allows to estimate the value of the variance
shift $D_{f, d}$ and compare with the experimental values in Table
\ref{table}. We would like to thank  Karl Rubin for providing us with the experimental values of the variance shift. Notice that these are the opposite of what appears in \cite{MazurRubin:2016a}, because our  modular symbols are purely imaginary.
\begin{table}[h]
\begin{tabular}{c|c|c}
$d$& $D_{f, d}$ & Experimental Shift\\\hline
1& -0.440048 &-0.440\\
3& -0.244592 & -0.246\\
5& -0.153710&  -0.153\\
15& \phantom{-}0.041745&\phantom{-}0.040
\end{tabular}
\caption{The variance shifts for 15.a1}\label{table}
\end{table}

\subsection{Normal distribution}
With the
description of the modular symbols
and $T_{\infty \frac{1}{d}}$ from
\eqref{one-more}, \eqref{two-more},
and \eqref{three-more}, we can apply
Corollary \ref{unsmooth-moments} to
compute the moments of $\langle r \rangle$. Since  $r=\frac{a\sqrt{q/d}}{c\sqrt{q/d}}\in
T_{\infty\frac{1}{d}}$ has  $c(r)=c\sqrt{q/d}$, we may use
$\log c(r)=\log c+2^{-1}\log (q/d)$ and Remark \ref{extra-constant} to conclude the following corollary.
\begin{cor}
Let $I\subseteq \R\slash \Z$ be any interval of positive length, and consider for $d\vert
q$ the set $Q_d=\{{a}/{c}\in\Q, (a,c)=1, (c,q)=d\}$. Then the values of the map 
\begin{equation*}
\begin{array}{ccc}
  Q_d\cap I&\to& \R \\
\displaystyle\frac{a}{c} & \mapsto & \displaystyle\frac{\langle r\rangle}{(C_f\log c)^{1/2}}
\end{array}
\end{equation*}
ordered according to $c$ have asymptotically  a standard normal
distribution.
\end{cor}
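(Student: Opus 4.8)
The plan is to specialize the general machinery of Section~\ref{distribution-results} to $\G=\G_0(q)$ with the cusps $\sa=\infty$ and $\sb=1/d$, using the dictionary \eqref{one-more}, \eqref{two-more}, \eqref{three-more}. Fix $d\mid q$ and write $v=q/d$. Under these choices the set $T_{\infty\frac{1}{d}}$ is in bijection with $Q_d$ (reduced modulo $1$, each class having a unique representative $a/c$ with $0\le a<c$), the quantity $c(r)=c\sqrt v$ is a strictly increasing function of the reduced denominator $c$ so the ordering by $c(r)$ and the ordering by $c$ agree, and $T_{\infty\frac{1}{d}}(M)\cap I$ corresponds to those $a/c\in Q_d\cap I$ with $c\le M/\sqrt v$. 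Finally \eqref{three-more} gives $\langle r\rangle_{\infty\frac{1}{d}}=\langle r\rangle+P_d$, where $P_d=2\pi i\int_{1/d}^{i\infty}\alpha$ is a constant independent of $r$.

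Next I would compute the $k$th moment of $\langle r\rangle$ over $T_{\infty\frac{1}{d}}(M)\cap I$. Expanding $\langle r\rangle^k=(\langle r\rangle_{\infty\frac{1}{d}}-P_d)^k$ by the binomial theorem and summing term by term, Corollary~\ref{unsmooth-moments} shows that the $j=0$ term produces the main term $\delta_{2\N}(k)\,C_f^{k/2}\abs{I}\frac{k!}{(k/2)!\,2^{k/2}}\frac{M^2\log^{k/2}M}{\pi\vol{\G_0(q)\backslash\H}}$, while every term with $j\ge1$ is $O(M^2\log^{(k-1)/2}M)$ and is therefore absorbed into the error (for $k$ odd there is no main term and the whole sum is $O(M^2\log^{(k-1)/2}M)$). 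Substituting $M=M'\sqrt v$ only shifts $\log M$ by the constant $\tfrac12\log v$, which does not affect the leading term, and leaves $\abs{I}$ and $\vol{\G_0(q)\backslash\H}$ untouched. Dividing by $\#(T_{\infty\frac{1}{d}}(M)\cap I)$, whose asymptotics come from Theorem~\ref{equidistribution}, and performing Abel summation against the weight $(C_f\log c(r))^{-k/2}$ exactly as in the proof of Corollary~\ref{anot-dist}, I would obtain
\begin{equation*}
\frac{1}{\#(T_{\infty\frac{1}{d}}(M)\cap I)}\sum_{r\in T_{\infty\frac{1}{d}}(M)\cap I}\left(\frac{\langle r\rangle}{(C_f\log c(r))^{1/2}}\right)^k\longrightarrow \delta_{2\N}(k)\,\frac{k!}{(k/2)!\,2^{k/2}},
\end{equation*}
the $k$th moment of the standard normal. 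Since $\log c(r)=\log c+\tfrac12\log v$ with $\tfrac12\log v$ constant, Remark~\ref{extra-constant} permits replacing $(C_f\log c(r))^{1/2}$ by $(C_f\log c)^{1/2}$ in the denominator without changing the limiting moments. The classical Fr\'echet--Shohat moment theorem \cite[11.4.C]{Loeve:1977a} then upgrades convergence of all moments to convergence in distribution, which is precisely the assertion.

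The only genuinely delicate point is the bookkeeping in the middle step: one must verify that the lower-order binomial contributions ($j\ge1$), the passage $M\mapsto M'\sqrt v$, the constant shift $P_d$, and the replacement of $\log c(r)$ by $\log c$ all feed into strictly smaller powers of the logarithm than the main term and hence vanish after normalization. This is exactly the situation covered by Remark~\ref{extra-constant}; everything else is a mechanical transcription of Corollary~\ref{unsmooth-moments} and Corollary~\ref{anot-dist} through \eqref{one-more}--\eqref{three-more}.
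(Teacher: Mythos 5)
Your argument is correct and is essentially the paper's own proof: the paper likewise specializes Corollary \ref{unsmooth-moments} and Corollary \ref{anot-dist} to $\sa=\infty$, $\sb=1/d$ via \eqref{one-more}--\eqref{three-more}, and disposes of the rescaling $c(r)=c\sqrt{q/d}$ through $\log c(r)=\log c+\tfrac12\log(q/d)$ and Remark \ref{extra-constant}. The only difference is one of exposition: you spell out the binomial expansion absorbing the constant period $P_d$ and the bookkeeping of lower-order logarithmic powers, which the paper leaves implicit.
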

This 
completes the proof of Theorem \ref{distribution-theorem-special}.

\bibliographystyle{alpha}
\def\cprime{$'$}

\end{document}